\newtheorem{theorem}{Theorem}
\newtheorem{proposition}{Proposition}[section]
\newtheorem{corollary}{Corollary}[section]
\newtheorem{lemma}{Lemma}[section]
\newtheorem{Remark}{Remark}[section]
\newtheorem{claim}{Claim}[section]
\newtheorem{conjecture}{Conjecture}
\newenvironment{customquestion}[1]
{\innercustomthm}
{\endinnercustomthm}
\theoremstyle{definition}
\newtheorem{definition}{Definition}
\newtheorem{example}{Example}[section]
\newcommand{\Z}{\mathbb{Z}}
\newcommand{\R}{\mathbb{R}}
\newcommand{\Sp}{\mathbb{S}}
\newcommand{\diam}{\mathrm{diam}}
\newcommand{\dgh}{d_\mathrm{GH}}
\newcommand{\filrad}{\mathrm{FillRad}}
\newcommand{\sfilrad}{\mathrm{sFillRad}}
\newcommand{\hyp}{\mathrm{hyp}}
\newcommand{\rad}{\mathrm{rad}}
\newcommand{\spread}{\mathrm{spread}}
\newcommand{\dgm}{\mathrm{barc}}
\newcommand{\dgmR}{\dgm^\mathrm{VR}}
\newcommand{\Hom}{\mathrm{H}}
\newcommand{\PH}{\mathrm{PH}}
\newcommand{\vr}{\mathrm{VR}}
\newcommand{\per}{\mathrm{B_*}}
\newcommand{\met}{\mathrm{Met}}
\newcommand{\pmet}{\mathrm{PMet}}
\newcommand{\htop}{\mathrm{hTop}_*}
\newcommand{\dhi}{d_\mathrm{HI}}
\newcommand{\di}{d_\mathrm{I}}
\definecolor{darkblue}{rgb}{0.0, 0.0, 0.8}
\definecolor{darkred}{rgb}{0.8, 0.0, 0.0}
\definecolor{darkgreen}{rgb}{0.0, 0.8, 0.0}
\begin{document}
	\title{Vietoris-Rips Persistent Homology, Injective Metric Spaces, and The Filling Radius}

	\author[1]{Sunhyuk Lim}
	\author[2]{Facundo M\'emoli}
	\author[3]{Osman Berat Okutan}
	
	\affil[1]{Department of Mathematics,
		The Ohio State University\\
		\texttt{lim.991@osu.edu}}
	\affil[2]{Department of Mathematics and Department of Computer Science and Engineering,
		The Ohio State University\\ 	\texttt{memoli@math.osu.edu}}

	\affil[3]{Department of Mathematics,
		Florida State University\\
		
		\texttt{okutan@math.fsu.edu}}
	
	\date{\today}
	\maketitle

	\begin{abstract}
		In the applied algebraic topology community, the persistent homology induced by the Vietoris-Rips simplicial filtration is a standard method for capturing topological information from metric spaces. In this paper, we consider a different, more geometric way of generating persistent homology of metric spaces which arises by first embedding a given metric space into a larger space and then considering thickenings of the original space inside this ambient metric space. In the course of doing this, we  construct an appropriate category for studying this notion of persistent homology and  show that, in a category theoretic sense, the standard persistent homology of the Vietoris-Rips filtration is isomorphic to our geometric persistent homology provided that the ambient metric space satisfies a property called injectivity.  
		
		As an application of this isomorphism result we are able to precisely characterize the type of intervals that appear in the persistence barcodes of the Vietoris-Rips filtration  of any compact metric space and also 
		to give succinct proofs of the characterization of the persistent homology of products and metric gluings of metric spaces. Our results also permit proving several bounds on the length of intervals in the Vietoris-Rips barcode by other metric invariants, for example the notion of spread introduced by M. Katz. 
		
		As another application, we  connect this geometric persistent homology to the notion of filling radius of manifolds introduced by Gromov \cite{G07} and show some consequences related to (1) the homotopy type of the Vietoris-Rips complexes of spheres which follow from work of M.~Katz and (2) characterization (rigidity) results for spheres in terms of their Vietoris-Rips persistence barcodes which follow from work of F.~Wilhelm. 
		
		Finally, we establish a sharp version of Hausmann's theorem for spheres which may be of independent interest.
	\end{abstract}

	\newpage
	\tableofcontents
	
	\section{Introduction}
	
	The simplicial complex nowadays referred to as the Vietoris-Rips complex was originally introduced by Leopold Vietoris in the early 1900s in order to build a homology theory for metric spaces \cite{vietoris}. Later, Eliyahu Rips and Mikhail Gromov \cite{ghg} both utilized the Vietoris-Rips complex in their study of hyperbolic  groups.

	Given a metric space $(X,d_X)$ and $r>0$ the $r$-Vietoris-Rips complex $\mathrm{VR}_r(X)$  has $X$ as its vertex set, and simplices are all those nonempty finite subsets of $X$ whose diameter is strictly less than $r$. In \cite{h95}, Hausmann showed that the Vietoris-Rips complex can be used to recover the homotopy type of a Riemannian manifold $M$. More precisely, he introduced a quantity $r(M)$ (a certain variant of the injectivity radius), and proved that $\vr_r(M)$ is homotopy equivalent to $M$ for any $r\in(0,r(M))$.

	Since $\mathrm{VR}_r(X)\subseteq \mathrm{VR}_s(X)$ for all $0<r\leq s$, this construction then naturally induces the so called  Vietoris-Rips simplicial filtration of $X$, denoted by  $\mathrm{VR}_*(X) = \big(\mathrm{VR}_r(X)\big)_{r>0}$.  By applying the simplicial homology functor  (with coefficients in a given field) one obtains a \emph{persistence module}: a directed system $V_*=\big(V_r\xrightarrow{v_{rs}} V_s\big)_{r\leq s}$ of vector spaces and linear maps (induced by the simplicial inclusions). The persistent module obtained from $\vr_*(X)$ is referred to as the Vietoris-Rips persistent homology of $X$.  
	
	The notion of \emph{persistent homology} arose from work  by Frosini,  Ferri, and Landi \cite{f90,f93}, Robins \cite{V99}, and Edelsbrunner \cite{JE95,HLZ00} and  collaborators. After that, considering the persistent homology of the simplicial filtration induced from  Vietoris-Rips complexes was a natural next step. For example,  Carlsson and de Silva \cite{cds} applied Vietoris-Rips persistent homology to topological estimation from point cloud data, and Ghrist and de Silva applied  it to sensor networks \cite{DG07}. Its efficient computation has been addressed by Bauer in \cite{bauer}. A more detailed historical survey and review of general ideas related to persistent homology can be found in \cite{G09,HH08,HH10}.

	The persistent homology of the Vietoris-Rips filtration of a metric space provides a functorial way\footnote{Where for metric spaces $X$ and $Y$ morphisms are given by  $1$-Lipschitz maps $\phi:X\rightarrow Y$, and for persistence modules $V_*$ and $W_*$ morphisms are systems of linear maps $\nu_* = (\nu_r:V_r\rightarrow W_r)_{r>0}$ making all squares commute.} of assigning a persistence module to a metric space. Persistence modules are usually represented, up to isomorphism, as \emph{barcodes}: multisets of intervals each representing the lifetime of a homological feature. In  this paper,  barcodes are associated to Vietoris-Rips filtrations, and these barcodes will be denoted by $\dgmR_*(\cdot)$. In the areas of topological data analysis (TDA) and computational topology, this type of persistent homology is a widely used tool for capturing topological properties of a dataset  \cite{bauer,cds,DG07}.

	Despite its widespread use in applications, little is known in terms of relationships between Vietoris-Rips barcodes and other metric invariants. For instance, whereas it is obvious that the right endpoint of any interval $I$ in $\dgmR_\ast(X)$ must be bounded above by the diameter of $X$, there has been little progress in relating the length of bars to other invariants such as volume (or Hausdorff measure) or curvature (whenever defined).

	\paragraph{Contributions.}  One main contribution of this paper is establishing a precise relationship (i.e. a filtered homotopy equivalence) between the Vietoris-Rips simplicial filtration of a metric space and  a more geometric (or extrinsic) way of assigning a persistence module to a metric space, which consists of first isometrically embedding it into a larger space and then  considering the persistent homology of the filtration obtained by considering the resulting system of nested neighborhoods of the original space inside this ambient space. These neighborhoods, being also metric (and thus topological) spaces, permit giving a short proof of the K\"unneth formula for Vietoris-Rips persistent homology. 
	
	A particularly nice ambient space inside which one can isometrically embed any given compact metric space $(X,d_X)$ is $L^\infty(X)$; the Banach space consists of all the bounded real-valued functions on $X$, together with the $\ell^\infty$-norm. The embedding is given by $X\ni x\mapsto  d_X(x,\cdot)$: it is indeed immediate that this embedding is isometric since $\|d_X(x,\cdot)-d_X(x',\cdot)\|_{\infty} = d_X(x,x')$ for all $x,x'\in X$. This is usually called the \emph{Kuratowski} isometric embedding of $X$.
	
	That the Vietoris-Rips filtration of a \emph{finite} metric space produces persistence modules isomorphic to the sublevel set filtration of the distance function $$\mbox{$\delta_X:L^\infty(X)\rightarrow \mathbb{R}_{\geq 0}$, $L^\infty(X)\ni f \mapsto \inf_{x\in X}\|d_X(x,\cdot)-f\|_{\infty}$}$$ was already used in \cite{dgh-rips} in order to establish the Gromov-Hausdorff stability of Vietoris-Rips persistence of finite metric spaces.
	
	In this paper we significantly generalize this point of view  by proving an isomorphism theorem between the Vietoris-Rips filtration of \emph{any} compact metric space $X$  and its \emph{Kuratowski filtration}:  $$\big(\delta_{X}^{-1}([0,r))\big)_{r>0},$$
	a fact which immediately implies that their persistent homologies are isomorphic.
	
	We do so by constructing  a filtered homotopy equivalence between the Vietoris-Rips filtration and the sublevel set filtration induced by $\delta_X$. Furthermore, we prove that $L^\infty(X)$ above can be replaced with  \emph{any injective (or equivalently, hyperconvex) metric space}   \cite{dress-book,l13} admitting an isometric embedding of $X$:

	\begin{restatable*}[Isomorphism Theorem]{theorem}{isotheorem}\label{theorem:isom}
		Let $\eta: \met \to \pmet$ be a metric homotopy pairing (for example the Kuratowski functor). Then $\per \circ \eta: \met \to \htop$ is naturally isomorphic to $\vr_{2*}$.
	\end{restatable*}

	Above, $\met$ is the category of compact metric spaces with 1-Lipschitz maps, $\pmet$ is the category of metric pairs $(X,E)$ where $X\hookrightarrow E$ isometrically,  $E$ is an injective metric space,  a metric homotopy pairing is any right adjoint to the forgetful functor (e.g. the Kuratowski embedding), and $\mathrm{B}_\ast$ is the functor sending a pair $(X,E)$ to the filtration $\big(B_r(X,E)\big)_{r>0}$; see Sections \ref{sec:pairs} and  \ref{sec:isom}.
	One useful consequence of Theorem \ref{theorem:isom} is given by Theorem \ref{theorem:pbarcode} which establishes the existence of Vietoris-Rips barcodes for arbitrary totally bounded metric spaces (a fact that was seemingly overlooked in prior literature).
	
	A certain well known construction which involves the isometric embedding $X\hookrightarrow L^\infty(X)$ is that of the \emph{filling radius} of a Riemannian manifold \cite{gfilling} defined by Gromov in the early 1980s. In that construction, given an $n$-dimensional Riemannian manifold $M$ one studies for each $r>0$ the inclusion $$\iota_r: M\hookrightarrow \delta_M^{-1}([0,r)),$$ and seeks the infimal $r>0$ such that the map induced by $\iota_r$ at $n$-th homology level annihilates the fundamental class $[M]$ of $M$. This infimal value defines $\filrad(M)$, the filling radius of $M$. 
	
	Via our isomorphism theorem we are able prove that there always exists a bar in the barcode of a manifold whose length is exactly twice its filling radius:
	
	\begin{restatable*}[]{proposition}{propfrfdgm}\label{prop:filradpersistence}
		Let $M$ be a closed connected $n$-dimensional Riemannian manifold. Then, $$(0,2\,\filrad(M;\mathbb{F})]\in\dgm_n^\mathrm{VR}(M;\mathbb{F}),$$
		where $\mathbb{F}$ is an arbitrary field if $M$ is orientable, and $\mathbb{F}=\Z_2$ if $M$ is non-orientable. Moreover, this is the \emph{unique} interval in $\dgm_n^\mathrm{VR}(M;\mathbb{F})$ starting at $0$ and $\filrad(M;\mathbb{F})\leq\filrad(M)$ when $M$ is orientable.
	\end{restatable*}
	
	As a step in his proof of the celebrated systolic inequality, Gromov proved in \cite{gfilling} that  the filling radius satisfies $\filrad(M)\leq c_n \big(\mathrm{vol}(M)\big)^{1/n}$ for any $n$-dimensional complete manifold $M$ (where $c_n$ is a universal constant, and Nabutovsky recently proved that $c_n$ can be improved to $\frac{n}{2}$ \cite[Theorem 1.2]{nabutovsky2019linear}). This immediately yields a relationship between $\dgmR_*(M)$ and the volume of  $M$.  The fact that the filling radius has already been connected to a number of other metric  invariants also permits importing these results to the setting of Vietoris-Rips barcodes (see Section \ref{sec:bound}). This in turn permits relating  $\dgmR_\ast(M)$ with other metric invariants of $M$, a research thread which has remained mostly unexplored. See Proposition \ref{prop:genfilradpersistence} for a certain generalization of Proposition \ref{prop:filradpersistence} to ANR spaces.

	In a series of papers \cite{k83,katz-s2,katz-s1,katz-cpn} M. Katz studied both the problem of computing the filling radius of spheres (endowed with the geodesic distance) and complex projective spaces, and the problem of understanding the change in homotopy type of $\delta_X^{-1}([0,r))$ when $X\in\{\mathbb{S}^1,\mathbb{S}^2\}$ as $r$ increases.
	
	Of central interest in topological data analysis has been the question of providing a complete characterization of the Vietoris-Rips  persistence barcodes of spheres of different dimensions. Despite the existence of a complete answer to the question for the case of $\mathbb{S}^1$ \cite{adams} due to Adams and Adamaszek, relatively little is known for higher dimensional spheres. In \cite{AAF17} the authors consider a variant of the Vietoris-Rips filtration, which they call Vietoris-Rips metric thickening. The authors are able to obtain information about the succesive homotopy types of this filtration on spheres of different dimension (see Section 5 of \cite{AAF17}) for a certain range of values of the scale parameter. 
	
	The authors of \cite{AAF17} conjecture that the open Vietoris-Rips filtration (which is the one considered in the present paper) is filtered homotopy equivalent to their open Vietoris-Rips metric thickening filtration (as a consequence their persistent homologies  are isomorphic). This isomorphism was conjectured in Conjecture 6.12 of \cite{AAF17} which was recently settled in Corollary 5.10 of \cite{adams2021persistent}.

	Our isomorphism theorem (Theorem \ref{theorem:isom}) permits applying Katz's results in order to provide partial answers to the questions mentioned above and also to elucidate other properties of the standard open Vietoris-Rips filtration and its associated persistence barcodes $\dgmR_\ast(\cdot)$. In addition to these results derived from our isomorphism theorem, in Appendix \ref{sec:app:SnHausmann}, we refine certain key lemmas used in the original proof of Hausmann's theorem \cite{h95} and establish the homotopy equivalence between $\vr_r(\Sp^n)$ and $\Sp^n$ for any $r\in\left(0,\arccos{\left(-\frac{1}{n+1}\right)}\right]$: 
	
	\begin{restatable*}[]{theorem}{thmSnfirsttype}\label{cor:Snfirsttype}
		For any $n\in\mathbb{Z}_{>0}$, we have $\mathrm{VR}_{r}(\mathbb{S}^n)\simeq \mathbb{S}^n$ for any $r\in\left(0,\arccos\left(\frac{-1}{n+1}\right)\right]$.
	\end{restatable*}
	
	Note that this is indeed an improvement since, for spheres, Hausmann's quantity satisfies $r(\Sp^n)=\frac{\pi}{2}<\arccos{\left(-\frac{1}{n+1}\right)}$. This improvement is obtained with the aid of a refined version of Jung's theorem (cf. Theorem \ref{thm:Jung}) which we also establish. Theorem \ref{cor:Snfirsttype} also improves upon \cite[Remark p.508]{k83}, see discussion in Section \ref{sec:spheres-geod}.
	
	In the direction of characterizing the Vietoris-Rips barcodes of spheres, we are able to provide a complete characterization of the homotopy types of the Vietoris-Rips complexes of round spheres $\mathbb{S}^{n-1}\subset \R^n$ endowed with the (restriction of the) $\ell^\infty$-metric, which we denote by $\mathbb{S}^{n-1}_\infty$. Two critical observations are  that (1) the $r$-thickening of $\mathbb{S}^{n-1}_\infty$ inside of $\R^n_\infty$ ($\R^n$ equipped with the $\ell^\infty$-metric) is homotopy equivalent to the $r$-thickening of $\mathbb{S}^{n-1}_\infty$ inside of $\mathbb{D}^n_\infty$ ($n$-dimensional unit ball with $\ell^\infty$-metric), and (2) that it is easier to find the precise shape of the latter:
	
	\begin{restatable*}[]{theorem}{thmsphinf}\label{thm:ncircletightspan}
		For any $n\in\mathbb{Z}_{>0}$ and $r>0$,
		$$B_r(\mathbb{S}^{n-1}_\infty,\R^n_\infty)\simeq B_r(\mathbb{S}^{n-1}_\infty,\mathbb{D}^n_\infty)=\mathbb{D}^n_\infty\backslash V_{n,r}$$
		where
		$$V_{n,r}:=\bigcap_{(p_1,\dots,p_n)\in\{r,-r\}^n}\left\{(x_1,\dots,x_n)\in\mathbb{R}^n:\sum_{i=1}^n (x_i-p_i)^2\leq 1\right\}.$$
		
		In particular, for $r>\frac{1}{\sqrt{n}}$ we have $V_{n,r}=\emptyset$ so that $B_r(\mathbb{S}^{n-1}_\infty,\mathbb{D}^n_\infty)=\mathbb{D}^n_\infty$. As a result, $B_r(\mathbb{S}^{n-1}_\infty,\R^n_\infty)$ is homotopy equivalent to $\mathbb{S}^{n-1}$ for $r\in\left(0,\frac{1}{\sqrt{n}}\right]$ and contractible for $r>\frac{1}{\sqrt{n}}$ (See Figure \ref{fig:d2-s1-tight-span} for an illustration for the case when $n=2$).
	\end{restatable*}
	
	\begin{figure}
		\centering
		\includegraphics[width=0.5\linewidth]{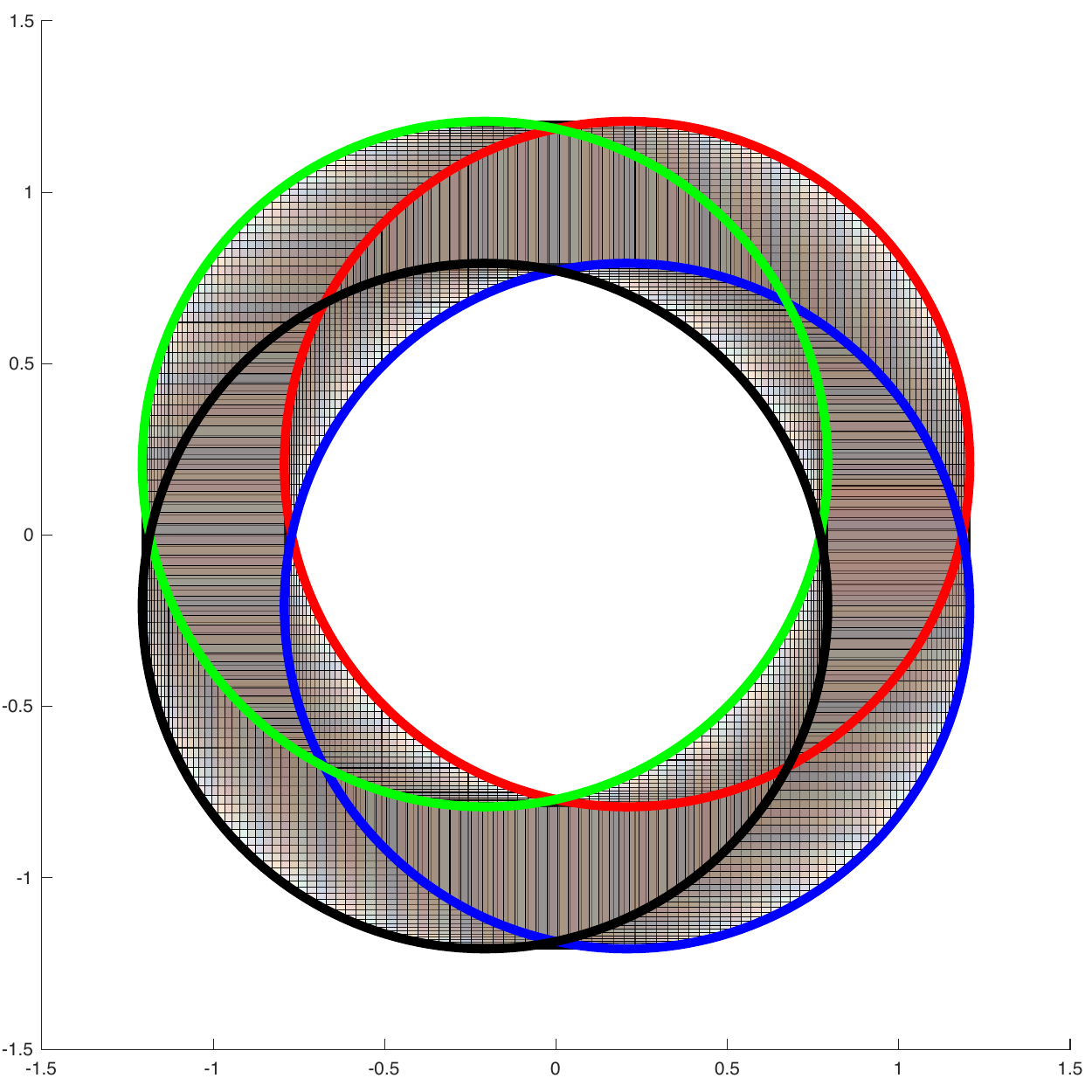}
		\caption{$B_r(\mathbb{S}^1_\infty,\mathbb{D}^2_\infty)=\mathbb{D}^2_\infty\backslash V_{2,r}$ in the plane $\R^2_\infty$. The set $V_{2,r}$ is given by the intersection of the 4 closed disks shown in the figure. See Theorem \ref{thm:ncircletightspan}.}
		\label{fig:d2-s1-tight-span}
	\end{figure}
	
	From a different perspective, by appealing to our isomorphism theorem, it is also possible to apply certain results from quantitative topology to the problem of characterization of metric spaces by their Vietoris-Rips persistence barcodes.  In applied algebraic topology, a general question of interest is:

	\begin{restatable*}[]{question}{qexact}\label{q:exact}
		Assume $X$ and $Y$ are compact metric spaces such that $\dgmR_k(X;\mathbb{F})=\dgmR_k(Y;\mathbb{F})$ for all $k\in\mathbb{Z}_{\geq 0}.$ Then, how similar are $X$ and $Y$ (in a suitable sense)?
	\end{restatable*}
	
	It follows from work by Wilhelm \cite{wilhem} and Yokota \cite{yokota} on rigidity properties of spheres via the filling radius, and the isomorphism theorem (Theorem \ref{theorem:isom}), that any $n$-dimensional Alexandrov space without boundary and sectional curvature bounded below by $1$ such that its Vietoris-Rips persistence barcode agrees with that of $\mathbb{S}^n$ must be \emph{isometric} to $\mathbb{S}^n$. This provides some new information about the inverse problem for persistent homology \cite{curry,gameiro}. More precisely, and for example, we obtain the corollary below, where for an $n$-dimensional manifold $M$, $I_{n,\mathbb{F}}^M$ denotes the persistence interval in $\dgmR_n(M)$ induced by the fundamental class of $M$ (cf. Proposition \ref{prop:filradpersistence}):
	
	\begin{restatable*}[$\dgm^\vr_*$ rigidity for spheres]{corollary}{cororigid}\label{Cor:Wilhelmrigidity}
		For any closed connected $n$-dimensional Riemannian manifold $M$ with sectional curvature $\mathrm{K}_M\geq 1$, we have: 
		\begin{enumerate}
			\item $I_{n,\mathbb{F}}^M\subseteq I_n^{\Sp^n}$.
			
			\item If $I_{n,\mathbb{F}}^M=I_n^{\Sp^n}$  then $M$ is isometric to $\Sp^n$.
			
			\item There exists $\epsilon_n>0$ such that if $\mathrm{length}(I_n^{\Sp^n})-\epsilon_n<\mathrm{length}(I_{n,\mathbb{F}}^M)$, then $M$ is diffeomorphic to $\Sp^n$.
			
			\item If $\mathrm{length}(I_{n,\mathbb{F}}^M)> \frac{\pi}{3}$, then $M$ is a twisted $n$-sphere (and in particular, homotopy equivalent to the $n$-sphere).
		\end{enumerate}
	\end{restatable*}

	\medskip

	The lower bound on sectional curvature is crucial -- in Example \ref{ex:one-parameter} we construct a one parameter family of deformations of the sphere $\Sp^2$ with constant filling radius (cf. Figure \ref{fig:oneparameter}).

	See Propositions \ref{prop:LiuWilhelm1} and \ref{prop:LiuWilhelm2} for additional related results, and see Question \ref{q:approx} for a relaxation of Question \ref{q:exact}.

	\begin{figure}
		\centering
		\includegraphics[width=15cm]{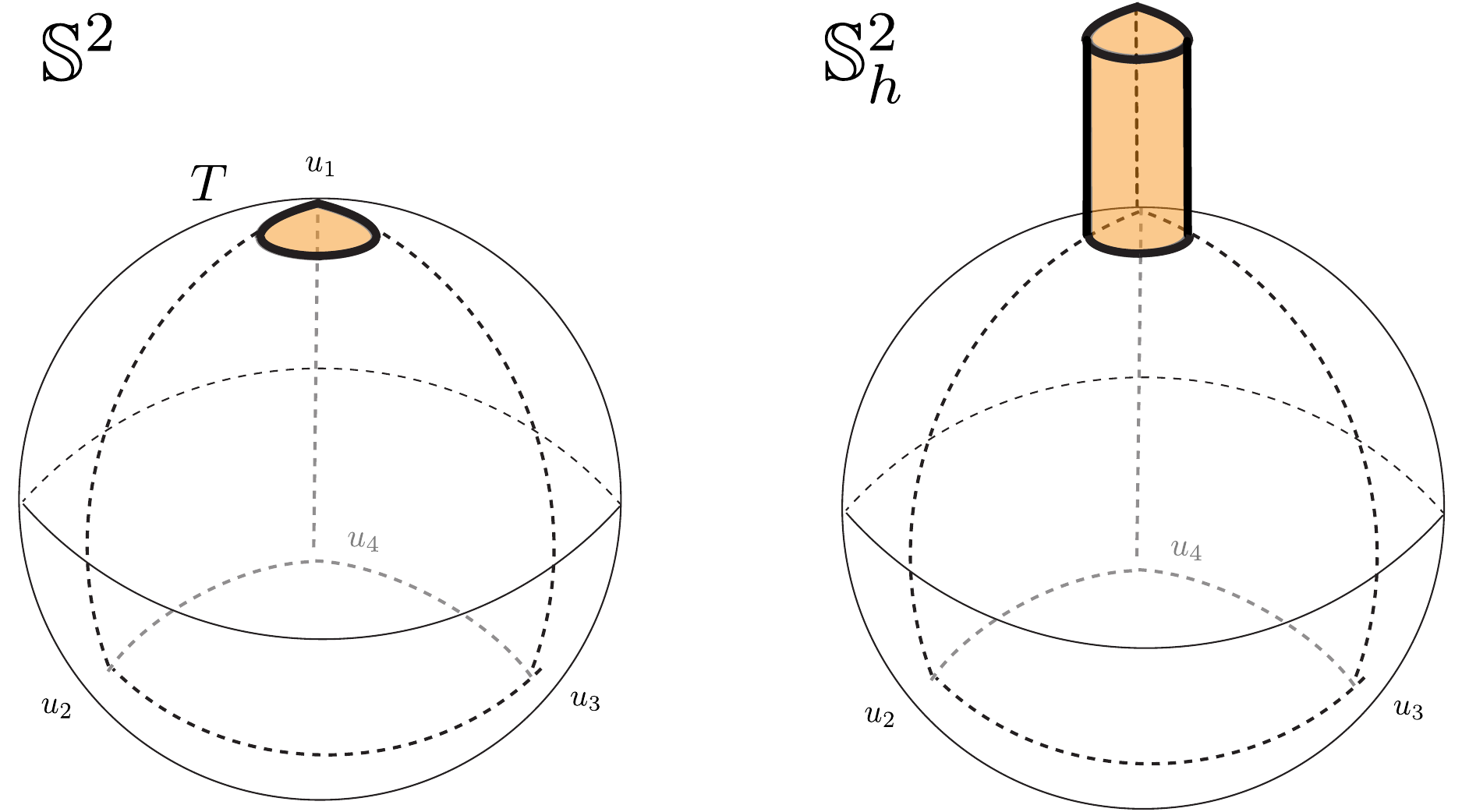}
		\caption{The construction of the one parameter family of surfaces $\Sp^2_h$ with the same filling radius as $\Sp^2$. $u_1$, $u_2$, $u_3$ and $u_4$ are vertices of a regular geodesic tetrahedron,  $T$ is a small  geodesic triangle, which is used to form a cylinder of height $h$ (left figure). See Example \ref{ex:one-parameter} for details.} \label{fig:oneparameter}
	\end{figure}

	Lastly, let us address a   variant of Question \ref{q:exact} concerning the case when $\dgmR_k(X;\mathbb{F})$ and $\dgmR_k(Y;\mathbb{F})$ are possibly different. Recall that there is the bottleneck distance $d_\mathrm{B}$  measuring the dissimilarity between two barcodes (cf. Definition \ref{def:bottledis}). One of the fundamental results of Topological Data Analysis is the following stability theorem (cf. Theorem \ref{thm:isometry} and Theorem \ref{thm:stab-barcodes}): for any field $\mathbb{F}$,
	\begin{equation}\label{eq:stabilityeq}
		\ell^\vr(X,Y):= \frac{1}{2}\sup_k d_\mathrm{B}(\dgmR_k(X;\mathbb{F}),\dgmR_k(Y;\mathbb{F}))\leq \dgh(X,Y).
	\end{equation}
	Therefore, in order to understand  how strong the Vietoris-Rips barcode is as a geometric invariant, it is natural to ask the following question.
	
	\begin{customquestion}{2 (i)}\label{customq:estimator}
		How good is $\ell^\vr(X,Y)$ as an estimator of $\dgh(X,Y)$?
	\end{customquestion}
	
	For an example, one might ask whether the inequality (\ref{eq:stabilityeq}) is tight or not. What we know is that this is indeed not tight for the cases when $X,Y$ are spheres of different dimension because of the following reason: in Corollary \ref{coro:dgh-spheres},  we show that $\ell^\vr(\Sp^m,\Sp^n)=\frac{1}{4}\arccos{\left(\frac{-1}{m+1}\right)}$ for any $0<m<n$. However, in \cite[Theorem B]{dgh-spheres} it is proved that $\frac{1}{2}\arccos{\left(\frac{-1}{m+1}\right)}$ (i.e., the twice  $\ell^\vr(\Sp^m,\Sp^n)$) lower bounds $\dgh(\Sp^m,\Sp^n)$ for any $0<m<n$ and that this bound is tight. 
	
	\medskip
	Now, let us ask the following question.
	
	\begin{customquestion}{2 (ii)}\label{customq:estimator-tight}
		For what type of spaces $X$ and $Y$ does  inequality (\ref{eq:stabilityeq}) become tight?
	\end{customquestion}
	
	Or, one might ask the following question, too.
	
	\begin{customquestion}{2 (iii)}\label{customq:estimator-reverse}
		For what type of spaces $X$ and $Y$ do we have the reverse stability inequality $\dgh(X,Y)\leq C\cdot\ell^\vr(X,Y)$ for some $C>0$?
	\end{customquestion}
	
	Note that the reverse stability inequality mentioned in Question \ref{customq:estimator-reverse} cannot hold in general. For  example, if we let $X=\Sp^1$ and $Y$ be  $\Sp^1$ attached with disjoint trees of arbitrary length (regarded as a geodesic metric space), then we can prove $\ell^\vr(X,Y)=0$ whereas $\dgh(X,Y)$ can be arbitrarily large (depending on the length of the attached trees). See Figure \ref{fig:s1-tree} and the beginning of Section \ref{sec:rigidity} for a more detailed explanation.

	\bigskip
	The authors hope that this paper can help bridge between the Applied Algebraic Topology and the Quantitative Topology communities.
	
	\paragraph{Organization.}
	In Section \ref{sec:background}, we provide some necessary definitions and results about Vietoris-Rips filtration, persistence, and injective metric spaces.

	In Section \ref{sec:pairs}, we construct a category of metric pairs. This category will be the natural setting for our extrinsic persistent homology. Although being functorial is trivial in the case of Vietoris-Rips persistence, the type of functoriality which one is supposed to expect in the case of metric embeddings is a priori not obvious. We address this question in Section \ref{sec:pairs} by introducing a suitable category structure.

	In Section \ref{sec:isom}, we show that the Vietoris-Rips filtration can be (categorically) seen as a special case of persistent homology obtained through metric embeddings via the isomorphism theorem (Theorem \ref{theorem:isom}). In this section, we also we also establish the stability of the filtration obtained via metric embeddings. 
	
	Sections \ref{sec:vrendpts}, \ref{sec:vrapp}, \ref{sec:hom-types}, \ref{sec:hyperbolicity}, and \ref{sec:fil} provide applications of our isomorphism theorem to different questions.
	
	In Section \ref{sec:vrendpts}, we prove that any interval in persistence barcode for open Vietoris-Rips filtration must have open left endpoint and closed right endpoint.

	In Section \ref{sec:vrapp}, we obtain new proofs of formulas about the Vietoris-Rips persistence of metric products and metric gluings of metric spaces.

	In Section \ref{sec:hom-types}, we prove a number of results concerning the homotopy types of Vietoris-Rips filtrations of spheres and complex projective spaces. Also, we fully compute the homotopy types of Vietoris-Rips filtration of spheres with $\ell^\infty$-norm.

	In Section \ref{sec:hyperbolicity}, we  reprove Rips and Gromov's result about the contractibility of the Vietoris-Rips complex of hyperbolic geodesic metric spaces, by using our method consisting of isometric embeddings into injective metric spaces. As a result, we will be able to bound the length of intervals in Vietoris-Rips persistence barcode by the hyperbolicity of the underlying space.

	In Section \ref{sec:fil}, we give some applications of our ideas to the filling radius of Riemannian manifolds and also study  consequences related to the characterization of spheres by their persistence barcodes and some generalizations and novel stability properties of the filling radius.
	
	The appendix contains relegated proofs and some background material.
	
	\subsubsection*{Acknowledgements.}
	
	We thank Prof. Henry Adams and Dr. Johnathan Bush for very useful feedback about a previous version of this article. We also thank Prof. Mikhail Katz and  Prof. Michael Lesnick for explaining to us some aspects of their work. We thank Dr. Qingsong Wang for bringing to our attention the paper \cite{schmahl2022} which was critical for the proof of Theorem \ref{theorem:pbarcode}. Finally, we thank Dr. Alexey Balitsky for pointing out an imprecision in the statement of Proposition \ref{prop:bcdmtpct}. 
	
	This research was supported by NSF under grants DMS-1723003, CCF-1740761, and CCF-1526513.

	\section{Background}\label{sec:background}
	In this section we cover the background needed for proving our main results. We alert readers that, in this paper, the same notation can mean either a simplicial complex itself or its geometric realization,  interchangeably. The precise meaning will be made clear in each context.
	
	\subsection{Vietoris-Rips filtration and persistence}\label{sec:vietoris}
	References for the definitions and results in this subsection are \cite{bl14,lesnick-iso}.  
	
	\begin{definition}[Vietoris-Rips filtration]
		Let $X$ be a metric space and $r>0$. The \emph{(open) Vietoris-Rips complex} $\vr_r(X)$ of $X$ is the simplicial complex whose vertices are the points of $X$ and whose simplices are those finite subsets of $X$ with diameter strictly less then $r$. Note that if $r\leq s$, then $\vr_r(X)$ is contained in $\vr_s(X)$. Hence, the family $\vr_*(X)$ is a filtration, called the \emph{open Vietoris-Rips filtration} of $X$.
	\end{definition}
	
	The (geometric realization of) a Vietoris-Rips filtration is a special case of the following more general notion.
	
	\begin{definition}[Persistence family] \label{def:pers-fams}
		A \emph{persistence family} is a collection $\big(U_r,f_{r,s} \big)_{r\leq s\in T}$ where $T$ is a nonempty subset of $\R$ such that, for each $r\leq s \leq t \in T$, $U_r$ is a topological space, $f_{r,s}: U_r \to U_s$ is a continuous map, $f_{r,r}=\mathrm{id}_{U_r}$ and $f_{s,t} \circ f_{r,s}=f_{r,t}$.
		
		Given two persistence families $(U_*,f_{*,*})$ and $(V_*,g_{*,*})$ indexed by the same $T\subseteq\R$, a morphism from the first one to the second is a collection $(\phi_r)_{r\in T}$ such that for each $r\leq s$, $\phi_r$ is a homotopy class of maps from $U_r \to V_r$ and $\phi_s \circ f_{r,s}$ is homotopy equivalent to $g_{r,s} \circ \phi_r$. 
	\end{definition}
	
	\begin{definition}[Persistence module]
		A persistence module $V_*=(V_r,v_{r,s})_{r\leq s\in T}$ over $T\subseteq\R$ is a family of $\mathbb{F}$-vector spaces $V_r$ for some field $\mathbb{F}$ with morphisms $v_{r,s}:V_r \to V_s$ for each $r \leq s$ such that 
		\begin{itemize}
			\item $v_{r,r}=\mathrm{id}_{V_r}$,
			\item $v_{s,t}\circ v_{r,s}=v_{r,t}$ for each $r\leq s \leq t$.
		\end{itemize}
		In other words, a persistence module is a functor from the poset $(T,\leq)$ to the category of vector spaces. The morphisms $v_{\ast,\ast}$ are referred to as the structure maps of $V_\ast$.
	\end{definition}
	
	By $0_*$ we will denote the zero persistence module.
	
	\medskip
	
	For any $k\geq 0$, applying the degree $k$ homology functor (with coefficients in a field $\mathbb{F}$) to a persistence family $\big(U_r,f_{r,s} \big)_{r\leq s\in T}$ produces the persistence module $\mathrm{H}_k(U_*;\mathbb{F})$ where the morphisms are those induced by $\big(f_{r,s}\big)_{r\leq s}$.
	
	Following the extant literature, we will use the term \emph{persistent homology} of a  persistence family (a.k.a. a filtration) to refer to the persistence module obtained  upon applying the  homology functor to this family.
	
	In particular, one can apply the homology functor to the Vietoris-Rips filtration of a metric space $X$. This induces a persistence module (with $T = \R_{>0}$) where the morphisms are those induced by inclusions. As a persistence module, it is denoted by $\PH_k(\vr_*(X);\mathbb{F})$ and referred to as the \emph{Vietoris-Rips persistent homology} of $X$.
	
	\begin{definition}[Interval persistence module, \cite{carlsson2010zigzag}]
		Given an interval $I$ in $T\subseteq\R$ (i.e. if $r\leq s\leq t$ and $r,t\in I$, then $s\in I$) and a field $\mathbb{F}$, the persistence module $\mathbb{F}_*[I]$ over $T$ is defined as follows: The vector space at $r$ is $\mathbb{F}$ if $r$ is in $I$ and zero otherwise. Given $r \leq s$, the morphism  corresponding to $(r,s)$ is the identity if $r,s$ are in $I$ and zero otherwise.  
	\end{definition}
	
	\begin{definition}[Barcode]
		For a given persistence module $V_*$, if there is a multiset of intervals $(I_\lambda)_{\lambda\in\Lambda}$ such that $V_*$ is isomorphic to $\bigoplus_{\lambda\in \Lambda} \mathbb{F}_*[I_\lambda]$, then that multiset is denoted by $\dgm(V_*)$ and referred to as a \emph{(persistence) barcode} associated to the persistence module $V_*$ (see below). Modules for which there exist such a multiset of intervals are said to be \emph{interval decomposable}.
	\end{definition}
	
	By Azumaya's theorem \cite{azumaya1950corrections}, persistence barcodes, whenever they exist, are unique: any two persistence barcodes associated to a given $V_*$ must agree  (up to reordering). The most important existence result for persistence barcodes is Crawley-Boevey's theorem \cite{crawley2015decomposition} which guarantees the existence of a persistence barcode associated to $V_*=(V_r,v_{r,s})$ if $V_*$ is pointwise finite-dimensional (i.e. $\dim(V_r)<\infty$ for all $r$). However, for many natural persistence modules (e.g., Vietoris-Rips persistent homology of a non-finite metric space $X$), it is not straightforward to verify the pointwise finite-dimensionality condition. Nevertheless, in Theorem \ref{theorem:pbarcode}, we were able to prove that if $X$ is totally bounded then its Vietoris-Rips persistent homology has a (unique) persistence barcode. This is achieved without invoking Crawley-Boevey's theorem and instead through combining our main (isomorphism) theorem (see Theorem \ref{theorem:isom}) with a recent result by Schmahl \cite[Theorem 1.2]{schmahl2022}. The totally boundedness condition is required in order to guarantee the following notion of regularity which will be useful in the proof of Theorem \ref{theorem:pbarcode}.
	
	\begin{definition}[q-tame persistence module]
		A persistence module $V_*=\big(V_r,v_{r,s} \big)_{r\leq s\in T}$ is said to be \emph{q-tame} if $\mathrm{rank}(v_{r,s})<\infty$ whenever $r < s$.
	\end{definition}
	
	\begin{Remark}
		The notions of interval decomposability and q-tameness are not equivalent. Indeed, \begin{enumerate}
			\item[(1)] there exist  q-tame modules which are not interval decomposable  \cite[Remark 2.9]{chazal2016structure} and 
			\item[(2)] there exist interval decomposable modules which are not q-tame 
			\cite[Example 3.30]{chazal2016structure}. 
		\end{enumerate} Interval decomposability and q-tameness are however related through a certain notion of weak isomorphism; see \cite{chazal2014observable}.  
	\end{Remark}
	
	\begin{Remark}\label{rmk:ttbddqtame}
		In \cite[Proposition 5.1]{cdo14}, the authors proved that if $X$ is a totally bounded metric space, then $\PH_k(\vr_*(X);\mathbb{F})$ is q-tame for any nonnegative integer $k\geq 0$ and any field $\mathbb{F}$.
	\end{Remark}
	
	\begin{restatable}{theorem}{vrintdcp}\label{theorem:pbarcode}
		If $X$ is a totally bounded metric space, then there is a (unique) persistence barcode associated to $\PH_k(\vr_*(X);\mathbb{F})$.
	\end{restatable}
	
	If $X$ is a totally bounded metric space, then we denote the barcode corresponding to $\PH_k(\vr_*(X);\mathbb{F})$ by $\dgmR_k(X;\mathbb{F})$.
	
	\medskip
	See Section \ref{sec:vrendpts} for the proof of Theorem \ref{theorem:pbarcode}. As we already mentioned earlier, our proof of Theorem \ref{theorem:pbarcode} does not depend on Crawley-Boevey's theorem since we circumvented verifying the pointwise finite-dimensionality of $\PH_k(\vr_*(X);\mathbb{F})$ for a totally bounded $X$. Therefore, the following question seems interesting.
	
	\begin{customquestion}{3}
		Assume a metric space $X$ is totally bounded and $k\geq 0$ is any integer. Then, is $\PH_k(\vr_*(X);\mathbb{F})$ pointwise finite-dimensional?
	\end{customquestion}
	
	From now on, unless specified otherwise, we will always assume that $T=\R$. For a given metric space and integer $k\geq 0$, we will occasionally view $V_*=\PH_k(\vr_*(X);\mathbb{F})$ as a persistence module defined over the whole real line $\R$ by trivially extending it to the left of $0\in\R$, that is, we  set $V_t=0$ for $t\leq 0$.

	We now recall a notion of distance between persistence modules.
	\begin{definition}[Interleaving distance]\label{def:int-dist}
		Two persistence modules $V_*$ and $W_*$  are said to be $\delta$-interleaved for some $\delta\geq 0$ if there are natural transformations $f:V_* \to W_{*+\delta}$ and $g: W_* \to V_{*+\delta}$ such that $f \circ g$ and $g \circ f$ are equal to the structure maps $W_* \to W_{*+2\delta}$ and $V_* \to V_{*+2\delta}$, respectively. The interleaving distance between $V_\ast$ and $W_\ast$ is defined as $$d_\mathrm{I}(V_\ast,W_\ast):=\inf\{\delta\geq 0|\,\mbox{$V_\ast$ and $W_\ast$ are $\delta$-interleaved}\}.$$
	\end{definition}
	
	It is known \cite{bl14} that $d_\mathrm{I}$ is an extended pseudo-metric on the collection of all persistence modules.
	
	\begin{example}\label{ex:int-zero}
		Consider $0_*$, the zero persistence module. Then, for any finite dimensional $V_*$ one has $$d_\mathrm{I}(V_*,0_*) = \frac{1}{2} \sup\{\mathrm{length}(I),\,I\in \dgm(V_*)\}.$$ 
	\end{example}
	
	\begin{definition}[Bottleneck distance]\label{def:bottledis}
		Let $M$ and $M'$ be two possibly empty multisets of intervals. A subset $P\subseteq M\times M'$ is said to be a partial matching between $M$ and $M'$ if it satisfies the following constraints:
		\begin{itemize}
			\item every interval $I\in M$ is matched with at most one interval of $M'$, i.e. there is at most one interval $I'\in M'$ such that $(I,I')\in P$,
			
			\item every interval $I'\in M'$ is matched with at most one interval of $M$, i.e. there is at most one interval $I\in M$ such that $(I,I')\in P$.
		\end{itemize}
		The bottleneck distance between $M$ and $M'$ is defined as
		$$d_\mathrm{B}(M,M'):=\inf_{P\textrm{ partial matching}}\mathrm{cost}(P)$$
		where
		$$\mathrm{cost}(P):=\max\left\{\sup\limits_{(I,I')\in P}\Vert I-I' \Vert_\infty,\sup\limits_{I=\langle a,b \rangle\in M \sqcup M'\textrm{ unnmatched}}\frac{\vert a-b \vert}{2}\right\}$$
		and
		$$\Vert I-I' \Vert_\infty:=\max\{\vert a-a' \vert,\vert b-b' \vert\}$$
		for $I=\langle a,b \rangle, I'=\langle a',b' \rangle$ (here, $\langle,\rangle$ means either open or closed endpoint).
	\end{definition}

	\begin{theorem}[{Isometry theorem, \cite[Theorem 5.14]{chazal2016structure}}]\label{thm:isometry}
		For any two q-tame persistence modules $V_*$ and $W_*$, the following equality holds:
		$$d_\mathrm{B}(\dgm(V_*),\dgm(W_*))=d_\mathrm{I}(V_*,W_*).$$
	\end{theorem}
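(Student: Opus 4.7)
The plan is to prove the two inequalities separately, as is standard for such isometry theorems.

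For the direction $\db \leq \di$ (stability), I would proceed as follows. Given a $\delta$-interleaving $f_\ast : V_\ast \to W_{\ast+\delta}$ and $g_\ast : W_\ast \to V_{\ast+\delta}$ satisfying the compatibility relations of Definition \ref{def:int-dist}, I would construct a partial matching of $\dgm(V_\ast)$ with $\dgm(W_\ast)$ of cost at most $\delta$ using the induced matching of Bauer--Lesnick. Concretely, for each $\epsilon > 0$ one examines the images of the structure maps $v_{r-\epsilon, r+\epsilon}$ and $w_{r-\epsilon, r+\epsilon}$, together with the composites $g_{r+\delta} \circ f_r$ and $f_{r+\delta} \circ g_r$ forced to equal the structure maps of spans $2\delta$. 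A standard rank-function argument shows that any interval $\langle a,b\rangle \in \dgm(V_\ast)$ with $b - a > 2\delta$ must be matched with an interval $\langle a',b'\rangle \in \dgm(W_\ast)$ with $|a-a'|, |b-b'| \leq \delta$, while any unmatched interval has length at most $2\delta$.

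For the converse direction $\di \leq \db$, I would use the structure theorem for q-tame modules (Theorem \ref{theorem:pbarcode}) to decompose $V_\ast \cong \bigoplus_{\lambda} (I_\lambda)_{\mathbb{F}}$ and $W_\ast \cong \bigoplus_{\mu} (J_\mu)_{\mathbb{F}}$. Given a partial matching $P$ of cost $\delta$, I would construct a $\delta$-interleaving block by block: for each matched pair $(I_\lambda, J_\mu) \in P$ with $\|I_\lambda - J_\mu\|_\infty \leq \delta$, the natural shift-by-$\delta$ maps between the interval modules $(I_\lambda)_\mathbb{F}$ and $(J_\mu)_\mathbb{F}$ form a $\delta$-interleaving (essentially because each interval, when shifted by $\delta$ in either direction, contains the appropriate sub-interval of the other). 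For unmatched intervals of length at most $2\delta$, the relevant interval module $\delta$-interleaves with the zero module (cf.\ Example \ref{ex:int-zero}). Assembling these component interleavings yields the required global $\delta$-interleaving $V_\ast \to W_{\ast+\delta}$ and $W_\ast \to V_{\ast+\delta}$.

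The main obstacle is the q-tame setting rather than the pointwise finite-dimensional one. The Krull--Schmidt-type decomposition used above is not automatic: classical decomposition (Crawley-Boevey) applies when each $V_r$ is finite-dimensional, whereas q-tameness only guarantees finiteness of $\mathrm{rank}(v_{r,s})$ for $r<s$. I would handle this by a truncation/limit argument: for each $\epsilon > 0$, replace $V_\ast$ by its $\epsilon$-smoothing $V_\ast^\epsilon$ defined via the images of $v_{r-\epsilon, r}$, which is pointwise finite-dimensional, and similarly for $W_\ast$. One then obtains $\delta$-interleavings for the smoothed modules and passes to the limit as $\epsilon \to 0$, taking care that the barcodes and interleaving distances are continuous under this operation. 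Alternatively, one can bypass the decomposition entirely and use the rectangle measure formalism of Chazal--de Silva--Glisse--Oudot, where both $\di$ and $\db$ are expressed via a common measure on rectangles in the half-plane $\{(b,d): b \leq d\}$ and the inequality follows from a direct matching construction on this measure.
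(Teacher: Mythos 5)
The paper does not prove this statement: it is quoted as the Isometry Theorem and attributed directly to Chazal--de~Silva--Glisse--Oudot (Theorem~5.14 of \cite{chazal2016structure}), with no argument supplied in the text. So there is no in-paper proof to compare against; the question is only whether your blind sketch is a sound reconstruction of the cited result.

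Your outline tracks the standard two-inequality proof from the literature. For $\db\leq\di$ you invoke the Bauer--Lesnick induced-matching machinery, and for $\di\leq\db$ you build an interleaving block-by-block from a cheap matching via the interval decomposition. Both halves are correct as sketched \emph{for pointwise finite-dimensional modules}, and you correctly flag the real obstruction: a general q-tame module need not admit an interval decomposition, so neither half can be run directly. Your two proposed fixes are both viable and are both in the literature. The $\epsilon$-smoothing route works because $V_r^\epsilon := \mathrm{im}\,(v_{r-\epsilon,r})$ is finite-dimensional by q-tameness, giving a pfd module whose interleaving distance to $V_\ast$ is at most $\epsilon$; one then needs the additional (nontrivial but true) fact that $\dgm(V_\ast^\epsilon)\to\dgm(V_\ast)$ in bottleneck distance as $\epsilon\to 0$, which requires care because the undecorated diagram of a q-tame module is defined only up to points on the diagonal. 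The rectangle-measure formalism of \cite{chazal2016structure} bypasses decomposition altogether and is in fact the route the cited Theorem~5.14 takes, which is why that reference is the canonical one for the q-tame statement. One further caveat worth making explicit if you were to write this up: for q-tame modules the ``barcode'' must be read as the undecorated persistence diagram (a multiset supported away from the diagonal, arising from a measure) rather than a literal multiset of intervals; the isometry is between $\di$ and the bottleneck distance on such diagrams, and the partial-matching cost function you use implicitly assumes this normalization. With those qualifications, your sketch is a faithful reconstruction of the known proof.
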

	
	For the proof of the following theorem, see \cite[Lemma 4.3]{cdo14} or \cite{hmtint,dgh-rips,fm-tripods}.
	
	\begin{theorem}\label{thm:stab-barcodes}
		Let $X,Y$ be compact metric spaces and $\mathbb{F}$ be an arbitrary field. Then, for any $k \in \Z_{\geq 0}$ 
		$$d_\mathrm{I}(\PH_k(\vr_*(X);\mathbb{F}),\PH_k(\vr_*(Y);\mathbb{F})) \leq 2\,\dgh(X,Y). $$
	\end{theorem}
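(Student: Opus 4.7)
The plan is to use the correspondence characterization of $d_{\mathrm{GH}}$ to build an explicit $\epsilon$-interleaving of the Vietoris-Rips persistence modules whenever $\epsilon>2\,\dgh(X,Y)$. Recall that $\dgh(X,Y)=\tfrac12\inf_R \mathrm{dis}(R)$, where $R\subseteq X\times Y$ ranges over correspondences (subsets whose projections onto $X$ and $Y$ are both surjective) and $\mathrm{dis}(R):=\sup\{|d_X(x,x')-d_Y(y,y')|:(x,y),(x',y')\in R\}$. First I would fix such an $\epsilon$, pick a correspondence $R$ with $\mathrm{dis}(R)<\epsilon$, and using the axiom of choice extract set-theoretic maps $\phi:X\to Y$, $\psi:Y\to X$ satisfying $(x,\phi(x))\in R$ and $(\psi(y),y)\in R$ for every $x$ and $y$. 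These maps are not assumed continuous; only their action on vertex sets of the Vietoris-Rips complexes matters.

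Next, I would verify that $\phi$ and $\psi$ induce \emph{shifted simplicial maps}. For any simplex $\sigma=\{x_0,\dots,x_k\}$ of $\vr_r(X)$ we have $d_Y(\phi(x_i),\phi(x_j))<d_X(x_i,x_j)+\epsilon<r+\epsilon$, so $\phi$ extends to a simplicial map $\vr_r(X)\to\vr_{r+\epsilon}(Y)$; symmetrically for $\psi$. These maps clearly commute with the Vietoris-Rips structure maps and thus yield persistence morphisms $\PH_k(\vr_*(X);\mathbb{F})\to\PH_k(\vr_{*+\epsilon}(Y);\mathbb{F})$ and $\PH_k(\vr_*(Y);\mathbb{F})\to\PH_k(\vr_{*+\epsilon}(X);\mathbb{F})$.

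The crucial step is to show that the two round-trip compositions agree, on homology, with the structure maps $\vr_r(X)\to\vr_{r+2\epsilon}(X)$ and $\vr_r(Y)\to\vr_{r+2\epsilon}(Y)$. I would do this through simplicial contiguity: for any simplex $\sigma$ of $\vr_r(X)$, the union $\sigma\cup\psi(\phi(\sigma))$ lies in $\vr_{r+2\epsilon}(X)$. The only nontrivial distance to bound is $d_X(x_i,\psi(\phi(x_j)))$; since both $(x_i,\phi(x_i))$ and $(\psi(\phi(x_j)),\phi(x_j))$ are in $R$, this distance is within $\epsilon$ of $d_Y(\phi(x_i),\phi(x_j))$, which is itself within $\epsilon$ of $d_X(x_i,x_j)<r$, giving the desired bound $<r+2\epsilon$. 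Since contiguous simplicial maps induce the same homomorphism on simplicial homology, these four families of morphisms constitute an $\epsilon$-interleaving, whence $d_{\mathrm{I}}\leq\epsilon$. Letting $\epsilon\searrow 2\,\dgh(X,Y)$ finishes the argument.

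Nothing here is genuinely hard, but the point requiring care is preserving the \emph{strict} diameter inequality (dictated by the open Vietoris-Rips convention) through every estimate; this is handled by selecting $R$ so that $\mathrm{dis}(R)$ is strictly less than $\epsilon$, after which the strictness propagates through all the inequalities above. As an alternative proof that is in the spirit of the rest of the paper, one could instead isometrically embed $X$ and $Y$ into a common injective ambient space and invoke the isomorphism theorem together with Hausdorff-distance stability of the filtration $\per\circ\eta$; however, the direct correspondence argument is elementary and self-contained, so I would present it.
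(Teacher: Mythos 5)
Your proof is correct. Note that the paper does not actually supply its own proof of Theorem~\ref{thm:stab-barcodes} in the text; it defers to the literature (in particular to \cite[Lemma 4.3]{cdo14}, whose argument is precisely the correspondence-plus-contiguity scheme you have reconstructed), so in that sense your write-up recovers the route the paper points the reader to via citation. What the paper does contribute is the observation in Remark~\ref{rem:stab-gh} that the same bound also follows from combining Theorem~\ref{thm:stabilitymh} (stability of metric homotopy pairings, comparing the homotopy interleaving distance with $\dgh$) with the isomorphism theorem (Theorem~\ref{theorem:isom}); you correctly flag this alternative at the end of your proposal. The two routes trade differently. Your direct argument is elementary and self-contained: it never leaves the category of simplicial complexes, the constant $2$ appears transparently in the round-trip contiguity estimate, and it requires no injective hulls, nerve lemmas, or homotopy interleavings. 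The paper's route is heavier in prerequisites but is not specific to Vietoris--Rips: the same two-line derivation gives Gromov--Hausdorff stability for any invariant factoring through a metric homotopy pairing, which is why the paper prefers that presentation. One small remark on your contiguity check: you single out the mixed distances $d_X(x_i,\psi(\phi(x_j)))$, but the bound $\mathrm{diam}\big(\sigma\cup\psi(\phi(\sigma))\big)<r+2\epsilon$ also requires $d_X\big(\psi(\phi(x_i)),\psi(\phi(x_j))\big)<r+2\epsilon$; these follow by the very same two-step distortion estimate, so there is no gap, but the check should be stated for all pairs of vertices, not only the mixed ones. Your handling of the strict inequalities forced by the open Vietoris--Rips convention, by choosing $R$ with $\mathrm{dis}(R)<\epsilon$ and propagating strictness, is exactly right.
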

	
	\subsection{Injective (Hyperconvex) metric spaces}\label{sec:injective}
	
	A hyperconvex metric space is one where any collection of balls with  non-empty  pairwise intersections forces the non-empty intersection of all balls. These were studied by  Aronszajn and Panitchpakdi \cite{aronszajn1956extension} who showed that every hyperconvex space is an absolute 1-Lipschitz
	retract. Isbell \cite{isbell1964six} proved that every metric space admits a \emph{smallest} hyperconvex \emph{hull} (cf. the definition of tight span below). Dress rediscovered this concept in \cite{dress} and subsequent work provided much development in the context of phylogenetics \cite{phylo,dress-book}.  More recently, in \cite{joharinad2020topological} Joharinad and Jost considered relaxations of hyperconvexity and related it to a certain notion of curvature applicable to general metric spaces.
	
	\medskip
	References for this section are \cite{dress,dress-book,l13}.
	
	\begin{definition}[Injective metric space]\label{def:Injme}
		A metric space $E$ is called \emph{injective} if for each $1$-Lipschitz map $f: X \to E$ and isometric embedding of $X$ into $\tilde{X}$, there exists a $1$-Lipschitz map $\tilde{f}: \tilde{X} \to E$ extending $f$:
		$$\begin{tikzcd}
			X \arrow[r, hook] \arrow[dr, "f", rightarrow]
			& \tilde{X} \arrow[d, "\tilde{f}"]\\
			& E
		\end{tikzcd}$$
	\end{definition}
	
	\begin{definition}[Hyperconvex space] \label{def:hyper}
		A metric space $X$ is called  \emph{hyperconvex} if for every family $(x_i,r_i)_{i \in I}$ of $x_i$ in $X$ and $r_i \geq 0$ such that $d_X(x_i,x_j) \leq r_i + r_j$ for each $i,j$ in $I$, there exists a point $x$ such that $d_X(x_i,x) \leq r_i$ for each $i$ in $I$. 
	\end{definition}
	
	The following lemma is easy to deduce from the definition of hyperconvex space.
	
	\begin{lemma}\label{lemma:intercloballhyper}
		Any nonempty intersection of closed balls in hyperconvex space is hyperconvex.
	\end{lemma}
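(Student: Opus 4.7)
The plan is to witness hyperconvexity of the intersection by augmenting the given data and applying hyperconvexity of the ambient space once. Write the intersection as $Y=\bigcap_{\alpha\in A}\bar B(c_\alpha,s_\alpha)\neq\emptyset$, where each $\bar B(c_\alpha,s_\alpha)$ is a closed ball in $X$. To test hyperconvexity of $Y$, fix any family $(y_i,r_i)_{i\in I}$ with $y_i\in Y$, $r_i\geq 0$, and $d_X(y_i,y_j)\leq r_i+r_j$ for all $i,j\in I$; the goal is to produce $y\in Y$ with $d_X(y,y_i)\leq r_i$ for every $i$.

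The main idea is to combine the two families into a single collection
\[
\{(y_i,r_i)\}_{i\in I}\;\cup\;\{(c_\alpha,s_\alpha)\}_{\alpha\in A}
\]
in $X$ and verify the pairwise compatibility hypothesis of Definition \ref{def:hyper} for the full collection. The $(i,j)$ pairs hold by assumption. For an $(i,\alpha)$ pair, since $y_i\in Y\subseteq\bar B(c_\alpha,s_\alpha)$ we have $d_X(y_i,c_\alpha)\leq s_\alpha\leq r_i+s_\alpha$. For an $(\alpha,\beta)$ pair, pick any $z\in Y$ (which exists because the intersection is nonempty); then $d_X(c_\alpha,z)\leq s_\alpha$ and $d_X(c_\beta,z)\leq s_\beta$, so the triangle inequality gives $d_X(c_\alpha,c_\beta)\leq s_\alpha+s_\beta$.

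Now apply hyperconvexity of $X$ to the augmented family to obtain a single point $y\in X$ satisfying $d_X(y,y_i)\leq r_i$ for every $i\in I$ and $d_X(y,c_\alpha)\leq s_\alpha$ for every $\alpha\in A$. The second batch of inequalities says precisely $y\in\bigcap_\alpha\bar B(c_\alpha,s_\alpha)=Y$, while the first batch is the desired conclusion. Hence $Y$ is hyperconvex.

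There is essentially no obstacle here beyond the pairwise verification; the only subtlety is that the nonemptiness hypothesis on $Y$ is genuinely needed, since it is the ingredient that forces $d_X(c_\alpha,c_\beta)\leq s_\alpha+s_\beta$ for the center-center pairs. Without it, the augmented family would not satisfy the hypothesis of hyperconvexity in $X$, and the argument would collapse.
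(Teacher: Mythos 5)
Your proof is correct, and it is the standard argument: augment the test family $(y_i,r_i)_i$ with the defining data $(c_\alpha,s_\alpha)_\alpha$ of the intersection, verify the three kinds of pairwise inequalities (the center--center case being exactly where nonemptiness is used), and apply hyperconvexity of the ambient space once. The paper itself omits the proof, asserting only that it is ``easy to deduce from the definition,'' and what you wrote is precisely the deduction one would expect.
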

	
	For a proof of the following proposition, see \cite{aronszajn1956extension} or \cite[Proposition 2.3]{l13}.
	
	\begin{proposition}\label{prop:hypinj}
		A metric space is injective if and only if it is hyperconvex.
	\end{proposition}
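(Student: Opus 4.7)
\bigskip

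\noindent\textbf{Proof proposal.} The plan is to prove the two implications separately. Both directions hinge on the one-point extension technique: to reduce extension problems to a single new point whose image in the target is chosen via the defining property of the other notion.

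For the direction ``injective implies hyperconvex,'' I would start with a family $(x_i,r_i)_{i\in I}$ in $E$ satisfying $d_E(x_i,x_j)\leq r_i+r_j$. Setting $X:=\{x_i\,:\,i\in I\}\subseteq E$, I would enlarge $X$ to $\tilde X:=X\cup\{x_\ast\}$, equipping it with the distance inherited from $E$ on $X$ and with
\[
\tilde d(x_\ast,x_i):=\sup_{j\in I}\max\!\bigl(0,\,d_E(x_i,x_j)-r_j\bigr)
\]
for the new point. The hypothesis $d_E(x_i,x_j)\leq r_i+r_j$ ensures $\tilde d(x_\ast,x_i)\leq r_i$, and a short triangle-inequality check (using the triangle inequality inside $E$ to compare $d_E(x_i,x_k)$ with $d_E(x_i,x_j)+d_E(x_j,x_k)$) shows that $\tilde d$ is indeed a metric extending the one on $X$. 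Applying injectivity of $E$ to the isometric inclusion $X\hookrightarrow\tilde X$ and to the $1$-Lipschitz map $f=\mathrm{id}_X:X\to E$ yields a $1$-Lipschitz extension $\tilde f:\tilde X\to E$; then $x:=\tilde f(x_\ast)$ satisfies $d_E(x,x_i)\leq \tilde d(x_\ast,x_i)\leq r_i$ for every $i$, as required.

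For the converse ``hyperconvex implies injective,'' I would use Zorn's lemma on the poset $\mathcal{P}$ of pairs $(Y,g)$ with $X\subseteq Y\subseteq \tilde X$ and $g:Y\to E$ a $1$-Lipschitz extension of $f$, ordered by extension. Chains have obvious upper bounds (union of domains, union of graphs), so a maximal element $(Y_\ast,g_\ast)$ exists. If $Y_\ast\neq\tilde X$, pick $x_\ast\in\tilde X\setminus Y_\ast$ and consider the family of closed balls $\bigl(g_\ast(y),\,d_{\tilde X}(y,x_\ast)\bigr)_{y\in Y_\ast}$ in $E$. The $1$-Lipschitz property of $g_\ast$ together with the triangle inequality gives pairwise compatibility $d_E(g_\ast(y),g_\ast(y'))\leq d_{\tilde X}(y,x_\ast)+d_{\tilde X}(y',x_\ast)$, so hyperconvexity supplies a point $z\in E$ simultaneously in every ball. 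Setting $g_\ast(x_\ast):=z$ produces a $1$-Lipschitz extension to $Y_\ast\cup\{x_\ast\}$, contradicting maximality. Hence $Y_\ast=\tilde X$ and $g_\ast$ is the sought-after extension $\tilde f$.

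The main technical obstacle I expect is the first direction: one must be careful to \emph{define} the distance of $x_\ast$ to each $x_i$ so that the resulting $\tilde d$ is simultaneously a genuine metric (triangle inequality in both directions) and bounded by $r_i$. The choice above does both, but the verification of the triangle inequalities requires taking suprema across all indices and invoking the triangle inequality in $E$ twice. The Zorn's-lemma step in the second direction is routine once the hyperconvexity hypothesis is stated in the form of Definition \ref{def:hyper}, because the new point is accommodated by a single application of hyperconvexity.
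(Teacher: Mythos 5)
Your Zorn's-lemma argument for ``hyperconvex $\Rightarrow$ injective'' is correct and is the standard one. The problem is in the other direction: the distance you assign to the new point,
$\tilde d(x_*,x_i) := \sup_{j}\max\bigl(0,\,d_E(x_i,x_j)-r_j\bigr)$,
does \emph{not} in general define a metric on $\tilde X$. The inequalities $\tilde d(x_*,x_i)\le r_i$ and $\tilde d(x_*,x_k)\le \tilde d(x_*,x_i)+d_E(x_i,x_k)$ do hold, but the remaining triangle inequality $d_E(x_i,x_k)\le \tilde d(x_*,x_i)+\tilde d(x_*,x_k)$ can fail whenever all the radii are large compared with the pairwise distances. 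Concretely: take $E=\R$, $x_1=0$, $x_2=2$, $r_1=r_2=3$; then $\tilde d(x_*,x_1)=\tilde d(x_*,x_2)=0$ while $d_E(x_1,x_2)=2$, so $\tilde d$ is not even a pseudometric. The intuition behind the failure is that your formula computes the \emph{smallest} distance from $x_*$ to $x_i$ consistent with the ball constraints, and there is no reason such lower bounds should satisfy a triangle inequality among themselves.

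The standard repair is to first replace each $r_i$ by its $1$-Lipschitz regularization $\tilde r_i := \inf_{j}\bigl(r_j + d_E(x_i,x_j)\bigr)$. Then $\tilde r_i\le r_i$ (take $j=i$), $\lvert \tilde r_i-\tilde r_k\rvert\le d_E(x_i,x_k)$, and the compatibility $d_E(x_i,x_k)\le \tilde r_i+\tilde r_k$ still holds: for all $l,m$ one has $r_l + d_E(x_i,x_l) + r_m + d_E(x_k,x_m) \ge d_E(x_l,x_m) + d_E(x_i,x_l) + d_E(x_k,x_m) \ge d_E(x_i,x_k)$, and one passes to the infimum in $l$ and $m$. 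Setting $\tilde d(x_*,x_i):=\tilde r_i$ therefore yields a genuine (pseudo)metric one-point extension; applying injectivity to $\mathrm{id}_X$ then produces a point $x=\tilde f(x_*)$ with $d_E(x,x_i)\le \tilde r_i\le r_i$ for all $i$, as needed. (If some $\tilde r_i=0$ one may take $x=x_i$ directly, so the degenerate pseudometric case causes no trouble.) Note the paper itself does not supply a proof of this proposition and simply cites \cite{aronszajn1956extension} and \cite[Proposition~2.3]{l13}; the regularization above is essentially what those sources do.
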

	
	Moreover, every injective metric space is a contractible geodesic metric space, as one can see in Lemma \ref{lemma:geobicomb} and Corollary \ref{cor:injectivecont}.
	
	\begin{definition}[Geodesic bicombing]
		By a \emph{geodesic bicombing} $\gamma$ on a metric space $(X,d_X)$, we mean a continuous map $\gamma: X\times X\times [0,1]\rightarrow X$ such that, for every pair $(x,y)\in X\times X$, $\gamma(x,y,\cdot)$ is a geodesic from $x$ to $y$ with constant speed. In other words, $\gamma$ satisfies the following:
		\begin{enumerate}
			\item $\gamma(x,y,0)=x$ and $\gamma(x,y,1)=y$.
			\item $d_X(\gamma(x,y,s),\gamma(x,y,t))=(t-s)\cdot d_X(x,y)$ for any $0\leq s\leq t\leq 1$.
		\end{enumerate}
	\end{definition}
	
	\begin{lemma}[{\cite[Proposition 3.8]{l13}}]\label{lemma:geobicomb}
		Every injective metric space $(E,d_E)$ admits a geodesic bicombing $\gamma$ such that, for any $x,y,x',y'\in E$ and $t\in [0,1]$, it satisfies:
		\begin{enumerate}
			\item \textbf{Conical:} $d_E(\gamma(x,y,t),\gamma(x',y',t))\leq (1-t)\,d_E(x,x')+t\,d_E(y,y')$. 
			\item \textbf{Reversible:} $\gamma(x,y,t)=\gamma(y,x,1-t)$.
			\item \textbf{Equivariant:} $L\circ \gamma(x,y,\cdot)=\gamma(L(x),L(y),\cdot)$ for every isometry $L$ of $E$.
		\end{enumerate}
	\end{lemma}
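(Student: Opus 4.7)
The plan is to follow Lang's approach. I would first use hyperconvexity to produce a geodesic between every pair of points, and then refine the simultaneous choice of geodesics to a single bicombing that is conical, reversible, and equivariant by a fixed-point/compactness argument on the space of all conical bicombings.

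For the first step, I would invoke the equivalence of injectivity and hyperconvexity (Proposition \ref{prop:hypinj}). Given $x,y \in E$ and $t \in [0,1]$, the closed balls $\overline{B}(x,\,t\,d_E(x,y))$ and $\overline{B}(y,\,(1-t)\,d_E(x,y))$ have radii summing exactly to $d_E(x,y)$, so the defining property of hyperconvexity (Definition \ref{def:hyper}) produces a point in their intersection. This makes $E$ metrically convex in the sense of Menger; iterating on dyadic times $k/2^n$ and extending by completeness (injective metric spaces are complete, since closed balls are themselves hyperconvex by Lemma \ref{lemma:intercloballhyper}) yields a $d_E(x,y)$-Lipschitz geodesic from $x$ to $y$. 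Performing these choices coherently across all pairs --- for instance by the axiom of choice combined with Lemma \ref{lemma:intercloballhyper} to keep the relevant intersections hyperconvex at every stage --- shows that the set $\mathcal{B}$ of \emph{conical} bicombings on $E$ is nonempty.

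The second step is to locate an element of $\mathcal{B}$ that is additionally reversible and equivariant. View $\mathcal{B}$ as a subset of $E^{E\times E\times[0,1]}$ in the product topology; the conical inequality constrains each $\gamma(x,y,t)$ to a bounded hyperconvex subset of $E$ depending only on $x$ and $y$, which lets one extract limits (or ultralimits) within $\mathcal{B}$. The involution $\sigma(\gamma)(x,y,t) := \gamma(y,x,1-t)$ and the conjugation actions $L_\ast(\gamma)(x,y,t) := L^{-1}(\gamma(L(x),L(y),t))$ by isometries $L$ of $E$ all preserve $\mathcal{B}$, and the involution commutes with the isometry action. Applying a Markov-Kakutani-style fixed-point argument to these commuting actions --- or, equivalently, iterating a hyperconvex midpoint-averaging of $\gamma$ against $\sigma(\gamma)$ and against its isometric translates and passing to a limit in $\mathcal{B}$ --- produces a $\gamma \in \mathcal{B}$ fixed by $\sigma$ and by every isometry, yielding reversibility and equivariance; continuity of $\gamma$ is then automatic from the conical inequality. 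The main obstacle is the absence of any linear structure on $E$: one cannot literally form convex combinations of bicombings, and the hyperconvex midpoint substitute is non-canonical, so preserving the conical inequality under such non-canonical averaging is delicate and must be built into the iteration from the outset. A secondary subtlety is that closed balls in $E$ need not be compact, so the limiting/fixed-point step may require passing to an ultralimit rather than a naive pointwise limit.
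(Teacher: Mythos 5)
The paper does not actually prove this lemma; it is quoted verbatim from Lang \cite[Proposition~3.8]{l13}, so the relevant comparison is with Lang's argument. Lang's route is constructive and quite different from yours: embed the injective space $E$ isometrically into an ambient space carrying a genuine affine structure (e.g.\ $\ell^\infty(E)$ via the Kuratowski map, or the convex subset $\Delta_1(E)\subseteq\ell^\infty(E)$), take the straight affine segment $(1-t)\kappa(x)+t\,\kappa(y)$ there, and push it back into $E$ by a \emph{canonical} $1$-Lipschitz retraction onto $E$. Conicality then falls out of the triangle inequality because the retraction is $1$-Lipschitz; reversibility is immediate because $(1-t)\kappa(x)+t\,\kappa(y)=(1-(1-t))\kappa(y)+(1-t)\kappa(x)$; and equivariance follows because the retraction is built canonically from the metric of $E$ alone, so every isometry of $E$ commutes with it. No fixed-point theorem and no averaging over bicombings is needed.

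Your first step (hyperconvexity implies metric convexity, and completeness gives geodesics, so the set $\mathcal{B}$ of conical bicombings is nonempty) is fine, but your second step has a genuine gap which you in fact flag yourself. A Markov--Kakutani-type argument needs a compact convex set and commuting continuous \emph{affine} self-maps; absent any linear structure on $E$, neither $\mathcal{B}$ nor your proposed ``hyperconvex midpoint-averaging'' carries a canonical affine structure, so there is no object to which a fixed-point theorem applies. A hyperconvex midpoint between two points of $E$ is a non-unique choice, and making it coherently for all $(x,y,t)$ is essentially the same problem one is trying to solve, with no guarantee that conicality survives the averaging or that the iteration converges. Worse, the equivariance you need is precisely the assertion that the construction is \emph{canonical}, which a choice-dependent averaging scheme cannot deliver. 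Lang sidesteps all of this by importing the affine structure from the ambient $\ell^\infty$-space and making the \emph{retraction} --- not the bicombing --- the object that must be chosen in a canonical, isometry-respecting way.
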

	
	\begin{corollary}\label{cor:injectivecont}
		Every injective metric space $E$ is contractible.
	\end{corollary}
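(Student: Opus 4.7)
The plan is to use the conical geodesic bicombing produced by Lemma \ref{lemma:geobicomb}. Fix any basepoint $x_0 \in E$ and define a homotopy
\begin{equation*}
H \colon E \times [0,1] \longrightarrow E, \qquad H(x,t) := \gamma(x, x_0, t).
\end{equation*}
The endpoint conditions in the definition of geodesic bicombing give $H(x,0) = \gamma(x,x_0,0) = x$ and $H(x,1) = \gamma(x,x_0,1) = x_0$ for every $x \in E$. Hence, once continuity of $H$ is checked, $H$ will be a homotopy from $\mathrm{id}_E$ to the constant map at $x_0$, witnessing contractibility.

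Continuity of $H$ is essentially immediate: a geodesic bicombing is by definition a continuous map $\gamma \colon E \times E \times [0,1] \to E$, and $H$ is just the restriction obtained by freezing the middle coordinate at $x_0$. If one prefers an explicit estimate, the triangle inequality yields
\begin{equation*}
d_E\bigl(H(x,t),\, H(x',t')\bigr) \;\leq\; d_E\bigl(\gamma(x,x_0,t),\, \gamma(x',x_0,t)\bigr) + d_E\bigl(\gamma(x',x_0,t),\, \gamma(x',x_0,t')\bigr),
\end{equation*}
and then the conical property (item (1) of Lemma \ref{lemma:geobicomb}) bounds the first summand by $(1-t)\, d_E(x,x')$, while the constant-speed clause in the definition of bicombing bounds the second by $|t-t'|\, d_E(x',x_0)$. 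Both quantities tend to zero as $(x',t') \to (x,t)$, so $H$ is continuous (indeed locally Lipschitz).

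There is no real obstacle in this proof; the entire content was already packaged into the preceding lemma. The only choice to make is which slot of $\gamma$ to freeze at $x_0$ and which to use as the homotopy parameter, and the endpoint convention of Lemma \ref{lemma:geobicomb}(1) fixes this uniquely. It is worth noting that contractibility per se does not require the contracting homotopy to be geodesic, but Lemma \ref{lemma:geobicomb} gives the geodesic version for free.
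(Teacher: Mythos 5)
Your proof is correct and is essentially identical to the paper's: both fix a basepoint $x_0$, restrict the geodesic bicombing $\gamma$ from Lemma \ref{lemma:geobicomb} to $E\times\{x_0\}\times[0,1]$, and observe that the endpoint conditions give a contraction to $x_0$. The only difference is that you spell out the (automatic) continuity estimate, which the paper leaves implicit.
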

	\begin{proof}
		By Lemma \ref{lemma:geobicomb}, there is a geodesic bicombing $\gamma$ on $E$. Fix arbitrary point $x_0\in E$. Then, restricting $\gamma$ to $E \times \{x_0 \} \times [0,1]$ gives a deformation retraction of $E$ onto $x_0$. Hence $E$ is contractible.
	\end{proof}
	
	\begin{example}
		For any set $S$, the Banach space $L^\infty(S)$ consisting of all the bounded real-valued functions on $S$ with the  $\ell^\infty$-norm is injective. 
	\end{example}
	
	\begin{definition}\label{def:kuratowski}
		For a compact metric space $(X,d_X)$, the map $\kappa:X \to L^\infty(X)$, $x \mapsto d_X(x,\cdot)$ is an isometric embedding and it is called the \emph{Kuratowski embedding}. Hence every compact metric space can be isometrically embedded into an injective metric space.
	\end{definition}
	
	Let us introduce some notations which will be used throughout this paper. Suppose that $X$ is a subspace of a metric space $(E,d_E)$. For any $r>0$, let $B_r(X,E):=\{z\in E|\,\exists x\in X\mbox{ with } d_E(z,x)<r\}$ denote the open $r$-neighborhood of $X$ in $E$. In particular, if $X=\{x\}$ for some $x\in E$, it is just denoted by $B_r(x,E)$, the usual open $r$-ball around $x$ in $E$.
	
	As one more convention, whenever there is an isometric embedding $\iota:X\longhookrightarrow E$, we will use the notation $B_r(X,E)$ instead of $B_r(\iota(X),E)$. For instance, in the sequel we will use $B_r(X,L^\infty(X))$ rather than $B_r(\kappa(X),L^\infty(X))$.
	
	\begin{definition}\label{def:Cechcpx}
		For any metric space $E$, a nonempty subspace $X$, and $r>0$, the \v{C}ech complex $\check{\mathrm{C}}_r(X,E)$ is defined as the nerve of the open covering $\mathcal{U}_r:=\{B_r(x,E):\,x\in X\}$. In other words, $\check{\mathrm{C}}_r(X,E)$ is the simplicial complex whose vertices are the points of $X$, and $\{x_0,\dots,x_n\}\subseteq X$ is a simplex in $\check{\mathrm{C}}_r(X,E)$ if and only if $\cap_{i=0}^n B_r(x_i,E)\neq\emptyset$.
	\end{definition}
	
	The following observation is simple, yet it plays an important role in our paper. 
	
	\begin{proposition}\label{prop:cechvrsame}
		If $(E,d_E)$ is an injective metric space, and $\emptyset \neq X\subseteq E$ then, for any $r>0$,
		$$\check{\mathrm{C}}_r(X,E)=\vr_{2r}(X).$$
	\end{proposition}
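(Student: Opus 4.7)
The plan is to verify the stated equality of simplicial complexes by checking the two inclusions separately. Both complexes share $X$ as vertex set, so it suffices to compare their simplex sets: a finite subset $\sigma=\{x_0,\dots,x_n\}\subseteq X$ belongs to $\vr_{2r}(X)$ iff $\diam(\sigma)<2r$, and belongs to $\check{\mathrm{C}}_r(X,E)$ iff $\bigcap_{i=0}^n B_r(x_i,E)\neq\emptyset$.

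For the inclusion $\check{\mathrm{C}}_r(X,E)\subseteq\vr_{2r}(X)$, no assumption on $E$ is needed: if $z\in\bigcap_{i=0}^n B_r(x_i,E)$, then the triangle inequality gives $d_E(x_i,x_j)\leq d_E(x_i,z)+d_E(z,x_j)<2r$ for all $i,j$, hence $\diam(\sigma)<2r$ and $\sigma\in\vr_{2r}(X)$.

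For the reverse inclusion $\vr_{2r}(X)\subseteq\check{\mathrm{C}}_r(X,E)$, this is where injectivity (equivalently hyperconvexity, by Proposition \ref{prop:hypinj}) enters. Assume $\sigma=\{x_0,\dots,x_n\}\in\vr_{2r}(X)$, so $\max_{i,j}d_E(x_i,x_j)<2r$; since $\sigma$ is finite we may pick $r'\in(0,r)$ with $d_E(x_i,x_j)\leq 2r'=r'+r'$ for all $i,j$. Applying the hyperconvexity condition of Definition \ref{def:hyper} to the family $(x_i,r')_{i=0}^n$ produces a point $z\in E$ with $d_E(x_i,z)\leq r'<r$ for every $i$, so $z\in\bigcap_{i=0}^n B_r(x_i,E)$ and $\sigma$ is a simplex of $\check{\mathrm{C}}_r(X,E)$.

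The only delicate point is the mismatch between the strict inequalities built into the open Vietoris-Rips complex and the open $r$-balls on the one hand, and the non-strict inequalities of hyperconvexity on the other; the finiteness of $\sigma$ lets us absorb this gap by passing from $r$ to the slightly smaller radius $r'$, which I expect to be the only mildly subtle step. Everything else is pure triangle inequality and a direct application of the definitions.
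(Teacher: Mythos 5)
Your proof is correct and follows essentially the same route as the paper's: triangle inequality for one inclusion, hyperconvexity for the other, with finiteness of the simplex used to pass from the strict bound $d_E(x_i,x_j)<2r$ to a non-strict bound $\leq 2r'$ with $r'<r$ so that hyperconvexity yields a common point inside the open balls. The only difference is cosmetic: you spell out the choice of $r'$, whereas the paper disposes of the same point in a parenthetical remark.
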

	
	\begin{Remark}
		Note that  Proposition \ref{prop:cechvrsame} is optimal 
		in the sense that if $\check{\mathrm{C}}_r(X,E)=\vr_{2r}(X)$ holds true \emph{for all $\emptyset\neq X\subseteq E$}, then this condition itself resembles  hyperconvexity of $E$ (cf. Definition \ref{def:hyper}).
		
		Also note that Proposition \ref{prop:cechvrsame} is a generalization of both \cite[Lemma 4]{ghrist2005coverage} and \cite[Lemma 2.9]{dgh-rips} in that those papers only consider the case when $X$ is finite and $E=\ell^\infty(X)$.
	\end{Remark}
	
	\begin{proof}[Proof of Proposition \ref{prop:cechvrsame}]
		Because of the triangle inequality, it is obvious that $\check{\mathrm{C}}_r(X,E)$ is a subcomplex of $\vr_{2r}(X)$. Now, fix an  arbitrary simplex $\{x_0,\dots,x_n\}\in \vr_{2r}(X)$. It means that $d_E(x_i,x_j)<2r$ for any $i,j=0,\dots,n$. Now, since $E$ is hyperconvex by Proposition \ref{prop:hypinj}, there exists $\overline{x}\in E$ such that $d_X(x_i,\overline{x})<r$ for any $i=0,\dots,n$ (note that, since $\{x_0,\dots,x_n\}$ is finite, one can use $<$ instead of $\leq$ when  invoking the hyperconvexity property). Therefore,  $\{x_0,\dots,x_n\}\in\check{\mathrm{C}}_r(X,E)$. Hence $\vr_{2r}(X)$ is a subcomplex of $\check{\mathrm{C}}_r(X,E)$. This concludes the proof.
	\end{proof}
	
	In particular, Proposition \ref{prop:cechvrsame} implies the following result.
	
	\begin{proposition}\label{prop:vrhyperconvex}
		Let $X$ be a subspace of an injective metric space $(E,d_E)$. Then, for any $r>0$, the Vietoris-Rips complex $\mathrm{VR}_{2r}(X)$ is homotopy equivalent to $B_r(X,E)$.
	\end{proposition}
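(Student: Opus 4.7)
The plan is to combine the equality $\check{\mathrm{C}}_r(X,E) = \vr_{2r}(X)$ from Proposition~\ref{prop:cechvrsame} with a Nerve Theorem applied to the open cover $\mathcal{U}_r = \{B_r(x,E)\}_{x \in X}$ of $B_r(X,E)$. Since the nerve of $\mathcal{U}_r$ is by definition $\check{\mathrm{C}}_r(X,E)$, it suffices to show that $\mathcal{U}_r$ is a good cover, i.e.\ that every non-empty finite intersection $\bigcap_{i=0}^{n} B_r(x_i, E)$ is contractible. This, together with the paracompactness of $B_r(X,E)$ (which is automatic as a metric space), will let me invoke a standard Nerve Theorem to conclude $B_r(X,E) \simeq \check{\mathrm{C}}_r(X,E) = \vr_{2r}(X)$.

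For the contractibility of non-empty finite intersections, I would use the conical geodesic bicombing $\gamma$ on $E$ provided by Lemma~\ref{lemma:geobicomb}. Given $y, y' \in \bigcap_{i=0}^n B_r(x_i, E)$ and $t \in [0,1]$, the conical inequality gives
\[
d_E\bigl(\gamma(y,y',t),\, x_i\bigr) \;=\; d_E\bigl(\gamma(y,y',t),\, \gamma(x_i,x_i,t)\bigr) \;\le\; (1-t)\,d_E(y,x_i) + t\,d_E(y',x_i) \;<\; r
\]
for every $i$, so the bicombing path stays inside the intersection. Fixing any base point $z$ in the intersection, the map $H(y,t) := \gamma(y,z,t)$ is continuous in $y$ (this is where the continuity clause of Lemma~\ref{lemma:geobicomb} is used) and provides a deformation retraction of $\bigcap_{i=0}^n B_r(x_i, E)$ onto $\{z\}$, exactly as in the proof of Corollary~\ref{cor:injectivecont}.

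The main obstacle, in my view, is not the contractibility step but choosing and justifying the precise version of the Nerve Theorem. The cover $\mathcal{U}_r$ may be arbitrarily large (indexed by $X$, which can be uncountable) and $X$ is merely assumed to be a subspace of $E$, so I should appeal to a form of the nerve lemma that applies to arbitrary good open covers of paracompact spaces (e.g.\ the version attributed to Borsuk, or Hatcher's Corollary~4G.3 together with a partition-of-unity argument). Once this is in place, chaining the homotopy equivalence $B_r(X,E) \simeq \check{\mathrm{C}}_r(X,E)$ with Proposition~\ref{prop:cechvrsame} yields the claim.
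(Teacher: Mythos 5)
Your proposal is correct and follows essentially the same route as the paper: both show $\mathcal{U}_r = \{B_r(x,E)\}_{x\in X}$ is a good cover using the conical geodesic bicombing on the injective space, invoke a nerve lemma for paracompact spaces, and identify the nerve with $\vr_{2r}(X)$ via Proposition~\ref{prop:cechvrsame}. The paper packages the contractibility step as its Lemma~\ref{lem:intersectballs} (deriving the bound $d_E(\gamma(x,y,t),z) \le \max\{d_E(x,z), d_E(y,z)\}$ from the conical inequality), but this is the same argument you wrote out directly.
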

	
	The proof of Proposition \ref{prop:vrhyperconvex} will use the following lemma:
	
	\begin{lemma}\label{lem:intersectballs}
		In an injective metric space $E$, every non-empty intersection of open balls is contractible.
	\end{lemma}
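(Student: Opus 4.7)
The plan is to contract the intersection to a chosen point along the geodesic bicombing guaranteed by Lemma \ref{lemma:geobicomb}. So the first step is to invoke that lemma to fix a conical, continuous geodesic bicombing $\gamma: E \times E \times [0,1] \to E$ on the injective metric space $E$.

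The key intermediate observation I would prove is that every open ball $B_r(x,E)$ is $\gamma$-convex in the sense that $\gamma(y,z,t) \in B_r(x,E)$ whenever $y,z \in B_r(x,E)$ and $t \in [0,1]$. This follows directly from the conical inequality by comparing $\gamma(y,z,\cdot)$ with the constant geodesic $\gamma(x,x,\cdot) \equiv x$:
\[
d_E(\gamma(y,z,t),x) = d_E(\gamma(y,z,t),\gamma(x,x,t)) \leq (1-t)\,d_E(y,x) + t\,d_E(z,x) < r.
\]
Consequently, any non-empty intersection $U := \bigcap_{i \in I} B_{r_i}(x_i, E)$ is also $\gamma$-convex, since the condition is preserved under intersections.

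Next, I would fix any basepoint $p \in U$ (which exists by hypothesis) and define the homotopy $H : U \times [0,1] \to U$ by $H(y,t) := \gamma(y,p,t)$. The $\gamma$-convexity of $U$ ensures that $H$ indeed lands in $U$ for all $t$; continuity of $H$ follows from the continuity of $\gamma$ (which is part of the definition of a geodesic bicombing); and the defining properties $\gamma(y,p,0) = y$ and $\gamma(y,p,1) = p$ show that $H$ is a deformation of the identity of $U$ to the constant map at $p$. Thus $U$ is contractible.

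I do not expect any serious obstacle: the only subtlety is checking strict inequality in the ball-convexity step, which is immediate because the conical estimate is a convex combination of two quantities that are themselves strictly less than $r$. The whole argument only uses the existence of a conical bicombing and does not require finiteness of the index set $I$, so it applies uniformly to arbitrary intersections.
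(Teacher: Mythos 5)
Your proposal is correct and follows essentially the same route as the paper: invoke the conical geodesic bicombing from Lemma \ref{lemma:geobicomb}, specialize the conical inequality with $x'=y'=z$ (so $\gamma(z,z,t)\equiv z$) to show each open ball is $\gamma$-convex, observe that this is preserved under arbitrary intersections, and contract along $\gamma$ to any basepoint. The only cosmetic difference is that the paper records the specialized inequality as $d_E(\gamma(x,y,t),z)\le\max\{d_E(x,z),d_E(y,z)\}$ rather than as the explicit convex combination, but the content is identical.
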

	\begin{proof}
		Let $\gamma$ be a geodesic bicombing on $E$, whose existence is guaranteed by Lemma \ref{lemma:geobicomb}. Then, for each $x,y,x',y'$ in $E$ and $t$ in $[0,1]$,
		$$d_E(\gamma(x,y,t),\gamma(x',y',t)) \leq (1-t)\,d_E(x,x')+t\,d_E(y,y').$$
		In particular, by letting $x'=y'=z$, we obtain
		$$d_E(\gamma(x,y,t),z) \leq \max \big\{ d_E(x,z), d_E(y,z)\big\}$$
		for any $t\in [0,1]$. Hence, if $x,y$ are contained in an open ball with center $z$, then $\gamma(x,y,t)$ is contained in the same ball for each $t$ in $[0,1]$. Therefore, if $U$ is a non-empty intersection of open balls in $E$, then $\gamma$ restricts to $U \times U \times [0,1] \to U$, which implies that $U$ is contractible. 
	\end{proof}
	\begin{proof}[Proof of Proposition \ref{prop:vrhyperconvex}]
		Let $\mathcal{U}_r:=\{B_r(x,E):\,x\in X\}.$
		By Lemma \ref{lem:intersectballs}, $\mathcal{U}_r$ is a good cover of $B_r(X,E)$. Hence, by the nerve lemma (see \cite[Corollary 4G.3]{h01}), $B_r(X,E)$ is homotopy equivalent to the nerve of $\mathcal{U}_r$, which is the same as the \v{C}ech complex $\check{\mathrm{C}}_r(X,E)$. By Proposition \ref{prop:cechvrsame}, $\check{\mathrm{C}}_r(X,E)=\vr_{2r}(X).$ This concludes the proof.
	\end{proof}

	\section{Persistence via metric pairs}\label{sec:pairs}
	
	One of the insights leading to the notion of persistent homology associated to metric spaces was considering neighborhoods of a metric space in a nice (for example Euclidean) embedding \cite{niyogi}. In this section we formalize this idea in a categorical way.
	
	\begin{definition}[Category of metric pairs]\label{def:metric-pairs}
		\begin{itemize}
			\item A \emph{metric pair} is an ordered pair $(X,E)$ of metric spaces such that $X$ is a metric subspace of $E$.
			\item  Let $(X,E)$ and $(Y,F)$ be metric pairs. A $1$-Lipschitz map from $(X,E)$ to $(Y,F)$ is a $1$-Lipschitz map from $E$ to $F$ mapping $X$ into $Y$.
			\item Let $(X,E)$ and $(Y,F)$ be metric pairs and $f$ and $g$ be $1$-Lipschitz maps from $(X,E)$ to $(Y,F)$. We say that $f$ and $g$ are \emph{equivalent}  if there exists a continuous family $(h_t)_{t \in [0,1]}$ of $1$-Lipschitz maps from $E$ to $F$ and a $1$-Lipschitz map $\phi: X \to Y$ such that $h_0=f,h_1=g$ and $h_t|_X = \phi$ for each $t$.
			\item We define $\pmet$ as the category whose objects are metric pairs and whose morphisms are defined as follows. Given metric pairs $(X,E)$ and $(Y,F)$, the morphisms from $(X,E)$ to $(Y,F)$ are equivalence classes of $1$-Lipschitz maps from $(X,E)$ to $(Y,F)$.
		\end{itemize}
	\end{definition}
	
	Recall the definition of persistence families, Definition \ref{def:pers-fams}. We let $\htop$ denote the category of persistence families with morphisms specified as in Definition \ref{def:pers-fams}.
	
	\begin{Remark}
		Let $(X,E)$ and $(Y,F)$ be persistent pairs and let  $f$ be a $1$-Lipschitz morphism between them. Then, $f$ maps $B_r(X,E)$ into $B_r(Y,F)$ for each $r>0$. Furthermore, if $g$ is equivalent to $f$, then they reduce to homotopy equivalent maps from $B_r(X,E)$ to $B_r(Y,F)$ for each $r>0$.
	\end{Remark}
	By the remark above, we obtain the following functor from $\pmet$ to $\htop$.
	
	\begin{definition}[Persistence functor]
		Define the \emph{persistence functor} $\per: \pmet \to \htop$ sending $(X,E)$ to the persistence family obtained by the filtration $(B_r(X,E))_{r>0}$ and sending a morphism between metric pairs to the homotopy classes of maps it induces between the filtrations.
	\end{definition}
	
	\begin{Remark}
		Suppose a metric pair $(X,E)$ is given. For any $k\geq 0$, one can apply the $k$-th homology functor (with coefficients in a given field $\mathbb{F}$) to a persistence family $\per(X,E)$. This induces a persistence module where the morphisms are induced by inclusions. As a persistence module, it is denoted by $\PH_k(\per(X,E);\mathbb{F})$.
	\end{Remark}
	
	Let $\met$ be the category of metric spaces where morphisms are given by $1$-Lipschitz maps. There is a forgetful functor from $\pmet$ to $\met$ mapping $(X,E)$ to $X$ and mapping a morphism defined on $(X,E)$ to its restriction to $X$. Although forgetful functors often have left adjoints, we are going to see that this one has a right adjoint. 
	
	\begin{theorem}\label{theorem:adjoint}
		The forgetful functor from $\pmet$ to $\met$ has a right adjoint.
	\end{theorem}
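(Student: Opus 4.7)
The natural candidate for a right adjoint is the functor $R:\met\to\pmet$ that assigns to each metric space $X$ the pair $R(X):=(X,E(X))$, where $E(X)$ is an injective metric space isometrically containing $X$ (for instance, $E(X):=L^\infty(X)$ via the Kuratowski embedding $\kappa_X$ of Definition \ref{def:kuratowski}). First I would define $R$ on morphisms: given a $1$-Lipschitz map $\phi:X\to X'$, the composition $\kappa_{X'}\circ\phi:X\to E(X')$ is $1$-Lipschitz, and since $E(X')$ is injective (Definition \ref{def:Injme}), it extends to a $1$-Lipschitz map $\tilde\phi:E(X)\to E(X')$ with $\tilde\phi(X)\subseteq X'$; we set $R(\phi):=[\tilde\phi]$.

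The first nontrivial step is to verify that $[\tilde\phi]$ is independent of the chosen extension, and hence $R$ is functorial. For this I would use the conical geodesic bicombing $\gamma$ on $E(X')$ provided by Lemma \ref{lemma:geobicomb}: given two $1$-Lipschitz extensions $f,g:E(X)\to E(X')$ of $\kappa_{X'}\circ\phi$, set $h_t(z):=\gamma(f(z),g(z),t)$. Continuity of $\gamma$ makes $(h_t)_{t\in[0,1]}$ a continuous family; the conicality inequality yields
\[
d_{E(X')}(h_t(z),h_t(z'))\leq (1-t)\,d(f(z),f(z'))+t\,d(g(z),g(z'))\leq d_{E(X)}(z,z'),
\]
so each $h_t$ is $1$-Lipschitz; and $h_t|_X$ is the constant (in $t$) map $\kappa_{X'}\circ\phi$ because $f=g$ on $X$. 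Thus $f$ and $g$ are equivalent in the sense of Definition \ref{def:metric-pairs}, so $R(\phi)$ is well defined. The same bicombing argument shows that any extension of $\mathrm{id}_X$ is equivalent to $\mathrm{id}_{E(X)}$, and that an extension of a composition is equivalent to the composition of extensions, establishing functoriality.

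Next, I would exhibit the adjunction $\mathrm{Hom}_{\pmet}\bigl((Y,F),R(X)\bigr)\cong \mathrm{Hom}_{\met}(Y,X)$. The forward map restricts a morphism $[g]:(Y,F)\to(X,E(X))$ to $g|_Y:Y\to X$. The inverse sends $\phi:Y\to X$ to the class of any $1$-Lipschitz extension of $\kappa_X\circ\phi:Y\to E(X)$ to $F$, which exists by injectivity of $E(X)$ and is unique up to the equivalence relation by the very same bicombing argument applied over $F$. The two assignments are mutually inverse: restricting an extension recovers the original map on the nose, while two morphisms of pairs with the same restriction to $Y$ are equivalent. Naturality in both variables is then routine, as it reduces to the uniqueness-up-to-equivalence of extensions.

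The main obstacle is indeed the well-definedness clause: without the equivalence relation in Definition \ref{def:metric-pairs} the construction fails, because extensions under injectivity are typically highly nonunique. What makes the plan work is the existence of a \emph{conical} bicombing on every injective space (Lemma \ref{lemma:geobicomb}), which canonically interpolates any two competing extensions through $1$-Lipschitz maps that are fixed on the source subspace; every step above reduces to this single mechanism.
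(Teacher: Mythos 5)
Your proposal is correct and matches the paper's own argument: the candidate right adjoint is the Kuratowski functor $X\mapsto(X,L^\infty(X))$, and the key mechanism for both well-definedness of the extension-class and the hom-set bijection is precisely the conical geodesic bicombing of Lemma~\ref{lemma:geobicomb}, which the paper packages into Lemma~\ref{lem:bicombing} and Lemma~\ref{lem:injective} rather than inlining as you did. The only difference is expository organization, not mathematical content.
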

	
	First we need to prove a few results. The reader should consult Section \ref{sec:injective} for background on injective metric spaces.
	
	\begin{lemma}\label{lem:bicombing}
		Let $(X,E)$ and $(Y,F)$ be metric pairs such that $F$ is an injective metric space. Let $f$ and $g$ be $1$-Lipschitz maps from $(X,E)$ to $(Y,F)$. Then, $f$ is equivalent to $g$ if and only if $f|_X \equiv g|_X$.
	\end{lemma}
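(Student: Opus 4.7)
The plan is to prove both directions separately, with the forward direction being essentially immediate from the definition and the reverse direction relying on a conical geodesic bicombing on $F$ (which exists by injectivity via Lemma \ref{lemma:geobicomb}).

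For the easy direction (equivalent implies $f|_X \equiv g|_X$), I would simply unpack Definition \ref{def:metric-pairs}: if $f$ and $g$ are equivalent, there is some continuous family $(h_t)_t$ and a fixed $1$-Lipschitz map $\phi: X \to Y$ with $h_0 = f$, $h_1 = g$, and $h_t|_X = \phi$ for all $t \in [0,1]$. Evaluating at $t=0$ and $t=1$ gives $f|_X = \phi = g|_X$.

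For the nontrivial direction, suppose $f|_X \equiv g|_X =: \phi$. By Lemma \ref{lemma:geobicomb}, since $F$ is injective, it admits a conical geodesic bicombing $\gamma: F \times F \times [0,1] \to F$. Define the candidate homotopy
\[
h_t(e) := \gamma\bigl(f(e),\, g(e),\, t\bigr), \qquad e \in E,\ t \in [0,1].
\]
Continuity of $(t,e) \mapsto h_t(e)$ is immediate from continuity of $\gamma$, and the geodesic endpoint conditions of $\gamma$ give $h_0 = f$ and $h_1 = g$. For $x \in X$ we have $f(x) = g(x) = \phi(x)$, so $h_t(x) = \gamma(\phi(x),\phi(x),t) = \phi(x)$ for all $t$ (the constant-speed geodesic from a point to itself is constant), which shows $h_t|_X = \phi$.

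The main technical check — which is where the conical property of $\gamma$ is essential — is that each $h_t$ is $1$-Lipschitz from $E$ to $F$. For $e,e' \in E$, the conical inequality yields
\[
d_F\bigl(h_t(e),h_t(e')\bigr) \leq (1-t)\, d_F\bigl(f(e),f(e')\bigr) + t\, d_F\bigl(g(e),g(e')\bigr),
\]
and since $f,g$ are both $1$-Lipschitz, each term on the right is bounded by $d_E(e,e')$, so $d_F(h_t(e),h_t(e')) \leq d_E(e,e')$. This gives the required equivalence of $f$ and $g$. There is no real obstacle here beyond recognizing that the conical bicombing is precisely the tool needed to upgrade the trivial pointwise straight-line homotopy to a $1$-Lipschitz family fixing $X$; all other verifications are routine.
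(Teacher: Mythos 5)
Your proof is correct and follows essentially the same approach as the paper: both directions are handled identically, with the reverse implication using the conical geodesic bicombing from Lemma \ref{lemma:geobicomb} to define $h_t(e)=\gamma(f(e),g(e),t)$ and the conical inequality to verify that each $h_t$ is $1$-Lipschitz. You spell out a few verifications (continuity, $h_t|_X=\phi$) that the paper leaves implicit, but the underlying argument is the same.
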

	\begin{proof}
		The ``only if" part is obvious from Definition \ref{def:metric-pairs}. Now assume that $f|_X \equiv g|_X$. By Lemma \ref{lemma:geobicomb}, there exists a geodesic bicombing $\gamma: F \times F \times [0,1] \to F$ such that for each $x,y,x',y'$ in $F$ and $t$ in $[0,1]$, $$d_F(\gamma(x,y,t),\gamma(x',y',t)) \leq (1-t)\,d_F(x,x')+t\,d_F(y,y').$$ For $t$ in $[0,1]$, define $h: E\times[0,1] \to F$ by $h_t(x)=\gamma(f(x),g(x),t)$. Note that $h_0=f,h_1=g$ and $(h_t)|_X$ is the same map for all $t$. The inequality above implies that $h_t$ is $1$-Lipschitz for all $t$. This completes the proof.
	\end{proof}
	
	\begin{lemma}\label{lem:injective}
		Let $(X,E)$ and $(Y,F)$ be metric pairs such that $F$ is an injective metric space. Then for each $1$-Lipschitz map $\phi: X \to Y$, there exists a unique $1$-Lipschitz map from $(X,E)$ to $(Y,F)$ extending $\phi$ up to equivalence.
	\end{lemma}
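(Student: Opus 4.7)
The plan is to handle existence and uniqueness separately, each by invoking one of the two preceding lemmas, so the proof is essentially a two-line application of the injectivity of $F$ together with Lemma \ref{lem:bicombing}.

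For \textbf{existence}, I would start with the given $1$-Lipschitz map $\phi: X \to Y$ and reinterpret it as a $1$-Lipschitz map $\phi: X \to F$ by post-composing with the isometric inclusion $Y \hookrightarrow F$. Since $X$ is isometrically embedded in $E$ and $F$ is injective, Definition \ref{def:Injme} provides a $1$-Lipschitz extension $\tilde{\phi}: E \to F$. By construction $\tilde{\phi}|_X = \phi$ takes values in $Y$, so $\tilde{\phi}$ qualifies as a $1$-Lipschitz morphism of pairs from $(X,E)$ to $(Y,F)$ extending $\phi$ in the sense of Definition \ref{def:metric-pairs}.

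For \textbf{uniqueness up to equivalence}, suppose $f, g: (X,E) \to (Y,F)$ are two $1$-Lipschitz morphisms of pairs with $f|_X = g|_X = \phi$. Then Lemma \ref{lem:bicombing} applies directly (this is where injectivity of $F$ is used a second time, via the conical geodesic bicombing on $F$): any two $1$-Lipschitz morphisms between the given pairs that agree on $X$ are equivalent. Hence $f \sim g$, as desired.

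I do not anticipate a serious obstacle here, since both halves are immediate consequences of the hypotheses. The only subtle point to flag is that the ambient target $F$ being injective is used twice and in two genuinely different ways: once to extend $\phi$ from $X$ to all of $E$ (the defining universal property of injectivity), and once to produce the homotopy between any two such extensions (through the conical geodesic bicombing supplied by Lemma \ref{lemma:geobicomb} and packaged as Lemma \ref{lem:bicombing}). It is worth remarking in the write-up that injectivity of $E$ is not needed, only that of $F$.
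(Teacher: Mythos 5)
Your proof is correct and takes the same approach as the paper: existence from the injectivity of $F$ applied to $\iota_{Y\hookrightarrow F}\circ\phi: X\to F$ along the isometric embedding $X\hookrightarrow E$, and uniqueness up to equivalence directly from Lemma \ref{lem:bicombing}. Your write-up is just a more detailed version of the paper's two-line argument.
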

	\begin{proof}
		The uniqueness up to equivalence part follows from Lemma \ref{lem:bicombing}. The existence part follows from the injectivity of $F$.
	\end{proof}
	
	\begin{proof}[Proof of Theorem \ref{theorem:adjoint}]
		Let $\kappa : \met \to \pmet$ be the functor sending $X$ to $(X,L^\infty(X))$ where $L^\infty(X)$ is the Banach space consisting of all the bounded real-valued functions on $X$ with $\ell^\infty$-norm (see Definition \ref{def:kuratowski} in Section \ref{sec:injective}). A $1$-Lipschitz map $f:X \to Y$ is sent to the unique morphism (see Lemma \ref{lem:injective}) extending $f$. This functor $\kappa$ is said to be the  \textbf{Kuratowski functor}.

		There is a natural morphsim $$\mathrm{Hom}((X,E),(Y,L^\infty(Y))) \to \mathrm{Hom}(X,Y), $$
		sending a morphism to its restriction to $X$. By Lemma \ref{lem:injective}, this is a bijection. Hence $\kappa$ is a right adjoint to the forgetful functor.
	\end{proof}
	
	Recall that any two right adjoints of a same functor must be isomorphic \cite[Proposition 9.9]{awodey2010category}.
	
	\begin{definition}[Metric homotopy pairing]
		A functor $\eta: \met \to \pmet$ is called a \emph{metric homotopy pairing} if it is a right adjoint to the forgetful functor.
	\end{definition}
	
	\begin{example}\label{ex:inj}
		Let $(X,d_X)$ be a metric space. $L^\infty(X)$ is an injective space associated to $X$; see Section \ref{sec:injective} for the precise definition. Consider also the following additional spaces associated to $X$:
		\begin{align*}
			&\Delta(X):=\{f\in L^\infty(X):f(x)+f(x')\geq d_X(x,x')\text{ for all }x,x'\in X\},\\
			&E(X):=\{f\in\Delta(X):\text{if }g\in\Delta(X)\text{ and }g\leq f,\text{ then }g=f\},\\
			&\Delta_1(X):=\Delta(X)\cap\textrm{Lip}_1(X,\mathbb{R}),
		\end{align*}
		with $\ell^\infty$-metrics for all of them (cf. \cite[Section 3]{l13}). Then,
		$$(X,L^\infty(X)), (X,E(X)), (X,\Delta(X)), (X,\Delta_1(X))$$
		are all metric homotopy pairings, since the second element in each pair is an injective metric space (see \cite[Section 3]{l13}) into which $X$ isometrically embeds via the map $\kappa:x\mapsto d_X(x,\cdot)$. Here, $E(X)$ is said to be the \textbf{tight span} of $X$ \cite{dress,isbell1964six} and it is a especially interesting space. $E(X)$ is the smallest injective metric space into which $X$ can be embedded and it is unique up to isometry. Furthermore, if $X$ is a tree metric space (i.e., a metric space with $0$-hyperbolicity; see Definition \ref{def:hyperbolicity}), then $E(X)$ is the smallest metric tree containing $X$. This special property has recently been used to the application of phylogenetics,  \cite{dress-book}.
	\end{example}

	\section{Isomorphism and stability}\label{sec:isom}
	
	Recall that $\met$ is the category of metric spaces with $1$-Lipschitz maps as morphisms. We have the functor $\vr_*: \met \to \htop$ induced by the Vietoris-Rips filtration. The main theorem we prove in this section is the following:
	
	\isotheorem
	
	Recall the precise definitions of $\mathcal{U}_r$ and $\check{\mathrm{C}}_r(X,E)$ from Definition \ref{def:Cechcpx}. We denote the filtration of \v{C}ech complexes $\big(\check{\mathrm{C}}_r(X,E)\big)_{r>0}$ by $\check{\mathrm{C}}_*(X,E)$.

	The following theorem is the main tool for the proof of Theorem \ref{theorem:isom}. Its proof, being fairly long, is relegated to Appendix \ref{sec:app:functnerve}.
	
	\begin{restatable}[Generalized Functorial Nerve Lemma]{theorem}{functnerve}
		\label{thm:fuctnerve}
		Let $X$ and $Y$ be two paracompact spaces, $\rho:X\longrightarrow Y$ be a continuous map, $\mathcal{U}=\{U_\alpha\}_{\alpha\in A}$ and $\mathcal{V}=\{V_\beta\}_{\beta\in B}$ be good open covers (every nonempty finite intersection is contractible) of $X$ and $Y$ respectively, based on arbitrary index sets $A$ and $B$, and $\pi:A\longrightarrow B$ be a map such that
		$$\rho(U_\alpha)\subseteq V_{\pi(\alpha)}\,\,
		\mbox{for any $\alpha\in A$}$$

		Let $\mathrm{N}\,\mathcal{U}$ and $\mathrm{N}\,\mathcal{V}$ be the nerves of $\mathcal{U}$ and $\mathcal{V}$, respectively. Observe that, since $U_{\alpha_0}\cap\dots \cap\,U_{\alpha_n}\neq\emptyset$ implies $V_{\pi(\alpha_0)}\cap\dots \cap\,V_{\pi(\alpha_n)}\neq\emptyset$, $\pi$ induces the canonical simplicial map $\bar{\pi}:\mathrm{N}\,\mathcal{U}\longrightarrow\mathrm{N}\,\mathcal{V}$.

		Then, there exist homotopy equivalences $X\longrightarrow \mathrm{N}\,\mathcal{U}$ and $Y\longrightarrow \mathrm{N}\,\mathcal{V}$ that commute with $\rho$ and $\bar{\pi}$ up to homotopy:
		$$\begin{tikzcd}[ampersand replacement=\&]
			X \arrow[r] \arrow[d, "\rho"] \& \mathrm{N}\,\mathcal{U}\arrow[d, "\bar{\pi}"] \\
			Y \arrow[r] \& \mathrm{N}\,\mathcal{V}
		\end{tikzcd}$$
	\end{restatable}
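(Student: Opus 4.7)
The plan is to prove the generalized functorial nerve lemma via the Mayer--Vietoris blowup construction (a.k.a.\ the homotopy colimit of the cover), exploiting the fact that its two canonical projections—onto the space and onto the nerve—are \emph{both} homotopy equivalences whenever the cover is good and the space is paracompact. The key advantage of this approach is that the desired homotopy-commutative square will sit between two \emph{strictly} commutative squares, so functoriality becomes automatic. Concretely, for a good open cover $\mathcal{U} = \{U_\alpha\}_{\alpha \in A}$ of a paracompact space $X$, I would define the \emph{nerve blowup}
$$B(\mathcal{U}) := \Bigl( \bigsqcup_{\sigma \in \mathrm{N}\,\mathcal{U}} U_\sigma \times |\sigma| \Bigr) \big/ \sim,$$
where $U_\sigma := \bigcap_{\alpha \in \sigma} U_\alpha$ and the identifications glue $(x,t) \in U_{\sigma} \times |\tau|$ (viewed via a face inclusion $|\tau| \hookrightarrow |\sigma|$ for $\tau \subseteq \sigma$) with $(x,t) \in U_\tau \times |\tau|$. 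There are canonical projections $p_X : B(\mathcal{U}) \to X$ and $p_N : B(\mathcal{U}) \to \mathrm{N}\,\mathcal{U}$ onto the two factors.

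The core technical step is to show that both $p_X$ and $p_N$ are homotopy equivalences. Paracompactness supplies a partition of unity $\{\psi_\alpha\}_{\alpha \in A}$ subordinate to $\mathcal{U}$; this yields an explicit section $s_X : X \to B(\mathcal{U})$ sending $x$ to the pair $(x, \sum_\alpha \psi_\alpha(x)\,[\alpha])$, with barycentric coordinates taken in the simplex spanned by those $\alpha$ with $x \in U_\alpha$. A fiberwise straight-line contraction (each $p_X^{-1}(x)$ is a closed simplex) then shows $s_X$ is a homotopy inverse to $p_X$. For $p_N$, the fibers over the interior of $|\sigma|$ are $U_\sigma$, which is contractible by the good-cover hypothesis; a skeleton-by-skeleton inductive lift of contractions, together with paracompactness to glue local pieces, produces a section that is a homotopy inverse. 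This is essentially the refined classical nerve lemma, and I would likely base the argument on the treatment in Hatcher's Corollary~4G.3 extended to the topological (rather than simplicial) setting.

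The functoriality is then a short diagram chase. The hypothesis $\rho(U_\alpha) \subseteq V_{\pi(\alpha)}$ implies $\rho(U_\sigma) \subseteq V_{\pi(\sigma)}$ for every simplex $\sigma$ of $\mathrm{N}\,\mathcal{U}$, so the formula $(x,t) \mapsto (\rho(x), \bar\pi_*(t))$ defines a continuous map $B(\rho,\pi) : B(\mathcal{U}) \to B(\mathcal{V})$ making both squares of
$$\begin{tikzcd}
X \arrow[d, "\rho"'] & B(\mathcal{U}) \arrow[l, "p_X"'] \arrow[r, "p_N"] \arrow[d, "B(\rho,\pi)"] & \mathrm{N}\,\mathcal{U} \arrow[d, "\bar\pi"] \\
Y & B(\mathcal{V}) \arrow[l, "p_Y"] \arrow[r, "p_M"'] & \mathrm{N}\,\mathcal{V}
\end{tikzcd}$$
commute strictly. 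Choose homotopy inverses $q_X$ to $p_X$ and $q_Y$ to $p_Y$, and define $f := p_N \circ q_X$ and $g := p_M \circ q_Y$. Since $p_Y \circ B(\rho,\pi) \circ q_X = \rho \circ p_X \circ q_X \simeq \rho \simeq p_Y \circ q_Y \circ \rho$, and $p_Y$ is a homotopy equivalence (so any two maps into $B(\mathcal{V})$ that become homotopic after post-composing with $p_Y$ are themselves homotopic), one concludes $B(\rho,\pi) \circ q_X \simeq q_Y \circ \rho$; post-composing with $p_M$ gives $\bar\pi \circ f \simeq g \circ \rho$, as required.

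The main obstacle I anticipate is the first step: verifying carefully that both projections of the blowup are genuine homotopy equivalences (not merely weak equivalences) in the stated generality of paracompact spaces with arbitrarily indexed good open covers. The section $s_X$ must be well-defined across the face identifications, and the skeleton-by-skeleton contraction producing the inverse of $p_N$ must be performed so that the homotopies assemble globally across infinitely many overlapping simplices; this requires invoking locally finite refinements from paracompactness. Once the two homotopy equivalences are in hand, the functorial conclusion is pure diagram chasing, as outlined above.
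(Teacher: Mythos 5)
Your proof is correct and rests on the same central construction as the paper's: the Mayer--Vietoris blowup (which the paper calls the realization $\Delta X_\mathcal{U}$ of the complex of spaces), with its projection to $X$ shown to be a homotopy equivalence via a partition-of-unity section and its projection to $\mathrm{N}\,\mathcal{U}$ shown to be a homotopy equivalence via contractibility of the $U_\sigma$ (Hatcher's Propositions 4G.1 and 4G.2, the same two ingredients the paper imports). Where you diverge is in how the homotopy-commutativity of the final square is extracted. The paper's Lemma A.6 works with the partition-of-unity section $g: X \to \Delta X_\mathcal{U}$ head-on and constructs an explicit homotopy between $\bar\rho \circ g$ and $g' \circ \rho$ by linearly interpolating, inside $\Delta Y_\mathcal{V}$, between $\bar\pi$ applied to a partition of unity on $X$ and a separate partition of unity on $Y$; the paper must then verify that the union of the involved index sets spans a simplex of $\mathrm{N}\,\mathcal{V}$ so that the interpolation is well defined. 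You sidestep this entirely: since both projection squares in your three-column diagram commute \emph{strictly} (as you correctly check) and $p_Y$ is a homotopy equivalence, the relation $p_Y \circ (B(\rho,\pi)\circ q_X) \simeq p_Y \circ (q_Y \circ \rho)$ is promoted to $B(\rho,\pi)\circ q_X \simeq q_Y \circ \rho$ by the standard fact that post-composition with a homotopy equivalence is injective on homotopy classes, and post-composing with $p_M$ finishes the argument. This is a genuine, if modest, improvement: it replaces the paper's hand-built interpolation homotopy by a formal diagram chase, is insensitive to the particular choice of homotopy inverses $q_X, q_Y$, and would transfer painlessly to any setting where the two projections of the blowup remain homotopy equivalences. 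You also correctly locate the real technical burden — establishing those two homotopy equivalences over arbitrary (possibly uncountable) index sets — and, like the paper, delegate it to Hatcher's treatment; to fully close the gap you would need to rehearse those arguments at the same level of care the paper exercises in its appendix, but your outline of the section $s_X$ and the fiberwise contraction is the right start.
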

	
	The next corollary is an important special case of Theorem \ref{thm:fuctnerve}. 
	
	\begin{corollary}[Functorial Nerve Lemma]\label{cor:injfuctnerve}
		Let $X\subseteq X'$ be two paracompact spaces. Let $\mathcal{U}=\{U_\alpha\}_{\alpha\in\Lambda}$ and $\mathcal{U}'=\{U_\alpha'\}_{\alpha\in \Lambda}$ be good open covers (every nonempty finite intersection is contractible) of $X$ and $X'$ respectively, based on the same index set $\Lambda$, such that $U_\alpha\subseteq U_\alpha'$ for all $\alpha\in \Lambda$. Let $\mathrm{N}\,\mathcal{U}$ and $\mathrm{N}\,\mathcal{U}'$ be the nerves of $\mathcal{U}$ and $\mathcal{U}'$, respectively.

		Then, there exist homotopy equivalences $X\longrightarrow \mathrm{N}\,\mathcal{U}$ and $X'\longrightarrow \mathrm{N}\,\mathcal{U}'$ that commute with the canonical inclusions $X\longhookrightarrow X'$ and $\mathrm{N}\,\mathcal{U}\longhookrightarrow\mathrm{N}\,\mathcal{U}'$, up to homotopy:
		$$\begin{tikzcd}
			X \arrow[r] \arrow[d, hook] & \mathrm{N}\,\mathcal{U}\arrow[d, hook] \\
			X' \arrow[r] & \mathrm{N}\,\mathcal{U}'
		\end{tikzcd}$$
	\end{corollary}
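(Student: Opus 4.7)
The plan is to derive this corollary directly from the Generalized Functorial Nerve Lemma (Theorem \ref{thm:fuctnerve}) by taking $Y := X'$, $\mathcal{V} := \mathcal{U}'$, $B := \Lambda$ (so the two index sets coincide), letting $\rho: X \hookrightarrow X'$ be the given inclusion map, and letting $\pi: \Lambda \to \Lambda$ be the identity. The heavy lifting has already been done in the generalized statement, so the corollary should fall out essentially by bookkeeping.

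First I would verify the hypotheses of Theorem \ref{thm:fuctnerve}. Paracompactness of $X$ and $X'$ and goodness of the two covers are given by assumption; the compatibility condition $\rho(U_\alpha) \subseteq V_{\pi(\alpha)}$ reduces to $U_\alpha \subseteq U'_\alpha$, which is precisely the hypothesis of the corollary. In particular, whenever $U_{\alpha_0} \cap \cdots \cap U_{\alpha_n}$ is non-empty, so is $U'_{\alpha_0} \cap \cdots \cap U'_{\alpha_n}$, so the induced simplicial map $\bar{\pi}: \mathrm{N}\,\mathcal{U} \to \mathrm{N}\,\mathcal{U}'$ is well-defined. Because $\pi$ is the identity on indices, $\bar{\pi}$ fixes vertices and sends each simplex of $\mathrm{N}\,\mathcal{U}$ to the corresponding simplex of $\mathrm{N}\,\mathcal{U}'$ with the same vertex set, i.e.\ it is exactly the canonical simplicial inclusion $\mathrm{N}\,\mathcal{U} \hookrightarrow \mathrm{N}\,\mathcal{U}'$.

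Invoking Theorem \ref{thm:fuctnerve} with these data then produces homotopy equivalences $X \to \mathrm{N}\,\mathcal{U}$ and $X' \to \mathrm{N}\,\mathcal{U}'$ making the relevant square commute up to homotopy, with vertical maps $\rho$ and $\bar{\pi}$ that are, by construction, the inclusions $X \hookrightarrow X'$ and $\mathrm{N}\,\mathcal{U} \hookrightarrow \mathrm{N}\,\mathcal{U}'$. This is exactly the content of the corollary. There is no real obstacle here beyond the correct specialization of the generalized lemma; all of the genuine topological work (choosing partitions of unity, constructing maps realizing the nerve equivalence compatibly with $\rho$, and verifying the homotopy commutativity of the resulting square) is packaged inside the proof of Theorem \ref{thm:fuctnerve} in the appendix.
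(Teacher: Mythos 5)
Your proposal is correct and follows exactly the same approach as the paper: specialize Theorem \ref{thm:fuctnerve} by taking $\rho$ to be the inclusion $X\hookrightarrow X'$ and $\pi$ to be the identity on $\Lambda$. The paper's proof is a one-line version of yours; you have simply spelled out the verification of hypotheses in more detail.
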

	\begin{proof}
		Choose the canonical inclusion map $X\longhookrightarrow X'$ as $\rho$, the identity map on $\Lambda$ as $\pi$, and apply Theorem \ref{thm:fuctnerve}.
	\end{proof}
	
	\begin{Remark} 
		A result which is similar to Corollary \ref{cor:injfuctnerve} was already proved in \cite[Lemma 3.4]{chazal2008towards} for finite index sets whereas in our version index sets can have arbitrary cardinality. In \cite[Theorem 25, Theorem 26]{chowdhury2018functorial}, the authors prove simplicial complex version of Corollary \ref{cor:injfuctnerve} for finite index sets and invoke a certain functorial version of Dowker's  theorem.

		Finally, recently we became aware of \cite[Lemma 5.1]{virk2019rips}, which is  similar to Theorem \ref{thm:fuctnerve}. The author considers spaces with \emph{numerable} covers (i.e. the spaces admit locally finite partition of unity subordinate to the covers), whereas in our version that condition is automatically satisfied since we only consider paracompact spaces. Our proof technique differs from that of \cite{virk2019rips} in that whereas \cite{virk2019rips} relies on a result from \cite{dieck}, our proof follows the traditional proof of the nerve lemma \cite{h01}.
	\end{Remark}
	
	\begin{proposition}\label{prop:commutative1}
		For each metric pair $(X,E)\in\pmet$, there exist homotopy equivalences $$\phi_*^{(X,E)}:B_*(X,E)\longrightarrow\check{\mathrm{C}}_*(X,E)$$
		such that, for any $0<r\leq s$, the following diagram commutes up to homotopy:
		$$\begin{tikzcd}B_r(X,E)\arrow[d, hook]\arrow[r, "\phi_r^{(X,E)}"] & \check{\mathrm{C}}_r(X,E)\arrow[d, hook]\\B_s(X,E)\arrow[r, "\phi_s^{(X,E)}"] & \check{\mathrm{C}}_s(X,E)\end{tikzcd}$$
		where $B_r(X,E)\longhookrightarrow B_s(X,E)$ and $\check{\mathrm{C}}_r(X,E)\longhookrightarrow\check{\mathrm{C}}_s(X,E)$ are the canonical inclusions.
	\end{proposition}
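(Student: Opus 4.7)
The strategy is to apply the Functorial Nerve Lemma (Corollary \ref{cor:injfuctnerve}) to the filtered open covers $\mathcal{U}_r := \{B_r(x,E) : x \in X\}$ of $B_r(X,E)$. By Definition \ref{def:Cechcpx}, the nerve of $\mathcal{U}_r$ is precisely $\check{\mathrm{C}}_r(X,E)$, so a successful application of the corollary will produce the desired homotopy equivalences $\phi_r^{(X,E)}: B_r(X,E) \to \check{\mathrm{C}}_r(X,E)$.

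First I would verify the hypotheses. Since the relevant metric pairs (those arising from a metric homotopy pairing, which is how this proposition is used in Theorem \ref{theorem:isom}) have $E$ injective, Lemma \ref{lem:intersectballs} guarantees that any nonempty intersection of open balls in $E$ is contractible, so $\mathcal{U}_r$ is a good cover of $B_r(X,E)$. Paracompactness of $B_r(X,E)$ is automatic since it is an open subset of the metric space $E$. For $0<r\leq s$, the covers $\mathcal{U}_r$ and $\mathcal{U}_s$ share the common index set $X$, and the containment $B_r(x,E) \subseteq B_s(x,E)$ holds for every $x \in X$; together with the canonical inclusion $B_r(X,E) \hookrightarrow B_s(X,E)$, this is exactly the setup of Corollary \ref{cor:injfuctnerve}.

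Invoking the corollary for each pair $r \leq s$ produces homotopy equivalences together with a homotopy-commutative square of the required form. The main subtlety is \emph{coherence}: a naive pairwise application of the corollary might select different maps $\phi_r^{(X,E)}$ when it is used for different pairs, so one must choose a single family that simultaneously makes every square commute up to homotopy. I would resolve this by fixing, once and for all, the canonical map underlying the standard proof of the nerve lemma via a partition of unity, namely
\[
\phi_r^{(X,E)}(y) := \sum_{x \in X} \lambda_{x,r}(y)\, x, \qquad \lambda_{x,r}(y) := \frac{\max(0, r - d_E(y,x))}{\sum_{x'\in X}\max(0, r - d_E(y,x'))},
\]
interpreted as barycentric coordinates in $|\check{\mathrm{C}}_r(X,E)|$ (the denominator is strictly positive on $B_r(X,E)$, and only finitely many terms are nonzero because the relevant simplex of the nerve is finite). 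For $r \leq s$ and $y \in B_r(X,E)$, both $\phi_r^{(X,E)}(y)$ and $\phi_s^{(X,E)}(y)$ lie in the simplex of $|\check{\mathrm{C}}_s(X,E)|$ spanned by $\{x \in X : d_E(y,x) < s\}$, and the straight-line homotopy between them inside this convex simplex supplies the required homotopy commutativity.

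The main technical point I expect to require care is not the existence of each individual $\phi_r^{(X,E)}$, which is a direct output of the nerve lemma, but rather verifying that the above canonical recipe actually gives a continuous family of well-defined maps into the geometric realization and that the prescribed straight-line homotopies depend continuously on $r$ and $y$; alternatively, one can bypass the explicit construction by invoking the Generalized Functorial Nerve Lemma (Theorem \ref{thm:fuctnerve}) applied to the entire directed system of inclusions at once, whose proof (in the appendix) performs exactly this partition-of-unity construction and therefore yields all the pairwise compatibilities simultaneously.
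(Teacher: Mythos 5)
Your approach is essentially the paper's: identify $\check{\mathrm{C}}_r(X,E)$ as the nerve of $\mathcal{U}_r$, observe that $\mathcal{U}_r$ is a good cover of the paracompact space $B_r(X,E)$ when $E$ is injective (via Lemma \ref{lem:intersectballs}), and apply the Functorial Nerve Lemma with the shared index set $X$ and the pointwise containments $B_r(x,E)\subseteq B_s(x,E)$. The coherence concern you raise is real for the \emph{statement} of Corollary \ref{cor:injfuctnerve}, which guarantees a commuting square only one pair at a time; but it is resolved exactly as you note in your closing paragraph. In the proof of Theorem \ref{thm:fuctnerve} (see Lemmas \ref{lemma:realizationnervecommute} and \ref{lemma:Xrealizationcommute} in the appendix), the equivalence $B_r(X,E)\to\check{\mathrm{C}}_r(X,E)$ is the composite $f_r\circ g_r$, where $g_r$ depends only on a partition of unity subordinate to $\mathcal{U}_r$ chosen once and for all, and the homotopy-commutativity of the square $(r,s)$ is established for \emph{any} such pair of choices. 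Therefore fixing a partition of unity for each $r$ independently already yields a globally coherent system; no compatibility across scales is needed.

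One caution about your explicit formula: for infinite $X$ the family $(\lambda_{x,r})_{x\in X}$ is not in general a partition of unity, because it need not be locally finite. For a point $y\in B_r(X,E)$ the set $\{x\in X: d_E(y,x)<r\}$ can be uncountable (e.g.\ $X$ any non-discrete compact set), so the denominator is an uncountable sum and the putative image $\phi_r^{(X,E)}(y)$ would have infinite support, hence does not lie in the geometric realization of the simplicial complex $\check{\mathrm{C}}_r(X,E)$. This is exactly why the appendix proof reaches for paracompactness to produce a locally finite partition of unity rather than writing a closed form. The straight-line-homotopy argument you give is fine once the maps are well defined, so your fallback to Theorem \ref{thm:fuctnerve} is the correct repair and coincides with the paper's intended route.
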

	\begin{proof}
		Observe that $\check{\mathrm{C}}_r(X,E)$ is the nerve of the open covering $\mathcal{U}_r$ for any $r>0$, and apply Corollary \ref{cor:injfuctnerve}.
	\end{proof}
	
	\begin{proposition}\label{prop:commutative2}
		Let $(X,E)$ and $(Y,F)$ be metric pairs in $\pmet$ and $f:(X,E) \to (Y,F)$ be a $1$-Lipschitz map. Let $\phi_*^{(X,E)}:B_*(X,E)\longrightarrow\check{\mathrm{C}}_*(X,E)$ and $\phi_*^{(Y,F)}:B_*(Y,F)\longrightarrow\check{\mathrm{C}}_*(Y,F)$ be the homotopy equivalences guaranteed by Proposition \ref{prop:commutative1}. Then, for any $r>0$, the following diagram commutes up to homotopy:
		$$\begin{tikzcd}B_r(X,E)\arrow[d, "f_r"]\arrow[r, "\phi_r^{(X,E)}"] & \check{\mathrm{C}}_r(X,E)\arrow[d, "f_r"]\\B_r(Y,F)\arrow[r, "\phi_r^{(Y,F)}"] & \check{\mathrm{C}}_r(Y,F)\end{tikzcd}$$
		where $f_*:B_r(X,E)\longrightarrow B_r(Y,F)$ and $f_*:\check{\mathrm{C}}_r(X,E)\longrightarrow\check{\mathrm{C}}_r(Y,F)$ are the canonical maps induced from $f$.

		Furthermore, if we substitute $f$ by an equivalent map, then the homotopy types of the vertical maps remain unchanged.
	\end{proposition}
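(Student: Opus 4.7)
The plan is to invoke the Generalized Functorial Nerve Lemma (Theorem \ref{thm:fuctnerve}) with the pair of thickenings related by $f$, in exact parallel with how Proposition \ref{prop:commutative1} arose from Corollary \ref{cor:injfuctnerve}. Specifically, take the paracompact spaces to be $B_r(X,E)$ and $B_r(Y,F)$, the continuous map $\rho$ to be the restriction of $f$ (it lands in $B_r(Y,F)$ because $f$ is $1$-Lipschitz with $f(X)\subseteq Y$), the good open covers to be $\mathcal{U}_r^X=\{B_r(x,E)\}_{x\in X}$ and $\mathcal{U}_r^Y=\{B_r(y,F)\}_{y\in Y}$ (these are good by Lemma \ref{lem:intersectballs}, since $E$ and $F$ are injective), and the index map to be $\pi=f|_X\colon X\to Y$. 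Since $f$ is $1$-Lipschitz, $f(B_r(x,E))\subseteq B_r(f(x),F)$, so the hypothesis $\rho(U_\alpha)\subseteq V_{\pi(\alpha)}$ is satisfied. Theorem \ref{thm:fuctnerve} then yields a homotopy-commutative square in which the right-hand vertical map $\bar\pi$ is the simplicial map $f_r\colon \check{\mathrm{C}}_r(X,E)\to\check{\mathrm{C}}_r(Y,F)$ induced by $f|_X$, the left-hand vertical map is $f_r\colon B_r(X,E)\to B_r(Y,F)$, and the horizontal homotopy equivalences can be identified with $\phi_r^{(X,E)}$ and $\phi_r^{(Y,F)}$ of Proposition \ref{prop:commutative1}.

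For the ``furthermore'' assertion, suppose $f$ is replaced by an equivalent $1$-Lipschitz map $g$, witnessed (cf.\ Definition \ref{def:metric-pairs}) by a continuous family $(h_t)_{t\in[0,1]}$ of $1$-Lipschitz maps $E\to F$ with $h_0=f$, $h_1=g$, and common restriction $h_t|_X=\phi\colon X\to Y$. For any $z\in B_r(X,E)$ with $d_E(z,x)<r$ for some $x\in X$, the $1$-Lipschitz property gives $d_F(h_t(z),\phi(x))=d_F(h_t(z),h_t(x))<r$, so each $h_t$ restricts to a map $B_r(X,E)\to B_r(Y,F)$, producing a continuous homotopy between the left-hand vertical arrows induced by $f$ and by $g$. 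On the right, the induced simplicial maps on \v{C}ech complexes depend only on $f|_X$ and $g|_X$, which both equal $\phi$; thus the two right-hand vertical maps are literally identical. This shows that the homotopy types of both vertical maps depend only on the equivalence class of $f$.

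The principal obstacle is bookkeeping: one must make sure the homotopy equivalences $\phi_r^{(X,E)}$ and $\phi_r^{(Y,F)}$ coming from Proposition \ref{prop:commutative1} can be \emph{simultaneously} chosen to be the ones Theorem \ref{thm:fuctnerve} produces here. The cleanest way to circumvent this is not to invoke Proposition \ref{prop:commutative1} as a black box and then retroactively adjust, but rather to use a single canonical nerve-lemma construction (e.g.\ via a fixed choice of partition of unity subordinate to $\mathcal{U}_r^X$ and $\mathcal{U}_r^Y$, as in the proof of Theorem \ref{thm:fuctnerve} in Appendix \ref{sec:app:functnerve}) throughout. The functoriality of this construction then covers both the $r\leq s$ refinement used in Proposition \ref{prop:commutative1} and the $(X,E)\to(Y,F)$ refinement needed here, so that all squares in sight commute up to homotopy with a single consistent choice of $\phi_*^{(X,E)}$ and $\phi_*^{(Y,F)}$.
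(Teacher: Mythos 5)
Your proof is correct and follows the paper's argument exactly: the paper likewise applies Theorem~\ref{thm:fuctnerve} with $\rho=f|_{B_r(X,E)}$ and $\pi=f|_X$, and handles the ``furthermore'' clause by restricting the witnessing homotopy $(h_t)$ to the thickenings and observing that the \v{C}ech maps depend only on $f|_X=g|_X$. Your closing remark about ensuring that the same canonical nerve-lemma equivalences are used in Propositions~\ref{prop:commutative1} and~\ref{prop:commutative2} is a legitimate subtlety that the paper elides, and your fix (a fixed choice of subordinate partitions of unity, so that the construction in Appendix~\ref{sec:app:functnerve} is functorial in both refinement directions) is the right one.
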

	\begin{proof}
		Since $f$ is $1$-Lipschitz, $f(B_r(x,E))\subseteq B_r(f(x),F)$. Hence, if we choose $f\vert_{B_r(X,E)}$ as $\rho$, and $f\vert_X$ as $\pi$, the commutativity of the diagram is the direct result of Theorem \ref{thm:fuctnerve}.

		Furthermore, if $f,g$ are equivalent, then the homotopy $(h_t)$ between $f,g$ induces the homotopy between $f_r:B_r(X,E)\longrightarrow B_r(Y,F)$ and $g_r:B_r(X,E)\longrightarrow B_r(Y,F)$. Moreover, since $f\vert_X=g\vert_X$, both of the induced maps $f_r:\check{\mathrm{C}}_r(X,E)\longrightarrow\check{\mathrm{C}}_r(Y,F)$ and $g_r:\check{\mathrm{C}}_r(X,E)\longrightarrow\check{\mathrm{C}}_r(Y,F)$ are exactly same.
	\end{proof}
	
	We are now ready to prove the main theorem of this section.
	
	\begin{proof}[Proof of Theorem \ref{theorem:isom}]
		Since all metric homotopy pairings are naturally isomorphic, without loss of generality we can assume that $\eta=\kappa$, the Kuratowski functor. Note that, by Proposition \ref{prop:cechvrsame}, $\check{\mathrm{C}}_r(X,E)=\vr_{2r}(X)$ for any $X\in\met$ and $r>0$.

		Let's construct the natural transformation $\tau$ from $\per \circ \kappa: \met \to \htop$ to $\vr_{2*}$ in the following way: Fix an arbitrary metric space $X\in\met$. Then, let $\tau_X$ to be the homotopy equivalences $\phi_*^{(X,L^\infty(X))}:B_*(X,L^\infty(X))\longrightarrow \vr_{2*}(X)$
		guaranteed by Proposition \ref{prop:commutative1}. Then, when $f:X\longrightarrow Y$ is $1$-Lipschitz, the functoriality between $\tau_X$ and $\tau_Y$ is the direct result of Proposition \ref{prop:commutative2}. So $\tau$ is indeed a natural transformation. Finally, since each $\phi_r^{(X,L^\infty(X))}$ is a homotopy equivalence for any $X\in\met$ and $r>0$, $\tau$ is natural isomorphism.
	\end{proof}
	

	\subsection{Stability of metric homotopy pairings}\label{subsec:stability}
	
	In this subsection, we consider a distance between metric pairs by invoking the homotopy interleaving distance introduced by Blumberg and Lesnick \cite{hmtint} and then show that metric homotopy pairings are $1$-Lipschitz with respect to this distance and the Gromov-Hausdorff distance.
	
	Let us give a quick review of homotopy interleaving distance between $\R$-spaces. For more details, please see \cite[Section 3.3]{hmtint}. An $\R$-space is a functor from the poset $(\R,\leq)$ to the category of topological spaces. Note that given a metric pair $(X,E)$, the filtration of open neighborhoods $B_*(X,E)$ is an $\R$-space. Two $\R$-spaces $A_*$ and $B_*$ are said to be $\delta$-interleaved for some $\delta>0$ if there are natural transformations $f:A_* \to B_{*+\delta}$ and $g: B_* \to A_{*+\delta}$ such that $f \circ g$ and $g \circ f$ are equal to the structure maps $B_* \to B_{*+2\delta}$ and $A_* \to A_{*+2\delta}$, respectively.
	
	A natural transformation $f:R_* \to A_*$ is called a weak homotopy equivalence if $f$ induces a isomorphism between homotopy groups at each index. Two $\R$-spaces $A_*$ and $A'_*$ are said to be weakly homotopy equivalent if there exists an $\R$-space $R_*$ and weak homotopy equivalences $f:R_* \to A_*$ and $f':R_* \to A'_*$. The homotopy interleaving distance $\dhi(A_*,B_*)$ is then defined as the infimal $\delta>0$ such that there exists $\delta$-interleaved $\mathbb{R}$-spaces $A'_*$ and $B'_*$ with the property that $A'_*$ and $B'_*$ are weakly homotopy equivalent to $A_*$ and $B_*$, respectively.
	
	We now adapt this construction to metric pairs. Given metric pairs $(X,E)$ and $(Y,F)$, we define the homotopy interleaving distance between them by $$\dhi((X,E),(Y,F)):=\dhi(B_*(X,E),B_*(Y,F)).$$
	The main theorem that we are going to prove in this section is the following. Below $\dgh$ denotes the Gromov-Hausdorff distance between metric spaces (see \cite{burago2001course}) and $d_\mathrm{I}$ denotes the interleaving distance between persistence modules (see Section \ref{sec:vietoris}):
	
	\begin{theorem}\label{thm:stabilitymh}
		Let $\eta: \met \to \pmet$ be a metric homotopy pairing. Then, for any compact metric spaces $X$ and $Y$, 
		$$\dhi(\eta(X),\eta(Y)) \leq \dgh(X,Y). $$
	\end{theorem}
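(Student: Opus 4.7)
The plan is to reduce to the Kuratowski functor (since all metric homotopy pairings are naturally isomorphic, as noted just before Example \ref{ex:inj}) and then exploit the standard characterization of the Gromov--Hausdorff distance: for every $\delta > \dgh(X,Y)$ one can find a metric space $Z$ with isometric embeddings $X, Y \hookrightarrow Z$ such that $d_H^Z(X,Y) < \delta$. The aim is to transport this ``common ambient space'' picture into the world of injective metric spaces so as to produce a $\delta$-interleaving, and then show that the resulting $\mathbb{R}$-spaces are weakly equivalent to $B_*(X,\eta(X))$ and $B_*(Y,\eta(Y))$.

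First I would embed $Z$ isometrically into an injective metric space $E$ (for instance $E = L^\infty(Z)$ or the tight span $E(Z)$). Then $X$ and $Y$ sit isometrically inside $E$ with $d_H^E(X,Y) < \delta$. The defining inclusions $X \subseteq B_\delta(Y,E)$ and $Y \subseteq B_\delta(X,E)$ immediately upgrade to inclusions of open neighborhoods $B_r(X,E) \hookrightarrow B_{r+\delta}(Y,E)$ and $B_r(Y,E) \hookrightarrow B_{r+\delta}(X,E)$ for every $r>0$, and these are strictly natural in $r$ and compose to the structure maps. So $B_*(X,E)$ and $B_*(Y,E)$ form a strict $\delta$-interleaved pair of $\mathbb{R}$-spaces.

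Next I need to show that $B_*(X,E)$ is weakly equivalent (as an $\mathbb{R}$-space) to $B_*(X, L^\infty(X))$, and similarly on the $Y$ side. Since $L^\infty(X)$ is injective (Definition \ref{def:Injme}) and $X \hookrightarrow E$ isometrically, the identity $X \to X$ extends to a $1$-Lipschitz map $g: E \to L^\infty(X)$ preserving $X$ pointwise. This is a morphism $(X,E) \to (X, L^\infty(X))$ in $\pmet$ and induces a strictly natural map of $\mathbb{R}$-spaces $g_*: B_*(X,E) \to B_*(X, L^\infty(X))$. By Proposition \ref{prop:commutative1} both of these $\mathbb{R}$-spaces admit natural homotopy equivalences to $\check{\mathrm{C}}_*(X, E) = \vr_{2*}(X) = \check{\mathrm{C}}_*(X, L^\infty(X))$ (invoking Proposition \ref{prop:cechvrsame}), and Proposition \ref{prop:commutative2} ensures the square involving $g_*$ commutes up to homotopy with the identity on the \v{C}ech side, since $g|_X = \mathrm{id}_X$. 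A two-out-of-three argument at each index $r$ then forces $g_r$ to be a homotopy equivalence, so $g_*$ is a levelwise weak equivalence of $\mathbb{R}$-spaces, giving the required weak equivalence $B_*(X,E) \simeq B_*(X, L^\infty(X))$.

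Combining these ingredients: taking $A'_* = B_*(X,E)$ and $B'_* = B_*(Y,E)$ exhibits a pair of $\delta$-interleaved $\mathbb{R}$-spaces weakly equivalent to $B_*(\eta(X))$ and $B_*(\eta(Y))$ respectively, so by definition $\dhi(\eta(X), \eta(Y)) \leq \delta$; letting $\delta \downarrow \dgh(X,Y)$ finishes the proof. The main obstacle I anticipate is the bookkeeping at the interface between the strictly natural inclusions (which live at the level of topological spaces) and the only-homotopy-commutative identifications with $\vr_{2*}$ supplied by Proposition \ref{prop:commutative1}; this is handled cleanly only because the homotopy interleaving distance of \cite{hmtint} is designed precisely to absorb such zig-zags through weakly equivalent surrogate $\mathbb{R}$-spaces.
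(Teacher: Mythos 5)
Your proposal is correct and follows the same overall skeleton as the paper's proof: reduce to the Kuratowski functor, find a common ambient metric space $Z$ with $d_H^Z(X,Y)$ small, observe that $B_*(X,L^\infty(Z))$ and $B_*(Y,L^\infty(Z))$ are strictly $\delta$-interleaved, and then argue that the choice of injective ambient space does not change the homotopy interleaving class. The divergence is in that last step. The paper handles it with two short category-theoretic lemmas: Lemma \ref{lem:isomhyp} shows that $(X,E)$ and $(X,F)$ are \emph{isomorphic in} $\pmet$ whenever $E,F$ are both injective (this uses the geodesic bicombing on the injective targets, via Lemma \ref{lem:bicombing}, to show that the two composites $f\circ g$ and $g\circ f$ are equivalent to the identity morphisms, not merely homotopic maps), and Lemma \ref{lem:isomhi} converts an isomorphism in $\pmet$ into $\dhi=0$. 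You instead build a single one-way morphism $g\colon E\to L^\infty(X)$ extending the Kuratowski embedding, and prove it is a levelwise homotopy equivalence by feeding it through the functorial nerve lemma machinery (Propositions \ref{prop:commutative1} and \ref{prop:commutative2}, plus $\check{\mathrm{C}}_r(X,\cdot)=\vr_{2r}(X)$ from Proposition \ref{prop:cechvrsame}) and a two-out-of-three argument. Both are valid; the paper's version is self-contained within Section \ref{subsec:stability} and leans only on elementary facts about injectivity and bicombings, whereas yours imports the heavier nerve-theoretic apparatus from the proof of the isomorphism theorem but, as a by-product, produces an explicit levelwise weak equivalence $g_*$ rather than an abstract zig-zag. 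One small remark: when you say $g$ extends ``the identity $X\to X$'', what you mean (and what makes the injectivity argument go through) is that $g$ extends the Kuratowski isometric embedding $\kappa\colon X\hookrightarrow L^\infty(X)$; worth phrasing that way since $E$ and $L^\infty(X)$ are genuinely different spaces whose shared copy of $X$ is only identified via $\kappa$.
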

	
	\begin{Remark} \label{rem:stab-gh}
		Note that through combining Theorem \ref{thm:stabilitymh} and the isomorphism theorem (Theorem \ref{theorem:isom}), one obtains another proof of Theorem \ref{thm:stab-barcodes}: for any compact metric spaces $X$ and $Y$, a field $\mathbb{F}$, and $k\in\mathbb{Z}_{\geq 0}$,
		$$\di(\PH_k(\vr_*(X);\mathbb{F}),\PH_k(\vr_*(Y);\mathbb{F})) \leq 2\,\dgh(X,Y). $$
	\end{Remark}
	
	\begin{lemma}\label{lem:isomhi}
		If $(X,E)$ and $(Y,F)$ are isomorphic  in $\pmet$, then $\dhi((X,E),(Y,F))=0.$
	\end{lemma}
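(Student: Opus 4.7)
The plan is to unpack the isomorphism in $\pmet$ to produce natural transformations between the $\mathbb{R}$-spaces $B_*(X,E)$ and $B_*(Y,F)$ that are mutually inverse up to homotopy at each level, from which $\dhi = 0$ follows immediately.

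First, I would make the isomorphism hypothesis explicit. By Definition \ref{def:metric-pairs}, it provides $1$-Lipschitz maps $f: E \to F$ (sending $X$ into $Y$) and $g: F \to E$ (sending $Y$ into $X$) together with continuous families $(h_t: E \to E)_{t \in [0,1]}$ and $(k_t: F \to F)_{t \in [0,1]}$ of $1$-Lipschitz maps with $h_0 = g \circ f$, $h_1 = \mathrm{id}_E$, $h_t|_X$ independent of $t$, and the symmetric conditions for $k_t$. Restricting $h_1|_X = \mathrm{id}_X$ forces $h_t|_X = \mathrm{id}_X$ for every $t \in [0,1]$, and analogously $k_t|_Y = \mathrm{id}_Y$.

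Next, I would verify that every $1$-Lipschitz self-map of $E$ that fixes $X$ pointwise preserves each neighborhood $B_r(X,E)$: if $d_E(z,x) < r$ with $x \in X$, then $d_E(h_t(z), x) = d_E(h_t(z), h_t(x)) \le d_E(z,x) < r$. Consequently $(h_t)$ restricts to a jointly continuous homotopy $B_r(X,E) \times [0,1] \to B_r(X,E)$ that is compatible with the inclusions $B_r(X,E) \hookrightarrow B_s(X,E)$ for $r \le s$. This exhibits a homotopy of natural transformations of $\mathbb{R}$-spaces between $g_* \circ f_*$ and $\mathrm{id}_{B_*(X,E)}$, and symmetrically between $f_* \circ g_*$ and $\mathrm{id}_{B_*(Y,F)}$. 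In particular, at each level $r$ both $f_r$ and $g_r$ are ordinary homotopy equivalences, so $f_* : B_*(X,E) \to B_*(Y,F)$ is a weak equivalence of $\mathbb{R}$-spaces in the sense of \cite{hmtint}.

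Finally, taking $R_* := B_*(X,E)$ with the identity on one side and $f_*$ on the other witnesses that $B_*(X,E)$ and $B_*(Y,F)$ are weakly homotopy equivalent as $\mathbb{R}$-spaces. Since $B_*(X,E)$ is trivially $0$-interleaved with itself, choosing $A'_* = B'_* = B_*(X,E)$ in the definition of the homotopy interleaving distance yields $\dhi((X,E),(Y,F)) = 0$. The only subtle point is the precise reading of the equivalence relation in $\pmet$: what really matters is that the homotopy fixes $X$ \emph{pointwise}, because this is exactly what guarantees preservation of each neighborhood and hence the level-wise homotopy; everything else is a direct unwinding of definitions.
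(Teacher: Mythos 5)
Your proof is correct and takes essentially the same approach as the paper, unpacking the categorical isomorphism in $\pmet$ into level-wise homotopy equivalences of the filtrations of neighborhoods. You are in fact more precise than the paper's one-line argument, which loosely says $f$ and $g$ ``induce isomorphism between the $\R$-spaces'' when in fact the composites $f_*\circ g_*$ and $g_*\circ f_*$ are only homotopic to, not equal to, the identities; your observation that the homotopies fix $X$ (resp.\ $Y$) pointwise, and hence preserve each $B_r(X,E)$ and commute with the structure inclusions, is the genuine crux that makes the weak-equivalence argument go through.
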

	\begin{proof}
		Let $f: (X,E) \to (Y,F)$ and $g: (Y,F) \to (X,E)$ be $1$-Lipschitz maps such that $f \circ g$ and $g \circ f$ are equivalent to the respective identities. Then, the result follows since $f$ and $g$ induce isomorphism between the $\R$-spaces $B_*(X,E)$ and $B_*(Y,F)$. 
	\end{proof}
	
	\begin{lemma}\label{lem:isomhyp}
		Let $E$ and $F$ be injective metric spaces containing $X$. Then $(X,E)$ is isomorphic to $(X,F)$ in $\pmet$.
	\end{lemma}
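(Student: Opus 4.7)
The plan is to use the injectivity of both $E$ and $F$ to produce mutually inverse (up to equivalence) morphisms in $\pmet$ between $(X,E)$ and $(X,F)$, and then invoke Lemma \ref{lem:bicombing} to certify that the relevant compositions reduce to the identity at the level of $\pmet$.

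First, I would apply Definition \ref{def:Injme} to the isometric embedding $X \hookrightarrow E$ and the $1$-Lipschitz map $\mathrm{id}_X: X \to X \subseteq F$; since $F$ is injective, there exists a $1$-Lipschitz extension $f: E \to F$ with $f|_X = \mathrm{id}_X$. In particular $f$ sends $X$ into $X$, so $f$ is a morphism $(X,E) \to (X,F)$ in $\pmet$. Symmetrically, using the injectivity of $E$ applied to $X \hookrightarrow F$ and $\mathrm{id}_X: X \to X \subseteq E$, I obtain a $1$-Lipschitz map $g: F \to E$ with $g|_X = \mathrm{id}_X$, which is a morphism $(X,F) \to (X,E)$ in $\pmet$.

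Next, consider the compositions $g \circ f: E \to E$ and $f \circ g: F \to F$. Both are $1$-Lipschitz (as compositions of $1$-Lipschitz maps), both send $X$ into $X$, and on $X$ they coincide with $\mathrm{id}_X$. Hence $g \circ f$ and $\mathrm{id}_E$ are two $1$-Lipschitz maps $(X,E) \to (X,E)$ whose restrictions to $X$ agree. Since $E$ is injective, Lemma \ref{lem:bicombing} yields that $g \circ f$ is equivalent to $\mathrm{id}_E$. An identical argument using the injectivity of $F$ shows that $f \circ g$ is equivalent to $\mathrm{id}_F$. Therefore $f$ and $g$ represent mutually inverse isomorphisms in $\pmet$, so $(X,E) \cong (X,F)$.

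No step here looks genuinely hard: the only subtlety is ensuring that the extensions produced by injectivity can be chosen to restrict to $\mathrm{id}_X$ on the nose (not just up to equivalence), but this is automatic because Definition \ref{def:Injme} extends a given map, and we feed it the identity on $X$. Lemma \ref{lem:bicombing} then does all of the work of collapsing the ambiguity in the extension, which is exactly the reason the category $\pmet$ was set up with morphisms taken up to the equivalence in Definition \ref{def:metric-pairs}.
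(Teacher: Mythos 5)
Your proof is correct and follows essentially the same approach as the paper's: use injectivity of $E$ and $F$ to extend $\mathrm{id}_X$ to $1$-Lipschitz maps $f:E\to F$ and $g:F\to E$, then apply Lemma \ref{lem:bicombing} to conclude that $g\circ f$ and $f\circ g$ are equivalent to the respective identities. Your version is simply a more detailed writeup of the same argument.
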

	\begin{proof}
		By injectivity of $E$ and $F$, there are $1$-Lipschitz maps $f: E \to F$ and $g: F \to E$ such that $f|_X$ and $g|_X$ are equal to $\mathrm{id}_X$. Hence, by Lemma \ref{lem:bicombing}, $f \circ g: (X,F) \to (X,F)$ and $g \circ f: (X,E) \to (X,F)$ are equivalent to identity. 
	\end{proof}
	
	\begin{proof}[Proof of Theorem \ref{thm:stabilitymh}]
		Since all metric homotopy pairings are naturally isomorphic, by Lemma \ref{lem:isomhi}, without loss of generality we can assume that $\eta=\kappa$, the Kuratowski functor. Let $r>\dgh(X,Y)$. Let us show that $$\dhi((X,L^\infty(X)),(Y,L^\infty(Y)) \leq r. $$
		
		By assumption (see \cite{burago2001course}), there exists a metric space $Z$ containing $X$ and $Y$ such that the Hausdorff distance between $X$ and $Y$ as subspaces of $Z$ is less than or equal to $r$. Hence, the $\R$-spaces $B_*(X,L^\infty(Z))$ and $B_*(Y,L^\infty(Z))$ are $r$-interleaved as $B_\epsilon(X,L^\infty(Z)) \subseteq B_{r+\epsilon}(Y,L^\infty(Z))$ and $B_\epsilon (Y,L^\infty(Z)) \subseteq B_{r+\epsilon}(X,L^\infty(Z))$ for each $\epsilon$. Now, by Lemma \ref{lem:isomhyp}, we have $$\dhi((X,L^\infty(X)), (Y, L^\infty(Y))) = \dhi((X,L^\infty(Z),(Y,L^\infty(Z))) \leq r. $$ 
	\end{proof}


	\section{Application: the study of endpoints of intervals in $\dgmR_k(X)$ and proof of Theorem \ref{theorem:pbarcode}}\label{sec:vrendpts}
	
	We now establish the existence of a persistence barcode associated to $\PH_k(\vr_*(X);\mathbb{F})$ whenever $X$ is totally bounded.
	
	\vrintdcp*
	\begin{proof}
		We will invoke  \cite[Theorem 1.2]{schmahl2022}. Schmahl proved that a q-tame persistence module $V_*=(V_r,v_{r,s})_{r\leq s\in T}$ admits a persistence barcode whenever the index set $T$ satisfies some mild technical conditions (all of which are satisfied by $\R$) and, furthermore,    the canonical map $\mathrm{colim}_{r<s}V_r\rightarrow V_s$ is an isomorphism for all $s>0$ -- a condition which we refers to as \emph{lower semi-continuity} of  $V_*$.\footnote{In order to establish this result, Schmahl employed \cite[Corollary 3.6]{chazal2014observable} which guarantees that, if the index set $T$ satisfies the aforementioned technical assumptions, then the persistence module arising as the image of the canonical map from $\overline{V_*}$ to $V_*$ always admits a persistence barcode, where $\overline{V_*}$ is the persistence module with $\{\mathrm{colim}_{r<s}V_r\}_s$ as objects and the canonical structure maps as morphisms.}

		By our main (isomorphism) theorem (Theorem \ref{theorem:isom}), it is enough to show that $\PH_k(\per \circ \eta(X);\mathbb{F})$ admits a persistence barcode. First of all, note that $\PH_k(\per \circ \eta(X);\mathbb{F})$ is q-tame by Remark \ref{rmk:ttbddqtame} and Theorem \ref{theorem:isom}. In order to apply  \cite[Theorem 1.2]{schmahl2022}, it suffices to verify the lower semi-continuity of $\PH_k(\per \circ \eta(X);\mathbb{F})$. Note that, since $$\mathrm{B}_s \circ \eta(X)=\bigcup\limits_{r<s}\mathrm{B}_r \circ \eta(X)\,\,\,\,\mbox{for all $s>0$,}$$ by \cite[Section 14.6]{may99},  the canonical map $\mathrm{colim}_{r<s}\Hom_k(\mathrm{B}_r \circ \eta(X);\mathbb{F})\rightarrow\Hom_k(\mathrm{B}_s \circ \eta(X);\mathbb{F})$ is an isomorphism for all $s>0$. This completes the proof.
	\end{proof}
	
	It is known that, in some cases, the intervals in the Vietoris-Rips barcode of a metric space are of the form $(u,v]$ or $(u,\infty)$ for $0\leq u< v<\infty$.

	\begin{example}\label{Ex:intervalform}
		In the following examples, any $I\in\dgmR_k(X;\mathbb{F})$ has a form of $(u,v]$ or $(u,\infty)$ for some $0\leq u<v<\infty$.
		\begin{enumerate}
			\item When $X$ is a finite metric space, for any $k\geq 0$.
			\item When $X=\Sp^1$, for any $k\geq 0$ (see \cite[Theorem 7.4]{adams}).
			\item When $X$ is a compact geodesic metric space, for $k=1$ (see \cite[Theorem 8.2]{Virk18}).
		\end{enumerate}
	\end{example}

	As far as we know the general statement given in Theorem \ref{thm:intervalform} below is first proved in this paper. Our proof crucially exploits the isomorphism theorem (Theorem \ref{theorem:isom}).

	\begin{theorem}\label{thm:intervalform}
		Suppose a compact metric space $(X,d_X)$, a field $\mathbb{F}$, and an nonnegative integer $k$ are given. Then, for any $I\in\dgmR_k(X;\mathbb{F})$, $I$ must be either of the form  $(u,v]$ or $(u,\infty)$ for some $0\leq u<v<\infty$.
	\end{theorem}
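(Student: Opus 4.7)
The plan is to use the Isomorphism Theorem (Theorem \ref{theorem:isom}) to transfer the problem to the filtration $B_*(X,L^\infty(X))$ of open thickenings of $X$ in its Kuratowski embedding, and then to read off the shape of the endpoints from a \emph{left continuity} property of this thickening filtration.

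First, applying Theorem \ref{theorem:isom} with the Kuratowski functor, the persistence module $\PH_k(\vr_*(X);\mathbb{F})$ is isomorphic to the persistence module $W_\ast$ defined by $W_r := \mathrm{H}_k\bigl(B_r(X,L^\infty(X));\mathbb{F}\bigr)$, up to the reindexing $r \mapsto 2r$. Since reindexing by a positive scalar sends $(u,v]$ to $(2u,2v]$ and similarly for the remaining endpoint types, it suffices to prove that every interval in the barcode of $W_\ast$ has the form $(u,v]$ or $(u,\infty)$.

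The key step is to establish that $W_\ast$ is \emph{left continuous}: for every $r>0$,
$$
W_r \;\cong\; \varinjlim_{0 < s < r} W_s,
$$
naturally in the structure maps. The geometric input is the elementary identity $B_r(X,L^\infty(X)) = \bigcup_{0<s<r} B_s(X,L^\infty(X))$, which is immediate because any strict inequality $\|z-\kappa(x)\|_\infty < r$ is witnessed by some $s<r$. Since each $B_s$ is open, every compact subset of $B_r$ is contained in $B_s$ for some $s<r$. Singular homology commutes with filtered colimits of topological spaces having this property (each singular simplex has compact image), yielding left continuity of $W_\ast$.

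Finally, since $X$ is compact, $W_\ast$ is q-tame and admits a decomposition $W_\ast \cong \bigoplus_\lambda (I_\lambda)_\mathbb{F}$ by Theorem \ref{theorem:pbarcode}. Direct sums of vector spaces commute with filtered colimits, so left continuity passes to every summand: $(I_\lambda)_\mathbb{F}|_r = \varinjlim_{s<r}(I_\lambda)_\mathbb{F}|_s$ for each $\lambda$ and each $r>0$. A closed left endpoint $a>0$ of $I_\lambda$ would produce the contradiction $(I_\lambda)_\mathbb{F}|_a = \mathbb{F}$ while $\varinjlim_{s<a}(I_\lambda)_\mathbb{F}|_s = 0$, and a finite open right endpoint $b$ would give the opposite contradiction $(I_\lambda)_\mathbb{F}|_b = 0$ while $\varinjlim_{s<b}(I_\lambda)_\mathbb{F}|_s = \mathbb{F}$. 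Therefore each $I_\lambda$ must be of the form $(u,v]$ or $(u,\infty)$ with $0\leq u < v < \infty$. The main obstacle is the homological colimit step: one must justify carefully that singular homology commutes with the directed union $\bigcup_{s<r} B_s$, since $B_r(X,L^\infty(X))$ is an arbitrary open subset of an infinite-dimensional Banach space rather than a simplicial complex. Everything else is essentially bookkeeping inside the interval decomposition.
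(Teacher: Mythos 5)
Your proposal is correct and rests on exactly the same geometric input as the paper's argument, but you package it more abstractly. The paper proves the endpoint behaviour via its Lemma on interval endpoints by working directly with individual singular cycles and filling chains: using the fact that a singular chain has compact support (the paper's Lemma on compactness of singular chains), it pushes a representing cycle at a closed left endpoint $u$ down into $B_{u-\varepsilon}$ to contradict minimality, and pushes a filling chain at an open right endpoint $v$ down into $B_{v-\varepsilon'}$ to contradict nontriviality of the class just below $v$. You instead bundle both directions into a single module-level statement --- left continuity, $W_r \cong \varinjlim_{s<r} W_s$ --- derived from the same compactness observation (every compact subset of the increasing union $B_r = \bigcup_{s<r} B_s$ of open sets lies in some $B_s$), and then transfer this condition across the barcode decomposition using the fact that direct sums commute with filtered colimits. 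The content is identical; the gain in your version is that one clean naturality statement handles both endpoint types simultaneously and makes the role of open thickenings transparent, whereas the paper's chain-level proof has the advantage of adapting directly to the all-simplicial setting (as it does in its appendix alternative proof, where "singular chain has compact support" becomes "a simplicial chain involves finitely many simplices, each with diameter strictly below $r$"). Your step transferring left continuity to each summand $(I_\lambda)_\mathbb{F}$ is sound: the canonical comparison map respects the decomposition, and a direct sum of maps is an isomorphism iff each component is.
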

	
	\paragraph{The proof of Theorem \ref{thm:intervalform}.} In this section we first state and prove two lemmas which will be combined in order to furnish the proof of Theorem \ref{thm:intervalform}.
	
	\begin{lemma}\label{lemma:singchaincpt}
		Let $X$ be a topological space and $G$ be an abelian group. Then, for any $k\geq 0$ and any $k$-dimensional singular chain $c$ of $X$ with coefficients in $G$, there exist a compact subset $K_c\subseteq X$ and $k$-dimensional singular chain $c'$ of $K_c$ with coefficients in $G$ such that
		$$(\iota)_\sharp(c')=c,$$
		where $\iota:K_c\longhookrightarrow X$ is the canonical inclusion map.
	\end{lemma}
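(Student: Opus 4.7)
The plan is to unpack the definitions and use the compactness of the standard simplex. Recall that a $k$-dimensional singular chain of $X$ with coefficients in $G$ is, by definition, a \emph{finite} formal sum
\[
c = \sum_{i=1}^{n} g_i\,\sigma_i,
\]
where each $g_i \in G$ and each $\sigma_i \colon \Delta^k \to X$ is a continuous singular $k$-simplex. Note that $n$ depends on $c$ but is finite.

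First I would define the candidate compact set. Since each $\sigma_i$ is continuous and $\Delta^k$ is compact, the image $\sigma_i(\Delta^k)\subseteq X$ is compact. Set
\[
K_c := \bigcup_{i=1}^{n} \sigma_i(\Delta^k),
\]
which is a finite union of compact subsets of $X$ and is therefore itself compact. The inclusion $\iota\colon K_c \longhookrightarrow X$ is continuous.

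Next I would construct the chain $c'$ on $K_c$. For each $i$, since $\sigma_i(\Delta^k)\subseteq K_c$, the map $\sigma_i$ factors uniquely through $K_c$: let $\sigma_i' \colon \Delta^k \to K_c$ be defined by $\sigma_i'(t) := \sigma_i(t)$; this is continuous because $K_c$ carries the subspace topology from $X$, and by construction $\iota \circ \sigma_i' = \sigma_i$. Define
\[
c' := \sum_{i=1}^{n} g_i\, \sigma_i' \in C_k(K_c;G).
\]
Finally, applying the chain map induced by $\iota$ gives
\[
\iota_\sharp(c') = \sum_{i=1}^{n} g_i\,(\iota\circ \sigma_i') = \sum_{i=1}^{n} g_i\, \sigma_i = c,
\]
as required.

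There is essentially no obstacle here; the only point to stress is the finiteness of the sum defining $c$, which is what makes $K_c$ compact rather than merely $\sigma$-compact. The argument is entirely formal once this is observed, and it does not depend on the group $G$.
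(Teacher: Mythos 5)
Your proof is correct and follows essentially the same approach as the paper: express $c$ as a finite sum of singular simplices, take $K_c$ to be the union of their compact images, and restrict the simplices to $K_c$. The only difference is that you spell out the verification, whereas the paper simply asserts that $K_c$ works.
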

	\begin{proof}
		Recall that one can express $c$ as a sum of finitely many $k$-dimensional singular simplices with coefficients in $G$. In other words,
		$$c=\sum_{i=1}^l\alpha_i\sigma_i$$
		where $\alpha_i\in G$ and $\sigma_i:\Delta_k\rightarrow X$ is a continuous map for each $i=1,\dots,l$. Next, let $K_c:=\cup_{i=1}^l\sigma_i(\Delta_k)$. This $K_c$ is the compact subspace that we required.
	\end{proof}
	
	For the remainder of this section, given any field $\mathbb{F}$ and a metric pair $(X,E)$, for each $0<r<\infty$ we will denote by $(SC_\ast^{(r)},\partial_\ast^{(r)})$ the singular chain complex of $B_r(X,E)$ with coefficients in $\mathbb{F}$. For each $0<r\leq s<\infty$ we will denote by $i_{r,s}$ the canonical inclusion map $B_r(X,E)\subseteq B_s(X,E)$. By $(i_{r,s})_\sharp$ we will denote the (injective) map induced at the level of singular chain complexes.
	
	\begin{lemma}\label{lemma:left-right}
		Suppose that a compact metric space $(X,d_X)$, a field $\mathbb{F}$, a metric homotopy pairing $\eta$, and a nonnegative integer $k$ are given. Then, for every $I\in\dgm(\PH_k(\per \circ \eta(X);\mathbb{F})),$
		\begin{description}
			\item[(i)] if $u\in[0,\infty)$ is the left endpoint of $I$, then $u\notin I$ (i.e. $I$ is left-open).
			
			\item[(ii)] if $v\in[0,\infty)$ is the right endpoint of $I$, then $v\in I$ (i.e. $I$ is right-closed).
		\end{description}
	\end{lemma}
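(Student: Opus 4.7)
The plan rests on two simple observations about the filtration $B_*(X,E)=(B_r(X,E))_{r>0}$. First, this filtration is \emph{left-continuous}: $B_r(X,E)=\bigcup_{0<s<r}B_s(X,E)$, because the strict inequality $d_E(z,X)<r$ always admits some $s<r$ with $d_E(z,X)<s$. Second, Lemma~\ref{lemma:singchaincpt} tells us that every singular chain has support inside a compact subset of its ambient space. Combining these, any singular chain living in $B_r(X,E)$ has compact support covered by the nested family $(B_s(X,E))_{s<r}$, hence already lies in some $B_s(X,E)$ with $s<r$. This single mechanism lets us pull both cycles and bounding chains down from level $r$ to a strictly smaller level, and it is the only geometric input the proof will use.

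To treat (i), I would fix an interval decomposition $V_*:=\PH_k(\per\circ\eta(X);\mathbb{F})\cong\bigoplus_{\lambda\in\Lambda}(I_\lambda)_\mathbb{F}$ (Theorem~\ref{theorem:pbarcode}; q-tameness follows from compactness of $X$ via the isomorphism theorem). Every structure map $(i_{r,s})_*:V_r\to V_s$ is diagonal under this decomposition. Suppose, for contradiction, that some interval $I=I_{\lambda_0}$ has left endpoint $u>0$ with $u\in I$ (the case $u=0$ is vacuous, since intervals are subsets of $\mathbb{R}_{>0}$). Pick $e_u\in V_u$ whose $\lambda_0$-component is the canonical generator of $(I)_\mathbb{F}$ at level $u$, and represent $e_u$ by a cycle $c\in C_k(B_u(X,E))$. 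By the observation above, $c$ already lives in $B_s(X,E)$ for some $s<u$ and therefore defines a class $\tilde e\in V_s$ with $(i_{s,u})_*(\tilde e)=e_u$. But $(I)_\mathbb{F}$ has value $0$ at $s<u$, so $\tilde e$ has zero $\lambda_0$-component, and by diagonality so does $(i_{s,u})_*(\tilde e)=e_u$ -- contradicting our choice of $e_u$.

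For (ii), I would run an analogous argument but applied to bounding chains. Suppose some $I=I_{\lambda_0}$ has right endpoint $v<\infty$ with $v\notin I$; pick $s_0\in I$ and let $e_{s_0}\in V_{s_0}$ be the corresponding $\lambda_0$-generator. Then $(i_{s_0,v})_*(e_{s_0})=0$, so any cycle $c\in C_k(B_{s_0}(X,E))$ representing $e_{s_0}$ satisfies $c=\partial b$ in $B_v(X,E)$ for some $b\in C_{k+1}(B_v(X,E))$. This time I would pull down the \emph{bounding} chain: by the same compactness observation, $b$ lives in $B_s(X,E)$ for some $s<v$, and enlarging if necessary we may take $s_0\leq s<v$. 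Since $I$ is an interval with $s_0\in I$ and supremum $v$, the half-open interval $[s_0,v)$ lies entirely in $I$, so $s\in I$ and $(i_{s_0,s})_*(e_{s_0})$ ought to be a nonzero generator of $(I)_\mathbb{F}$ at $s$; yet the relation $\partial b=c$ now holds in $C_k(B_s(X,E))$, forcing $(i_{s_0,s})_*(e_{s_0})=0$, a contradiction. The step I expect to need the most care is precisely this pulling-down of $b$ in (ii): unlike the cycle argument in (i), it requires separately tracking the compact supports of both $c$ and $b$ and then choosing a common level $s<v$ large enough to accommodate both, after which diagonality of the structure maps does the rest of the work.
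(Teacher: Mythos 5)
Your proposal is correct and takes essentially the same approach as the paper: both rest on Lemma~\ref{lemma:singchaincpt} together with the observation that a compact subset of $B_r(X,E)$ already lies inside $B_s(X,E)$ for some $s<r$, pulling cycles down for (i) and bounding chains down for (ii). Your explicit appeal to the interval decomposition and the diagonality of the structure maps is a slightly cleaner packaging of the compatible family $\{c_r\}_{r\in I}$ the paper works with, but the underlying argument is identical.
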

	
	\begin{Remark}
		The lower semi-continuity property of $\PH_k(\per \circ \eta(X);\mathbb{F})$ invoked in the proof of Theorem \ref{theorem:pbarcode} can be seen (with some work) to be equivalent to Lemma \ref{lemma:left-right}: the injectivity of the canonical maps $\mathrm{colim}_{r<s}\Hom_k(\mathrm{B}_r \circ \eta(X);\mathbb{F})\rightarrow\Hom_k(\mathrm{B}_s \circ \eta(X);\mathbb{F})$ is equivalent to item (ii) of Lemma \ref{lemma:left-right} and their surjectivity  is equivalent to item (i) of Lemma \ref{lemma:left-right}. However, we provide a  direct and elementary proof of Lemma \ref{lemma:left-right} below.
	\end{Remark}
	
	\begin{proof}[Proof of \textbf{\emph{(i)}}]
		Let $\eta(X)=(X,E)$. The fact that $I\in\dgm(\mathrm{PH}_k(\per \circ \eta(X);\mathbb{F}))$ implies that, for each $r\in I$, there exists a singular $k$-cycle $c_r$ on $B_r(X,E)$ with coefficients in $\mathbb{F}$ satisfying the following:
		
		\begin{enumerate}
			\item $[c_r]\in \mathrm{H}_k(B_r(X,E);\mathbb{F})$ is nonzero for any $r\in I$.
			\item $(i_{r,s})_\ast([c_r])=[c_s]$ for any $r\leq s$ in $I$.
		\end{enumerate}
		
		Now, suppose that $u$ is a closed left endpoint of $I$ (so, $u\in I$).  In particular, by the above there exists a singular $k$-cycle $c_u$ on $B_u(X,E)$ with coefficients in $\mathbb{F}$ with the above two properties.

		Then, by Lemma \ref{lemma:singchaincpt}, we know that there is a compact subset $K_{c_u}\subseteq B_u(X,E)$ and a singular $k$-cycle $c_u'$ on $K_{c_u}$ with coefficients in $\mathbb{F}$ such that $(\iota)_\sharp(c_u')=c_u$ where $\iota:K_{c_u}\longrightarrow B_u(X,E)$ is the canonical inclusion. Moreover, since $K_{c_u}$ is compact, there exists small $\varepsilon>0$ such that $$K_{c_u}\subseteq B_{u-\varepsilon}(X,E).$$
		Now, define $c_{u-\varepsilon}:=(\iota')_\sharp(c_u')$ where $\iota':K_{c_u}\longrightarrow B_{u-\varepsilon}(X,E)$ is the canonical inclusion. Then, this singular chain satisfies $$(i_{u-\varepsilon,u})_\sharp(c_{u-\varepsilon})=(i_{u-\varepsilon,u})_\sharp\circ(\iota')_\sharp(c_u')=(\iota)_\sharp(c_u')=c_u.$$
		Moreover,  $c_{u-\varepsilon}$ cannot be null-homologous. Otherwise, there would exist a singular $(k+1)$-chain $d_{u-\varepsilon}$ on $B_{u-\varepsilon}(X,E)$ with coefficients in $\mathbb{F}$ such that $\partial_{k+1}^{(u-\varepsilon)} d_{u-\varepsilon}=c_{u-\varepsilon}$. However, this would imply that $$\partial_{k+1}^{(u)}\circ(i_{u-\varepsilon,u})_\sharp(d_{u-\varepsilon})=(i_{u-\varepsilon,u})_\sharp\circ\partial_{k+1}^{(u-\varepsilon)}(d_{u-\varepsilon})=(i_{u-\varepsilon,u})_\sharp(c_{u-\varepsilon})=c_u$$
		because of the naturality of the boundary operators $\partial_{k+1}^{(u-\varepsilon)}$ and $\partial_{k+1}^{(u)}$. This would in turn contradict  the property $[c_u]\neq 0$.

		So, we must have $[c_{u-\varepsilon}]\neq 0$. But, the existence of such $c_{u-\varepsilon}$ contradicts  the fact that $u$ is the left endpoint of $I$. Therefore, one  concludes that $u$ cannot be a  closed left endpoint, so it must be an open endpoint.
	\end{proof}
	
	\begin{proof}[Proof of \textbf{\emph{(ii)}}]
		Now, suppose that $v$ is an open right endpoint of $I$ (so that $v\notin I$ and therefore $c_v$ is not defined by the above two conditions). Choose small enough $\varepsilon>0$ so that $v-\varepsilon\in I$, and let
		$$c_v:=(i_{v-\varepsilon,v})_\sharp(c_{v-\varepsilon}).$$
		Then, $c_v$ must be null-homologous.

		This means that there exists a singular $(k+1)$-dimensional chain $d_v$ on $B_v(X,E)$ with coefficients in $\mathbb{F}$ such that $\partial_{k+1}^{(v)} d_v=c_v$. By Lemma \ref{lemma:singchaincpt}, we know that there is a compact subset $K_{d_v}\subseteq B_v(X,E)$ and a singular $(k+1)$-chain $d_v'$ of $K_{d_v}$ with coefficients in $\mathbb{F}$ such that $(\iota)_\sharp(d_v')=d_v$ where $\iota:K_{d_v}\longrightarrow B_v(X,E)$ is the canonical inclusion. Moreover, since $K_{d_v}$ is compact, there exists $\varepsilon'\in (0,\varepsilon]$ such that $K_{d_v}\subseteq B_{v-\varepsilon'}(X,E)$.

		Let $d_{v-\varepsilon'}:=(\iota')_\sharp(d_v')$ where $\iota':K_{d_v}\longhookrightarrow B_{v-\varepsilon'}(X,E)$ is the canonical inclusion. Then, again by the naturality of boundary operators,
		\begin{align*}
			(i_{v-\varepsilon',v})_\sharp\circ\partial_{k+1}^{(v-\varepsilon')}(d_{v-\varepsilon'})&=\partial_{k+1}^{(v)}\circ(i_{v-\varepsilon',v})_\sharp(d_{v-\varepsilon'})\\
			&=\partial_{k+1}^{(v)}\circ (i_{v-\varepsilon',v})_\sharp\circ (\iota')_\sharp (d_v')=\partial_{k+1}^{(v)}\circ (\iota)_\sharp(d_v')=\partial_{k+1}^{(v)} d_v=c_v.
		\end{align*}

		Since $(i_{v-\varepsilon',v})_\sharp$ is injective and $(i_{v-\varepsilon',v})_\sharp\circ (i_{v-\varepsilon,v-\varepsilon'})_\sharp (c_{v-\varepsilon})=(i_{v-\varepsilon,v})_\sharp(c_{v-\varepsilon})=c_v$, one can conclude that $\partial_{k+1}^{(v-\varepsilon')}(d_{v-\varepsilon'})=(i_{v-\varepsilon,v-\varepsilon'})_\sharp (c_{v-\varepsilon})$. This indicates that 
		$$0=[(i_{v-\varepsilon,v-\varepsilon'})_\sharp (c_{v-\varepsilon})]=(i_{v-\varepsilon,v-\varepsilon'})_\ast ([c_{v-\varepsilon}])=[c_{v-\varepsilon'}],$$
		but it contradicts the fact that $[c_{v-\varepsilon'}]\neq 0$. Therefore, $v$ must be a closed endpoint.
	\end{proof}
	
	Finally, the proof of Theorem \ref{thm:intervalform} follows from the lemmas above.
	
	\begin{proof}[Proof of Theorem \ref{thm:intervalform}]
		Apply Lemma \ref{lemma:left-right} and Theorem \ref{theorem:isom}.
	\end{proof}
	
	\paragraph{A (false) conjecture.} Actually, we first expected the following conjecture to be true. Observe that, if true, the conjecture would imply Theorem \ref{thm:intervalform}. Also, it is obvious that this conjecture is true when $X$ is a finite metric space.
	
	\begin{conjecture}[Lower semicontinuity of the homotopy type of Vietoris-Rips complexes]
		Suppose $X$ is a compact metric space. Then, for any $r\in\mathbb{R}_{>0}$, $\vr_r(X)$ is homotopy equivalent to $\vr_{r-\varepsilon}(X)$ whenever $\varepsilon>0$ is small enough.
	\end{conjecture}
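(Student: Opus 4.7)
The plan is to translate the problem into an extrinsic setting via the isomorphism theorem (Theorem~\ref{theorem:isom}) and then construct an explicit deformation retraction using the geodesic bicombing of an injective ambient space. Fix the Kuratowski embedding $X \hookrightarrow E := L^\infty(X)$. By Proposition~\ref{prop:vrhyperconvex} we have $\vr_r(X) \simeq B_{r/2}(X,E)$ and $\vr_{r-\varepsilon}(X) \simeq B_{(r-\varepsilon)/2}(X,E)$. Writing $s := r/2$ and $s' := (r-\varepsilon)/2$, it suffices to show that for $\varepsilon > 0$ small enough the inclusion $B_{s'}(X,E) \hookrightarrow B_s(X,E)$ is a homotopy equivalence, since then applying the homotopy equivalence under the (contravariant) translation gives the desired equivalence at the level of Vietoris--Rips complexes.

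The first step is to produce a candidate deformation $H \colon B_s(X,E) \times [0,1] \to B_s(X,E)$ using the bicombing $\gamma$ from Lemma~\ref{lemma:geobicomb}. For $z \in B_s(X,E)$, the compactness of $X$ ensures that $\delta_X(z) := \inf_{x \in X} d_E(z,x)$ is attained at some $p_z \in X$. Setting $\lambda(z) := \max\bigl(0,\,1 - s'/\delta_X(z)\bigr)$ and $H_t(z) := \gamma\bigl(z, p_z, \lambda(z)\,t\bigr)$, the constant-speed property of $\gamma$ gives $H_0 = \mathrm{id}$, $H_t|_{B_{s'}(X,E)} = \mathrm{id}$, and $d_E(H_1(z), p_z) = (1 - \lambda(z))\,\delta_X(z) = \min(\delta_X(z), s')$. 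Hence $H_1$ maps $B_s(X,E)$ into the closed $s'$-neighborhood of $X$, and a small additional inward adjustment lands the image strictly inside $B_{s'}(X,E)$, giving the desired deformation retraction provided continuity can be arranged.

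The second step is to make $H$ continuous. The scalar $\lambda(z)$ is continuous because $\delta_X$ is $1$-Lipschitz, and $\gamma$ is jointly continuous, so the only problematic ingredient is the footpoint selection $z \mapsto p_z$, which in general is only upper semicontinuous. To repair this I would cover the shell $B_s(X,E) \setminus B_{s'}(X,E)$ by open sets $U_i$ small enough that any two points of $U_i$ admit approximate closest points in $X$ within a preset tolerance $\eta \ll s - s'$ of one another, pick one approximate footpoint $p_i$ per $U_i$, and patch them together via a locally finite partition of unity $\{\rho_i\}$ subordinate to $\{U_i\}$. The patching is done using the bicombing: the nonzero weights $\rho_{i_1}(z), \ldots, \rho_{i_k}(z)$ are combined recursively into weighted $\gamma$-averages of the $p_{i_j}$, and the conical inequality in Lemma~\ref{lemma:geobicomb} controls distances at each stage. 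The smoothed footpoint then remains within $\eta$ of distance $\delta_X(z)$ from $z$, so the modified $H$ still lands in $B_{s'}(X,E)$, and the induced map is a genuine deformation retraction.

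The main obstacle is that this averaging scheme requires a uniform $\varepsilon > 0$ beyond which no genuinely new combinatorial patterns of approximate closest points appear in the shell. This is automatic when the set of inter-point distances of $X$ is locally finite near $r$ (covering finite metric spaces and well-behaved Riemannian manifolds), but for general compact $X$ whose pairwise distances accumulate at scale $r$ the shell can contain an unbounded family of distinct footpoint configurations for every $\varepsilon > 0$, and the partition-of-unity averaging may fail to be compatible with the topology of the level sets of $\delta_X$. Overcoming this would likely require either a more delicate continuous-selection argument (possibly routed through a zigzag of homotopy equivalences along a cofinal sequence $r_n \uparrow r$ combined with the functorial nerve lemma Corollary~\ref{cor:injfuctnerve}), or the identification of a compact $X$ for which no such $\varepsilon$ exists, thereby refuting the conjecture.
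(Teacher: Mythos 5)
The statement you are trying to prove is false; the paper records it only as a labelled conjecture in order to immediately refute it. The counterexample is $X=\mathbb{S}^1$ at $r=\pi$: by \cite[Theorem 7.4]{adams} (quoted in Example \ref{Ex:intervalform}), $\vr_{r'}(\mathbb{S}^1)\simeq\mathbb{S}^{2m+1}$ for $r'\in\left(\frac{2\pi m}{2m+1},\frac{2\pi(m+1)}{2m+3}\right]$, and these intervals accumulate at $\pi$ from the left. Hence for every $\varepsilon>0$ the complexes $\vr_{r'}(\mathbb{S}^1)$ with $r'\in(\pi-\varepsilon,\pi]$ realize infinitely many distinct homotopy types (odd spheres of unbounded dimension), so $\vr_{\pi}(\mathbb{S}^1)$ cannot be homotopy equivalent to $\vr_{\pi-\varepsilon}(\mathbb{S}^1)$ for all small $\varepsilon$. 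The paper further notes that restricting to $r<\diam(X)$ does not rescue the statement, via $X=\mathbb{S}^1\vee\alpha\cdot\mathbb{S}^1$. (The weaker, chain-level consequence --- that bars are left-open and right-closed, Theorem \ref{thm:intervalform} --- survives, but is proved by a compactness argument on individual cycles, not by a homotopy equivalence of successive complexes.)

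Concretely, the step of your argument that cannot be repaired is the claim that the inclusion $B_{s'}(X,E)\hookrightarrow B_s(X,E)$ is a homotopy equivalence once $s'<s$ are close enough: by Theorem \ref{theorem:isom} and Proposition \ref{prop:vrhyperconvex} this is equivalent to the conjecture itself, and the $\mathbb{S}^1$ example shows the neighborhoods $B_s(\mathbb{S}^1,L^\infty(\mathbb{S}^1))$ change homotopy type on a sequence of radii accumulating at $s=\pi/2$. Your proposed retraction founders exactly where you suspect. The nearest-point projection $z\mapsto p_z$ onto $X\subseteq L^\infty(X)$ is genuinely discontinuous and admits no continuous selection in general; the bicombing-average of several approximate footpoints is a point of $E$ rather than of $X$, so flowing $z$ toward it need not decrease $\delta_X(z)$ below $s'$; and no uniform $\varepsilon$ taming the ``combinatorial patterns'' of footpoints can exist, precisely because the homotopy type may change infinitely often in every left neighborhood of $r$. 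The second alternative you raise at the end --- exhibiting a compact $X$ that refutes the conjecture --- is the correct resolution, and $\mathbb{S}^1$ already does it.
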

	
	However, the following example shows that this conjecture is false.
	
	\begin{example}
		By \cite[Theorem 7.4]{adams}, we know that $\vr_r(\mathbb{S}^1)$ is homotopy equivalent to $\mathbb{S}^{2m+1}$ if $r\in\left(\frac{2\pi m}{2m+1},\frac{2\pi(m+1)}{2m+3}\right]$ for $m=0,1,\cdots$. Observe that $\lim_{m\rightarrow\infty}\frac{2\pi m}{2m+1}=\pi$. Therefore, $\vr_\pi(\mathbb{S}^1)$ cannot be homotopy equivalent to $\vr_{\pi-\varepsilon}(\mathbb{S}^1)$ for all small enough $\varepsilon$, since for $r$ in the interval $ [\pi-\varepsilon,\pi]$, $\vr_r(\mathbb{S}^1)$ attains infinitely many different homotopy types.
	\end{example}
	
	Then, one might now wonder whether the conjecture holds when we restrict the range of $r$ to $(0,\diam(X))$. But, again this new conjecture is false as the following example shows.
	
	\begin{example}
		Let $X:=\Sp^1\vee \alpha\cdot\Sp^1$ for some $\alpha\in (0,1)$. Observe that $\diam(X)=\pi$. Also, by Lemma \ref{lem:wedge}, $E\vee F$ will be an injective metric space containing $X$ whenever $E$ is an injective metric space containing $\Sp^1$ (e.g., $E(\Sp^1)$) and $F$ is an injective metric space containing $\alpha\cdot\Sp^1$ (e.g., $E(\alpha\cdot\Sp^1)$). Hence, by Proposition \ref{prop:vrhyperconvex}, $\vr_{2r}(X)\simeq B_r(X,E\vee F)=B_r(\Sp^1,E)\vee B_r(\alpha\cdot\Sp^1,F)$ and $\vr_{2r}(\alpha\cdot\Sp^1)\simeq B_r(\alpha\cdot\Sp^1,F)$ for any $r>0$. Therefore, $\vr_{\alpha\pi}(X)$ cannot be homotopy equivalent to $\vr_{\alpha\pi-\varepsilon}(X)$ for  small enough $\varepsilon$, since  $\vr_r(\alpha\cdot \Sp^1)$ attains infinitely many homotopy types for $r\in [\alpha\pi-\varepsilon,\alpha\pi]$.
	\end{example}
	
	\begin{Remark}
		Some time after we discovered the above proof of Theorem \ref{thm:intervalform}, we realized that one can actually directly prove the  result at the simplicial level which makes the proof slightly simpler; see Appendix \ref{sec:app:otherpfintvltype}.
	\end{Remark}
	
	\section{Application: products and metric gluings}\label{sec:vrapp}
	
	The following statement regarding products of filtrations are obtained at the simplicial level (and in more generality) in \cite[Proposition 2.6]{perea-icaasp} and in \cite{hitesh-perea,perea}. The statement about metric gluings appeared in 
	\cite[Proposition 4]{wedge17} and \cite[Proposition 4.4]{mo18}. These proofs operate at the simplicial level.
	
	Here we give alternative proofs through the consideration of neighborhoods in an injective metric space via Theorem \ref{theorem:isom}.
	
	We first recall the notion of metric gluing: Given two metric spaces $X$ and $Y$ and points $p\in X$ and $q\in Y$, the \emph{metric gluing} $X\vee Y:=X\sqcup Y\slash p\sim q$ is defined through the metric: $$d_{X \vee Y}(z,z'):=\begin{cases}d_X(z,z')&\text{if }z,z'\in X\\d_Y(z,z')&\text{if }z,z'\in Y\\ d_X(z,p)+d_Y(z',q)&\text{if }z\in X\text{ and }z'\in Y.\end{cases}$$
	
	\begin{theorem}[Persistent K\"unneth Formula]\label{thm:vrapp} Let $X$ and $Y$ be  metric spaces, and $\mathbb{F}$ be a field.
		\begin{enumerate}
			\item[(1)] (Persistent K\"unneth formula) Let $X \times Y$ denote the $\ell^\infty$-product of $X,Y$. Then,
			$$\PH_*(\vr_*(X \times Y);\mathbb{F}) \cong \PH_*(\vr_*(X);\mathbb{F}) \otimes \PH_*(\vr_*(Y);\mathbb{F}).$$
			
			\item[(2)] Let $p$ and $q$ be points in $X$ and $Y$ respectively. Let $X \vee Y$ denote the metric gluing of metric spaces $X$ and $Y$ along $p$ and $q$. Then\footnote{We use ``reduced'' homology functor for this metric gluing case.} $$\PH_*(\vr_*(X \vee Y);\mathbb{F}) \cong \PH_*(\vr_*(X);\mathbb{F}) \oplus \PH_*(\vr_*(Y);\mathbb{F}). $$
		\end{enumerate}
	\end{theorem}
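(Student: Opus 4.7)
The plan is to use the isomorphism theorem (Theorem \ref{theorem:isom}) to translate each statement into one about open $r$-neighborhoods of $X$ and $Y$ inside carefully chosen injective ambient spaces, after which the product and wedge statements reduce to classical facts from algebraic topology applied in a way that is natural with respect to the filtration parameter.

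For part~(1), I would choose injective metric spaces $E_X \supseteq X$ and $E_Y \supseteq Y$ (for instance the Kuratowski hulls) and first verify that $E_X \times E_Y$ equipped with the $\ell^\infty$-product metric is itself injective: a family of pairwise compatible balls in the $\ell^\infty$-product decouples coordinatewise into two compatible families in $E_X$ and $E_Y$, and hyperconvexity of each factor (Definition \ref{def:hyper}) produces a common point. The $\ell^\infty$-structure moreover yields the identity
$$B_r(X \times Y,\, E_X \times E_Y) \;=\; B_r(X,E_X)\times B_r(Y,E_Y)$$
functorially in $r$. Combined with Proposition \ref{prop:vrhyperconvex}, this produces a filtered homotopy equivalence $\vr_{2*}(X\times Y)\simeq \vr_{2*}(X)\times \vr_{2*}(Y)$. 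Applying the topological K\"unneth formula with field coefficients at each scale, together with its naturality in the structure maps, yields the persistent K\"unneth isomorphism.

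For part~(2), the plan is to embed $X\vee Y$ into the metric wedge $E_X\vee E_Y$ of injective hulls, glued along the basepoints so that distances across the wedge are sums of distances to the wedge point. After checking that this wedge is hyperconvex (see below), the wedge-metric structure gives the decomposition
$$B_r(X\vee Y,\, E_X\vee E_Y) \;=\; B_r(X,E_X)\,\vee\, B_r(Y,E_Y),$$
since any point in $E_X$ at distance less than $r$ from a point of $Y$ would already be within $r$ of the wedge point $p\in X$. Because $p$ sits inside each factor and admits a contractible neighborhood there (small balls in an injective space contract to their center via the geodesic bicombing of Lemma \ref{lemma:geobicomb}), the wedge is a ``good'' wedge and reduced homology splits:
$$\tilde{\mathrm{H}}_*\bigl(B_r(X,E_X)\vee B_r(Y,E_Y);\mathbb{F}\bigr) \;\cong\; \tilde{\mathrm{H}}_*\bigl(B_r(X,E_X);\mathbb{F}\bigr) \,\oplus\, \tilde{\mathrm{H}}_*\bigl(B_r(Y,E_Y);\mathbb{F}\bigr).$$
This splitting is natural in $r$ and transports to the persistence modules, and Theorem \ref{theorem:isom} finishes the argument.

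The main obstacle I anticipate is verifying the hyperconvexity of the metric wedge $E_X\vee E_Y$, as the natural case analysis involves ``effective radii'' of the form $r_j-d(p,z_j)$ when forcing a prospective common center $z$ to lie in the opposite factor, and these can become negative. The key observation is that if this effective radius is negative on both sides simultaneously, the original cross-compatibility $d(z_i,z_j)=d(z_i,p)+d(p,z_j)\le r_i+r_j$ is violated; hence the obstruction can occur on at most one side, and hyperconvexity of the corresponding factor, augmented by the derived constraints $d(z,p)\le r_j - d(p,z_j)$ from each point on the other side, produces the common center (and the wedge point $z=p$ itself works when neither side is problematic). A more conceptual alternative, bypassing this direct verification, is to invoke the known fact that the tight span of a metric wedge is the wedge of the tight spans, and to apply Theorem \ref{theorem:isom} with $\eta(\cdot)=(\cdot,E(\cdot))$.
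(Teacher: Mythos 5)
Your proposal follows the paper's own argument essentially step for step: embed each factor in an injective ambient space, show that the $\ell^\infty$-product (resp.\ metric wedge) of injective spaces is again injective (the paper's Lemmas~\ref{lem:prod} and~\ref{lem:wedge}), observe that the open $r$-neighborhood of the product (resp.\ wedge) decomposes as the product (resp.\ wedge) of the open $r$-neighborhoods, and conclude via the classical K\"unneth (resp.\ wedge) decomposition in homology together with Theorem~\ref{theorem:isom}. The only cosmetic difference is that for part~(1) the paper checks injectivity of the $\ell^\infty$-product directly from the Lipschitz-extension definition rather than via hyperconvexity, and your case analysis for hyperconvexity of the wedge is precisely the argument in Lemma~\ref{lem:wedge}.
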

	
	\begin{Remark}
		Note that \cite[Corollary 5.2 and 5.8]{adams2021operations} establish results analogous to Theorem \ref{thm:vrapp} for the products and metric gluings of Vietoris-Rips metric thickenings.
	\end{Remark}
	
	\begin{Remark}\label{rem:productbarcode}
		Note that the tensor product of two simple persistence modules corresponding to intervals $I$ and $J$ is the simple persistence module corresponding to the interval $I \cap J$. Therefore, the first part of Theorem \ref{thm:vrapp} implies that
		$$\dgmR_k(X \times Y;\mathbb{F}):=\{I \cap J: I \in \dgmR_i(X;\mathbb{F}), J \in \dgmR_j(Y;\mathbb{F}), i+j=k\}. $$
		for any nonnegative integer $k$.
	\end{Remark}
	
	\begin{example}[Tori]\label{example:torusbarcode}
		For a given choice of $\alpha_1,\ldots,\alpha_n>0$, let $X$ be the $\ell^\infty$-product $\Pi_{i=1}^n (\alpha_i\cdot \mathbb{S}^1)$. Then, by \cite[Theorem 7.4]{adams} and Remark \ref{rem:productbarcode}:
		$$\dgmR_0(X;\mathbb{F})=\{(0,\infty)\},$$
		and
		\begin{align*}
			&\dgmR_k(X;\mathbb{F})\\
			&\quad=\left\{ \left(\max_{1\leq j\leq m} \frac{2\pi\alpha_{i_j} l_{i_j}}{2l_{i_j} + 1}, \min_{1\leq j\leq m} \frac{2\pi\alpha_{i_j} (l_{i_j}+1)}{2l_{i_j} + 3}\right]: \{i_j\}_{j=1}^m\subseteq \{1,\dots,n\}, l_{i_j}\in\mathbb{Z}_{\geq 0}, \sum_{j=1}^m (2\,l_{i_j}+1) =k  \right\}
		\end{align*}
		for any $k\in\mathbb{Z}_{>0}$.

		Note that above we are defining a multiset, hence if an element appears more than once in the definition, then it will appear more than once in the multiset. In particular, in the case of $X=\mathbb{S}^1 \times \mathbb{S}^1$, for all integers $k\geq 0$ we have the following:
		
		\begin{align*}
			\dgmR_0(X;\mathbb{F})&=\{(0,\infty)\},\\
			\dgmR_{2k+1}(X;\mathbb{F})&=\left\{ \left(\frac{2\pi k}{2k + 1},\frac{2 \pi (k+1)}{2k+3} \right], \left(\frac{2\pi k}{2k + 1},\frac{2 \pi (k+1)}{2k+3} \right] \right\},\\
			\dgmR_{4k+2}(X;\mathbb{F})&=\left\{\left(\frac{2\pi k}{2k + 1},\frac{2 \pi (k+1)}{2k+3} \right] \right\},\\
			\dgmR_{4k+4}(X;\mathbb{F}) &=\emptyset.
		\end{align*}
		
		See also the remarks on homotopy types of Vietoris-Rips  complexes of tori after \cite[Proposition 10.2]{adams} and \cite{chaparro}.
	\end{example}

	To be able to prove Theorem \ref{thm:vrapp}, we need the following lemmas:
	\begin{lemma}\label{lem:prod}
		If $E$ and $F$ are injective metric spaces, then so is their $\ell^\infty$-product.
	\end{lemma}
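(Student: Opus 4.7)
The plan is to use the equivalence between injectivity and hyperconvexity (Proposition \ref{prop:hypinj}) and verify the hyperconvexity condition directly on the $\ell^\infty$-product. This reduces the problem to a coordinatewise application of hyperconvexity of $E$ and $F$.

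More precisely, I would proceed as follows. Let $d$ denote the $\ell^\infty$-metric on $E\times F$, so $d((e,f),(e',f')) = \max\bigl(d_E(e,e'),\,d_F(f,f')\bigr)$. Suppose we are given a family $\bigl((e_i,f_i),r_i\bigr)_{i\in I}$ in $(E\times F) \times \R_{\geq 0}$ satisfying the pairwise compatibility condition
\[
d\bigl((e_i,f_i),(e_j,f_j)\bigr) \leq r_i + r_j \quad \text{for all } i,j \in I.
\]
By the definition of the $\ell^\infty$-metric, this is equivalent to the two conditions $d_E(e_i,e_j)\leq r_i + r_j$ and $d_F(f_i,f_j)\leq r_i + r_j$ holding for all $i,j\in I$.

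Now I would invoke hyperconvexity of $E$ (Definition \ref{def:hyper}), which is available via Proposition \ref{prop:hypinj}, to produce a point $e\in E$ with $d_E(e_i,e)\leq r_i$ for every $i\in I$. Applying hyperconvexity of $F$ analogously yields a point $f\in F$ with $d_F(f_i,f)\leq r_i$ for every $i\in I$. Setting $(e,f)\in E\times F$, one has $d\bigl((e_i,f_i),(e,f)\bigr) = \max\bigl(d_E(e_i,e),\,d_F(f_i,f)\bigr)\leq r_i$ for all $i\in I$, which verifies the hyperconvexity of $E\times F$. Another appeal to Proposition \ref{prop:hypinj} concludes that $E\times F$ is injective.

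There is no substantive obstacle here; the only thing to be careful about is that the product is taken with the $\ell^\infty$-metric (rather than, e.g., the $\ell^2$-metric), since this is exactly what allows the pairwise distance condition on the product to decouple into pairwise conditions on each factor. A direct argument using the extension property from Definition \ref{def:Injme} would also work (extending a 1-Lipschitz map into $E\times F$ by extending each coordinate separately via injectivity of $E$ and $F$), but the hyperconvexity route is shorter and more transparent.
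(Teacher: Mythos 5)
Your argument is correct, but it takes a genuinely different route from the paper. The paper proves injectivity directly from Definition \ref{def:Injme}: given a $1$-Lipschitz map $(f,g)\colon X \to E\times F$ and an isometric embedding $X\hookrightarrow Y$, it observes that the $\ell^\infty$-metric on the product makes $(f,g)$ $1$-Lipschitz precisely when $f$ and $g$ each are, extends $f$ and $g$ separately using injectivity of $E$ and $F$, and then reassembles them into a $1$-Lipschitz extension $(\tilde f,\tilde g)\colon Y\to E\times F$. You instead pass through hyperconvexity via Proposition \ref{prop:hypinj}, check the ball-intersection condition coordinatewise, and then pass back. Both arguments turn on the same structural fact --- that the $\ell^\infty$-metric decouples the product into its factors --- so they have essentially the same content, and you even flag the paper's approach as an alternative in your last sentence. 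The paper's route is marginally leaner because it avoids two invocations of the equivalence between injectivity and hyperconvexity; your route is perhaps more symmetric with the paper's proof of the gluing Lemma \ref{lem:wedge}, which does go through hyperconvexity. Either is a perfectly acceptable proof.
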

	\begin{proof}
		Let $X$ be a metric space. Note that $(f,g): X \to E \times F$ is $1$-Lipschitz if and only if $f$ and $g$ are $1$-Lipschitz. Given such $f$ and $g$ and a metric embedding $X$ into $Y$, we have $1$-Lipschitz extensions $\tilde{f}$ and $\tilde{g}$ of $f$ and $g$ from $Y$ to $E$ and $F$, respectively. Hence, $(\tilde{f},\tilde{g}): Y \to E \times F$ is a $1$-Lipschitz extension of $(f,g)$. Therefore $E \times F$ is injective.
	\end{proof}
	\begin{lemma}\label{lem:wedge}
		If $E$ and $F$ are injective metric spaces, then so is their metric gluing along any two points.
	\end{lemma}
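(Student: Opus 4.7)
The plan is to use the characterization of injective spaces as hyperconvex metric spaces (Proposition \ref{prop:hypinj}). Denote the basepoint of the wedge $E \vee F$ by $\ast$, identifying $p \in E$ with $q \in F$; recall that the metric on $E \vee F$ restricts to the given metrics on $E$ and $F$, while $d(e,f) = d_E(e,\ast) + d_F(\ast,f)$ for $e \in E$, $f \in F$. Given a family $(x_i, r_i)_{i \in I}$ with $d(x_i, x_j) \leq r_i + r_j$, I want to produce a common point $z$ with $d(z, x_i) \leq r_i$ for all $i$.

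First I would partition $I$ into $A = \{i : x_i \in E\}$ and $B = \{i : x_i \in F\}$ (placing $\ast$-centered balls into either, say $A$). The key observation is the following dichotomy: define $\rho_E := \sup_{i \in A} (d_E(x_i,\ast) - r_i)$ and $\rho_F := \sup_{j \in B} (d_F(x_j,\ast) - r_j)$. If both were strictly positive, choosing near-suprema $i_0 \in A$, $j_0 \in B$ would yield $d(x_{i_0}, x_{j_0}) = d_E(x_{i_0},\ast) + d_F(x_{j_0},\ast) > r_{i_0} + r_{j_0}$, contradicting the pairwise intersection hypothesis. So at most one of $\rho_E, \rho_F$ is positive; by symmetry assume $\rho_F \leq 0$, i.e. $d_F(x_j, \ast) \leq r_j$ for every $j \in B$.

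Next I would find the required $z$ inside $E$. Consider in $E$ the family consisting of the original balls $(x_i, r_i)$ for $i \in A$ together with the balls $(\ast, r_j - d_F(x_j,\ast))$ for $j \in B$; the latter are exactly the traces on $E$ of the balls $B(x_j, r_j) \subset E \vee F$, which are nonempty by the assumption $\rho_F \leq 0$. The hard step is to verify the pairwise distance/radius inequalities required to invoke hyperconvexity of $E$: (a) for $i,i' \in A$ this is immediate; (b) for $i \in A$ and $j \in B$, one needs $d_E(x_i, \ast) \leq r_i + (r_j - d_F(x_j,\ast))$, which rearranges exactly to the original hypothesis $d(x_i, x_j) \leq r_i + r_j$; (c) for $j, j' \in B$, one needs $d_F(x_j,\ast) + d_F(x_{j'},\ast) \leq r_j + r_{j'}$, which follows directly from $\rho_F \leq 0$ applied termwise. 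Hyperconvexity of $E$ then produces a point $z \in E$ in every ball of this $E$-family, and unwinding the definitions shows $z$ lies in each $B(x_i, r_i) \subset E \vee F$.

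The main obstacle I anticipate is simply the bookkeeping in case (c), where one might be tempted to derive the needed inequality from $d_F(x_j, x_{j'}) \leq r_j + r_{j'}$ (the wrong direction of the triangle inequality), but the stronger conclusion $\rho_F \leq 0$ sidesteps this issue cleanly. Once the three cases are checked, hyperconvexity of $E$ closes the argument, and Proposition \ref{prop:hypinj} converts the conclusion back to injectivity of $E \vee F$.
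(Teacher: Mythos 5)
Your proof is correct, and it takes a route that is dual to (and organized differently from) the paper's. The paper sets $\epsilon := \max\bigl\{\inf_i (r_i - d_E(x_i,p)),\ \inf_j (s_j - d_F(y_j,q))\bigr\}$, proves $\epsilon\geq 0$ by the same dichotomy you observed, then after WLOG assuming the first infimum is nonnegative replaces the \emph{entire} $E$-sided family with a single auxiliary ball $\overline{B}_\epsilon(q,F)$ (which it shows lies inside every $\overline{B}_{r_i}(x_i,E\vee F)$) and finally invokes hyperconvexity of $F$ on the family $\{(q,\epsilon)\}\cup\{(y_j,s_j)\}_j$. You instead WLOG $\rho_F\leq 0$ and propagate each $F$-centered ball individually through the glue point, i.e.\ you restrict $\overline{B}_{r_j}(x_j,E\vee F)$ to its trace $\overline{B}_{r_j-d_F(x_j,\ast)}(\ast,E)$ on $E$, then invoke hyperconvexity of $E$ on the enlarged family. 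Both methods exploit the same fact — that, after the dichotomy, the glue point sits in all the balls from one side — but the paper compresses the far side to one ball while you keep it ball-for-ball. The paper's aggregation is slightly slicker; your trace construction is perhaps more transparent and makes the pairwise verification (a)--(c) entirely mechanical, since the cross condition (b) literally \emph{is} the original wedge-metric hypothesis and (c) reduces to the sign of $\rho_F$. Both are valid and of comparable length.
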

	\begin{proof}
		Let $p$ and $q$ be points in $E$ and $F$ respectively and $E \vee F$ denote the metric gluing of $E$ and $F$ along $p$ and $q$. We are going to show that $E \vee F$ is hyperconvex, hence injective (see Proposition \ref{prop:hypinj}). We denote the metric on $E \vee F$  by $d$, the metric on $E$ by $d_E$ and the metric on $F$ by $d_F$.
		
		Let $(x_i,r_i)_i, (y_j,s_j)_j$ be such that each $x_i$ is in $E$, each $y_j$ is in $F$, $r_i\geq 0$, $s_j \geq 0$, $$d_E(x_i,x_{i'}) \leq r_i+r_{i'},$$ $$d_F(y_j,y_{j'}) \leq s_j+s_{j'}$$ and $$d(x_i,y_j) \leq r_i + s_j$$ for each $i,i',j,j'$. Define $\epsilon$ by
		
		$$\epsilon:=\max \left\{\inf_i \big(r_i-d_E(x_i,p)\big), \inf_j \big(s_j-d_F(y_j,q)\big)\right\}. $$
		
		Let us show that $\epsilon \geq 0$. If the second element inside the maximum is negative, then there exists $j_0$ such that $d_F(y_{j_0},q)-s_{j_0} > 0.$ Since $d(x_i,y_{j_0})=d_E(x_i,p)+d_F(q,y_{j_0})$ for all $i$, we have
		$$r_i - d_E(x_i,p)=d_F(y_{j_0},q)+(r_i-d(x_i,y_{j_0})) \geq d_F(y_{j_0},q)-s_{j_0}>0.$$
		Therefore the first element inside the maximum is nonnegative. Hence $\epsilon\geq 0$.
		
		Without loss of generality, let us assume that $$\epsilon=\inf_i \big( r_i-d_E(x_i,p)\big) \geq 0. $$
		This implies that the non-empty closed ball $\overline{B}_\epsilon(q,F)$ is contained in $\overline{B}_{r_i}(x_i,E \vee F)$ for all $i$. Now, for each $j$, we have
		\begin{align*}
			\epsilon+s_j&= \inf_i \big(r_i - d_E(x_i,p) + s_j \big)\\
			&\geq \inf_i\big( d(x_i,y_j) - d_E(x_i,p)\big) \\
			&= d_F(y_j,q).
		\end{align*}
		Therefore,
		$$\left(\bigcap_i \overline{B}_{r_i}(x_i, E \vee F)\right) \cap \left(\bigcap_j \overline{B}_{s_j}(y_j,E \vee F)\right)  \supseteq \overline{B}_\epsilon(q,F) \cap \left(\bigcap_j \overline{B}_{s_j}(y_j, F)\right)\neq\emptyset$$
		where the right-hand side is non-empty by hyperconvexity of $F$.
	\end{proof}
	
	\begin{proof}[Proof of Theorem \ref{thm:vrapp}]
		
		\noindent
		(1) Let $E$ and $F$ be injective metric spaces containing $X$ and $Y$ respectively. Let $E \times F$ denote the $\ell^\infty$-product of $E$ and $F$. Note that for each $r>0$, $$B_r(X \times Y,E \times F)=B_r(X,E) \times B_r(Y,F).$$ Hence, by the (standard) K\"unneth formula \cite[Theorem 58.5]{munkres2018elements}, $$\Hom_*(B_r(X \times Y, E \times F);\mathbb{F}) \cong \Hom_*(B_r(X,E);\mathbb{F}) \otimes \Hom_*(B_r(Y,F);\mathbb{F}). $$ Now, the result follows from Lemma \ref{lem:prod} and Theorem \ref{theorem:isom}.
		
		\smallskip
		\noindent
		(2) Let $E$ and $F$ be as above and $E \vee F$ denote metric gluing of $E$ and $F$ along $p$ and $q$. Note that $$B_r(X \vee Y,E \vee F) = B_r(X,E) \vee B_r(Y,F). $$ Hence, by \cite[Corollary 2.25]{h01}, $$\Hom_*(B_r(X \vee Y, E \vee F);\mathbb{F}) \cong \Hom_*(B_r(X,E);\mathbb{F}) \oplus \Hom_*(B_r(Y,F);\mathbb{F}). $$
		Now, the result follows from Lemma \ref{lem:wedge} and Theorem \ref{theorem:isom}.
	\end{proof}
	
	\section{Application: homotopy types of $\mathrm{VR}_r(X)$ for $X\in\{\mathbb{S}^1,\mathbb{S}^2,\mathbb{CP}^n\}$}\label{sec:hom-types}
	
	In a series of papers \cite{k83,katz-s2,katz-s1,katz-cpn} M. Katz studied the filling radius of spheres and complex projective spaces. In this sequence of papers, Katz developed a notion of Morse theory for the diameter function $\mathrm{diam}:\mathrm{pow}(X)\rightarrow \mathbb{R}$ over a given metric space. By characterizing critical points of the diameter function on each of the spaces $\mathbb{S}^1,\mathbb{S}^2,$ and $\mathbb{CP}^n$ he was able to prove some results about the different homotopy types attained by $B_r(X,L^\infty(X))$ for $X\in\{\mathbb{S}^1,\mathbb{S}^2,\mathbb{CP}^n\}$ as $r$ increases. Here, we obtain  some corollaries that follow from combining the   work of M. Katz \cite{katz-s2,katz-s1} with Theorem \ref{theorem:isom}. 
	
	\subsection{The case of spheres with geodesic distance}\label{sec:spheres-geod}
	
	In \cite[Theorem 3.5]{h95}, Hausmann introduced the quantity $r(M)$ for a Riemannian manifold $M$, which is the supremum of those $r>0$ satisfying the following three conditions:
	
	\begin{enumerate}
		\item For all $x,y\in M$ such that $d_M(x,y)<2r$ there is a unique shortest geodesic joining $x$ to $y$. Its length is $d_M(x,y)$.
		
		\item Let $x,y,z,w\in M$ with $d_M(x,y),d_M(y,z),d_M(z,x)<r$ and $w$ be any point on the shortest geodesic joining $x$ to $y$. Then, $d_M(z,w)\leq\max\{d_M(y,z),d_M(z,x)\}$.
		
		\item If $\gamma$ and $\gamma'$ are arc-length parametrized geodesics such that $\gamma(0)=\gamma'(0)$ and if $0\leq s,s' <r$ and $0\leq t \leq1$, then $d_M(\gamma(ts),\gamma'(ts'))\leq d_M(\gamma(s),\gamma(s'))$.
	\end{enumerate}
	
	In particular, it can be checked that  $r(\Sp^n)=\frac{\pi}{2}$ for any $n\geq 1$. Hausmann then proved that if $r(M)>0$, $\vr_r(M)$ is homotopy equivalent to $M$ for any $r\in (0,r(M))$. This theorem is one of the foundational results in Topological Data Analysis, since it provides theoretical basis for the use of the Vietoris-Rips filtration for recovering the homotopy type of the underlying space.

	Then, via Proposition \ref{prop:vrhyperconvex} we obtain that $B_r(M,L^\infty(M))\simeq M$ for $r\in(0,\frac{1}{2}r(M)]$, and therefore $B_r(\Sp^n,L^\infty(\Sp^n))\simeq \Sp^n$ for all $r\in(0,\frac{\pi}{4}]$. In \cite[Remark p.508]{k83} Katz constructs a retraction from $B_r(\mathbb{S}^n,L^\infty(\mathbb{S}^n))$ to $\mathbb{S}^n$ for $r$ in the range $\left(0,\frac{1}{2}\arccos\left(\frac{-1}{n+1}\right)\right]$, which is a larger range than the one guaranteed by Hausmann's result. This suggests that an improvement of Hausmann's results might be possible for the particular case of spheres.

	Indeed, in the special case of spheres, by a refinement of Hausmann's method of proof  (critically relying upon Jung's theorem) we obtain the following theorem which also improves the aforementioned claim by Katz:
	
	\thmSnfirsttype
	
	That this result improves upon Hausmann's follows from the fact that $\arccos\left(\frac{-1}{n+1}\right)\geq \frac{\pi}{2}$ for all integers $n\geq 1$. The proof follows from the fact that with the aid of Jung's theorem, one can modify the lemmas that Hausmann originally used.  See Appendix \ref{sec:app:SnHausmann} for  a detailed proof along these lines which we believe is of independent interest.
	
	\begin{Remark}
		Note that \cite[Proposition 5.3]{AAF17} establishes a result analogous to Theorem \ref{cor:Snfirsttype} for  Vietoris-Rips metric thickenings of $\Sp^n$.
	\end{Remark}
	
	\begin{Remark}\label{rmk:onfirsttype}
		Note that the above theorem implies that for every $n$, $\dgmR_n(\mathbb{S}^n;\mathbb{F})$ contains an interval $I_n$ of the form $(0,d_n]$ where $d_n\geq \arccos\left(\frac{-1}{n+1}\right)$. The corollary does not however guarantee that $d_n$ equals its lower bound, nor that $I_n$ is the unique interval in $\dgmR_n(\mathbb{S}^n;\mathbb{F})$. Cf. Figure \ref{fig:non-unique-int} for an example of a 2-dimensional sphere (with non-round metric) having more than one interval in its 2-dimensional persistence barcode, and see Proposition \ref{prop:filradpersistence} for a general result about $I_n$.
	\end{Remark}
	
	For the particular cases of $\mathbb{S}^1$ and $\mathbb{S}^2$, we have additional information regarding the homotopy types of their Vietoris-Rips $r$-complexes when $r$ exceeds the range contemplated in the above Corollary.
	
	\begin{figure}
		\centering
		\includegraphics[width=.8\textwidth]{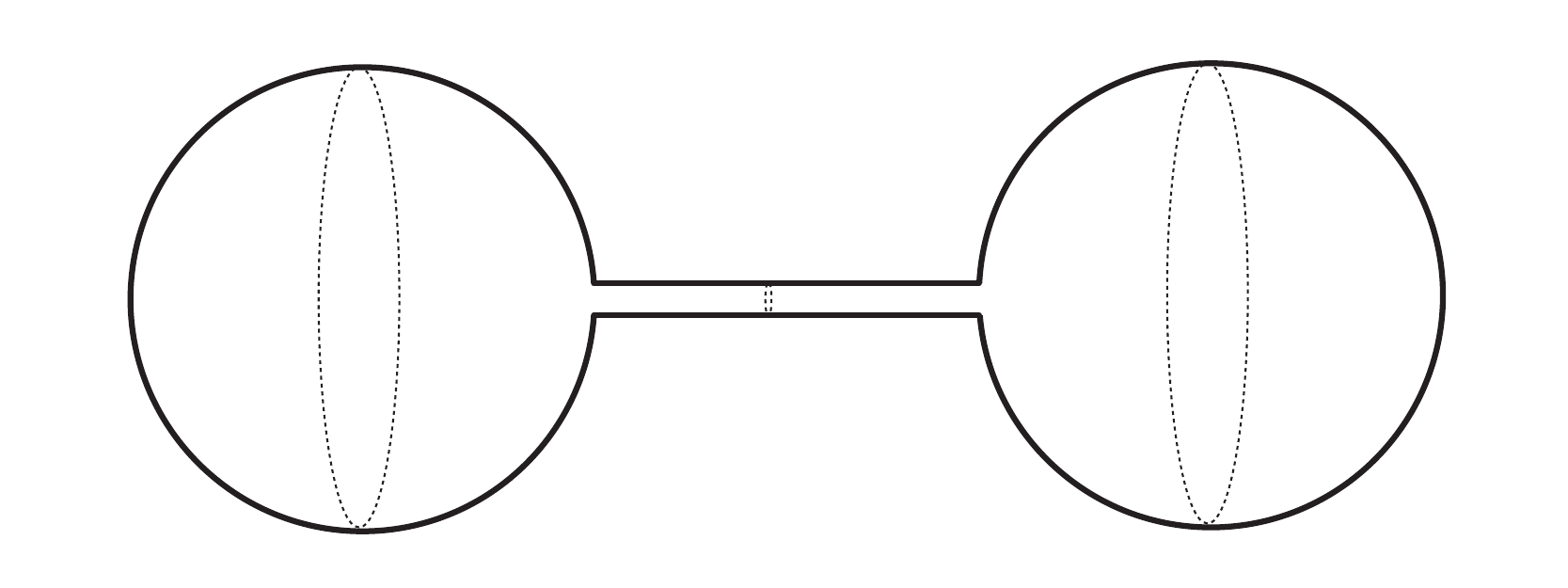}
		\caption{A 2-dimensional sphere with more than one interval in its $\dgmR_2
			$.}
		\label{fig:non-unique-int}
	\end{figure}
	
	\paragraph{The case of $\Sp^1$.}   The complete characterization of the different homotopy types of $\mathrm{VR}_r(\mathbb{S}^1)$ as $r>0$ grows was obtained by  Adamaszek and Adams in \cite{adams}. Their proof is combinatorial in nature and takes place at the simplicial level. 
	
	Below, by invoking Theorem \ref{theorem:isom},  we show how partial results can be obtained from the  work of Katz who directly analyzed the filtration $\big(B_r(\mathbb{S}^1,L^\infty(\mathbb{S}^1))\big)_{r>0}$ via a Morse theoretic argument.

	For each integer $k\geq 1$ let $\lambda_k := \frac{2\pi k}{2k+1}$. Katz proved in \cite{katz-s1} that $B_r(\mathbb{S}^1,L^\infty(\mathbb{S}^1))$ changes homotopy type only when $r = \frac{1}{2}\lambda_k$ for some $k$. In particular, his results imply:

	\begin{corollary}\label{cor:S1secondtype}
		For $r\in \left(\frac{2\pi}{3},\frac{4\pi}{5}\right)$, $\mathrm{VR}_{r}(\mathbb{S}^1)\simeq \mathbb{S}^3$.
	\end{corollary}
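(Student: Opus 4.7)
The plan is to invoke the isomorphism theorem to convert the statement into an equivalent one about extrinsic thickenings, and then read off the result from Katz's Morse-theoretic analysis of $B_*(\mathbb{S}^1, L^\infty(\mathbb{S}^1))$. First, by Theorem \ref{theorem:isom} (specialized to the Kuratowski pairing), for every $r>0$ there is a homotopy equivalence $\mathrm{VR}_{r}(\mathbb{S}^1) \simeq B_{r/2}(\mathbb{S}^1, L^\infty(\mathbb{S}^1))$. Setting $\rho := r/2$, the corollary reduces to showing that $B_{\rho}(\mathbb{S}^1, L^\infty(\mathbb{S}^1)) \simeq \mathbb{S}^3$ whenever $\rho \in \left(\tfrac{\pi}{3}, \tfrac{2\pi}{5}\right)$.

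Next, I would invoke Katz's result (recalled just before the corollary) that the homotopy type of $B_{\rho}(\mathbb{S}^1, L^\infty(\mathbb{S}^1))$ changes only at the discrete critical values $\tfrac{1}{2}\lambda_k = \tfrac{\pi k}{2k+1}$, $k\geq 1$. Since $\tfrac{\pi}{3} = \tfrac{1}{2}\lambda_1$ and $\tfrac{2\pi}{5} = \tfrac{1}{2}\lambda_2$ are two consecutive such critical values, the homotopy type of $B_{\rho}(\mathbb{S}^1, L^\infty(\mathbb{S}^1))$ is constant on the open interval $\left(\tfrac{\pi}{3}, \tfrac{2\pi}{5}\right)$. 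Thus it suffices to identify this constant homotopy type at a single (equivalently, any) value of $\rho$ in that interval. For $\rho$ slightly less than $\tfrac{\pi}{3}$, Theorem \ref{cor:Snfirsttype} (applied to $n=1$) together with Proposition \ref{prop:vrhyperconvex} yields $B_{\rho}(\mathbb{S}^1, L^\infty(\mathbb{S}^1)) \simeq \mathbb{S}^1$, so the only question is to describe what happens as $\rho$ crosses the first critical value $\tfrac{\pi}{3}$.

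This last identification is where the main work lies, and it is precisely the content of Katz's Morse-theoretic analysis in \cite{katz-s1}. In his framework, the critical configuration at $\rho = \tfrac{\pi}{3}$ corresponds to inscribed equilateral triangles in $\mathbb{S}^1$ (three points pairwise at geodesic distance $\lambda_1 = \tfrac{2\pi}{3}$), and the normal geometry of the diameter function near this critical set determines the attaching data for the cells added as $\rho$ crosses $\tfrac{\pi}{3}$. Katz computes that the resulting homotopy type just above $\tfrac{\pi}{3}$ is that of $\mathbb{S}^3$; combining this with the first two steps gives the claim. The genuine obstacle in this proof is therefore not the reduction step (which is immediate from Theorem \ref{theorem:isom}) but rather this intrinsic identification: verifying that the Morse-theoretic handle attachment at $\rho = \tfrac{\pi}{3}$ produces an honest $3$-sphere rather than some other $3$-dimensional CW complex with the same Betti numbers. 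Our role here is simply to import Katz's computation via the isomorphism theorem and translate it into a statement about the open Vietoris--Rips filtration.
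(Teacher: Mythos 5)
Your argument is correct and follows the same route as the paper: reduce via Theorem \ref{theorem:isom} to the Kuratowski thickening $B_{r/2}(\mathbb{S}^1,L^\infty(\mathbb{S}^1))$, then read off the homotopy type $\mathbb{S}^3$ from Katz \cite[Theorem 1.1]{katz-s1}. The extra discussion of critical values and the handle attachment at $\rho=\pi/3$ is accurate background on how Katz's theorem is proved, but it is not needed for the corollary itself once his theorem is cited as a black box, which is exactly what the paper does.
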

	
	\begin{proof}
		$B_r(\mathbb{S}^1,L^\infty(\mathbb{S}^1))$ is homotopy equivalent to $\mathbb{S}^3$ for $r\in\left(\frac{1}{2}\cdot\frac{2\pi}{3},\frac{1}{2}\cdot\frac{4\pi}{5}\right)$ by \cite[Theorem 1.1]{katz-s1}. Hence, the required result follows through Theorem \ref{theorem:isom}.
	\end{proof}

	\paragraph{The case of $\mathbb{S}^2$.} Similar arguments hold for the case of $\mathbb{S}^2$. Whereas the homotopy types of $\vr_r(\mathbb{S}^1)$ for any $r>0$ are known \cite{adams}, we are not aware of  similar results for $\mathbb{S}^2$. Below, $E_6$ is the binary tetrahedral group. 
	
	\begin{corollary}\label{cor:S2secondtype}
		For $r\in \left(\arccos\left(-\frac{1}{3}\right),\arccos \left(-\frac{1}{\sqrt{5}}\right)\right)$, 
		$\mathrm{VR}_{r}(\mathbb{S}^2)\simeq \mathbb{S}^2*\mathbb{S}^3\slash E_6$.
	\end{corollary}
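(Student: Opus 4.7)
The plan is to mimic the argument used for Corollary \ref{cor:S1secondtype}, reducing the statement about $\mathrm{VR}_r(\Sp^2)$ to a corresponding statement about the extrinsic open neighborhood $B_s(\Sp^2,L^\infty(\Sp^2))$ via Proposition \ref{prop:vrhyperconvex} (which in turn is a consequence of Theorem \ref{theorem:isom}), and then invoking Katz's \cite{katz-s2} computation of the homotopy type of these neighborhoods in the relevant range of $s$.

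Concretely, in \cite{katz-s2} Katz develops a Morse-theoretic analysis of the diameter function on finite subsets of $\Sp^2$ and identifies the critical configurations that control the change in homotopy type of the family $s\mapsto B_s(\Sp^2,L^\infty(\Sp^2))$. Two consecutive critical values arise from the regular geodesic tetrahedron, whose pairwise vertex distance is $\arccos(-1/3)$ and whose binary symmetry group is $E_6$, and from the next regular configuration, whose pairwise vertex distance is $\arccos(-1/\sqrt{5})$. For $s$ strictly between $\tfrac12\arccos(-1/3)$ and $\tfrac12\arccos(-1/\sqrt{5})$, Katz establishes $B_s(\Sp^2,L^\infty(\Sp^2))\simeq \Sp^2 * \Sp^3/E_6$.

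I would then apply Proposition \ref{prop:vrhyperconvex} with $X=\Sp^2$ and $E=L^\infty(\Sp^2)$ (which is injective by Example \ref{ex:inj}), obtaining $\mathrm{VR}_{2s}(\Sp^2)\simeq B_s(\Sp^2,L^\infty(\Sp^2))$ for every $s>0$. Setting $r=2s$, the interval $s\in\bigl(\tfrac12\arccos(-1/3),\tfrac12\arccos(-1/\sqrt{5})\bigr)$ becomes exactly $r\in\bigl(\arccos(-1/3),\arccos(-1/\sqrt{5})\bigr)$, and one concludes $\mathrm{VR}_r(\Sp^2)\simeq \Sp^2 * \Sp^3/E_6$ as claimed.

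The substantive mathematical input is Katz's Morse-theoretic computation, which is cited; the only thing that could go wrong in the transfer is bookkeeping. In particular, one has to align Katz's conventions on open versus closed thickenings and on strict versus non-strict diameter in the definition of $\mathrm{VR}_r$ with those used here, and keep track of the factor $2$ between the extrinsic radius and the Vietoris-Rips parameter in Proposition \ref{prop:vrhyperconvex}. Once these conventions are matched, the argument is purely formal and the proof reduces to the two citations above.
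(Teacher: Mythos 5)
Your proof is correct and follows exactly the route the paper takes: cite Katz's Morse-theoretic determination of the homotopy type of $B_s(\Sp^2,L^\infty(\Sp^2))$ on the interval $s\in\bigl(\tfrac12\arccos(-1/3),\tfrac12\arccos(-1/\sqrt5)\bigr)$, then transfer via the isomorphism (Proposition~\ref{prop:vrhyperconvex}, equivalently Theorem~\ref{theorem:isom}) with the usual factor-of-two reparametrization. The only slip is the citation: the relevant result is Theorem~7.1 of~\cite{katz-s1} (the 1991 \emph{Fundamenta} paper on neighborhoods of the Kuratowski embedding past the first diameter extremum), not~\cite{katz-s2}.
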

	\begin{proof}
		$B_r(\mathbb{S}^2,L^\infty(\mathbb{S}^2))$ is homotopy equivalent to the topological join of $\mathbb{S}^2$ and $\mathbb{S}^3\slash E_6$  for $r\in \left(\frac{1}{2}\cdot\arccos\left(-\frac{1}{3}\right),\frac{1}{2}\cdot\arccos\left(-\frac{1}{\sqrt{5}}\right)\right)$ by \cite[Theorem 7.1]{katz-s1}. Hence, if we apply Theorem \ref{theorem:isom}, we have the required result.
	\end{proof}
	
	\begin{Remark}
		Observe that $\mathbb{S}^3\slash E_6=\mathrm{SO}(3)/A_4$ where $A_4$ is the tetrahedral group (see \cite[Remark 5.6]{AAF17}).
	\end{Remark}
	
	\begin{Remark}\label{rmk:snintlbd}
		As already pointed out in Remark \ref{rmk:onfirsttype}, by virtue of Theorem \ref{cor:Snfirsttype}, $(0,d_n]\in\dgmR_n(\mathbb{S}^n;\mathbb{F})$ for some $d_n\geq\arccos\left(\frac{-1}{n+1}\right)$. Moreover, since for $n=1$ and $n=2$ we know (by Corollary \ref{cor:S1secondtype} and \ref{cor:S2secondtype}) that the homotopy type changes after $\arccos\left(\frac{-1}{n+1}\right)$, we  conclude that $\dgmR_n(\mathbb{S}^n;\mathbb{F})$ contains $\big(0,\arccos\left(\frac{-1}{n+1}\right)\big]$ for $n=1$ and $n=2$ and  that this is the unique interval in $\dgmR_n(\mathbb{S}^n;\mathbb{F})$ starting at $0$. Surprisingly, it is currently unknown how the homotopy type of $\mathrm{VR}_{r}(\mathbb{S}^n)$ changes after $\arccos\left(\frac{-1}{n+1}\right)$ for $n\geq 3$. But, still, in Section \ref{sec:fil} we will be able to show that $\big(0,\arccos\left(\frac{-1}{n+1}\right)\big]\in\dgmR_n(\mathbb{S}^n;\mathbb{F})$ for general $n$  via arguments involving the filling radius (cf. Proposition \ref{prop:filradpersistence}). In particular, this implies that the homotopy type of $\mathrm{VR}_{r}(\mathbb{S}^n)$ must change after the critical point $r=\arccos\left(\frac{-1}{n+1}\right)$ since the fundamental class dies after that point, even though we still do not know ``how'' the homotopy type changes. Moreover, since $\mathrm{VR}_{r}(\mathbb{S}^n)$ is homotopy equivalent to $\mathbb{S}^n$ for any $r\in(0,\arccos\left(\frac{-1}{n+1}\right)]$, we know that for any interval $I\in\dgmR_n(\mathbb{S}^n;\mathbb{F})$ with $I\neq (0,\arccos\left(\frac{-1}{n+1}\right)]$, the left endpoint of $I$ must be greater than or equal to $\arccos\left(\frac{-1}{n+1}\right)$.
	\end{Remark}
	
	The following sub-conjecture of \cite[Conjecture 5.7]{AAF17} is still open except for the  $n=1$ and $n=2$ cases; see also \cite[Theorem 5.4]{AAF17}.
	
	\begin{conjecture}\label{conj:sn2ndhttp}
		For any $n\in\mathbb{Z}_{> 0}$, there exists an $\varepsilon>0$ such that
		$$\mathrm{VR}_{r}(\mathbb{S}^n)\simeq\mathbb{S}^n*\big(\mathrm{SO}(n+1)/A_{n+2}\big)$$
		for any $r\in\left(\arccos\left(\frac{-1}{n+1}\right),\arccos\left(\frac{-1}{n+1}\right)+\varepsilon\right)$, where $A_{n+2}$ is the alternating group of degree $n+2$.
	\end{conjecture}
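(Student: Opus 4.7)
Via the Isomorphism Theorem (Theorem \ref{theorem:isom}), the conjecture is equivalent to showing that $B_{r/2}(\Sp^n, L^\infty(\Sp^n)) \simeq \Sp^n * (\mathrm{SO}(n+1)/A_{n+2})$ for $r$ slightly above the critical value $\rho_n := \arccos(-1/(n+1))$. The plan is therefore to perform a Morse-theoretic analysis of the distance function $\delta_{\Sp^n}: L^\infty(\Sp^n) \to \mathbb{R}_{\geq 0}$, in the spirit of Katz's work on $\Sp^1$ and $\Sp^2$. The starting point, provided by Theorem \ref{cor:Snfirsttype}, is that the sublevel set $\delta_{\Sp^n}^{-1}([0, \rho_n/2))$ has the homotopy type of $\Sp^n$; the task is to understand precisely how the topology changes as the level crosses $\rho_n/2$.

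First I would identify the critical manifold at height $\rho_n/2$. By the sharp version of Jung's theorem used in Appendix \ref{sec:app:SnHausmann} (Theorem \ref{thm:Jung}), a subset $S \subseteq \Sp^n$ of diameter at most $\rho_n$ whose circumradius equals $\rho_n/2$ must be contained in a regular inscribed $(n+1)$-simplex, and forcing the diameter to be exactly $\rho_n$ recovers the full vertex set. Hence the critical configurations are precisely the vertex sets of regular $(n+1)$-simplices inscribed in $\Sp^n$. The group $\mathrm{SO}(n+1)$ acts transitively on this set, and the stabilizer of one such configuration consists of the rotations that permute the $n+2$ vertices, which is exactly the alternating group $A_{n+2}$ (since even permutations are the orientation-preserving symmetries of a regular simplex). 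The critical manifold is therefore $M_n := \mathrm{SO}(n+1)/A_{n+2}$.

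Next I would carry out a Morse--Bott-type argument to show that crossing this critical level attaches to the previous sublevel set (homotopy equivalent to $\Sp^n$) a piece yielding the join $\Sp^n * M_n$. In an idealized Morse--Bott picture, one glues in a negative disc bundle over $M_n$; the things to verify are that (i) the negative fiber dimension equals $n+1$, matching the ``cone-over-$\Sp^n$'' direction in the join, and (ii) the attaching map on the unit sphere bundle factors through the natural projection $\Sp^n \times M_n \to \Sp^n$ that defines the join. The $\mathrm{SO}(n+1)$-equivariance of everything in sight should heavily constrain the bundle data and pin them down to the join model. This calculation can be cross-checked against the known cases $n=1$ (where $M_1 \cong \Sp^1$ and $\Sp^1 * \Sp^1 = \Sp^3$, matching Corollary \ref{cor:S1secondtype}) and $n=2$ (matching Corollary \ref{cor:S2secondtype}, since $\mathrm{SO}(3)/A_4 \cong \Sp^3/E_6$).

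The main obstacle is analytic: $L^\infty(\Sp^n)$ is neither Hilbert nor is $\delta_{\Sp^n}$ smooth, so classical Morse--Bott theory does not apply off the shelf. Katz handled $n \in \{1,2\}$ through combinatorially intricate explicit deformations, and those constructions become unmanageable for higher $n$. A potentially more tractable route is to perform the analysis inside the Vietoris--Rips metric thickening of \cite{AAF17}, where the diameter function has a critical set that is a bona fide smooth finite-dimensional manifold, and then transfer the answer across a filtered homotopy equivalence with the open Vietoris--Rips filtration---an equivalence which is itself conjectured in \cite[Conjecture 6.12]{AAF17} but for which the injective-thickening bridge of Theorem \ref{theorem:isom} should provide significant leverage for $X=\Sp^n$. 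A further technical hurdle is to rule out spurious contributions from degenerate critical configurations (proper subsets of a regular simplex, or non-regular sets of diameter $\rho_n$) via a careful equivariant local analysis around $M_n$; the conjectured answer predicts they contribute nothing, which must be confirmed rather than assumed.
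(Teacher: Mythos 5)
This statement is labeled a \emph{conjecture} in the paper, and the text immediately preceding it states explicitly that the claim is ``still open except for the $n=1$ and $n=2$ cases''. The paper offers no proof for general $n$; Corollaries~\ref{cor:S1secondtype} and~\ref{cor:S2secondtype} handle $n=1,2$ only by importing Katz's explicit topological analysis of $B_r(\mathbb{S}^1,L^\infty(\mathbb{S}^1))$ and $B_r(\mathbb{S}^2,L^\infty(\mathbb{S}^2))$. Your write-up is a research plan rather than a proof, and you say so yourself at the end: the Morse--Bott analysis of $\delta_{\Sp^n}$ in $L^\infty(\Sp^n)$ does not follow from any off-the-shelf theory, the reduction to the metric-thickening model rests on a separate open conjecture of \cite{AAF17}, and the exclusion of degenerate critical contributions is ``confirmed rather than assumed'' nowhere in the argument.

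That said, the sketch is coherent and consistent with what is known. The identification of the critical manifold as $\mathrm{SO}(n+1)/A_{n+2}$ is correct: $S_{n+2}$ acts on the regular inscribed $(n+1)$-simplex with determinant $\mathrm{sgn}(\sigma)$ on $\R^{n+1}$, so the stabilizer inside $\mathrm{SO}(n+1)$ is exactly $A_{n+2}$, and the consistency checks for $n=1,2$ come out right. One imprecision worth flagging: the appeal to Theorem~\ref{thm:Jung} to claim that a diameter-$\rho_n$ set of circumradius $\rho_n/2$ must be a regular simplex does not parse. Jung's theorem as stated bounds the \emph{spherical} circumradius by $\psi(D)$, and $\psi(D)\to\pi/2$ as $D\to\rho_n^-$ -- not $\rho_n/2$; for example, the regular triangle on $\Sp^1$ has spherical circumradius $2\pi/3$, not $\pi/3$. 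The quantity $\rho_n/2$ is the radius of the Kuratowski neighborhood in $L^\infty(\Sp^n)$ (the filling radius), which is a different notion. So the critical-set identification needs to be run through Katz's diameter-functional Morse theory rather than read off Jung's inequality directly. None of this is fatal to the \emph{idea}, but combined with the unproved Morse--Bott machinery, the attaching-map calculation, and the exclusion of spurious critical points, it confirms that what you have is exactly what the paper has: a compelling conjecture with a plausible outline and no proof.
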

	
	\begin{Remark}
		To see that Conjecture \ref{conj:sn2ndhttp} is a sub-conjecture of \cite[Conjecture 5.7]{AAF17}, observe that $\mathbb{S}^n*\big(\mathrm{SO}(n+1)/A_{n+2}\big)\cong\Sigma^{n+1}\,(\mathrm{SO}(n+1)/A_{n+2})$ for any non-negative integer $n$. It is a special case of the following more general homeomorphism:
		$$\mathbb{S}^n*X\cong\Sigma^{n+1}\,X$$
		for any Hausdorff and locally compact space $X$. This fact can be proved  by induction on $n$ and the associativity of the topological join (see \cite[Lecture 2.4]{fomenko2016homotopical}).
	\end{Remark}

	\subsection{The case of $\mathbb{CP}^n$}
	
	Partial information can be provided for the case of $\mathbb{CP}^n$ as well. First of all, recall that the complex projective line $\mathbb{CP}^1$ with its canonical metric  actually coincides with the sphere $\mathbb{S}^2$. Hence, one can apply Theorem \ref{cor:Snfirsttype} and Corollary \ref{cor:S2secondtype} to $\mathbb{CP}^1$.   The following results can be derived for general $\mathbb{CP}^n$.
	
	\begin{corollary}
		Let $\mathbb{CP}^n$ be the complex projective space with sectional curvature  between $1/4$ and $1$ with canonical metric. Then,
		\begin{enumerate}
			\item There exist $\alpha_n\in \left(0,\arccos\left(-\frac{1}{3}\right)\right]$ such that $\vr_{r}(\mathbb{CP}^n)$ is homotopy equivalent to $\mathbb{CP}^n$ for any $r\in(0,\alpha_n]$.
			
			\item Let $A$ be the space of equilateral 4-tuples in projective lines of $\mathbb{CP}^n$. Let $X$ be the partial join of $A$ and $\mathbb{CP}^n$ where $x \in \mathbb{CP}^n$ is joined to a tuple $a \in A$ by a line segment if $x$ is contained in the projective line determined by $a$. There exists a constant $\beta_n>0$ such that if $$\arccos\left(-\frac{1}{3}\right) < r < \arccos\left(-\frac{1}{3}\right) + \beta_n, $$ then $\vr_{r}(\mathbb{CP}^n)$ is homotopy equivalent to $X$.
		\end{enumerate}    
	\end{corollary}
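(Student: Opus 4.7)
The plan is to apply the Isomorphism Theorem (Theorem \ref{theorem:isom}) to translate both claims into statements about $L^\infty$-thickenings of $\mathbb{CP}^n$, and then invoke Katz's Morse-theoretic analysis of the diameter function on $\mathbb{CP}^n$ from \cite{katz-cpn}. Concretely, Theorem \ref{theorem:isom} yields a natural homotopy equivalence
$$\vr_{r}(\mathbb{CP}^n) \;\simeq\; B_{r/2}\bigl(\mathbb{CP}^n,L^\infty(\mathbb{CP}^n)\bigr)$$
for every $r>0$, so it suffices to identify the homotopy types of these thickenings on either side of the first nontrivial critical value of $\mathrm{diam}:\mathrm{pow}(\mathbb{CP}^n)\to\mathbb{R}$, which Katz computes to be $\tfrac{1}{2}\arccos(-1/3)$, realised on the critical locus of equilateral $4$-tuples supported in a projective line (which is precisely the space $A$ appearing in the statement).

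\textbf{Part (1).} Below the first nontrivial critical value of $\mathrm{diam}$, no new topology is attached and, by Katz's analysis, the sublevel thickening deformation retracts onto the Kuratowski image of $\mathbb{CP}^n$; this is the $L^\infty$-analogue of the ``Hausmann regime'', but now pushed all the way up to the first genuine critical value. Taking $\alpha_n$ to be the supremum of those $r$ for which this retraction is available, one has $\alpha_n\in(0,\arccos(-1/3)]$, and the displayed homotopy equivalence above yields $\vr_r(\mathbb{CP}^n)\simeq\mathbb{CP}^n$ for all $r\in(0,\alpha_n]$.

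\textbf{Part (2) and main obstacle.} Just above the critical value $\tfrac{1}{2}\arccos(-1/3)$, Katz's Morse-theoretic attaching identifies the newly-appearing topology as a family of cells parameterised by $A$, attached to $\mathbb{CP}^n$ along the incidence relation ``$x\in\mathbb{CP}^n$ lies on the projective line determined by $a\in A$''; this is precisely the partial join $X$ described in the statement. Because there are no further critical values immediately above $\tfrac{1}{2}\arccos(-1/3)$, the homotopy type of the thickening is locally constant on an interval of the form $\bigl(\tfrac{1}{2}\arccos(-1/3),\,\tfrac{1}{2}\arccos(-1/3)+\beta_n/2\bigr)$ for some $\beta_n>0$; feeding this back through Theorem \ref{theorem:isom} gives $\vr_r(\mathbb{CP}^n)\simeq X$ for $r\in(\arccos(-1/3),\arccos(-1/3)+\beta_n)$. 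The main technical burden is entirely on the Katz side: one must verify that $A$ really is the critical manifold at $\tfrac{1}{2}\arccos(-1/3)$, that it is \emph{non-degenerate} in the relevant Morse-theoretic sense, and that the attaching data is exactly the incidence partial join. With Theorem \ref{theorem:isom} in hand our own contribution reduces to the clean dictionary between the $L^\infty$-thickening and Vietoris-Rips pictures, plus the factor of two relating their scale parameters.
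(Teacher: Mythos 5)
Your route through Theorem \ref{theorem:isom} and the factor of two are correct, and for Part (2) your approach coincides with the paper's: both apply the isomorphism theorem and then invoke Katz's result \cite[Theorem 8.1]{katz-s1} to identify the homotopy type of the thickening just above the first critical value of the diameter functional.

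The gap is in Part (1). You assert that ``by Katz's analysis, the sublevel thickening deformation retracts onto the Kuratowski image of $\mathbb{CP}^n$'' for all $r$ below the first critical value, and that this pushes the Hausmann regime ``all the way up'' to $\tfrac{1}{2}\arccos(-1/3)$. That is not what Katz's Morse-theoretic analysis supplies. Morse theory for the diameter functional tells you where the homotopy type of $B_r(\mathbb{CP}^n,L^\infty(\mathbb{CP}^n))$ \emph{changes}; it does not by itself tell you what the homotopy type \emph{is} for small $r$. You need a base case to anchor the induction, and the paper supplies it with Hausmann's theorem \cite[Theorem 3.5]{h95}: Hausmann gives some $\alpha_n>0$ with $\vr_r(\mathbb{CP}^n)\simeq\mathbb{CP}^n$ for $r\in(0,\alpha_n]$, and Katz (via Theorem \ref{theorem:isom}) is used only to show $\alpha_n$ cannot exceed $\arccos(-1/3)$, since the homotopy type provably changes at that scale. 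Your argument, taken at face value, would prove the \emph{stronger} conclusion $\alpha_n=\arccos(-1/3)$ --- a statement the paper deliberately does not claim (and which, in light of the paper's Remark \ref{rmk:firstextremum} on $\mathbb{CP}^3$, is genuinely delicate). So the step ``no new topology is attached below the first critical value, hence the thickening retracts onto $\mathbb{CP}^n$'' is the missing idea; without Hausmann (or an explicit construction of the retraction in a neighborhood of the Kuratowski image), that step is not established.
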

	\begin{proof}
		By  Hausmann's theorem \cite[Theorem 3.5]{h95}, there exist $\alpha_n>0$ such that $\vr_{r}(\mathbb{CP}^n)$ is homotopy equivalent to $\mathbb{CP}^n$ for any $r\in(0,\alpha_n]$. Also, by \cite[Theorem 8.1]{katz-s1}, $\alpha_n$ cannot be greater than $\arccos(-1/3)$. The second claim is a direct result from Theorem \ref{theorem:isom} and \cite[Theorem 8.1]{katz-s1}.
	\end{proof}   
	
	\subsection{The case of spheres with $\ell^\infty$-metric} \label{sec:spheres-inf}

	The Vietoris-Rips filtration of $\mathbb{S}^1$ with the usual geodesic metric is quite challenging to understand \cite{adams}. However, it turns out that if we change its underlying metric, the situation becomes very simple. Throughout  this section, all metric spaces of interest are embedded in $(\mathbb{R}^n,\ell^\infty)$ and are endowed with the restriction of the ambient space metric. In particular, in this section, we use the following conventions: for any $n\in\mathbb{Z}_{>0}$,
	
	\begin{enumerate}
		\item $\R^n_\infty=(\R^n,\ell^\infty)$.
		
		\item $\mathbb{D}^n_\infty:=(\{(x_1,\dots,x_n)\in\mathbb{R}^n:\sum_{i=1}^n x_i^2\leq 1\},\ell^\infty)$. 
		
		\item $\mathbb{S}^{n-1}_\infty:=(\{(x_1,\dots,x_n)\in\mathbb{R}^n:\sum_{i=1}^n x_i^2=1\},\ell^\infty)$.
		
		\item $\blacksquare^n_\infty:=(\{(x_1,\dots,x_n)\in\mathbb{R}^n:x_i\in [-1,1]\textrm{ for every }i=1,\dots,n\},\ell^\infty)$.
		
		\item $\square^{n-1}_\infty:=(\{(x_1,\dots,x_n)\in\mathbb{R}^n:(x_1,\dots,x_n)\in\blacksquare^n_\infty \textrm{ and }x_i=\pm 1\textrm{ for some }i=1,\dots,n\},\ell^\infty)$.
	\end{enumerate}
	
	Note that $\blacksquare^n_\infty$ is just the unit closed $\ell^\infty$-ball around the origin in $\R^n_\infty$ and $\square^{n-1}_\infty$ is its boundary.

	The following theorem by K{\i}l{\i}{\c{c}} and Ko{\c{c}}ak is the motivation of this subsection.
	
	\begin{theorem}[{\cite[Theorem 2]{kilicc2016tight}}]\label{thm:tslinftyplane}
		Let $X$ and $Y$ be subspaces of $\R^2_\infty$. If $Y$ contains $X$, is closed, geodesically convex,\footnote{I.e., for any $p,q\in Y$, there exists at least one geodesic in $\R^2_\infty$ between $p$ and $q$ which is fully contained in $Y$} and minimal (with respect to  inclusion) with these properties, then $Y$ is the tight span of $X$.
	\end{theorem}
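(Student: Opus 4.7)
The strategy is to reduce the theorem to the uniqueness (up to isometry) of the tight span. Concretely, the plan is to show that the closed, geodesically convex subsets of $\R^2_\infty$ containing $X$ coincide with the injective metric subspaces of $\R^2_\infty$ containing $X$. Once this is in hand, a minimal $Y$ as in the statement must be an injective subspace of $\R^2_\infty$ containing $X$, and hence contains an isometric copy of $E(X)$ (by the universal property of the tight span as the smallest injective space over $X$, see Example \ref{ex:inj}); the image of this copy is itself a closed, geodesically convex subset of $\R^2_\infty$ containing $X$, so minimality of $Y$ forces $Y$ to equal that image and thus to be isometric to $E(X)$.

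For the easy inclusion, I would take an injective $Z \subseteq \R^2_\infty$ containing $X$ and verify that $Z$ is closed and geodesically convex. Closedness is automatic since injective subspaces are $1$-Lipschitz retracts of the ambient space. Geodesic convexity follows from Lemma \ref{lemma:geobicomb}: $Z$ carries an intrinsic geodesic bicombing, and because $Z \hookrightarrow \R^2_\infty$ is an isometric embedding, every intrinsic geodesic in $Z$ realizes the ambient $\ell^\infty$-distance of its endpoints, hence is itself an ambient geodesic fully contained in $Z$.

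The main step is the converse direction: every closed, geodesically convex $Y \subseteq \R^2_\infty$ is hyperconvex (and hence injective by Proposition \ref{prop:hypinj}). Given a compatible family $(y_i, r_i)_{i \in I}$ with $d_\infty(y_i,y_j)\leq r_i+r_j$, hyperconvexity of the ambient $\R^2_\infty$ (a coordinatewise Helly property for axis-parallel rectangles) already produces a common center $z^\ast\in\R^2_\infty$ with $d_\infty(z^\ast,y_i)\leq r_i$; the task is to replace $z^\ast$ by a point that lies in $Y$. Here the two-dimensional structure is essential: between any two points of $\R^2_\infty$ the set of geodesics fills an axis-parallel rectangle via monotone staircase paths, and geodesic convexity of $Y$ forces at least one such staircase between every pair $y_i, y_j$ to lie in $Y$. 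Combining these staircases together with the closedness of $Y$ and the fact that the Helly-intersection $\bigcap_i \overline{B}_{r_i}(y_i,\R^2_\infty)$ is itself an axis-parallel rectangle should allow us to locate a common center inside $Y$, completing the hyperconvexity check.

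The hard part will be exactly this center-construction inside $Y$. Geodesic convexity in $\R^2_\infty$ supplies only \emph{one} ambient geodesic inside $Y$ between a given pair of points, chosen from a whole rectangle of candidate ambient geodesics; turning this rather weak hypothesis into a Helly-type center for an arbitrary family of balls in $Y$ will require a careful argument that exploits the monotone staircase nature of $\ell^\infty$-geodesics together with the closedness of $Y$. The argument is inherently two-dimensional, which is consistent with the restriction to $\R^2_\infty$ in the theorem statement.
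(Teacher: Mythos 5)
This statement is not proved in the paper; it is cited verbatim from K\i l\i\c{c}--Ko\c{c}ak, so there is no in-paper argument to compare against. Your high-level strategy --- prove that a subspace of $\R^2_\infty$ is closed and geodesically convex if and only if it is injective, and then combine the resulting minimality with Isbell's uniqueness of the injective hull --- is the right frame, and the ``easy'' direction (injective $\Rightarrow$ closed and geodesically convex, via completeness of injective spaces and the geodesic bicombing of Lemma \ref{lemma:geobicomb}) is correct as you state it. The subsequent minimality argument (a hyperconvex $Y$ contains an isometric copy of $E(X)$ fixing $X$, which is again closed and geodesically convex, so minimality forces $Y$ to equal that copy) is sound, modulo a small point worth flagging: that $E(X)$ embeds isometrically into \emph{any} injective space containing $X$ while fixing $X$ is Isbell's essentiality property of the hull, which is stronger than the mere ``smallest up to isometry'' characterization you invoke.

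The real gap is twofold and sits exactly where you say the ``hard part'' is. First, you do not actually prove that a closed, geodesically convex subset of $\R^2_\infty$ is hyperconvex; you only describe the problem. Second, and more seriously, the geometric picture you base the sketch on is wrong. In $\R^2_\infty$, between two points whose coordinate differences are $a>b\geq 0$, the set of points lying on \emph{some} $\ell^\infty$-geodesic between them is a parallelogram with sides of slope $\pm 1$ (e.g.\ between $(0,0)$ and $(a,b)$ it is $\{(x,y): 0\le x\le a,\ \max(-x,\,x-(a-b))\le y\le\min(x,\,a+b-x)\}$), not an axis-parallel rectangle; and an axis-parallel ``monotone staircase'' between those two points has $\ell^\infty$-length $a+b>\max(a,b)$, so it is \emph{not} an $\ell^\infty$-geodesic at all. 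You appear to be reasoning about the $\ell^1$ (taxicab) geometry, where staircases are geodesics and the geodesic envelope is an axis-parallel rectangle. Because the whole point of the hard direction is to exploit the precise combinatorics of $\ell^\infty$-geodesics in dimension two, this misidentification would derail the center-construction, not merely complicate it. The correct geodesics travel ``at slope in $[-1,1]$'' monotonically in the coordinate that realizes the $\ell^\infty$-distance while oscillating freely in the other, and a successful argument must work with that shape.

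Keeping the reduction and replacing the incorrect geodesic picture with the correct parallelogram description --- and then actually carrying out the Helly-type center construction for closed, geodesically convex sets in the plane --- is what K\i l\i\c{c}--Ko\c{c}ak's Theorem~2 amounts to; as written, your proposal identifies where the work is but has not done it and has mischaracterized the geometry it would rest on.
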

	
	Theorem \ref{thm:tslinftyplane} has a number of interesting consequences.
	
	\begin{lemma}\label{lemma:squaretightspan}
		$\blacksquare^2_\infty$ is the tight span of $\square^1_\infty$. Moreover,
		$$B_r(\square^1_\infty,\blacksquare^2_\infty)=\begin{cases}[-1,1]^2\backslash [-(1-r),(1-r)]^2  & \text{if }r\in(0,1]\\ [-1,1]^2  & \text{if }r>1.\end{cases}$$
	\end{lemma}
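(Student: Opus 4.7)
The plan is to address the two claims separately, reducing the tight span identification to Theorem \ref{thm:tslinftyplane} together with injectivity considerations, and then computing the $r$-neighborhood by a direct distance calculation.

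For the tight span claim, I would verify the hypotheses of Theorem \ref{thm:tslinftyplane} with $X = \square^1_\infty$ and $Y = \blacksquare^2_\infty$. Containment and closedness are immediate. Geodesic convexity holds because, for any $p, q \in \blacksquare^2_\infty$, the Euclidean straight segment from $p$ to $q$ stays in the Euclidean-convex set $\blacksquare^2_\infty$ and, parameterized linearly, realizes the $\ell^\infty$-distance $\|p - q\|_\infty$, making it an $\ell^\infty$-geodesic. The minimality hypothesis is the subtlest point, and I would argue it indirectly: $\blacksquare^2_\infty$, being the $\ell^\infty$-product of the injective intervals $[-1, 1]$, is itself injective by Lemma \ref{lem:prod}, so by the universal (minimality) property of the tight span, $E(\square^1_\infty)$ embeds isometrically into $\blacksquare^2_\infty$. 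For the reverse inclusion, I would construct an isometric embedding $\blacksquare^2_\infty \hookrightarrow E(\square^1_\infty)$ by sending $(a, b) \mapsto f_{(a, b)}$, where $f_{(a, b)}(p) := \|p - (a, b)\|_\infty$ for $p \in \square^1_\infty$. Minimality of $f_{(a, b)}$ in $\Delta(\square^1_\infty)$ follows from a ray-extension argument: for each $p \in \square^1_\infty$, the Euclidean ray from $p$ through $(a, b)$ meets $\square^1_\infty$ again at some $q$, and since the $\ell^\infty$-distance is additive along any Euclidean segment, $\|p - q\|_\infty = \|p - (a, b)\|_\infty + \|(a, b) - q\|_\infty$, which is exactly the tightness condition required for minimality. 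Sandwiching the two isometric embeddings gives $E(\square^1_\infty) \cong \blacksquare^2_\infty$.

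For the explicit neighborhood formula, I would compute the $\ell^\infty$-distance from an arbitrary $(x, y) \in \blacksquare^2_\infty$ to $\square^1_\infty$. The nearest point on $\square^1_\infty$ is either $(\mathrm{sign}(x), y)$ at distance $1 - |x|$ or $(x, \mathrm{sign}(y))$ at distance $1 - |y|$, yielding $\inf_{p \in \square^1_\infty} \|(x, y) - p\|_\infty = \min(1 - |x|, 1 - |y|) = 1 - \max(|x|, |y|)$. Hence $(x, y) \in B_r(\square^1_\infty, \blacksquare^2_\infty)$ if and only if $\max(|x|, |y|) > 1 - r$. For $r \in (0, 1]$ this removes precisely the closed square $[-(1-r), (1-r)]^2$ from $[-1, 1]^2$; for $r > 1$, the inequality is vacuous and one recovers all of $\blacksquare^2_\infty$.

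The main obstacle is the minimality direction in the tight span identification. The injectivity-based route above sidesteps a direct attack on the ``minimal closed geodesically convex subset'' hypothesis of Theorem \ref{thm:tslinftyplane}, but still requires a careful check that $(a, b) \mapsto f_{(a, b)}$ is an \emph{isometric} embedding (verifying $\|f_{(a, b)} - f_{(a', b')}\|_\infty = \|(a, b) - (a', b')\|_\infty$ by selecting a suitable test point $p \in \square^1_\infty$ realizing the supremum) and that the ray-extension argument produces a valid $q \in \square^1_\infty$ regardless of the position of $(a, b)$ in the interior of $\blacksquare^2_\infty$.
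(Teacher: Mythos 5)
Your proof is correct, and for the tight-span identification you take a genuinely different route from the paper. The paper's own proof of Lemma~\ref{lemma:squaretightspan} is a one-liner invoking Theorem~\ref{thm:tslinftyplane} and simply declaring the minimality check ``straightforward''; you instead observe that $\blacksquare^2_\infty$ is injective as an $\ell^\infty$-product of injective intervals (Lemma~\ref{lem:prod}), and then pin down equality with the tight span by constructing an explicit isometric embedding $(a,b)\mapsto f_{(a,b)}$ of $\blacksquare^2_\infty$ into $E(\square^1_\infty)$ and sandwiching. Your ray-extension argument for the minimality of $f_{(a,b)}$ in $\Delta(\square^1_\infty)$ is sound (norm additivity along Euclidean segments is simply homogeneity of the norm), and the isometry check goes through by taking a test point of the form $p=(\pm 1, b)$ on the edge orthogonal to the coordinate realizing $\|(a,b)-(a',b')\|_\infty$. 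Interestingly, this injectivity-based strategy is closer to what the paper itself does for the $n$-dimensional generalization Lemma~\ref{lemma:nsquaretightspan}, where Theorem~\ref{thm:tslinftyplane} is unavailable: there the paper also starts from hyperconvexity of $\blacksquare^n_\infty$, but closes the argument with a contradiction based on the singleton intersection of two closed balls rather than with an explicit Kuratowski-type embedding as you do. Your approach has the appeal of being uniform across dimensions and of avoiding verification of the ``minimal closed geodesically convex'' hypothesis in Theorem~\ref{thm:tslinftyplane}, which, as you note, is the subtlest part of the cited theorem's hypotheses. Your computation of the neighborhood formula via $d_{\ell^\infty}((x,y),\square^1_\infty)=1-\max(|x|,|y|)$ is exactly the content the paper leaves implicit when it calls the formula ``obvious''.
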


	\begin{proof}
		By Theorem \ref{thm:tslinftyplane}, the first claim is straightforward. The second claim, namely the explicit expression of $B_r(\square^1_\infty,\blacksquare^2_\infty)$ is also obvious since we are using the $\ell^\infty$-norm.
	\end{proof}

	\begin{corollary}
		$B_r(\square^1_\infty,\blacksquare^2_\infty)$ is homotopy equivalent to $\mathbb{S}^1$ for $r\in(0,1]$ and contractible for $r>1$. Hence, for any field $\mathbb{F}$, it holds that:
		$$\dgmR_k(\square^1_\infty,\mathbb{F})=\begin{cases}\{(0,\infty)\} & \text{if }k=0\\ \{(0,2]\} & \text{if }k=1\\ \emptyset & \text{if }k\geq 2.\end{cases}$$
	\end{corollary}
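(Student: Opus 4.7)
My plan is to directly compute the persistent homology of the filtration $\big(B_r(\square^1_\infty,\blacksquare^2_\infty)\big)_{r>0}$ using the explicit description from Lemma \ref{lemma:squaretightspan}, and then invoke the isomorphism theorem (Theorem \ref{theorem:isom}) to transfer the result to Vietoris-Rips barcodes. Since Lemma \ref{lemma:squaretightspan} identifies $\blacksquare^2_\infty$ as the tight span of $\square^1_\infty$, it is an injective metric space (cf. Example \ref{ex:inj}), so Theorem \ref{theorem:isom} applies and yields the natural homotopy equivalence $B_r(\square^1_\infty,\blacksquare^2_\infty)\simeq\vr_{2r}(\square^1_\infty)$ for every $r>0$.

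For the homotopy types, I note that $B_r(\square^1_\infty,\blacksquare^2_\infty)=\{z\in\blacksquare^2_\infty:\|z\|_\infty>1-r\}$. When $r>1$ this is all of $[-1,1]^2$, which is convex and hence contractible. When $r\in(0,1]$, the origin is excluded, so $z\mapsto z/\|z\|_\infty$ is well-defined on $B_r$ and lands in $\square^1_\infty$. The straight-line homotopy
\[
H(z,t):=(1-t)\,z+t\,\frac{z}{\|z\|_\infty},\qquad t\in[0,1],
\]
stays inside $B_r$ because $\|H(z,t)\|_\infty$ interpolates monotonically between $\|z\|_\infty>1-r$ and $1$, giving a deformation retraction of $B_r$ onto $\square^1_\infty\simeq\mathbb{S}^1$.

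To read off the persistence, observe that the retraction formula $z\mapsto z/\|z\|_\infty$ is the \emph{same} on every $B_r$ with $r\in(0,1]$, and $\square^1_\infty$ sits in each $B_r$ as a common subspace. Hence the inclusion $B_r\hookrightarrow B_s$ for $r\leq s\leq 1$ induces an isomorphism on $H_1$, and the $1$-dimensional persistence module has the single interval $(0,1]$ (closed at $1$ because $B_1$ still retracts onto $\square^1_\infty$, and open at $0$ by Theorem \ref{thm:intervalform}). All $B_r$ are path-connected, yielding the bar $(0,\infty)$ in $H_0$, while $H_k=0$ for $k\geq 2$ since every $B_r$ lives in $\mathbb{R}^2$. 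Applying Theorem \ref{theorem:isom} rescales the parameter by a factor of $2$, turning the bar $(0,1]$ in $H_1$ into $(0,2]$ in $\dgmR_1(\square^1_\infty;\mathbb{F})$ and leaving the $H_0$ bar $(0,\infty)$ unchanged.

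The only mildly delicate point is checking the compatibility of the radial retractions across the filtration (so that the $H_1$ bar is truly $(0,1]$ rather than a shorter interval), but this is immediate from the fact that a single explicit formula works uniformly for all $r\in(0,1]$.
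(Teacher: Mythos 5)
Your proof is correct and takes essentially the same route as the paper's one-line proof (apply Lemma~\ref{lemma:squaretightspan} and Theorem~\ref{theorem:isom}, with the factor-of-$2$ rescaling); you merely spell out the radial deformation retraction and the filtration-compatibility that the paper leaves implicit.
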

	\begin{proof}
		Apply Lemma \ref{lemma:squaretightspan} and Theorem \ref{theorem:isom}.
	\end{proof}
	
	Interestingly, one can also prove the following result.
	
	\begin{lemma}\label{lemma:circletightspan}
		$\mathbb{D}^2_\infty$ is the tight span of $\mathbb{S}^1_\infty$. Moreover,
		$$B_r(\mathbb{S}^1_\infty,\mathbb{D}^2_\infty)=\mathbb{D}^2_\infty\backslash V_r$$
		for any $r>0$, where
		$$V_r:=\bigcap_{(p,q)\in\{r,-r\}^2}\big\{(x,y)\in\mathbb{R}^2:(x-p)^2+(y-q)^2\leq 1 \big\}.$$
		In particular, for $r>\frac{1}{\sqrt{2}}$ we have $V_r=\emptyset$ so that $B_r(\mathbb{S}^1_\infty,\mathbb{D}^2_\infty)=\mathbb{D}^2_\infty$ (see Figure \ref{fig:d2-s1-tight-span}).
	\end{lemma}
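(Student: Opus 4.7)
The lemma has three parts: the tight span identification, the neighborhood formula, and the emptiness of $V_r$ for $r>1/\sqrt{2}$. The plan is to dispose of the first via Theorem \ref{thm:tslinftyplane}, to obtain the second by a direct geometric argument, and to settle the third by a short computation on the defining inequalities.

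For the tight span claim, I would verify the four hypotheses of Theorem \ref{thm:tslinftyplane} for $X=\mathbb{S}^1_\infty$ and $Y=\mathbb{D}^2_\infty$. Containment $\mathbb{S}^1_\infty\subseteq\mathbb{D}^2_\infty$ and closedness of $\mathbb{D}^2_\infty$ in $\R^2_\infty$ are immediate. Geodesic convexity follows from the fact that the Euclidean segment $t\mapsto(1-t)p+tq$ between $p,q\in\mathbb{D}^2_\infty$ has $\ell^\infty$-length equal to $\|p-q\|_\infty$ (since both coordinates are affine in $t$), making it an $\ell^\infty$-geodesic of $\R^2_\infty$ contained in $\mathbb{D}^2_\infty$ by Euclidean convexity of the closed unit disk. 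Minimality is the delicate step: I expect to argue that any closed geodesically convex subset of $\R^2_\infty$ containing $\mathbb{S}^1_\infty$ must contain every interior point of the Euclidean disk, by a careful analysis of the diamond-shaped $\ell^\infty$-geodesic bundles between boundary points aligned on common vertical or horizontal lines through a putative missing interior point, combined with closedness of $Y$. I expect this minimality verification to be the main obstacle.

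For the neighborhood formula, the central observation is that $z=(x,y)\in V_r$ if and only if the closed $\ell^\infty$-ball $[x-r,x+r]\times[y-r,y+r]$ around $z$ sits inside the closed Euclidean unit disk: Euclidean convexity reduces containment of the closed square to containment of its four corners, which is exactly the definition of $V_r$. I would then verify that, for $z\in\mathbb{D}^2_\infty$, this ball-containment is equivalent to $z\notin B_r(\mathbb{S}^1_\infty,\mathbb{D}^2_\infty)$. The forward direction is immediate: if the closed square lies in the closed disk, then no interior point of the square can lie on $\mathbb{S}^1$, since a small Euclidean neighborhood of such a point would have to leave the closed disk. The converse uses that if some corner of the square escapes the closed unit disk, the Euclidean segment from $z$ to that corner crosses $\mathbb{S}^1_\infty$ at a point strictly between $z$ and the corner, yielding a point of $\mathbb{S}^1_\infty$ at open $\ell^\infty$-distance less than $r$ from $z$.

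Finally, the emptiness claim for $r>1/\sqrt{2}$ follows by summing the four inequalities defining $V_r$: one obtains $4(x^2+y^2+2r^2)\leq 4$, i.e., $x^2+y^2\leq 1-2r^2$, which has no real solutions once $r>1/\sqrt{2}$. Conversely, for $r\leq 1/\sqrt{2}$ the origin satisfies all four inequalities, so $V_r$ is nonempty in this range, ensuring the dichotomy stated in the lemma.
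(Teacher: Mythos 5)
Your treatment of the neighborhood formula is correct and is a genuinely different, conceptually cleaner route than the paper's. You reformulate $z\in V_r$ as the condition that the closed $\ell^\infty$-ball of radius $r$ around $z$ lies inside the closed Euclidean unit disk (convexity of the disk reduces the containment of the square to its four corners, which is precisely the four-disk intersection defining $V_r$), and then both inclusions fall out; the paper instead expands the quadratics and runs a sign-by-sign computation. Your summation argument for $V_r=\emptyset$ when $r>\tfrac{1}{\sqrt 2}$ is also correct and fills in a step the paper only asserts.

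The gap is in the minimality verification for the tight-span claim, and it is exactly where you predicted trouble---but the construction you sketch will not close it. Theorem~\ref{thm:tslinftyplane} only requires a geodesically convex set $Y$ to contain \emph{some} $\ell^\infty$-geodesic between each pair of its points. If $q_1,q_2\in\mathbb{S}^1_\infty$ lie on a common vertical or horizontal line through a putative missing interior point $p$, the $\ell^\infty$-geodesics from $q_1$ to $q_2$ fill the whole diamond you describe; a geodesically convex $Y$ may legitimately contain just one of them, and nothing forces that one to pass through $p$. So vertical and horizontal chords give no leverage. The chords you actually want are the $45^\circ$-diagonal chords through $p$: writing $q_1=(x_1,y_1)$, $q_2=(x_2,y_2)$ for the two intersections of such a line with $\mathbb{S}^1_\infty$, we have $|x_1-x_2|=|y_1-y_2|$, and for such endpoints the $\ell^\infty$-geodesic is \emph{unique} and coincides with the Euclidean segment, which runs through $p$. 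Hence any closed geodesically convex subset of $\R^2_\infty$ containing $\mathbb{S}^1_\infty$ must contain every interior point of the disk, and by closedness contains $\mathbb{D}^2_\infty$; minimality follows. The paper calls this ``straightforward,'' but the one-line observation about diagonal chords is needed, and the axis-parallel approach you proposed does not go through.
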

	\begin{proof}
		By Theorem \ref{thm:tslinftyplane}, the first claim is straightforward.

		First, fix arbitrary $(z+t,w+s)\in B_r(\mathbb{S}^1_\infty,\mathbb{D}^2_\infty)$ where $z^2+w^2=1$ and $t,s\in (-r,r)$. Suppose $z\geq 0$ and $w\geq 0$. Then,
		\begin{align*}
			(z+t+r)^2+(w+s+r)^2&=z^2+w^2+(t+r)^2+(s+r)^2+2z(t+r)+2w(s+r)> 1
		\end{align*}
		because of the assumptions on $z,w,t$, and $s$. Therefore, $(z+t,w+s)\notin V_r$ so that $(z+t,w+s)\in\mathbb{D}^2_\infty\backslash V_r$. By symmetry, the same result holds for other possible sign combinations of $z$ and $w$. Hence, we have $B_r(\mathbb{S}^1_\infty,\mathbb{D}^2_\infty)\subseteq\mathbb{D}^2_\infty\backslash V_r$.

		Now, fix arbitrary $(x,y)\in\mathbb{D}^2_\infty\backslash V_r$. Since $(x,y)\notin V_r$, without loss of generality, one can assume that $(x+r)^2+(y+r)^2>1$. Also, $x^2+y^2\leq 1$ since $(x,y)\in\mathbb{D}^2_\infty$. Then, there must be some $t\in[0,r)$ such that $(x+t)^2+(y+t)^2=1$. It follows that $(x,y)\in B_r(\mathbb{S}^1_\infty,\mathbb{D}^2_\infty)$. Since $(x,y)$ is an arbitrary point in $\mathbb{D}^2_\infty\backslash V_r$ it follows that $\mathbb{D}^2_\infty\backslash V_r\subseteq B_r(\mathbb{S}^1_\infty,\mathbb{D}^2_\infty)$.

		With this we conclude that $B_r(\mathbb{S}^1_\infty,\mathbb{D}^2_\infty)=\mathbb{D}^2_\infty\backslash V_r$, as we wanted.
	\end{proof}
	
	\begin{corollary}
		$B_r(\mathbb{S}^1_\infty,\mathbb{D}^2_\infty)$ is homotopy equivalent to $\mathbb{S}^1$ for $r\in\left(0,\frac{1}{\sqrt{2}}\right]$ and contractible for $r>\frac{1}{\sqrt{2}}$. Hence, for any field $\mathbb{F}$, it holds that:
		$$\dgmR_k(\mathbb{S}^1_\infty,\mathbb{F})=\begin{cases}\{(0,\infty)\} & \text{if }k=0\\ \{(0,\sqrt{2}]\} & \text{if }k=1\\ \emptyset & \text{if }k\geq 2.\end{cases}$$
	\end{corollary}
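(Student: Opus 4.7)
The plan is to analyze the homotopy type of $B_r(\mathbb{S}^1_\infty,\mathbb{D}^2_\infty)=\mathbb{D}^2_\infty\setminus V_r$ (provided by Lemma \ref{lemma:circletightspan}) in the two regimes determined by the critical threshold $r=\frac{1}{\sqrt{2}}$, and then translate the resulting persistent homology information into Vietoris--Rips barcode data through the isomorphism theorem (Theorem \ref{theorem:isom}).

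For $r>\frac{1}{\sqrt{2}}$, Lemma \ref{lemma:circletightspan} already gives $V_r=\emptyset$, so $B_r(\mathbb{S}^1_\infty,\mathbb{D}^2_\infty)=\mathbb{D}^2_\infty$, which as a topological space is the standard Euclidean closed disk and hence contractible. For $r\in\left(0,\frac{1}{\sqrt{2}}\right]$, I will first verify that $V_r$ is a non-empty compact convex subset contained in the \emph{interior} of the disk $\mathbb{D}^2_\infty$: non-emptiness is immediate because $(0,0)$ satisfies $(0\pm r)^2+(0\pm r)^2=2r^2\leq 1$; convexity follows from $V_r$ being the intersection of four Euclidean balls; and averaging the four defining inequalities yields $x_1^2+x_2^2\leq 1-2r^2<1$ for every $(x_1,x_2)\in V_r$ when $r>0$. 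Because $V_r$ is a compact convex set sitting in the interior of a topological disk, the complement $\mathbb{D}^2_\infty\setminus V_r$ deformation retracts onto $\partial\mathbb{D}^2_\infty\cong\mathbb{S}^1$. I would make this retraction explicit via a radial argument based at the origin: since $0\in V_r$ and $V_r$ is star-shaped at $0$, each outgoing ray meets $V_r$ in a segment and then proceeds to the unit circle, and one slides points on the outer portion of each ray to the boundary.

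To derive the barcode I will exploit the monotonicity $V_s\subseteq V_r$ for $r\leq s$ (the four centers $(\pm r,\pm r)$ move farther apart as $r$ grows), which makes the inclusion $B_r(\mathbb{S}^1_\infty,\mathbb{D}^2_\infty)\hookrightarrow B_s(\mathbb{S}^1_\infty,\mathbb{D}^2_\infty)$ compatible with a common radial retraction to $\partial\mathbb{D}^2_\infty$. Hence, for $0<r\leq s\leq\frac{1}{\sqrt{2}}$, the inclusion induces an isomorphism on $H_1$, while crossing the threshold $\frac{1}{\sqrt{2}}$ kills the generator. This shows that the persistence module $\mathrm{H}_1(B_\ast(\mathbb{S}^1_\infty,\mathbb{D}^2_\infty);\mathbb{F})$ is isomorphic to the interval module with support $\left(0,\frac{1}{\sqrt{2}}\right]$. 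All $B_r$ are path-connected and higher homologies vanish, so the remaining bars are as claimed. Finally I invoke Theorem \ref{theorem:isom} with $\eta$ taken to be the tight span pairing (which qualifies as a metric homotopy pairing by Example \ref{ex:inj} together with Lemma \ref{lemma:circletightspan}); the built-in factor of $2$ rescales $\left(0,\frac{1}{\sqrt{2}}\right]$ to $(0,\sqrt{2}]$, producing the stated $\dgmR_1$.

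The main obstacle I anticipate is establishing naturality of the deformation retract in $r$ carefully enough to pin down the interval endpoints, rather than merely the pointwise homotopy types. Fortunately, Theorem \ref{thm:intervalform} already constrains every bar to the form $(u,v]$, so the task reduces to identifying $u$ and $v$: the convexity argument above forces $u=0$ (since $\mathrm{H}_1$ is nontrivial for arbitrarily small $r>0$), and the critical value $v=\frac{1}{\sqrt{2}}$ is dictated by the precise threshold at which $V_r$ becomes empty and the generating loop dies.
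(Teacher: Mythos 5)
Your proof is correct and follows essentially the same route as the paper, which states its proof in one line (``Apply Lemma~\ref{lemma:circletightspan} and Theorem~\ref{theorem:isom}''); you have simply spelled out the details left implicit there, namely the convexity-based retraction of $\mathbb{D}^2_\infty\setminus V_r$ onto $\partial\mathbb{D}^2_\infty$, the monotonicity $V_s\subseteq V_r$ ensuring the inclusions $B_r\hookrightarrow B_s$ are $H_1$-isomorphisms in the subcritical range, and the factor-of-$2$ rescaling from Theorem~\ref{theorem:isom}.
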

	\begin{proof}
		Apply Lemma \ref{lemma:circletightspan} and Theorem \ref{theorem:isom}.
	\end{proof}
	
	Moreover, it turns out that, despite the fact that Theorem \ref{thm:tslinftyplane} is restricted to subsets of $\R^2$, Lemma \ref{lemma:squaretightspan} can be generalized to  arbitrary dimensions.
	
	\begin{lemma}\label{lemma:nsquaretightspan}
		For any $n\in\mathbb{Z}_{>0}$, $\blacksquare^n_\infty$ is the tight span of $\square^{n-1}_\infty$. Moreover, 
		$$B_r(\square^{n-1}_\infty,\blacksquare^n_\infty)=\begin{cases}[-1,1]^n\backslash [-(1-r),1-r]^n & \text{if }r\in(0,1]\\ [-1,1]^n & \text{if }r>1\end{cases}$$
	\end{lemma}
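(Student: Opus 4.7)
The plan is to split the lemma into the explicit formula for $B_r(\square^{n-1}_\infty,\blacksquare^n_\infty)$ and the assertion that $\blacksquare^n_\infty$ is the tight span of $\square^{n-1}_\infty$. The formula is an immediate $\ell^\infty$-computation: for any $x = (x_1,\ldots,x_n) \in \blacksquare^n_\infty$, the closest point of $\square^{n-1}_\infty$ is obtained by replacing the coordinate of largest absolute value, $x_i$, by its sign $\pm 1$ and leaving the other coordinates unchanged, so the distance from $x$ to $\square^{n-1}_\infty$ equals $\min_i(1-|x_i|)$. Hence $x \in B_r(\square^{n-1}_\infty,\blacksquare^n_\infty)$ iff $\max_i|x_i| > 1-r$, which yields $[-1,1]^n \setminus [-(1-r),1-r]^n$ for $r \in (0,1]$ and all of $[-1,1]^n$ for $r > 1$.

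To prove that $\blacksquare^n_\infty$ is the tight span of $\square^{n-1}_\infty$, I will verify (a) $\blacksquare^n_\infty$ is injective, and (b) every injective subspace of $\blacksquare^n_\infty$ containing $\square^{n-1}_\infty$ must equal $\blacksquare^n_\infty$. Together these suffice: by (a) and the universal property of the tight span, there is an isometric embedding $E(\square^{n-1}_\infty) \hookrightarrow \blacksquare^n_\infty$ extending the identity on $\square^{n-1}_\infty$, and by (b) its image (a hyperconvex subspace containing $\square^{n-1}_\infty$) must be all of $\blacksquare^n_\infty$. Statement (a) is routine: $\blacksquare^n_\infty$ is a closed $\ell^\infty$-ball in the hyperconvex space $\R^n_\infty$, so Lemma \ref{lemma:intercloballhyper} together with Proposition \ref{prop:hypinj} gives injectivity (equivalently, $\blacksquare^n_\infty$ is the $\ell^\infty$-product of $n$ copies of the injective interval $[-1,1]$).

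The substantive step, and the main obstacle, is (b), since Theorem \ref{thm:tslinftyplane}, which handles the planar case $n=2$, has no readily available analog in higher dimensions. To handle (b) I will argue directly via the Helly characterization of hyperconvexity. Fix an injective subspace $F \subseteq \blacksquare^n_\infty$ containing $\square^{n-1}_\infty$ and an arbitrary point $p = (p_1,\ldots,p_n) \in \blacksquare^n_\infty$. For each index $i \in \{1,\ldots,n\}$ and sign $\epsilon \in \{-1,+1\}$, let $e_i^\epsilon \in \square^{n-1}_\infty$ be the point obtained by replacing the $i$-th coordinate of $p$ by $\epsilon$, and set $r_i^\epsilon := 1 - \epsilon\, p_i$. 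A short case analysis ($i = j$ versus $i \neq j$) shows that $\|e_i^\epsilon - e_j^{\epsilon'}\|_\infty \leq r_i^\epsilon + r_j^{\epsilon'}$ for all choices, with equality exactly when $i = j$ and $\epsilon' = -\epsilon$ (in which case both sides equal $2$). Applying the hyperconvexity of $F$ to this family of $2n$ data yields a point $q \in F$ with $\|q - e_i^\epsilon\|_\infty \leq r_i^\epsilon$ for every $(i,\epsilon)$; reading off the $i$-th coordinate, $|q_i - 1| \leq 1 - p_i$ forces $q_i \geq p_i$ while $|q_i + 1| \leq 1 + p_i$ forces $q_i \leq p_i$, so $q = p$ and therefore $p \in F$. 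The decisive choice is the family $\{e_i^\epsilon\}$ of $2n$ ``axis-snapped'' centers on $\square^{n-1}_\infty$: they are the minimal collection whose Helly-tight closed balls pin $p$ down coordinate by coordinate, and I do not see how to circumvent this explicit coordinate construction.
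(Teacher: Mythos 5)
Your proof is correct, and the minimality step (b) takes a genuinely different route from the paper's. The paper first passes without loss of generality to $x_1 \geq \dots \geq x_n$ and then exhibits exactly \emph{two} points $p_0,p_1 \in \square^{n-1}_\infty$ obtained by translating $p$ along the diagonal $(1,\dots,1)$ to the face $x_n = -1$ and to the face $x_1 = 1$, respectively; the key observation is that for these two points the balls $\overline{B}_{\|p_0-p\|_\infty}(p_0)$ and $\overline{B}_{\|p-p_1\|_\infty}(p_1)$ intersect in $\mathbb{R}^n_\infty$ in the single point $p$ (because $p_0,p,p_1$ are collinear along the diagonal, the ball intersection pins every coordinate from above and below simultaneously). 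Hyperconvexity of the would-be proper injective subspace $X$ then forces $p\in X$. You instead apply Helly to the full family of $2n$ ``axis-snapped'' points $\{e_i^{\epsilon}\}$, each controlling a single coordinate of $p$ from one side; the trade-off is that you avoid the WLOG coordinate ordering and the balls need not have a one-point ambient intersection (only their $F$-intersection is forced to contain some $q$, which you then squeeze to $p$ coordinate-by-coordinate). Both are valid and similar in length; yours is more symmetric and mechanical, the paper's is more parsimonious. One small bookkeeping remark: your reduction of tight-span minimality to ``no proper hyperconvex subspace of $\blacksquare^n_\infty$ contains $\square^{n-1}_\infty$'' is exactly the paper's framing, so no gap there, though strictly speaking the image of $E(\square^{n-1}_\infty)$ under the embedding need not contain $\square^{n-1}_\infty$ pointwise unless you arrange the embedding to restrict to the identity on $\square^{n-1}_\infty$ — which the universal property does allow, as you note.
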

	\begin{proof}
		When $n\geq 3$ one cannot invoke Theorem \ref{thm:tslinftyplane} since it does not hold for general $n$ (see \cite[Example 5]{kilicc2016tight}). We will instead directly prove that $\blacksquare^n_\infty$ is the tight span of $\square^{n-1}_\infty$.

		First, observe that $\blacksquare^n_\infty=\overline{B}_1(O,\R^n_\infty)$, where $O=(0,\dots,0)$ is the origin, is hyperconvex by Lemma \ref{lemma:intercloballhyper}.

		Therefore, in order to show that $\blacksquare^n_\infty$ is indeed the tight span of $\square^{n-1}_\infty$, it is enough to show that there is no proper hyperconvex subspace of $\blacksquare^n_\infty$ containing $\square^{n-1}_\infty$. Suppose this is not true. Then there exists a proper hyperconvex subspace $X$ such that $\square^{n-1}_\infty\subseteq X\subsetneq\blacksquare^n_\infty$. Choose $p=(x_1,\dots,x_n)\in\blacksquare^n_\infty\backslash X$. Without loss of generality, one can assume $x_1\geq\dots\geq x_n$. Now, let $$p_0:=(x_1-(x_n+1),x_2-(x_n+1),\dots,-1)$$ and $$p_1:=(1,x_2+(1-x_1),\dots,x_n+(1-x_1)).$$ See Figure \ref{fig:p0p1}. Then, it is clear that $p_0,p_1\in\square^{n-1}_\infty\subseteq X$ and
		$$\Vert p_0-p_1\Vert_\infty=(x_n+1)+(1-x_1)=\Vert p_0-p\Vert_\infty+\Vert p-p_1\Vert_\infty.$$ 
		\begin{figure}
			\centering
			\includegraphics[width=0.2\textheight]{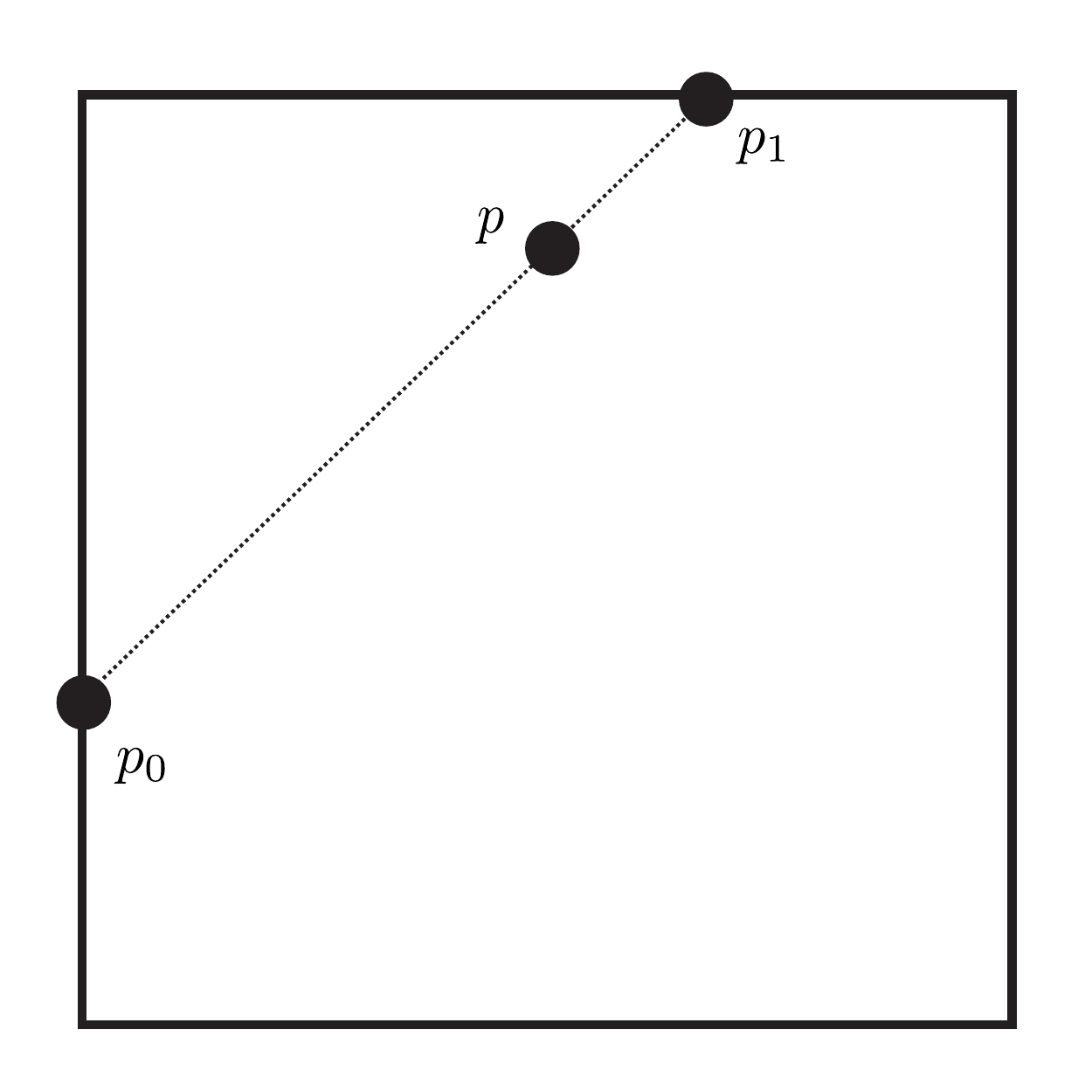}
			\caption{The points $p,p_0$ and $p_1$ in the proof of Lemma \ref{lemma:nsquaretightspan}.}
			\label{fig:p0p1}
		\end{figure}
		Therefore, since $X$ is hyperconvex, we know that
		$$\overline{B}_{\Vert p_0-p\Vert_\infty}(p_0,X)\cap\overline{B}_{\Vert p-p_1\Vert_\infty}(p_1,X)\neq\emptyset.$$
		However, note that
		$$\overline{B}_{\Vert p_0-p\Vert_\infty}(p_0,X)\cap\overline{B}_{\Vert p-p_1\Vert_\infty}(p_1,X)\subseteq\overline{B}_{\Vert p_0-p\Vert_\infty}(p_0,\R^n_\infty)\cap\overline{B}_{\Vert p-p_1\Vert_\infty}(p_1,\R^n_\infty)=\{p\}.$$
		This means that $p\in X$, which is contradiction. Hence, no such $X$ exists. Therefore, $\blacksquare^n_\infty$ is the tight span of $\square^{n-1}_\infty$, as we required.

		The second claim, namely the explicit expression of $B_r(\square^{n-1}_\infty,\blacksquare^n_\infty)$ is obvious since we are using the $\ell^\infty$-norm.
	\end{proof}
	
	\begin{corollary}
		For any $n\in\mathbb{Z}_{>0}$, $B_r(\square^{n-1}_\infty,\blacksquare^n_\infty)$ is homotopy equivalent to $\mathbb{S}^{n-1}$ for $r\in(0,1]$ and contractible for $r>1$. Hence, for any field $\mathbb{F}$, it holds that:
		$$\dgmR_k(\square^{n-1}_\infty,\mathbb{F})=\begin{cases}\{(0,\infty)\} & \text{if }k=0\\ \{(0,2]\} & \text{if }k=n-1\\ \emptyset & \text{otherwise}\end{cases}$$
		for $n\geq 2$, and
		$$\dgmR_k(\square^0_\infty,\mathbb{F})=\begin{cases}\{(0,\infty),(0,2]\} & \text{if }k=0\\ \emptyset & \text{if }k\geq 1.\end{cases}$$
	\end{corollary}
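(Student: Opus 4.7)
The plan is to extract the homotopy types of $B_r(\square^{n-1}_\infty,\blacksquare^n_\infty)$ directly from the explicit description supplied by Lemma \ref{lemma:nsquaretightspan}, and then transport the answer to the Vietoris-Rips barcode via the isomorphism theorem.

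For $r>1$ the formula gives $B_r=[-1,1]^n$, which is convex and hence contractible. For $r\in(0,1]$ it gives $B_r=\{x\in[-1,1]^n:\|x\|_\infty>1-r\}$, and I would check that the $\ell^\infty$-radial map
$$H(x,t)=(1-t)\,x+t\,\frac{x}{\|x\|_\infty},\qquad t\in[0,1],$$
defines a strong deformation retraction of $B_r$ onto $\partial[-1,1]^n=\square^{n-1}_\infty$. Well-definedness follows from $\|x\|_\infty>1-r\geq 0$ on $B_r$; the identity $\|H(x,t)\|_\infty=(1-t)\|x\|_\infty+t$ simultaneously keeps the homotopy inside $B_r$ (it exceeds $(1-t)(1-r)+t=1-r+rt\geq 1-r$, with equality only at $t=0$ where the value already equals $\|x\|_\infty>1-r$), bounds it above by $1$, lands it on the boundary at $t=1$, and fixes $\partial[-1,1]^n$ pointwise. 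Since $\square^{n-1}_\infty$ is homeomorphic to $\mathbb{S}^{n-1}$ (with $\square^0_\infty\cong\mathbb{S}^0$), this yields $B_r\simeq\mathbb{S}^{n-1}$ throughout $r\in(0,1]$.

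Next, the space $\blacksquare^n_\infty$ is injective: as a finite intersection of closed $\ell^\infty$-balls it is hyperconvex via Lemma \ref{lemma:intercloballhyper}, or alternatively as an $\ell^\infty$-product of injective intervals it is injective via Lemma \ref{lem:prod}. It contains $\square^{n-1}_\infty$ as its tight span by Lemma \ref{lemma:nsquaretightspan}. Thus Theorem \ref{theorem:isom} applies and yields a natural isomorphism $\mathrm{PH}_k(\mathrm{VR}_{2r}(\square^{n-1}_\infty);\mathbb{F})\cong\mathrm{H}_k(B_r(\square^{n-1}_\infty,\blacksquare^n_\infty);\mathbb{F})$ at the level of persistence modules. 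The inclusion $B_r\hookrightarrow B_s$ for $0<r\leq s\leq 1$ is a homotopy equivalence (both retractions land on the same $\partial[-1,1]^n$), while for $r\leq 1<s$ it factors through a contractible space. For $n\geq 2$, this produces a single bar $(0,\infty)$ in degree $0$ (from connectedness of $\mathbb{S}^{n-1}$), a single bar $(0,1]$ in degree $n-1$, and nothing else; rescaling $r\mapsto 2r$ gives $(0,\infty)$ and $(0,2]$ as claimed. For $n=1$, $B_r$ has two connected components for $r\in(0,1]$ and one component for $r>1$, so $H_0$ contributes the bars $(0,\infty)$ and $(0,1]$ (rescaling to $(0,\infty)$ and $(0,2]$), and higher-degree homology is trivial.

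The only delicate point is the bookkeeping of endpoints. The $B_*$ filtration uses strict inequality in $r$, so any bar is automatically left-open, while the $\mathbb{S}^{n-1}$ phase persists through the closed right endpoint $r=1$; this is consistent with (and in any case forced by) Theorem \ref{thm:intervalform}.
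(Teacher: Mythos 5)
Your argument is correct and follows essentially the same route as the paper's proof, which simply cites Lemma \ref{lemma:nsquaretightspan} and Theorem \ref{theorem:isom}. The one thing you supply that the paper leaves implicit is the verification that $[-1,1]^n\setminus[-(1-r),1-r]^n$ deformation retracts onto $\partial[-1,1]^n$; your radial homotopy $H(x,t)=\bigl((1-t)+t/\|x\|_\infty\bigr)x$ is well chosen, and the computation $\|H(x,t)\|_\infty=(1-t)\|x\|_\infty+t$ cleanly shows both containment in $B_r$ and that the retraction is strong. The observation that $\blacksquare^n_\infty=\overline{B}_1(O,\mathbb{R}^n_\infty)$ is a single closed ball (hence hyperconvex by Lemma \ref{lemma:intercloballhyper}) is exactly what the paper uses inside the proof of Lemma \ref{lemma:nsquaretightspan}, and the rescaling $r\mapsto 2r$ and endpoint bookkeeping via Theorem \ref{thm:intervalform} are handled correctly.
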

	\begin{proof}
		Apply Lemma \ref{lemma:nsquaretightspan} and Theorem \ref{theorem:isom}.
	\end{proof}

	\begin{Remark}
		It seems of interest to study the homotopy types of Vietoris-Rips complexes of ellipsoids with the $\ell^\infty$-metric, cf. \cite{henry-ellipses}.
	\end{Remark}

	Here, observant readers would have already  noticed that we do not need to use the tight spans of $\mathbb{S}^1_\infty$ and $\square^{n-1}_\infty$ in order to apply Theorem \ref{theorem:isom} since $\R^n_\infty$ itself is an injective metric space for any $n\in\mathbb{Z}_{>0}$. In particular, the persistent homology of $\square^{n-1}_\infty$ is simpler to compute if we use $\R^n_\infty$ as an ambient space. However, we believe that it is worth  clarifying  what are the tight spans of $\mathbb{S}^1_\infty$ and $\square^{n-1}_\infty$ since the exact shape of tight spans are largely mysterious in general. 
	\linebreak
	
	We do not know whether $\mathbb{D}^n_\infty$ is the tight span of $\mathbb{S}^{n-1}_\infty$ for general $n$. However, if we use $\R^n_\infty$ as an ambient injective metric space, we are still able to compute its persistent homology.

	\thmsphinf
	
	\begin{proof}
		First, let's prove that $B_r(\mathbb{S}^{n-1}_\infty,\mathbb{D}^n_\infty)$ is a deformation retract of $B_r(\mathbb{S}^{n-1}_\infty,\R^n_\infty)$. Consider the map $P_n:\{(x_1,\dots,x_n)\in\mathbb{R}^n:\sum_{i=1}^n x_i^2\geq 1\}\rightarrow\mathbb{S}^{n-1}_\infty$ such that for any $(x_1,\dots,x_n)\in\{(x_1,\dots,x_n)\in\mathbb{R}^n:\sum_{i=1}^n x_i^2\geq 1\}$, $P_n(x_1,\dots,x_n)$ is the unique point of $\mathbb{S}^{n-1}_\infty$ such that $\Vert (x_1,\dots,x_n)-P_n(x_1,\dots,x_n) \Vert_\infty=\inf_{(y_1,\dots,y_n)\in\mathbb{S}^{n-1}_\infty}\Vert (x_1,\dots,x_n)-(y_1,\dots,y_n)\Vert_\infty$. Observe that it is easy (but, very tedious) to prove that $P_n$ is well-defined, continuous, and that $P_n|_{\mathbb{S}^{n-1}_\infty}=\mathrm{id}_{\mathbb{S}^{n-1}_\infty}$.

		Now, for any $r>0$, consider the following homotopy 
		\begin{align*}
			h_{n,r}:B_r(\mathbb{S}^{n-1}_\infty,\R^n_\infty)\times [0,1]&\longrightarrow B_r(\mathbb{S}^{n-1}_\infty,\R^n_\infty)\\
			(x_1,\dots,x_n,t)&\longmapsto \begin{cases}(x_1,\dots,x_n)&\text{if }(x_1,\dots,x_n)\in\mathbb{D}^n_\infty\\(1-t)(x_1,\dots,x_n)+tP_n(x_1,\dots,x_n)&\text{if }(x_1,\dots,x_n)\notin\mathbb{D}^n_\infty\end{cases}
		\end{align*}
		
		The only subtle point is ascertaining whether the image of this map is contained in $B_r(\mathbb{S}^{n-1}_\infty,\R^n_\infty)$. But, for this note that $\Vert (x_1,\dots,x_n)-P_n(x_1,\dots,x_n) \Vert_\infty<r$ because of the definition of $P_n$ and the fact that $(x_1,\dots,x_n)\in B_r(\mathbb{S}^{n-1}_\infty,\R^n_\infty)$. Therefore, both $(x_1,\dots,x_n)$ and $P_n(x_1,\dots,x_n)$ belong to $B_r(P_n(x_1,\dots,x_n),\R^n_\infty)$ so that the linear interpolation is also contained in $B_r(P_n(x_1,\dots,x_n),\R^n_\infty)\subset B_r(\mathbb{S}^{n-1}_\infty,\R^n_\infty)$. Hence, one can conclude that $B_r(\mathbb{S}^{n-1}_\infty,\mathbb{D}^n_\infty)$ is a deformation retract of $B_r(\mathbb{S}^{n-1}_\infty,\R^n_\infty)$.\medskip

		Next, let's prove that $B_r(\mathbb{S}^{n-1}_\infty,\mathbb{D}^n_\infty)=\mathbb{D}^n_\infty\backslash V_{n,r}$. First, fix arbitrary $(z_1+t_1,\dots,z_n+t_n)\in B_r(\mathbb{S}^{n-1}_\infty,\mathbb{D}^n_\infty)$ where $\sum_{i=1}^n z_i^2=1$ and $t_i\in (-r,r)$ for all $i=1,\dots,n$. Consider the case of $z_i\geq 0$ for all $i=1,\dots,n$. Then,
		
		\begin{align*}
			\sum_{i=1}^n (z_i+t_i+r)^2&=\sum_{i=1}^n z_i^2+\sum_{i=1}^n (t_i+r)^2+\sum_{i=1}^n 2z_i(t_i+r)>1
		\end{align*}
		
		\noindent because of the assumptions on $\{z_i\}_{i=1}^n$ and $\{t_i\}_{i=1}^n$. Therefore, $(z_1+t_1,\dots,z_n+t_n)\notin V_{n,r}$ so that $(z_1+t_1,\dots,z_n+t_n)\in\mathbb{D}^n_\infty\backslash V_{n,r}$. By symmetry, the same result holds for other possible sign combinations of the $z_i$'s. Hence, we have $B_r(\mathbb{S}^{n-1}_\infty,\mathbb{D}^n_\infty)\subseteq\mathbb{D}^n_\infty\backslash V_{n,r}$.

		Now, fix arbitrary $(x_1\dots,x_n)\in\mathbb{D}^n_\infty\backslash V_{n,r}$. Since $(x_1,\dots,x_n)\notin V_{n,r}$, without loss of generality, one can assume that $\sum_{i=1}^n (x_i+r)^2>1$. Also, $\sum_{i=1}^n x_i^2\leq 1$ since $(x_1,\dots,x_n)\in\mathbb{D}^n_\infty$. Then, there must be some $t\in[0,r)$ such that $\sum_{i=1}^n (x_i+t)^2=1$. It follows that $(x_1,\dots,x_n)\in B_r(\mathbb{S}^{n-1}_\infty,\mathbb{D}^n_\infty)$. Since $(x_1,,\dots,x_n)$ is an arbitrary point in $\mathbb{D}^n_\infty\backslash V_{n,r}$ it follows that $\mathbb{D}^n_\infty\backslash V_{n,r}\subseteq B_r(\mathbb{S}^{n-1}_\infty,\mathbb{D}^n_\infty)$.

		With this we conclude that $B_r(\mathbb{S}^{n-1}_\infty,\mathbb{D}^n_\infty)=\mathbb{D}^n_\infty\backslash V_{n,r}$. This completes the proof.
	\end{proof}
	
	\begin{corollary}
		For any $n\geq 2$, $B_r(\mathbb{S}^{n-1}_\infty,\R^n_\infty)$ is homotopy equivalent to $\mathbb{S}^{n-1}$ for $r\in\left(0,\frac{1}{\sqrt{n}}\right]$ and contractible for $r>\frac{1}{\sqrt{n}}$. Hence, for any field $\mathbb{F}$, it holds that:
		$$\dgmR_k(\mathbb{S}^{n-1}_\infty,\mathbb{F})=\begin{cases}\{(0,\infty)\} & \text{if }k=0\\ \left\{\left(0,\frac{2}{\sqrt{n}}\right]\right\} & \text{if }k=n-1\\ \emptyset & \text{otherwise.}\end{cases}$$
	\end{corollary}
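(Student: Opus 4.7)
The plan is to deduce the corollary from Theorem \ref{thm:ncircletightspan} combined with the isomorphism theorem (Theorem \ref{theorem:isom}), using that $\R^n_\infty = L^\infty(\{1,\dots,n\})$ is injective. Since $\mathbb{S}^{n-1}_\infty$ isometrically embeds in $\R^n_\infty$, Lemma \ref{lem:isomhyp} implies $(\mathbb{S}^{n-1}_\infty,\R^n_\infty)$ is isomorphic in $\pmet$ to $(\mathbb{S}^{n-1}_\infty,L^\infty(\mathbb{S}^{n-1}_\infty))$, so Theorem \ref{theorem:isom} yields
\[
\PH_k(\vr_{2*}(\mathbb{S}^{n-1}_\infty);\mathbb{F}) \cong \PH_k(B_*(\mathbb{S}^{n-1}_\infty,\R^n_\infty);\mathbb{F}).
\]
Thus the corollary reduces to computing the persistent homology of the filtration $\big(B_r(\mathbb{S}^{n-1}_\infty,\R^n_\infty)\big)_{r>0}$, and then rescaling indices by a factor of $2$.

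For the homotopy-type half, Theorem \ref{thm:ncircletightspan} immediately gives $B_r\simeq\mathbb{D}^n_\infty$ (hence contractible) for $r>\tfrac{1}{\sqrt{n}}$, and $B_r\simeq\mathbb{D}^n_\infty\setminus V_{n,r}$ for $r\in(0,\tfrac{1}{\sqrt{n}}]$. For the latter range, I first note that $V_{n,r}$ is convex (as an intersection of Euclidean closed balls) and contains the origin, since the origin lies in each defining ball precisely when $nr^2\leq 1$. The radial homotopy $H_t(x)=\big(1-t+t/\|x\|_2\big)x$ then deformation-retracts $\mathbb{D}^n_\infty\setminus V_{n,r}$ onto the Euclidean unit sphere $\mathbb{S}^{n-1}_\infty$: convexity of $V_{n,r}$ together with $0\in V_{n,r}$ forces the intersection of any radial ray with $V_{n,r}$ to be an initial segment, so any point $x\notin V_{n,r}$ remains outside $V_{n,r}$ as it slides outward along the ray. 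This gives $B_r\simeq\mathbb{S}^{n-1}_\infty\cong\mathbb{S}^{n-1}$ for $r\in(0,\tfrac{1}{\sqrt{n}}]$.

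For the barcode, the key point is that all these deformation retracts fix the common subspace $\mathbb{S}^{n-1}_\infty$. Consequently, for $0<r\leq s\leq\tfrac{1}{\sqrt{n}}$, the inclusion $B_r\hookrightarrow B_s$ fits into a commutative triangle with the two inclusions of $\mathbb{S}^{n-1}_\infty$, each of which is a homotopy equivalence; the two-out-of-three property then forces the structure map to induce an isomorphism on every $H_k$. For $r\leq\tfrac{1}{\sqrt{n}}<s$ the target $B_s$ is contractible, so the map is zero on $H_{n-1}$. Combined with the fact that $B_r$ is nonempty and path-connected for all $r>0$ (since $\mathbb{S}^{n-1}_\infty$ is, for $n\geq 2$), the persistence module $\PH_{n-1}(B_*(\mathbb{S}^{n-1}_\infty,\R^n_\infty);\mathbb{F})$ is the interval module on $(0,\tfrac{1}{\sqrt{n}}]$, $\PH_0$ is the interval module on $(0,\infty)$, and all other degrees vanish. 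Rescaling by the factor $2$ from Theorem \ref{theorem:isom} then yields the stated barcode with $\tfrac{2}{\sqrt{n}}$ in degree $n-1$.

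The only mildly subtle step is the convexity argument supporting the radial retract; everything else is bookkeeping on top of the two cited theorems.
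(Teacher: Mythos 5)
Your proof is correct and uses the same two ingredients as the paper's one-line proof (Theorem~\ref{thm:ncircletightspan} plus Theorem~\ref{theorem:isom}), but it usefully spells out a step the paper leaves implicit: why the structure maps of the persistence module are isomorphisms on the parameter range $(0,1/\sqrt{n}]$. You supply this by noting that both the projection $P_n$ from the proof of Theorem~\ref{thm:ncircletightspan} and your radial retraction of $\mathbb{D}^n_\infty\setminus V_{n,r}$ onto the Euclidean sphere restrict to the identity on $\mathbb{S}^{n-1}_\infty$, so each inclusion $\mathbb{S}^{n-1}_\infty\hookrightarrow B_r(\mathbb{S}^{n-1}_\infty,\R^n_\infty)$ is a homotopy equivalence, and two-out-of-three then forces the filtration maps to be homotopy equivalences in that range. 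Your convexity-plus-$0\in V_{n,r}$ justification of the radial retraction (valid exactly when $nr^2\le 1$, covering the closed endpoint where $V_{n,r}=\{0\}$) is also correct and gives an explicit reason for the theorem's unproved assertion that $\mathbb{D}^n_\infty\setminus V_{n,r}\simeq \mathbb{S}^{n-1}$.
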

	\begin{proof}
		Apply Theorem \ref{thm:ncircletightspan} and Theorem \ref{theorem:isom}.
	\end{proof}

	\section{Application: hyperbolicity and persistence}\label{sec:hyperbolicity}
	
	One can reap benefits from the fact that one can choose \emph{any} metric homotopy pairing in the statement of Theorem \ref{theorem:isom}, not just the Kuratowski functor. 
	
	In this section, we will see one such example which arises from the interplay between the \emph{hyperbolicity} of the geodesic metric space $X$ and its tight span $E(X)$ (see Example \ref{ex:inj} to recall the definition of  tight span).

	We first recall the notion of hyperbolicity. 
	
	\begin{definition}[$\delta$-hyperbolicity]\label{def:hyperbolicity}
		A metric space $(X,d_X)$ is called $\delta$-hyperbolic, for some constant $\delta\geq 0$, if
		$$d_X(w,x)+d_X(y,z)\leq\max\{d_X(w,y)+d_X(x,z),d_X(x,y)+d_X(w,z)\}+\delta,$$
		for all quadruples of points $w,x,y,z\in X$. If a metric space is $\delta$-hyperbolic for some $\delta\geq 0$, it is said to be hyperbolic. 
		
		The \emph{hyperbolicity} $\mathrm{hyp}(X)$ of $X$ is defined as the infimal $\delta\geq 0$ such that $X$ is $\delta$-hyperbolic. A metric space is said to be \emph{hyperbolic} if $\hyp(X)$ is finite.
	\end{definition}
	
	For a more concrete development on the geometry of hyperbolic metric spaces and its applications (especially to group theory), see \cite{bonk2011embeddings,ghg}.
	
	\begin{example}
		Here are some examples of hyperbolic spaces:
		\begin{enumerate}
			\item Metric trees are $0$-hyperbolic spaces.
			
			\item All compact Riemannian manifolds are trivially hyperbolic spaces. More interestingly, among unbounded manifolds,  Riemannian manifolds with strictly negative sectional curvature are hyperbolic spaces. Observe that ``strictly negative''  sectional curvature is a necessary condition (for example, consider the Euclidean plane $\mathbb{R}^2$).
		\end{enumerate}
	\end{example}
	
	The following proposition guarantees that the tight span $E(X)$ preserves the hyperbolicity of the underyling space $X$ with controlled distortion.
	
	\begin{proposition}[{\cite[Proposition 1.3]{l13}}]\label{prop:hyperkura}
		If $X$ is a $\delta$-hyperbolic geodesic metric space for some $\delta\geq 0$, then its tight span $E(X)$ is also $\delta$-hyperbolic. 
		
		Moreover,
		$$B_r(X,E(X))=E(X)$$
		for any $r>\delta$. 
	\end{proposition}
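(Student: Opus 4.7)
The plan is to work directly with the explicit description of the tight span. Recall $E(X) \subseteq L^\infty(X)$ consists of the minimal elements of $\Delta(X) = \{f : f(x) + f(y) \geq d_X(x,y) \text{ for all } x,y\}$; equivalently, $f \in E(X)$ if and only if the \emph{extremality} identity $f(x) = \sup_{y \in X}(d_X(x,y) - f(y))$ holds for every $x \in X$. Any such $f$ is nonnegative (from $2f(x) \geq d_X(x,x) = 0$) and $1$-Lipschitz (extremality together with the triangle inequality yields $f(x) \leq f(y) + d_X(x,y)$), and the metric on $E(X)$ is the restriction of the sup-norm.

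For $\delta$-hyperbolicity of $E(X)$, I would first derive the dual formula
\[
d_{E(X)}(f, g) \;=\; \sup_{x, y \in X}\bigl(d_X(x, y) - f(x) - g(y)\bigr) \qquad \text{for all } f, g \in E(X),
\]
which follows by expanding $\sup_x(f(x) - g(x))$ via extremality of $f$ and symmetrizing. Given $f_1,\dots,f_4 \in E(X)$, the sum $d_{E(X)}(f_1,f_2) + d_{E(X)}(f_3,f_4)$ then rewrites as a single supremum over quadruples $(a,b,c,d) \in X^4$ of $d_X(a,b) + d_X(c,d) - f_1(a) - f_2(b) - f_3(c) - f_4(d)$. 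Applying the four-point condition for $X$ pointwise to $d_X(a,b) + d_X(c,d)$ inside the supremum, and then swapping $\sup$ with $\max$ using $\sup\max \leq \max\sup$, directly transfers the four-point condition from $X$ to $E(X)$ with the same constant $\delta$, giving $\hyp(E(X)) \leq \delta$.

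For the ``moreover'' statement, the crucial preliminary is the identity $\|f - d_X(x, \cdot)\|_\infty = f(x)$ for every $f \in E(X)$ and $x \in X$; the upper bound combines $1$-Lipschitzness (giving $f(y) - d_X(x,y) \leq f(x)$) with extremality (giving $d_X(x,y) - f(y) \leq f(x)$), and equality is attained at $y = x$. Consequently $d_{E(X)}(f, \kappa(X)) = \inf_{x \in X} f(x)$, so the statement reduces to showing $\inf_{x \in X} f(x) \leq \delta$ for every $f \in E(X)$. I would prove this by a midpoint-descent argument: given $\varepsilon > 0$, pick $x_\varepsilon$ with $f(x_\varepsilon) < \inf f + \varepsilon$, use extremality of $f$ at $x_\varepsilon$ to find $y_\varepsilon$ with $f(x_\varepsilon) + f(y_\varepsilon) < d_X(x_\varepsilon, y_\varepsilon) + \varepsilon$, take a midpoint $m$ of a geodesic between them (using the geodesic hypothesis on $X$), and choose a near-maximizing extremality witness $w$ at $m$. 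Applying the four-point condition of $X$ to the quadruple $(x_\varepsilon, y_\varepsilon, m, w)$ and using the extremality bounds $d_X(x_\varepsilon, w) - f(w) \leq f(x_\varepsilon)$ and $d_X(y_\varepsilon, w) - f(w) \leq f(y_\varepsilon)$ yields $f(m) \leq \delta + O(\varepsilon)$, hence $\inf f \leq \delta$ after letting $\varepsilon \to 0$. The main obstacle is handling both branches of the max in the four-point condition symmetrically: the branch involving $d_X(x_\varepsilon, w)$ produces the desired bound directly (since $f(x_\varepsilon) \approx \inf f$), while the branch involving $d_X(y_\varepsilon, w)$ may require refining the midpoint choice or iterating the construction to prevent large values of $f(y_\varepsilon)$ from spoiling the estimate, and one must also take care since $X$ need not be proper so compactness arguments on minimizing sequences are unavailable.
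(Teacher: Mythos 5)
This proposition is imported from Lang's paper and not proved in the present one, so there is no in-paper argument to compare against. Your first half is correct: the dual formula $d_{E(X)}(f,g)=\sup_{x,y\in X}\bigl(d_X(x,y)-f(x)-g(y)\bigr)$ (the lower bound from extremality, the upper from $f\in\Delta(X)$) rewrites $d_{E(X)}(f_1,f_2)+d_{E(X)}(f_3,f_4)$ as a single supremum over $X^4$, and then $\sup\max\leq\max\sup$ transfers the four-point inequality with the same constant $\delta$; this part needs no geodesic hypothesis and in fact shows $\hyp(E(X))\leq\hyp(X)$ for arbitrary $X$. The identity $\|f-\kappa(x)\|_\infty=f(x)$ and the resulting reduction of the second claim to $\inf_{x\in X}f(x)\leq\delta$ are also right.

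The midpoint step, however, does not close, and the asymmetry you flag is a genuine failure rather than a technicality: with $m$ the midpoint, $d_X(x_\varepsilon,m)=d_X(y_\varepsilon,m)=\tfrac12 d_X(x_\varepsilon,y_\varepsilon)$, and the branch of the four-point maximum containing $d_X(y_\varepsilon,w)$ only yields $f(m)\leq f(y_\varepsilon)-\tfrac12 d_X(x_\varepsilon,y_\varepsilon)+\delta+O(\varepsilon)$, which is vacuous when $f(y_\varepsilon)$ is large --- and $y_\varepsilon$ is forced on you by extremality, not chosen. The repair is to drop both the midpoint and the near-minimizer role of $x_\varepsilon$: take \emph{any} $x\in X$, pick $y$ with $f(x)+f(y)<d_X(x,y)+\varepsilon$, and let $m$ be the point on a geodesic from $x$ to $y$ at distance exactly $f(x)$ from $x$ (if $f(x)>d_X(x,y)$ then $f(y)<\varepsilon$ and $m:=y$ already works). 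Since $f\in\Delta(X)$ gives $d_X(x,y)\leq f(x)+f(y)$, one has $d_X(x,m)=f(x)$ and $d_X(m,y)\leq f(y)$, so for every $w\in X$ \emph{both} branches of the four-point maximum on the quadruple $(x,y,m,w)$ are bounded by the same quantity: $d_X(x,m)+d_X(y,w)\leq f(x)+f(y)+f(w)$ and $d_X(x,w)+d_X(m,y)\leq f(x)+f(w)+f(y)$. The four-point inequality then gives $d_X(x,y)+d_X(m,w)\leq f(x)+f(y)+f(w)+\delta$, hence $d_X(m,w)-f(w)<\delta+\varepsilon$ uniformly in $w$, and extremality of $f$ at $m$ yields $f(m)\leq\delta+\varepsilon$. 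Letting $\varepsilon\to 0$ gives $\inf_X f\leq\delta$ with no case split, no minimizing sequence, and no iteration.
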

	
	\begin{Remark}
		Note that since $X$ embeds isometrically into $E(X)$, the above implies that $$\mathrm{hyp}(E(X))=\mathrm{hyp}(X).$$
	\end{Remark}
	
	The following corollary was already established  by Gromov (who attributes it to Rips) in \cite[Lemma 1.7.A]{ghg}. The proof given by Gromov operates at the simplicial level.  By invoking Proposition \ref{prop:hyperkura} we obtain an alternative proof, which instead of operating the simplicial level, exploits the isometric embedding  of $X$ into its tight span $E(X)$ (which is a compact contractible space).
	
	\begin{corollary}\label{cor:Ripslemma}
		If $X$ is a hyperbolic geodesic metric space, then $\vr_{2r}(X)$ is contractible for any $r>\mathrm{hyp}(X)$.
	\end{corollary}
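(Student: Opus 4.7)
The plan is to bypass the original simplicial argument of Gromov--Rips by embedding $X$ isometrically into its tight span $E(X)$ and reducing the contractibility of $\vr_{2r}(X)$ to the contractibility of $E(X)$ itself. Since $E(X)$ is injective (Example \ref{ex:inj}), Proposition \ref{prop:vrhyperconvex} applied to the pair $(X,E(X))$ immediately gives
\[
\vr_{2r}(X) \simeq B_r\!\left(X, E(X)\right).
\]
This first step turns a simplicial question into a purely geometric one about open $r$-neighborhoods inside an injective ambient space.

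The second step exploits the hyperbolicity hypothesis. Because $\mathrm{hyp}(X)$ is the infimum of those $\delta$ for which the four-point condition holds, and this condition passes to limits, $X$ is itself $\mathrm{hyp}(X)$-hyperbolic; in particular, for any $r>\mathrm{hyp}(X)$ one can select $\delta\in[\mathrm{hyp}(X),r)$ such that $X$ is $\delta$-hyperbolic. Proposition \ref{prop:hyperkura} then gives $B_r(X,E(X)) = E(X)$. Chaining this with the previous homotopy equivalence yields $\vr_{2r}(X) \simeq E(X)$, and Corollary \ref{cor:injectivecont} (contractibility of every injective metric space via its canonical conical bicombing) concludes that $\vr_{2r}(X)$ is contractible.

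The main potential obstacle, namely manipulating the simplicial complex $\vr_{2r}(X)$ directly, is exactly what this approach sidesteps: once the nerve-type bridge of Proposition \ref{prop:vrhyperconvex} is in hand, hyperbolicity plays its role not by producing a simplicial contraction but by forcing the $r$-neighborhood of $X$ in $E(X)$ to swallow the entire contractible injective hull. The only verification beyond citing earlier results is the infimum-is-attained remark above, which is immediate from the closedness of the four-point inequality; after that the proof is a three-line chain of Proposition \ref{prop:vrhyperconvex}, Proposition \ref{prop:hyperkura}, and Corollary \ref{cor:injectivecont}.
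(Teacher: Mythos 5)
Your proof is correct and follows exactly the same route as the paper's: homotopy-equivalence of $\vr_{2r}(X)$ with $B_r(X,E(X))$ via Proposition~\ref{prop:vrhyperconvex}, collapse of $B_r(X,E(X))$ to all of $E(X)$ via Proposition~\ref{prop:hyperkura}, and contractibility of the injective space $E(X)$ via Corollary~\ref{cor:injectivecont}. The small side observation that $\hyp(X)$ is attained is true but unnecessary --- by definition of the infimum some $\delta\in[\hyp(X),r)$ making $X$ $\delta$-hyperbolic already exists whenever $r>\hyp(X)$, which is all that Proposition~\ref{prop:hyperkura} requires.
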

	\begin{proof}
		Choose arbitrary $r>\mathrm{hyp}(X)$. Then, there is $\delta\in[\mathrm{hyp}(X),r)$ such that $X$ is $\delta$-hyperbolic.

		By Proposition \ref{prop:vrhyperconvex}, we know that $\vr_{2r}(X)$ is homotopy equivalent to $B_r(X,E(X))$. But, by Proposition \ref{prop:hyperkura}, $B_r(X,E(X))=E(X)$. Since $E(X)$ is contractible by Corollary \ref{cor:injectivecont}, $\vr_{2r}(X)$ is contractible.
	\end{proof}
	
	As a consequence one can bound the length of intervals in the persistence barcode of hyperbolic spaces.
	
	\begin{corollary}
		If $X$ is a hyperbolic geodesic metric space, then for any $k\geq 1$ and $I=(u,v]\in\dgmR_k(X;\mathbb{F})$, we have $v\leq 2\,\mathrm{hyp}(X)$. In particular, $\mathrm{length}(I)\leq 2\,\mathrm{hyp}(X)$.
	\end{corollary}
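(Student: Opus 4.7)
The plan is to deduce the statement directly from Corollary \ref{cor:Ripslemma}, together with the persistence barcode decomposition (Theorem \ref{theorem:pbarcode}). The key observation is that Corollary \ref{cor:Ripslemma} already supplies exactly the vanishing statement we need at the space level: for every $r>\mathrm{hyp}(X)$, the Vietoris-Rips complex $\vr_{2r}(X)$ is contractible, hence upon applying $\Hom_k(-;\mathbb{F})$ for $k\geq 1$ we obtain $\PH_k(\vr_s(X);\mathbb{F}) = 0$ for all $s>2\,\mathrm{hyp}(X)$.

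Next, I would unpack what the existence of an interval $I=(u,v]\in\dgmR_k(X;\mathbb{F})$ with $k\geq 1$ means under the decomposition $\PH_k(\vr_\ast(X);\mathbb{F})\cong\bigoplus_\lambda (I_\lambda)_{\mathbb{F}}$ guaranteed by Theorem \ref{theorem:pbarcode}: namely, there exists $\lambda_0$ with $I_{\lambda_0}=(u,v]$, and for every parameter $s\in I$ the summand $(I_{\lambda_0})_{\mathbb{F}}$ contributes a nonzero copy of $\mathbb{F}$ to $\PH_k(\vr_s(X);\mathbb{F})$. Therefore, if we had $v>2\,\mathrm{hyp}(X)$, we could choose any $s$ with $2\,\mathrm{hyp}(X)<s\leq v$ (so $s\in I$) and obtain $\PH_k(\vr_s(X);\mathbb{F})\neq 0$, contradicting the vanishing statement from the first paragraph. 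Hence $v\leq 2\,\mathrm{hyp}(X)$.

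The bound on $\mathrm{length}(I)$ is then immediate: since every interval in $\dgmR_k(X;\mathbb{F})$ has left endpoint $u\geq 0$ (by Theorem \ref{thm:intervalform} intervals are left-open in $\mathbb{R}_{>0}$), one gets $\mathrm{length}(I)=v-u\leq v\leq 2\,\mathrm{hyp}(X)$.

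Honestly, there is no real obstacle here: the entire content of the statement sits in Corollary \ref{cor:Ripslemma}, and the argument above is simply the translation between contractibility of the filtration beyond a threshold and the vanishing of barcode intervals beyond that threshold. The only minor subtlety to flag is that Theorem \ref{thm:intervalform} ensures intervals are of the form $(u,v]$ or $(u,\infty)$, and the same contractibility argument also excludes the latter shape in dimensions $k\geq 1$, so the hypothesis ``$I=(u,v]$'' in the statement loses no generality.
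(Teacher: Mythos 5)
Your proof is correct and takes exactly the same route as the paper, which simply says ``Apply Corollary \ref{cor:Ripslemma}''; you have spelled out the translation from contractibility of $\vr_{s}(X)$ for $s>2\,\mathrm{hyp}(X)$ to the vanishing of barcode intervals past that threshold, via the q-tame decomposition of Theorem \ref{theorem:pbarcode}. The remark that the contractibility argument also rules out intervals of the form $(u,\infty)$ for $k\geq 1$ is a correct and worthwhile observation.
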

	\begin{proof}
		Apply Corollary \ref{cor:Ripslemma}.
	\end{proof}
	
	Observe that metric trees are both $0$-hyperbolic and hyperconvex. A recent paper by Joharinad and Jost \cite{joharinad2020topological} analyzes the persistent homology of metric spaces satisfying the hyperconvexity condition (which is equivalent to  injectivity)  as well as that of spaces satisfying a  relaxed version of hyperconvexity.

	\section{Application: the filling radius, spread, and persistence}\label{sec:fil}

	In this section, we recall the notions of spread and filling radius, as well as their relationship. In particular, we prove a number of statements about the filling radius of a closed connected manifold. Moreover, we consider a generalization of the filling radius and  also define a strong notion of filling radius which is akin to the so called \emph{maximal persistence} in the realm of topological data analysis.

	\subsection{Spread}\label{sec:spread}
	We recall a metric concept called \emph{spread}. The following definition is a variant of the one given in \cite[Lemma 1]{k83}.
	
	\begin{definition}[$N$-spread]
		For any integer $N\in\mathbb{Z}_{>0}$, the \emph{$N$-th spread} $\spread_N(X)$ of a metric space $(X,d_X)$ is the infimal $r>0$ such that there exists a subset $A$ of $X$ with cardinality at most $N$ such that 
		\begin{itemize}
			\item $\diam(A) < r$
			\item $\sup_{x\in X}\inf_{a\in A} d_X(x,a) < r.$
		\end{itemize}
		Finally, the \emph{spread} of $X$ is defined to be $\spread(X):=\inf_N\spread_N(X)$, i.e. the set $A$ is allowed to have arbitrary (finite) cardinality.
	\end{definition}

	\begin{Remark}\label{rem:rad}
		Recall that the \emph{radius} of a compact metric space $(X,d_X)$ is
		$$\rad(X):=\inf_{p\in X}\max_{x\in X}d_X(p,x).$$
		Thus, $\rad(X)=\spread_1(X)$.
	\end{Remark}
	
	\begin{Remark}[The spread of spheres]\label{rem:spread-sn}
		Katz proves in \cite[Theorem 2]{k83} that for all integers $n\geq 1$, $$\spread(\mathbb{S}^n) = \arccos\left(\frac{-1}{n+1}\right).$$
		For example, $\spread(\mathbb{S}^1)=\frac{2\pi}{3}.$ Notice that $\spread(\mathbb{S}^m)\geq \spread(\mathbb{S}^n) \geq \frac{\pi}{2}$ for $m\leq n.$ Katz's proof actually yields that $$\spread_{n+2}(\mathbb{S}^n) = \spread(\mathbb{S}^n)$$ for each $n$.
	\end{Remark}

	\subsection{Bounding barcode length via spread}
	
	Let $(X,d_X)$ be a compact metric space. Recall that for each integer $k\geq 0$, $\dgmR_k(X;\mathbb{F})$ denotes the persistence barcode associated to $\PH_k(\vr_*(X);\mathbb{F})$, the $k$-th persistent homology induced by the Vietoris-Rips filtration of $X$ (see Section \ref{sec:vietoris}).

	The following lemma is due to Katz \cite[Lemma 1]{k83}. 
	
	\begin{lemma}\label{lemma:katzcontraction}
		Let $(X,d_X)$ be a compact metric space. Then, for any $\delta>\frac{\spread(X)}{2}$, there exists a contractible space $U$ such that $X\subseteq U\subseteq B_\delta(X,L^\infty(X))$.
	\end{lemma}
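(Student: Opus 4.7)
The plan is to build $U$ as a tree-like union of line segments in $L^\infty(X)$, using the spread witness together with the hyperconvex structure.

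\textit{Setup.} Since $\delta>\tfrac{1}{2}\spread(X)$, I first choose $r\in(\spread(X),2\delta)$ and a finite set $A=\{a_1,\ldots,a_n\}\subseteq X$ with $\diam(A)<r$ and $\sup_{x\in X}\inf_{a\in A}d_X(x,a)<r$. Because $d(a_i,a_j)\le\diam(A)<r=r/2+r/2$, the closed balls $\overline{B}_{r/2}(a_i,L^\infty(X))$ pairwise intersect, so by hyperconvexity (Proposition~\ref{prop:hypinj}) they share a common point $p$ with $d(p,a_i)\le r/2<\delta$ for every $i$. In particular $p\in B_\delta(X,L^\infty(X))$.

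\textit{Candidate $U$ and inclusions.} Define
$$U\;:=\;\bigcup_{i=1}^n[a_i,p]\;\cup\;\bigcup_{\substack{x\in X,\,a\in A\\ d_X(x,a)<r}}[x,a]\;\subseteq\;L^\infty(X),$$
where $[y,z]=\{(1-s)y+sz:s\in[0,1]\}$ is the straight segment in $L^\infty(X)$. The covering-radius condition forces $X\subseteq U$, since each $x\in X$ is an endpoint of at least one leaf segment. The containment $U\subseteq B_\delta(X,L^\infty(X))$ is a direct check: on each spoke $[a_i,p]$, every point is within $d(a_i,p)\le r/2<\delta$ of $a_i\in X$; on each leaf $[x,a]$ with $d_X(x,a)<r<2\delta$, the point $(1-s)x+sa$ is within $\min(s,1-s)\,d_X(x,a)\le r/2<\delta$ of either $x$ or $a$, both of which lie in $X$.

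\textit{Contractibility (the main obstacle).} Viewed abstractly, $U$ is a star-of-stars rooted at $p$, with spokes $[a_i,p]$ and leaf segments $[x,a]$ attached at the $a_i$'s, and the intended deformation retraction to $\{p\}$ collapses each leaf to its $a$-endpoint and then each spoke to $p$. The genuine technical difficulty is that two linear segments in $L^\infty(X)$ can cross transversely in their interiors, so the subspace topology of $U$ need not coincide with that of the abstract tree $T$ obtained by gluing segments along their shared endpoints only; in principle such crossings could generate nontrivial $1$-cycles. I plan to resolve this by exhibiting the natural continuous map $\phi:T\to L^\infty(X)$ with image $U$ and arguing, via a Mayer--Vietoris/nerve-style argument using the conical geodesic bicombing supplied by Lemma~\ref{lemma:geobicomb}, that each transverse self-identification of $\phi$ identifies only contractible arcs and hence does not alter the homotopy type, so $U\simeq T\simeq\ast$. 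An alternative route, which I expect to be cleaner, is to replace $U$ by a small hyperconvex thickening of it inside $B_\delta(X,L^\infty(X))$; hyperconvexity then supplies a canonical deformation retraction onto $\{p\}$ via the bicombing from each point to $p$ through the appropriate $a_i$. Either approach uses injectivity of $L^\infty(X)$ in an essential way, and this contractibility verification is the step I expect to require the most care.
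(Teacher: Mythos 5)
There is a genuine gap, and it is exactly where you anticipate: the contractibility of your union of segments $U$ is not established, and the two remedies you sketch do not obviously repair it.

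Your inclusion arguments are correct. But the union $U=\bigcup_i[a_i,p]\cup\bigcup_{x,a}[x,a]$ is the continuous image of an abstract tree $T$ under a parametrizing map $\phi$, and in $L^\infty(X)$ two straight segments can meet transversely in their interiors, or even overlap along subarcs. A crossing is an identification of two points of $T$, and identifying two interior points of a tree produces a loop: in $\phi(T)$ one can travel along part of a leaf to the crossing, along part of the other leaf to its root $a_j$, via the spokes $a_j\to p\to a_i$ back to the first root, and along the first leaf back to the crossing. There is no a priori reason such a cycle is nullhomotopic in $U$, so ``only contractible arcs get identified'' is not a meaningful description of what a transverse crossing does; a Mayer--Vietoris argument is not going to show $U\simeq T$ here. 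Your second alternative is also unsubstantiated: an arbitrary neighborhood of $U$ inside $B_\delta(X,L^\infty(X))$ is not a hyperconvex subset of $L^\infty(X)$, and the hyperconvex subsets that do sit inside $B_\delta$ (intersections of closed balls of radius $<\delta$ centered on $X$) do not contain $X$ unless $\delta$ is enormous. Finally, even the assertion that the geodesic bicombing would ``route through the appropriate $a_i$'' is false: a single bicombing geodesic from a point to $p$ does not pass through $a_i$.

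The paper's proof avoids this problem entirely by changing what is done directly versus what is done in the analysis. Rather than building $U$ as a tree routed through the $a_i$, it takes $U$ to be a \emph{cone} over $X$ with a carefully engineered apex $f:=d_X(\cdot,A)+\delta'\in L^\infty(X)$, which satisfies $\|d_X(a,\cdot)-f\|_\infty=\delta'$ for every $a\in A$. Explicitly $U=\{\gamma_K(g,f,t):g\in X,\ t\in[0,1]\}$, where $\gamma_K$ is the Katz geodesic bicombing (Definition~\ref{def:Katzgeobicomb}). Contractibility is then immediate and needs no topological argument about segment intersections: the consistency property (item (5) of Proposition~\ref{prop:Katzgeoprop}) guarantees that $\gamma_K(\,\cdot\,,f,s)$ maps $U$ into $U$, so $U$ deformation retracts onto $\{f\}$. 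The points of $A$ do not appear in the definition of the paths at all; they appear only in the estimate showing $U\subseteq B_\delta(X,L^\infty(X))$, where one compares $\gamma_K(g,f,t)$ to the appropriate $d_X(a,\cdot)$ using the coordinatewise uniform-speed structure of $\gamma_K$ and its monotonicity property (item (6)). In short: you pushed the difficulty to the contractibility side; the paper keeps contractibility trivial (cone + bicombing consistency) and pushes the difficulty to the containment estimate, where the special geometry of the Katz bicombing makes it tractable. To salvage your route you would need an explicit, well-defined contraction of your $U$ that is continuous despite the overlaps, which essentially reproduces the cone argument anyway.
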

	
	\begin{Remark}
		Note that via the isomorphism theorem, Katz's lemma implies the fact that whenever $I=(0,v]\in \dgmR_*(X)$, then $v\leq \spread(X).$ The lemma does not permit bounding the length of intervals whose left endpoint is strictly greater than zero.
	\end{Remark}
	
	It turns out that we can prove a general version of Lemma \ref{lemma:katzcontraction} for  closed $s$-thickenings $\overline{B}_s(X,L^\infty(X))$ for any $s\geq 0$. 
	
	\begin{lemma}\label{lemma:katzcontractiongen}
		Let $(X,d_X)$ be a compact metric space. Then, for any $s\geq 0$ and $\delta>\frac{\spread(X)}{2}$, there exists a contractible space $U_{s,\delta}$ such that $\overline{B}_s(X,L^\infty(X))\subseteq U_{s,\delta}\subseteq B_{s+\delta}(X,L^\infty(X))$. 
	\end{lemma}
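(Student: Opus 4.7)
The plan is to bootstrap from Lemma~\ref{lemma:katzcontraction}. Since $\delta > \frac{\spread(X)}{2}$, I would choose $\delta_0 \in \left(\frac{\spread(X)}{2}, \delta\right)$ and apply Lemma~\ref{lemma:katzcontraction} with $\delta_0$ in place of $\delta$ to obtain a contractible subset $V \subseteq L^\infty(X)$ satisfying $X \subseteq V \subseteq B_{\delta_0}(X, L^\infty(X))$. The candidate is then $U_{s, \delta} := \overline{B}_s(V, L^\infty(X))$, the closed $s$-thickening of $V$ inside $L^\infty(X)$.

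The two containments are routine. Since $X \subseteq V$, one has $\overline{B}_s(X, L^\infty(X)) \subseteq \overline{B}_s(V, L^\infty(X)) = U_{s, \delta}$. Conversely, given $y \in U_{s, \delta}$, pick $v \in V$ with $d(y, v) \leq s$; since $d(v, X) < \delta_0$, the triangle inequality yields $d(y, X) < s + \delta_0 < s + \delta$, so $y \in B_{s + \delta}(X, L^\infty(X))$.

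The main step, and the main obstacle, is establishing contractibility of $U_{s, \delta}$. The plan is to exploit the Banach structure of $L^\infty(X)$: as a Minkowski sum, $U_{s, \delta} = V + \overline{B}_s(0, L^\infty(X))$, where the second factor is convex and contains $0$. Given a null-homotopy $h \colon V \times [0, 1] \to V$ contracting $V$ to a point $p \in V$, one would like to define a null-homotopy $H \colon U_{s, \delta} \times [0, 1] \to U_{s, \delta}$ by $H(y, t) := h(v(y), t) + (1 - t)\, k(y)$, using a decomposition $y = v(y) + k(y)$ with $v(y) \in V$ and $k(y) \in \overline{B}_s(0, L^\infty(X))$. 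Since $h(v(y), t) \in V$ and $(1 - t) k(y) \in \overline{B}_s(0, L^\infty(X))$ for all $t$ (the ball being star-shaped around $0$), $H$ would land in $V + \overline{B}_s(0, L^\infty(X)) = U_{s, \delta}$, and interpolate between the identity and the constant map to $p$.

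The hard part will be the non-continuity of the selection $v(\cdot)$, since nearest-point projection onto $V$ need not be well-defined or continuous. I would address this in either of two ways. The cleaner route is to inspect Katz's construction and verify that $V$ may be taken to be star-shaped with respect to a distinguished point $v_0$ (for instance, a point produced via the hyperconvexity of $L^\infty(X)$ applied to a finite ``spread-realizing'' set $A \subseteq X$). Star-shapedness of $V$ with respect to $v_0$ then propagates to $U_{s, \delta}$ by the short Minkowski-sum calculation $(1 - t) y + t v_0 = \bigl((1 - t) v + t v_0\bigr) + (1 - t) k$, where the first summand lies in $V$ and the second in $\overline{B}_s(0, L^\infty(X))$, giving a straight-line contraction of $U_{s, \delta}$ to $v_0$. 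Should Katz's $V$ fail to be star-shaped, the alternative is to replace it by its convex hull under the linear-interpolation (conical, consistent) bicombing on $L^\infty(X)$; the slack afforded by $\delta_0 < \delta$ ensures this enlargement still sits inside $B_{\delta_0'}(X, L^\infty(X))$ for some $\delta_0' \in (\delta_0, \delta)$, after which the star-shapedness argument applies verbatim.
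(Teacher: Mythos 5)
Your containment arguments are fine, and the Minkowski-sum identity for open thickenings in a Banach space is a clean way to set up the problem, but both of your proposed routes to contractibility leave a genuine gap: the factor-of-two sharpening $\delta > \tfrac12\spread(X)$ (as opposed to the weaker bound $\delta > \spread(X)$, which a linear argument does yield) leans essentially on the \emph{nonlinear} Katz bicombing $\gamma_K$ in $L^\infty(X)$, and the Minkowski wrapper requires the linear (straight-line) structure with which $\gamma_K$ is not compatible. On the primary route, Katz's $V$ is the union of the $\gamma_K$-geodesics from each Kuratowski point $d_X(x,\cdot)$ to $f(\cdot)=d_X(\cdot,A)+\delta'$, and $\gamma_K$ moves all coordinates at a uniform speed rather than proportionally, so those geodesics are not segments and $V$ is in general not star-shaped. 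A concrete case: $X=\{p,q\}$ with $d(p,q)=2$, so $L^\infty(X)\cong\R^2_\infty$, $p\mapsto(0,2)$, $q\mapsto(2,0)$, $\spread(X)=2$; take $A=\{p\}$ and $\delta'\searrow 1$, so $f\to(1,3)$. The $\gamma_K$-geodesic from $(2,0)$ to $f$ is the broken curve $t\mapsto(\max(2-3t,1),\,3t)$, which at $t=1/4$ is $(5/4,3/4)\in V$, yet the straight line from $(5/4,3/4)$ to $f$ passes through $(9/8,15/8)\notin V$. Since $V$ is here a one-dimensional union of two curves meeting only at $f$, there is no candidate star center that works.

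The alternative (replacing $V$ by its linear convex hull, or more generally appealing to the conical bicombing) fails for a deeper reason: convex hulls of Kuratowski-embedded spaces can escape \emph{far} from the original space. In the same two-point example, $\mathrm{conv}(V)$ contains $\tfrac12\big((2,0)+(1,3)\big)=(3/2,3/2)$, whose $\ell^\infty$-distance to $X$ is $3/2$, while $\delta$ may be taken as close to $\tfrac12\spread(X)=1$ as you like. More structurally: if you estimate the distance of the segment $[d_X(x,\cdot),f]$ to $X$ by interpolating between the two natural witnesses $y=x$ and $y=a_x$ (the $A$-point nearest $x$), you get a worst case of $(d_X(x,a_x)+\delta')/2$, which can be close to $\tfrac32\delta'>\delta$ whenever the spread is realized by the covering condition $\sup_x d_X(x,A)$ rather than by $\diam(A)$. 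That is precisely the regime Katz's bicombing was engineered to handle. The paper's proof stays inside that framework: it sets $U_{s,\delta}:=\{\gamma_K(g,f,t):g\in\overline{B}_s(X,L^\infty(X)),\,t\in[0,1]\}$ (for which contractibility is automatic from $\gamma_K$) and uses the pointwise profile of $\gamma_K$ at the critical time $t_0=\delta'/\|g-f\|_\infty$ together with Proposition~\ref{prop:Katzgeoprop}(6) to force $U_{s,\delta}\subseteq B_{s+\delta}(X,L^\infty(X))$; that distance estimate has no linear analogue and is exactly what your proposal is missing.
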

	
	Note that Lemma \ref{lemma:katzcontraction} can be obtained from the case $s=0$ in Lemma \ref{lemma:katzcontractiongen}.  We provide a detailed self-contained proof of this general version in Section \ref{sec:proof-katz-lemma-gen}.

	Armed with Lemma \ref{lemma:katzcontractiongen} and Theorem \ref{theorem:isom} one immediately obtains item (1) in the proposition below: 
	
	\begin{proposition}\label{prop:spread}
		Let $(X,d_X)$ be a compact metric space,  $k\geq 1$, and let $I$ be any interval in $ \dgmR_k(X;\mathbb{F})$. Then,
		\begin{equation}\label{eq:sp-bound}
			\mathrm{length}(I)\leq \spread(X),
		\end{equation}
		and, 
		\begin{equation}\label{eq:rad-bound}
			\mbox{if $I = (u,v]$ for some $0<u< v$, then $v\leq \spread_1(X).$}\end{equation}
	\end{proposition}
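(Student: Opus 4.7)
The plan for part~(1) is to combine Katz's refined contractibility lemma (Lemma~\ref{lemma:katzcontractiongen}) with the isomorphism theorem (Theorem~\ref{theorem:isom}). Given $s\geq 0$ and $\delta>\spread(X)/2$, the lemma provides a contractible space $U_{s,\delta}$ sandwiched as
\[ B_s(X,L^\infty(X))\subseteq \overline{B}_s(X,L^\infty(X))\subseteq U_{s,\delta}\subseteq B_{s+\delta}(X,L^\infty(X)). \]
The inclusion-induced structure map $B_s(X,L^\infty(X))\hookrightarrow B_{s+\delta}(X,L^\infty(X))$ therefore factors through a contractible space, hence is zero on $\Hom_k$ for every $k\geq 1$. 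Consequently every interval of the persistence module $\PH_k(B_*(X,L^\infty(X));\mathbb{F})$ has length at most $\delta$, and taking the infimum over admissible $\delta$ yields length at most $\spread(X)/2$. Theorem~\ref{theorem:isom} identifies this barcode with $\dgmR_k(X;\mathbb{F})$ after doubling the scale parameter, so $\mathrm{length}(I)\leq \spread(X)$.

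For part~(2), I will argue directly at the simplicial level via a cone construction. Fix any $v>\spread_1(X)$; by the definition of $\spread_1$ there exists $p\in X$ with $\max_{x\in X} d_X(p,x)<v$. I claim that $p$ is a cone vertex of $\vr_v(X)$: for every simplex $\sigma\in\vr_v(X)$, that is every finite $\sigma\subseteq X$ with $\diam(\sigma)<v$, the augmented set $\sigma\cup\{p\}$ still has diameter strictly less than $v$, since each additional pairwise distance $d_X(p,x)$ for $x\in\sigma$ is $<v$. Hence $\vr_v(X)$ is a simplicial cone with apex $p$ and is in particular contractible, so $\Hom_k(\vr_v(X);\mathbb{F})=0$ for every $k\geq 1$. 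This forces every interval in $\dgmR_k(X;\mathbb{F})$ to have right endpoint strictly less than $v$; letting $v\downarrow\spread_1(X)$ yields the desired bound.

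The main technical point to handle with care in part~(1) is that Lemma~\ref{lemma:katzcontractiongen} is phrased in terms of the closed thickenings $\overline{B}_s$, while the filtration $B_*(X,L^\infty(X))$ is built from open thickenings; one must therefore spell out the chain of inclusions $B_s\subseteq\overline{B}_s\subseteq U_{s,\delta}\subseteq B_{s+\delta}$ in order to conclude that the structure map of the open filtration is null-homotopic. Part~(2) is essentially content-free once one observes that $p\in X$ is automatically a vertex of $\vr_v(X)$; in fact the cone argument delivers the stronger statement that \emph{every} interval $I\in\dgmR_k(X;\mathbb{F})$ with $k\geq 1$ has right endpoint at most $\spread_1(X)$, regardless of whether its left endpoint is $0$ or strictly positive. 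The $u=0$ case is in any event already subsumed by part~(1), since $\spread(X)\leq\spread_1(X)$.
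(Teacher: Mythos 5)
Your argument is correct and matches the paper's own approaches. For part~(1) you follow exactly the route the paper sketches immediately before the proposition (combining Lemma~\ref{lemma:katzcontractiongen} with Theorem~\ref{theorem:isom}), correctly handling the closed-versus-open thickening chain $B_s\subseteq\overline{B}_s\subseteq U_{s,\delta}\subseteq B_{s+\delta}$ and the factor of~$2$ coming from the isomorphism $\per\circ\eta\cong\vr_{2*}$; the paper's written-out proof is the alternative simplicial/contiguity argument, but it explicitly endorses both. For part~(2), which the paper omits as ``similar but simpler steps,'' your cone-vertex argument---that any $p$ with $\max_{x\in X}d_X(p,x)<v$ cones off $\vr_v(X)$, so $\vr_v(X)$ is contractible for all $v>\spread_1(X)$---is a concise and valid realization of that remark, and in fact yields the slightly stronger conclusion that every interval $I=(u,v]\in\dgmR_k(X;\mathbb{F})$ with $k\geq 1$ (not only those with $u>0$) has $v\leq\spread_1(X)$.
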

	
	\begin{Remark}
		The second part of the proposition above, equation (\ref{eq:rad-bound}), implies that the right endpoint of any interval $I$ (often referred to as the \emph{death time} of $I$) cannot exceed the radius $\mathrm{rad}(X)$ of $X$ (cf. Remark \ref{rem:rad}).
	\end{Remark}
	
	Note that  by \cite[Section 1]{k83}, when $X$ is a geodesic space (e.g. a Riemmanian manifold) we have $\spread(X)\leq \frac{2}{3}\diam(X)$; this means that we have the following universal bound on the length of intervals in the Vietoris-Rips persistence barcode of a geodesic space $X$. 
	
	\begin{corollary}[Bound on length of bars of geodesic spaces] \label{coro:diam-bound}
		Let $X$ be a compact geodesic space. Then, for any $k\geq 1$ and any $I\in\dgmR_k(X;\mathbb{F})$ it holds that $$\mathrm{length}(I)\leq \frac{2}{3}\diam(X).$$
	\end{corollary}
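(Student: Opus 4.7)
The statement is an immediate concatenation of two ingredients that have already been set up in the excerpt. First, I would invoke Proposition~\ref{prop:spread}, specifically equation~(\ref{eq:sp-bound}), which gives the universal bound $\mathrm{length}(I)\leq \spread(X)$ for every $k\geq 1$ and every $I\in\dgmR_k(X;\mathbb{F})$, and is itself a direct consequence of Lemma~\ref{lemma:katzcontractiongen} together with the isomorphism theorem (Theorem~\ref{theorem:isom}). This reduces the problem to the purely metric-geometric statement $\spread(X)\leq \tfrac{2}{3}\diam(X)$ for compact geodesic spaces, which is Katz's inequality from \cite[Section~1]{k83}. Chaining the two yields $\mathrm{length}(I)\leq \tfrac{2}{3}\diam(X)$.

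\textbf{On Katz's geodesic inequality.} Since this is the only nontrivial input, I would re-derive it as follows. Set $D:=\diam(X)$, fix $\epsilon>0$, and choose any basepoint $x_0\in X$. If $X$ is already covered by the closed ball of radius $D/3$ about $x_0$, then $A:=\{x_0\}$ witnesses $\spread(X)\leq D/3$. Otherwise, for each $y\in X$ with $d_X(x_0,y)>D/3$, the geodesic hypothesis produces a point $m_y$ on a shortest $x_0$-$y$ curve with $d_X(x_0,m_y)=D/3$, so that $d_X(m_y,y)\leq d_X(x_0,y)-D/3 \leq \tfrac{2D}{3}$. All such $m_y$ lie on the metric sphere of radius $D/3$ about $x_0$, hence pairs of them are at mutual distance at most $\tfrac{2D}{3}$ by the triangle inequality through $x_0$. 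Using compactness of $X$, I would extract a finite subset $A\supseteq\{x_0\}$ of $\{x_0\}\cup\{m_y\}_y$ satisfying $\diam(A)\leq \tfrac{2D}{3}$ and covering $X$ within distance $\tfrac{2D}{3}+\epsilon$; sending $\epsilon\to 0$ gives $\spread(X)\leq \tfrac{2D}{3}$.

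\textbf{Main obstacle.} There is essentially no serious obstacle: the argument is linear once Proposition~\ref{prop:spread} is in hand. The only mild bookkeeping concerns the strict-inequality convention in the definition of spread (handled by the $\epsilon$-slack above) and the extraction of a finite $\epsilon$-net from the family $\{m_y\}$, which is immediate from compactness.
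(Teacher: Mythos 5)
Your proof is correct and follows exactly the route taken in the paper: combine equation~(\ref{eq:sp-bound}) of Proposition~\ref{prop:spread} with Katz's geodesic-space inequality $\spread(X)\leq\tfrac{2}{3}\diam(X)$ from \cite[Section~1]{k83}, which the paper simply cites. Your self-contained re-derivation of Katz's inequality (midpoints $m_y$ at distance $D/3$ from a fixed $x_0$, triangle inequality through $x_0$ to bound $\diam(A)$, compactness to extract a finite $\epsilon$-net) is a correct and welcome addition, though the paper does not include it.
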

	
	\begin{Remark}We make the following remarks:
		\begin{itemize}
			\item Note that for $k=1$, $\mathbb{S}^1$ achieves equality in the corollary above. Indeed, this follows from \cite{adams} since the longest interval in $\dgmR_k(\mathbb{S}^1)$ corresponds to $k=1$ and is exactly $(0,\frac{2\pi}{3}].$ 
			\item Since $\vr_r(X)$ is contractible for any $r>\diam(X)$, it is clear that $\mathrm{length}(I)\leq \diam(X)$ in general.  The corollary above improves this bound by a factor of $\frac{2}{3}$  when $X$ is geodesic.
			
			\item In \cite{k83} Katz proves that the filling radius of a manifold is bounded above by $\frac{1}{3}$ of its diameter. Our result is somewhat more general than Katz's in two senses: his claim applies to Riemannian manifolds $M$ and only provides information about the interval induced by the fundamental class of the manifold (see Proposition \ref{prop:filradpersistence}).  In contrast, Corollary \ref{coro:diam-bound} applies to any compact geodesic space and in this case it provides the same upper bound for the length any interval in $\dgmR_k(X;\mathbb{F})$, for any $k$.
			\item Besides the proof via Lemma \ref{lemma:katzcontractiongen} and Theorem \ref{theorem:isom} explained above, we provide an alternative direct proof of Proposition \ref{prop:spread} via simplicial arguments. We believe each proof is interesting in its own right.
		\end{itemize}
	\end{Remark}
	
	\begin{proof}[Proof of Proposition \ref{prop:spread} via simplicial arguments]
		Let $\delta>\spread(X)$. It is enough to show that for each $s > 0 $, the map $$\mathrm{H}_k(\vr_s(X);\mathbb{F}) \to \mathrm{H}_k(\vr_{\delta+s}(X);\mathbb{F})$$
		induced by the inclusion is zero. By the definition of spread, we know that there is a nonempty finite subset $A\subseteq X$ such that
		\begin{itemize}
			\item $\diam(A) < \delta$
			\item $\sup_{x\in X}\inf_{a\in A} d_X(x,a) < \delta.$
		\end{itemize}
		Note that then $\mathrm{H}_k(\vr_\delta(A);\mathbb{F})=0$ because $\vr_\delta(A)$ is a simplex. Let $\pi:X \to A$ be a map sending $x$ to a closest point in $A$. Then, $d_X(x,\pi(x))<\delta$ for any $x\in X$ because of the second property of $A$ (moreover, $\pi(x)=x$ if $x\in A$). Observe that, since $\diam(\pi(\sigma))<\delta$ for any simplex $\sigma\in\vr_s(X)$ by the first property of $A$, this map $\pi$ induces a simplicial map from $\vr_s(X)$ to $\vr_\delta(A)$. Hence, one can construct the following composite map $\nu$ from $\vr_s(X)$ to $\vr_{\delta+s}(X)$:
		$$\vr_s(X) \xrightarrow{\pi} \vr_\delta(A)\hookrightarrow \vr_\delta(X)\hookrightarrow \vr_{\delta+s}(X),$$
		
		\noindent where the second and third maps are induced by the canonical inclusions. Observe that this composition of maps induces a map from $\mathrm{H}_k(\vr_s(X))$ to $\mathrm{H}_k(\vr_{\delta+s}(X))$, and this induced map is actually the zero map since $\mathrm{H}_k(\vr_\delta(A);\mathbb{F})=0$. So, it is enough to show that the composite map $\nu$  is contiguous to the canonical inclusion $\vr_{s}(X)\hookrightarrow \vr_{\delta+s}(X)$. Let $\sigma=\{x_0,\dots,x_n\}$ be a subset of $X$ with diameter strictly less than $s$. Let $a_i:=\pi(x_i)$ for $i=0,1,\ldots,n$. Then, we have $$d_X(x_i,a_j) \leq d_X(x_i,x_j)+d_X(x_j,a_j) < \delta +s. $$ Hence, the diameter of the subspace $\{x_1,\dots,x_k,a_1,\dots,a_k \}$ is strictly less than $\delta+s$. This shows the desired contiguity and completes the proof. The proof of equation (\ref{eq:rad-bound}) follows  similar (but simpler) steps and thus we omit it.
	\end{proof}
	
	\begin{Remark}
		Note that whereas the proof of Lemma 1 in \cite{k83} takes place at the level of $L^\infty(X)$,  the proof of Proposition \ref{prop:spread} given above takes place at the level of simplicial complexes and simplicial maps.
	\end{Remark}
	
	\subsubsection{Bounds based on localization of spread}
	One can improve Proposition \ref{prop:spread} by considering a localized version of spread. Note that, in \cite{adams2021geometric}, the authors also built some bounds on the length of barcodes based on certain  notions of size of homology classes.\vspace{\baselineskip}
	
	For an integer $k\geq 0$, a given field $\mathbb{F}$, and a metric space $X$, let $$\mathrm{Spec}_k(X,\mathbb{F}):=\bigcup_{r>0}\bigg(\Hom_k(\vr_r(X);\mathbb{F})\backslash\{0\}\times\{r\}\bigg)$$ be the $k$-th Vietoris-Rips \emph{homological spectrum} of $X$ (with coefficients in $\mathbb{F})$. Note that we only consider nonzero elements of $\Hom_k(\vr_r(X);\mathbb{F})$ in the definition $\mathrm{Spec}_k(X,\mathbb{F})$ to avoid trivial cases (there can be no positive length bars associated to a zero element).
	
	\begin{example}
		Consider $X=\{0,1\}$ equipped with the metric inherited from $\R$. Then, for any field $\mathbb{F}$,
		
		$$\mathrm{Spec}_0(X,\mathbb{F})=\bigg(\bigcup_{r\in (0,1]}\mathrm{Span}_{\mathbb{F}}(\{\mu_r,\nu_r\})\times\{r\}\bigg)\cup\bigg(\bigcup_{r>1}\mathrm{Span}_{\mathbb{F}}(\{\omega_r\})\times\{r\}\bigg)$$
		where $\mu_r$ and $\nu_r\in\Hom_0(\vr_r(X);\mathbb{F})\backslash\{0\}$ are the homology classes homologous to $0$ and $1$, respectively for $r\in (0,1]$, and $\omega_r\in\Hom_0(\vr_r(X);\mathbb{F})\backslash\{0\}$ is the homology class homologous to both $0$ and $1$ for $r>1$ (i.e., $\omega_r=(i_{r',r})_\ast(\mu_{r'})=(i_{r',r})_\ast(\nu_{r'})$ for any $r'\in (0,1]$ and $r>1$).
	\end{example}
	
	\begin{definition}[Pre-localized spread of a homology class]
		For each
		$(\omega,s)\in\mathrm{Spec}_k(X,\mathbb{F})$
		we define the \emph{pre-localized spread} of $(\omega,s)$ as follows:
		$$\mathrm{pspread}(X;\omega,s):=\inf_{B\in S(\omega,s)}\spread(B).$$
		where $S(\omega,s)$ denotes the collection of all $B\subseteq X$ such that $\omega=\iota_\ast([c])$, $c$ is a simplicial $k$-cycle on $\vr_s(B)$, and $\iota:B\longhookrightarrow X$ is the canonical inclusion.
	\end{definition}
	
	Any $B$ as in the  definition above will be said to \emph{support} the homology class $(\omega,s)\in\mathrm{Spec}_k(X,\mathbb{F})$.
	
	\begin{lemma}\label{lemma:locspread}
		Suppose $(\omega,s)\in\mathrm{Spec}_k(X,\mathbb{F})$ and $k\geq 1$ are given. Then, for any $\delta>\mathrm{pspread}(X;\omega,s)$,
		$$(i_{s,(s+\delta)})_\ast(\omega)=0$$
		where $i_{s,(s+\delta)}:\vr_s(X)\longhookrightarrow\vr_{s+\delta}(X)$ is the canonical inclusion.
	\end{lemma}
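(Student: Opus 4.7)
The plan is to localize the simplicial argument used in the proof of Proposition \ref{prop:spread}. Since $\delta > \mathrm{pspread}(X;\omega,s) = \inf_{B \in S(\omega,s)} \spread(B)$, I can choose some $B \in S(\omega,s)$ with $\spread(B) < \delta$. By definition, there is a simplicial $k$-cycle $c$ on $\vr_s(B)$ with $\omega = \iota_*[c]$, where $\iota: B \hookrightarrow X$ is the inclusion. The task then reduces to showing that $[c]$ becomes null-homologous after composing $\iota_*$ with the structure map $\vr_s(X) \hookrightarrow \vr_{s+\delta}(X)$.

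First, from $\spread(B) < \delta$ I extract a finite subset $A \subseteq B$ with $\diam(A) < \delta$ and $\sup_{b \in B} \inf_{a \in A} d_X(b,a) < \delta$. Let $\pi: B \to A$ be a closest-point projection, so $d_X(b, \pi(b)) < \delta$ for every $b \in B$. Since $\diam(\pi(\sigma)) \leq \diam(A) < \delta$, the map $\pi$ induces a simplicial map $\vr_s(B) \to \vr_\delta(A)$. I then assemble the commutative (up to contiguity) diagram
$$
\begin{tikzcd}
\vr_s(B) \arrow[r, "\pi"] \arrow[d, hook] & \vr_\delta(A) \arrow[d, hook] \\
\vr_s(X) \arrow[r, hook] & \vr_{s+\delta}(X),
\end{tikzcd}
$$
where the right-hand vertical is the composition $\vr_\delta(A) \hookrightarrow \vr_\delta(X) \hookrightarrow \vr_{s+\delta}(X)$.

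The key step is to check that the two maps $\vr_s(B) \to \vr_{s+\delta}(X)$ obtained by traversing the diagram are contiguous, so that they induce the same map on homology. Given a simplex $\{x_0, \ldots, x_n\} \in \vr_s(B)$, the triangle inequality gives $d_X(x_i, \pi(x_j)) \leq d_X(x_i, x_j) + d_X(x_j, \pi(x_j)) < s + \delta$, and similarly $d_X(\pi(x_i), \pi(x_j)) < \delta < s + \delta$, so $\{x_0, \ldots, x_n, \pi(x_0), \ldots, \pi(x_n)\}$ is a simplex of $\vr_{s+\delta}(X)$; this witnesses the required contiguity.

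Finally, since $\diam(A) < \delta$, the complex $\vr_\delta(A)$ is a full simplex on the (finitely many) vertices of $A$, hence contractible, so $\mathrm{H}_k(\vr_\delta(A); \mathbb{F}) = 0$ for $k \geq 1$. Thus $\pi_*[c] = 0$, and chasing the diagram yields
$$(i_{s, s+\delta})_*(\omega) = (i_{s, s+\delta})_* \circ \iota_*[c] = 0,$$
as claimed. The only non-routine point is the contiguity verification; everything else is a direct transcription of the global argument to the subspace $B$. Taking the infimum over admissible $B$ then shows the bound $\delta > \mathrm{pspread}(X;\omega,s)$ is tight.
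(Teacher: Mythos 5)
Your proof is correct and follows essentially the same route as the paper: choose $B \in S(\omega,s)$ with $\spread(B) < \delta$, replay the argument of Proposition \ref{prop:spread} on $B$ (via the projection $\pi: B \to A$ and the contiguity check), and push the resulting vanishing to $\vr_{s+\delta}(X)$ via the inclusions. The only flaw is the closing sentence claiming the bound is ``tight,'' which the argument does not establish and should be deleted.
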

	\begin{proof}
		By the definition of $\mathrm{pspread}(X;\omega,s)$, there exists $B\subseteq X$ such that $\omega=\iota_\ast([c])$ where $c$ is a simplicial $k$-cycle on $\vr_s(B)$ and $\spread(B)<\delta$. Then, as in the proof of Proposition \ref{prop:spread}, one can prove that $$(j_{s,(s+\delta)})_\ast:\Hom_k(\vr_s(B);\mathbb{F})\longrightarrow\Hom_k(\vr_{s+\delta}(B);\mathbb{F})$$
		is the zero map where $j_{s,(s+\delta)}:\vr_s(B)\longhookrightarrow\vr_{s+\delta}(B)$ is the canonical inclusion. Hence, $(j_{s,(s+\delta)})_\ast([c])=0$. Furthermore, note that the following diagram commutes:
		$$\begin{tikzcd}\Hom_k(\vr_s(B);\mathbb{F})\arrow[d, "\iota_\ast"]\arrow[r, "(j_{s,(s+\delta)})_\ast"] & \Hom_k(\vr_{s+\delta}(B);\mathbb{F})\arrow[d, "\iota_\ast"]\\ \Hom_k(\vr_s(X);\mathbb{F})\arrow[r, "(i_{s,(s+\delta)})_\ast"] & \Hom_k(\vr_{s+\delta}(X);\mathbb{F})\end{tikzcd}$$
		where all the arrows are  maps induced by  canonical inclusions. Hence, one can conclude $(i_{s,(s+\delta)})_\ast(\omega)=0$ as we required.
	\end{proof}

	Now, fix an arbitrary $(\omega,s)\in\mathrm{Spec}_k(X,\mathbb{F})$. Then, let
	\begin{align*}
		&u_{(\omega,s)}:=\inf\{r>0:r\leq s\text{ and }\exists\text{ nonzero }\omega_r\in\Hom_k(\vr_r(X);\mathbb{F})\text{ such that }(i_{r,s})_\ast(\omega_r)=\omega\},\\
		&v_{(\omega,s)}:=\sup\{t>0:t\geq s\text{ and }\exists\text{ nonzero }\omega_t\in\Hom_k(\vr_t(X);\mathbb{F})\text{ such that }(i_{s,t})_\ast(\omega)=\omega_t\}.
	\end{align*}
	With an argument   similar to the one used in Section \ref{sec:vrendpts}, one can prove that $u_{(\omega,s)}<s\leq v_{(\omega,s)}$. Let 
	$$I_{(\omega,s)}:=\begin{cases}
		(u_{(\omega,s)},v_{(\omega,s)}] & \text{if $v_{(\omega,s)}<\infty$}\\
		(u_{(\omega,s)},\infty) & \text{otherwise}
	\end{cases}$$
	Intuitively, the interval $I_{(\omega,s)}$ is the maximal (left open, right closed) interval containing $s$ inside which the class $\omega$ can be ``propagated".

	\begin{figure}
		\centering
		\includegraphics[width=0.8\linewidth]{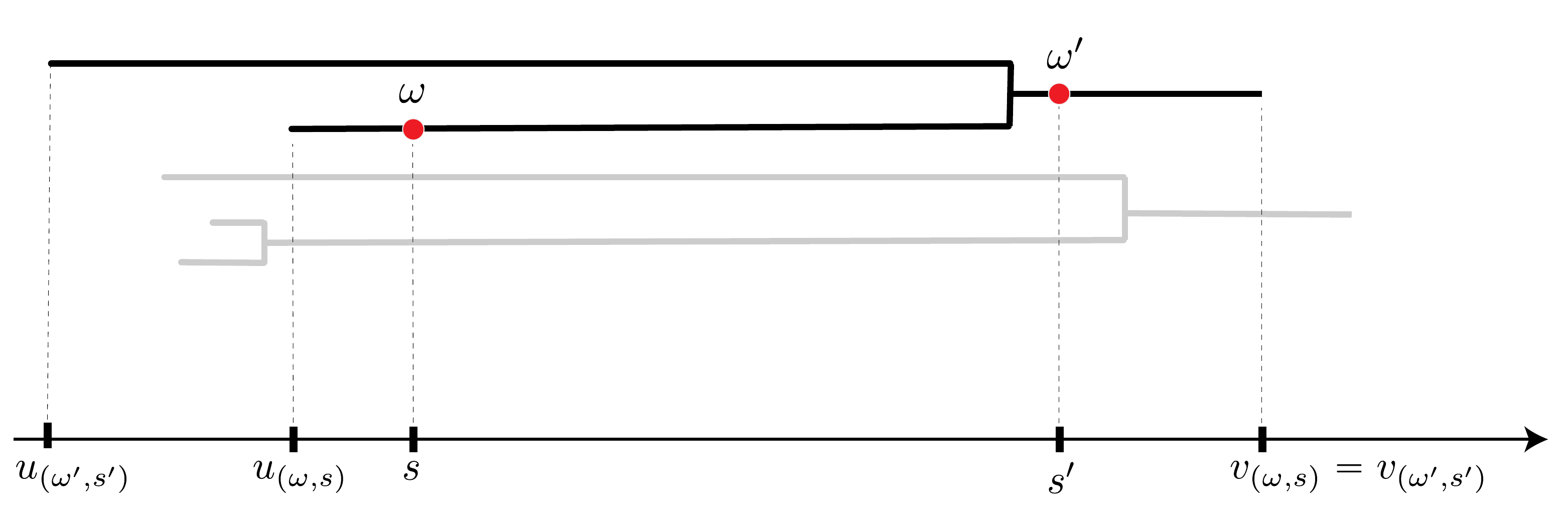}
		\caption{In this illustration, $s<s'$ and $u_{(\omega,s)}>u_{(\omega',s')}.$ See Remark \ref{rem:spec-uv}.}
		\label{fig:spec-uv}
	\end{figure}
	
	\begin{Remark}\label{rem:spec-uv}
		If $(\omega,s),(\omega',s')\in\mathrm{Spec}_k(X,\mathbb{F})$, $s\leq s'$, and $\omega'=(i_{s,s'})_\ast(\omega)$, then $v_{(\omega,s)}=v_{(\omega',s')}$ and $u_{(\omega,s)}\geq u_{(\omega',s')}$. The latter inequality can be strict. However, if $(i_{s,s'})_\ast$ is injective, then $u_{(\omega,s)}=u_{(\omega',s')}$ so that $I_{(\omega,s)}=I_{(\omega',s')}$. See Figure \ref{fig:spec-uv}.
	\end{Remark}
	
	\begin{figure}
		\centering
		\includegraphics[width=0.5\linewidth]{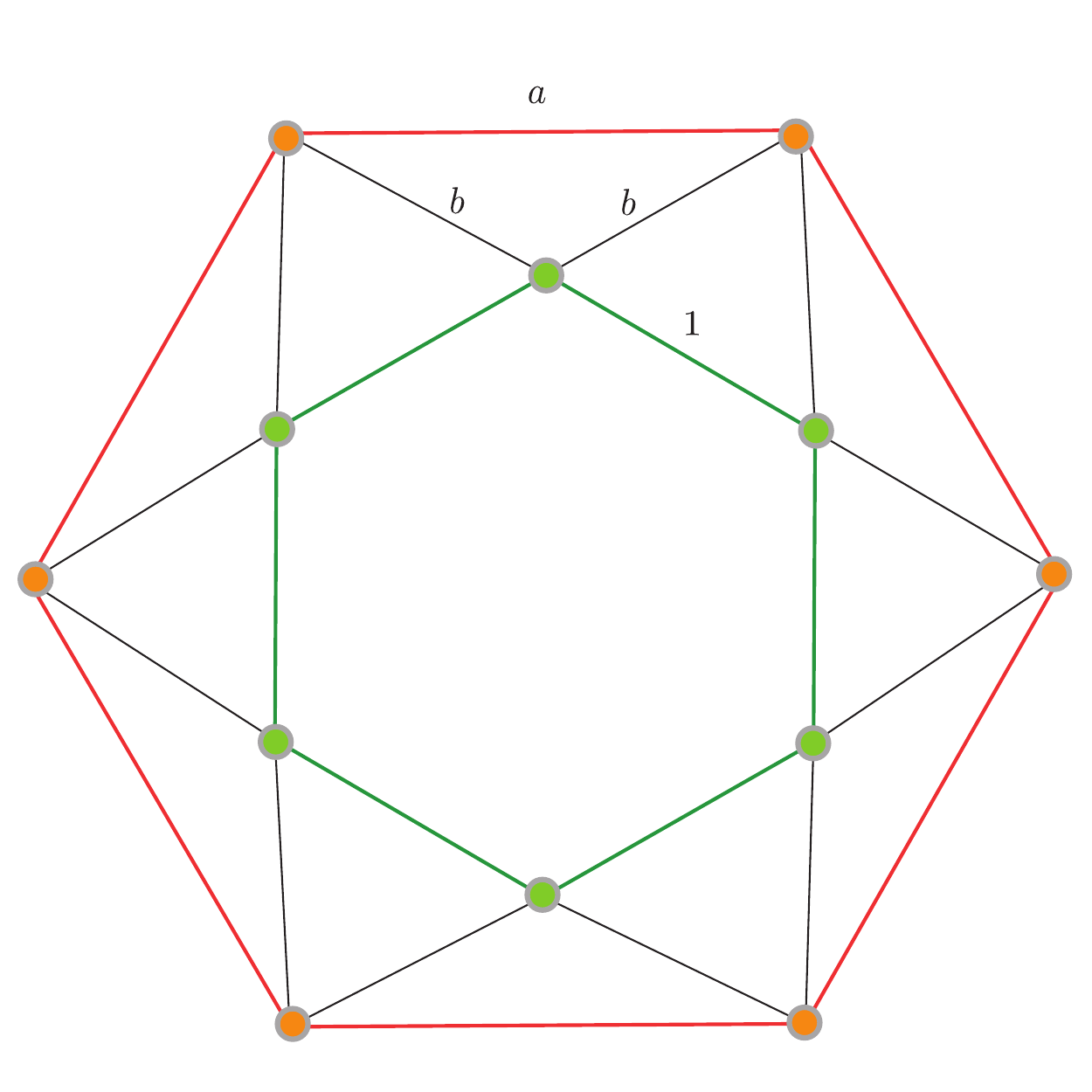}
		\caption{In this example,  $X$ is the set of  vertices of the graph shown above (in green and orange) where $1<a<b <2$. Then, $\dgmR_1(X;\mathbb{F})=\{(1,2],(a,b]\}$, while one can choose $(\omega,s)\in\mathrm{Spec}_1(X,\mathbb{F})$ such that $I_{(\omega,s)}=(a,2]\notin\dgmR_1(X;\mathbb{F})$. See Example \ref{ex:Iwsnotbarcode}.}
		\label{fig:Iomegas}
	\end{figure}
	
	\begin{example}\label{ex:Iwsnotbarcode} In general, $I_{(\omega,s)}$ is not necessarily one of the intervals in $\dgmR_k(X;\mathbb{F})$. Here is a brief sketch of how to construct such an example. Consider the graph consisting of $12$ vertices and $24$ edges as shown in Figure \ref{fig:Iomegas}. Assume that the length of the edge between adjacent inner (green) vertices is $1$, the length of the edge between adjacent outer (orange) vertices is $a$, and the length of the edge between adjacent inner and outer vertices is $b$ where $1<a<b<2$. Now, let $X$ be the set of vertices of this graph, and let $d_X$ be the shortest path metric between them. Then, one can easily check that $\dgmR_1(X;\mathbb{F})=\{(1,2],(a,b]\}$ where $(1,2]$ is associated to the homology class induced by the inner cycle and $(a,b]$ is associated to the homology class induced by the outer cycle. Now, if we choose $\{(\omega_s,s)\}_{s\in (a,b]}\subset\mathrm{Spec}_1(X,\mathbb{F})$ corresponding to the interval $(a,b]\in\dgmR_1(X;\mathbb{F})$, then $I_{(\omega_s,s)}=(a,2]\notin\dgmR_1(X;\mathbb{F})$ for $s\in (a,b]$.
	\end{example}
	
	Motivated by Example \ref{ex:Iwsnotbarcode} above, in  the proposition below we will clarify the relationship between the persistence barcode and the multiset consisting of all $I_{(\omega,s)}$.

	Recall that, for all $0<r<s<\infty$, $(i_{r,s})_*:\Hom_k(\vr_r(X);\mathbb{F})\rightarrow\Hom_k(\vr_s(X);\mathbb{F})$ denotes the morphism induced by the natural inclusion.
	
	\begin{proposition}\label{prop:bcdmtpct}
		Let $X$ be a compact metric space. Then, for all $0 < r < s$, the multiplicity of the interval $(r,s]$ in the barcode $\dgmR_k(X;\mathbb{F})$ is equal to
		
		\begin{equation}\label{eq:informal}
			\max\left\{ m\in\Z_{\geq 0} \Bigg| \begin{array}{l}
				\exists\text{ linearly independent vectors }\omega_1,\dots,\omega_m\in \Hom_k(\vr_s(X);\mathbb{F})\text{ s.t. }
				I_{(\omega_i,s)}=(r,s] \,   \forall i\\\text{ and no nonzero linear combination of  these vectors belongs to }\mathrm{Im}((i_{r,s})_*)
			\end{array}\right\}.
		\end{equation}

		More precisely, the multiplicity of $(r,s]$ in the barcode $\dgmR_k(X;\mathbb{F})$ is equal to
		\begin{equation}\label{eq:formal}
			\dim\mathrm{Span}_{\mathbb{F}}\left(\left\{[\omega]\in\left(\bigcap_{r'\in(r,s]}\mathrm{Im}((i_{r',s})_*)\right)\slash\mathrm{Im}((i_{r,s})_*)\,\,\text{s.t.}\, \,I_{(\omega,s)}=(r,s]\right\}\right).
		\end{equation}
	\end{proposition}

	Notice that $V_{r,s}:=\bigcap_{r'>r}\mathrm{Im}((i_{r',s})_*)$ contains all $\omega\in \Hom_k(\vr_s(X);\mathbb{F})$ such that $u_{(\omega,s)}\leq r$ and $v_{(\omega,s)}\geq s$. The quotient of the vector space  $V_{r,s}$ relative to $\mathrm{Im}((i_{r,s})_*)$ therefore consists of all $[\omega]$, with $\omega \in V_{r,s}$, such that $u_{(\omega,s)}=r$ and $v_{(\omega,s)}\geq s$. The condition that $I_{(\omega,s)}$ be exactly equal to $(r,s]$ further restricts $\omega$ to satisfy $v_{(\omega,s)}=s$.
	
	In terms of Example \ref{ex:Iwsnotbarcode}, the interpretation above can be used to prove that $(a,2]$ is not an interval in $\dgmR_1(X;\mathbb{F})$. Indeed, let $\omega \in \Hom_1(\vr_2(X);\mathbb{F})$ be such that $I_{(\omega,2)}=(a,2]$. Then, notice that $\omega=[c_0]$, where $c_0$ is the $1$-cycle consisting all $6$ inner points (in green). Then $u_{(\omega,2)}=u_{([c_0],2)}=1 < a$ which is a contradiction. 
	
	\begin{proof}[Proof of Proposition \ref{prop:bcdmtpct}]
		We will first prove the claim surrounding Equation (\ref{eq:formal}). By \cite[Lemma 3.16 and Proposition 3.29]{chazal2016structure},  the multiplicity of $(r,s]$ in $\dgmR_k(X;\mathbb{F})$ is equal to the following:\footnote{Whereas \cite[Lemma 3.16]{chazal2016structure} is concerned with the notion of ``persistence diagrams", it is well-known that, for a persistence module  admitting a persistence barcode, the  notions of persistence barcode and persistence diagram coincide, as is established in \cite[Proposition 3.29]{chazal2016structure}.}
		
		\begin{equation*}\label{eq:bcdmtpct}
			\lim\limits_{r'\rightarrow r+}\lim\limits_{s'\rightarrow s+}\big(\mathrm{rank}((i_{r',s})_*)-\mathrm{rank}((i_{r,s})_*)-\mathrm{rank}((i_{r',s'})_*)+\mathrm{rank}((i_{r,s'})_*)\big).
		\end{equation*}
		
		Hence, we will compute the formula above. First of all, it is easy to verify that
		\begin{align*}
			\mathrm{rank}((i_{r,s})_*)&=\dim\mathrm{Im}((i_{r,s})_*)\\
			&=\dim\mathrm{Span}_{\mathbb{F}}\left(\{\omega\in\Hom_k(\vr_s(X);\mathbb{F})\backslash\{0\}: u_{(\omega,s)}<r\text{ and }v_{(\omega,s)}\geq s\}\right).
		\end{align*}
		
		Also, note that
		\begin{align*}
			\lim\limits_{r'\rightarrow r+}\mathrm{rank}((i_{r',s})_*)&=\lim\limits_{r'\rightarrow r+}\dim\mathrm{Im}((i_{r',s})_*)\\
			&=\dim\bigcap_{r'\in(r,s]}\mathrm{Im}((i_{r',s})_*)\\
			&=\dim\mathrm{Span}_{\mathbb{F}}\left(\{\omega\in\Hom_k(\vr_s(X);\mathbb{F})\backslash\{0\}: u_{(\omega,s)}\leq r\text{ and }v_{(\omega,s)}\geq s\}\right).
		\end{align*}
		
		Hence,
		
		\begin{align*}
			&\lim\limits_{r'\rightarrow r+}\mathrm{rank}((i_{r',s})_*)-\mathrm{rank}((i_{r,s})_*)\\
			&\quad=\dim\left(\bigcap_{r'\in(r,s]}\mathrm{Im}((i_{r',s})_*)\right)\slash\mathrm{Im}((i_{r,s})_*)\\
			&\quad=\dim\mathrm{Span}_{\mathbb{F}}\left(\left\{[\omega]\in\left(\bigcap_{r'\in(r,s]}\mathrm{Im}((i_{r',s})_*)\right)\slash\mathrm{Im}((i_{r,s})_*): u_{(\omega,s)}= r\text{ and }v_{(\omega,s)}\geq s\right\}\right).
		\end{align*}
		
		Similarly, for all $s'>s$, we have
		
		\begin{align*}
			&\lim\limits_{r'\rightarrow r+}\mathrm{rank}((i_{r',s'})_*)-\mathrm{rank}((i_{r,s'})_*)\\
			&\quad=\dim\left(\bigcap_{r'\in(r,s']}\mathrm{Im}((i_{r',s'})_*)\right)\slash\mathrm{Im}((i_{r,s'})_*)\\
			&\quad=\dim\mathrm{Span}_{\mathbb{F}}\left(\left\{[\omega]\in\left(\bigcap_{r'\in(r,s']}\mathrm{Im}((i_{r',s'})_*)\right)\slash\mathrm{Im}((i_{r,s'})_*): u_{(\omega,s')}= r\text{ and }v_{(\omega,s')}\geq s'\right\}\right).
		\end{align*}
		
		Hence, finally one can derive
		
		\begin{align*}
			&\lim\limits_{r'\rightarrow r+}\lim\limits_{s'\rightarrow s+}\big(\mathrm{rank}((i_{r',s})_*)-\mathrm{rank}((i_{r,s})_*)-\mathrm{rank}((i_{r',s'})_*)+\mathrm{rank}((i_{r,s'})_*)\big)\\
			&=\dim\bigcap_{s'>s}\mathrm{Ker}(\tilde{i}_{s,s'})\\
			&=\dim\mathrm{Span}_{\mathbb{F}}\left(\left\{[\omega]\in\left(\bigcap_{r'\in(r,s]}\mathrm{Im}((i_{r',s})_*)\right)\slash\mathrm{Im}((i_{r,s})_*): I_{(\omega,s)}=(r,s]\right\}\right)
		\end{align*}
		
		where $\tilde{i}_{s,s'}:\left(\bigcap_{r'\in(r,s]}\mathrm{Im}((i_{r',s})_*)\right)\slash\mathrm{Im}((i_{r,s})_*)\rightarrow\left(\bigcap_{r'\in(r,s']}\mathrm{Im}((i_{r',s'})_*)\right)\slash\mathrm{Im}((i_{r,s'})_*)$ is the canonical morphism induced from $(i_{s,s'})_*$. This completes the derivation of Equation (\ref{eq:formal}).

		Next, we will verify that Equation (\ref{eq:formal}) is equivalent to (more intuitive) Equation (\ref{eq:informal}). For notational simplicity, let
		\begin{align*}
			(\star):\,\,&\exists\text{ linearly independent vectors }\omega_1,\dots,\omega_m\in \Hom_k(\vr_s(X);\mathbb{F})\text{ s.t. }I_{(\omega_i,s)}=(r,s]\text{ for all } i\\
			&\text{ and no nonzero linear combination of these vectors  belongs to }\mathrm{Im}((i_{r,s})_*),
		\end{align*}
		
		$$ M:=\max\{m\in\Z_{\geq 0}\vert\,(\star)\}$$ 
		and $$W:=\left\{[\omega]\in\left(\bigcap_{r'\in(r,s]}\mathrm{Im}((i_{r',s})_*)\right)\slash\mathrm{Im}((i_{r,s})_*)\,\,\text{s.t.}\, \,I_{(\omega,s)}=(r,s]\right\}.$$

		We will show that $M=\dim\mathrm{Span}_{\mathbb{F}}(W)$ which will establish the equivalence between Equations (\ref{eq:informal}) and (\ref{eq:formal}). 
		
		First, choose $\{\omega_1,\dots,\omega_M\}\subset\Hom_k(\vr_s(X);\mathbb{F})$ achieving $M$. Hence, $\omega_1,\dots,\omega_M$ satisfy the conditions in $(\star)$. Then, obviously, $\{[\omega_1],\dots,[\omega_M]\}\subseteq W$.
		
		\begin{claim}
			$[\omega_1],\dots,[\omega_M]$ are linearly independent.
		\end{claim}
		\begin{proof}
			Suppose $\sum_{i=1}^Mc_i[\omega_i]=0$ for some $c_1,\dots,c_M\in\mathbb{F}$. Then, $[\sum_{i=1}^Mc_i\omega_i]=\sum_{i=1}^Mc_i[\omega_i]=0$. This implies $\sum_{i=1}^Mc_i\omega_i\in\mathrm{Im}((i_{r,s})_*)$. Then, by assumption, no nonzero linear combination of the vectors $\omega_i$ belongs to $\mathrm{Im}((i_{r,s})_*)$, we have that $\sum_{i=1}^Mc_i\omega_i=0$. Finally, since the vectors $\omega_i$ are linearly independent, one can conclude that $c_1=\cdots=c_M=0$.
		\end{proof}
		
		Therefore, $M\leq\dim\mathrm{Span}_{\mathbb{F}}(W)$. In order to complete the proof, towards a contradiction, we will assume $M<\dim\mathrm{Span}_{\mathbb{F}}(W)$. Then, there exists $\omega\in\bigcap_{r'\in(r,s]}\mathrm{Im}((i_{r',s})_*)$ such that $I_{(\omega,s)}=(r,s]$ and $[\omega]$ is not a linear combination of the (equivalence classes of) vectors $[\omega_i]$. 
		
		We claim that $\omega_1,\dots,\omega_M,\omega$ satisfy the conditions in $(\star)$. First, let's show that $\omega_1,\dots,\omega_M,\omega$ are linearly independent. Suppose $\sum_{i=1}^Mc_i\omega_i+c\omega=0$ for some $c_1,\dots,c_M,c\in\mathbb{F}$. If $c=0$, by the linear independence of $\omega_1,\dots,\omega_M$, we have $c_1=\cdots=c_M=0$. If $c\neq 0$, without loss of generality one assume $c=1$. Therefore $\omega=-\sum_{i=1}^Mc_i\omega_i$ so $[\omega]=-\sum_{i=1}^Mc_i[\omega_i]$ which contradicts  the assumption on $\omega$. This proves that indeed $\omega_1,\dots,\omega_M,\omega$ are linearly independent. 
		
		Lastly, let's show that no nonzero linear combination of $\omega_1,\dots,\omega_M,\omega$ belongs to $\mathrm{Im}((i_{r,s})_*)$. Suppose $\sum_{i=1}^Mc_i\omega_i+c\omega$ is nonzero and belongs to $\mathrm{Im}((i_{r,s})_*)$ for some $c_1,\dots,c_M,c\in\mathbb{F}$. Note that $c$ cannot be zero by the assumptions on the vectors $\omega_i$, similarly to how we argued above. Hence, without loss of generality, $c=1$. Therefore $\sum_{i=1}^Mc_i\omega_i+\omega\in\mathrm{Im}((i_{r,s})_*)$ so $[\omega]=-\sum_{i=1}^Mc_i[\omega_i]$ which contradicts  the assumption on $\omega$. This proves that indeed no nonzero linear combination of $\omega_1,\dots,\omega_M,\omega$ belongs to $\mathrm{Im}((i_{r,s})_*)$. Hence, $\omega_1,\dots,\omega_M,\omega$ satisfy the conditions in $(\star)$. This contradicts the maximality of $M$. So, we can conclude that $M=\dim\mathrm{Span}_{\mathbb{F}}(W)$.
	\end{proof}
	
	\begin{Remark}\label{rmk:nononzerolincmb}
		\begin{figure}
			\centering
			\includegraphics[width=\linewidth]{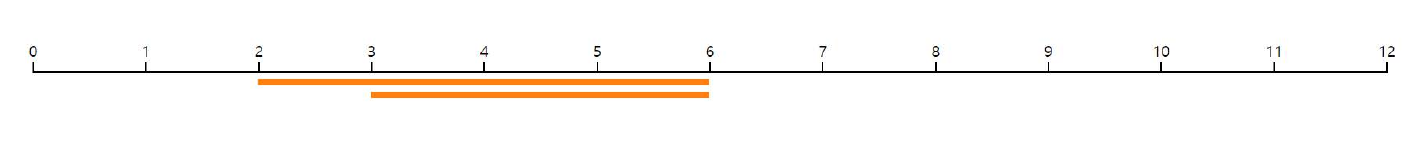}
			\caption{The barcode of the persistence module  $V_*^{2,3,6}$ from Remark \ref{rmk:nononzerolincmb}.}\label{fig:rips-torus-pd}
		\end{figure}
		
		One might wonder why the condition ``no nonzero linear combination of the vectors $\omega_1,\ldots,\omega_m$ belongs to $\mathrm{Im}((i_{r,s})_*)$"  in Equation (\ref{eq:informal}) of Proposition \ref{prop:bcdmtpct} is necessary. However, without this condition the first expression fails to provide the precise multiplicity of an interval. In this remark, we will provide a family of examples which depend on the choice of three real numbers $0<t<r<s$. Suppose $0<t<r<s$ are fixed. Let $V_*^{t,r,s}$ be the persistence module over the real line such that
		$$V_\lambda^{t,r,s}:=\begin{cases}\mathbb{F}&\text{if }t<\lambda\leq r\\ \mathbb{F}^2&\text{if }r<\lambda\leq s\\ 0&\text{otherwise}\end{cases}$$
		and
		$$v_{\lambda,\lambda'}^{t,r,s}:=\begin{cases}\mathrm{id}_{\mathbb{F}}&\text{if } t<\lambda\leq\lambda'\leq r\\ \omega\mapsto (\omega,\omega)&\text{if }\lambda\in (t,r]\text{ and }\lambda'\in (r,s]\\ \mathrm{id}_{\mathbb{F}^2}&\text{if }r<\lambda\leq\lambda'\leq s\\ 0&\text{otherwise}\end{cases}$$
		By \cite[Proposition 3.29]{chazal2016structure}, it is known that the the multiplicity of the interval $(r,s]$ is the following:
		\begin{align*}
			&\lim\limits_{r'\rightarrow r+}\lim\limits_{s'\rightarrow s+}\big(\mathrm{rank}(v_{r',s}^{t,r,s})-\mathrm{rank}(v_{r,s}^{t,r,s})-\mathrm{rank}(v_{r',s'}^{t,r,s})+\mathrm{rank}(v_{r,s'}^{t,r,s})\big)\\
			&=2-1-0+0=1.
		\end{align*}
		Now, consider the following number
		\[
		\max\left\{ m\in\Z_{\geq 0} \Bigg| \begin{array}{l}
			\exists\text{ linearly independent vectors }\omega_1,\dots,\omega_m\in V_t\text{ s.t. }
			I_{(\omega_i,t)}=(r,s]\text{ for all } i\\\text{ and no nonzero linear combination of these vectors belongs to }\mathrm{Im}(v_{r,s})
		\end{array}\right\}.
		\]
		which an analogue of Equation (\ref{eq:informal})  in Proposition \ref{prop:bcdmtpct} for the interval $(r,s]$. It is easy to verify this number is equal to the correct multiplicity $1$, while it becomes $2$ if we drop the condition ``no nonzero linear combination of these vectors belong to $\mathrm{Im}(v_{r,s})$". Figure \ref{fig:rips-torus-pd} depicts the barcode of $V_*^{2,3,6}$.

		Furthermore, the  persistence module  $V_*^{t,r,s}$ can be realized as a Vietoris-Rips persistent homology as described next.
		
		\begin{figure}
			\centering
			\begin{tikzpicture}[scale=0.5]
				\node[draw,circle,minimum size=16,inner sep=0] (0) at (0,0) {$v_0$};
				\node[draw,circle,minimum size=16,inner sep=0] (1) at (3,0) {$v_1$};
				\node[draw,circle,minimum size=16,inner sep=0] (2) at (6,0) {$v_2$};
				\node[draw,circle,minimum size=16,inner sep=0] (3) at (9,0) {$v_3$};
				\node[draw,circle,minimum size=16,inner sep=0] (4) at (12,0) {$v_4$};
				\node[draw,circle,minimum size=16,inner sep=0] (5) at (15,0) {$v_5$};
				\node[draw,circle,minimum size=16,inner sep=0] (6) at (18,0) {$v_0$};
				\node[draw,circle,minimum size=16,inner sep=0] (7) at (0,3) {$v_{10}$};
				\node[draw,circle,minimum size=16,inner sep=0] (9) at (6,3) {$v_{25}$};
				\node[draw,circle,minimum size=16,inner sep=0] (11) at (12,3) {$v_{26}$};
				\node[draw,circle,minimum size=16,inner sep=0] (13) at (18,3) {$v_{10}$};
				\node[draw,circle,minimum size=16,inner sep=0] (14) at (0,6) {$v_{9}$};
				\node[draw,circle,minimum size=16,inner sep=0] (15) at (3,6) {$v_{20}$};
				\node[draw,circle,minimum size=16,inner sep=0] (16) at (6,6) {$v_{21}$};
				\node[draw,circle,minimum size=16,inner sep=0] (17) at (9,6) {$v_{22}$};
				\node[draw,circle,minimum size=16,inner sep=0] (18) at (12,6) {$v_{23}$};
				\node[draw,circle,minimum size=16,inner sep=0] (19) at (15,6) {$v_{24}$};
				\node[draw,circle,minimum size=16,inner sep=0] (20) at (18,6) {$v_{9}$};
				\node[draw,circle,minimum size=16,inner sep=0] (21) at (0,9) {$v_{8}$};
				\node[draw,circle,minimum size=16,inner sep=0] (23) at (6,9) {$v_{18}$};
				\node[draw,circle,minimum size=16,inner sep=0] (25) at (12,9) {$v_{19}$};
				\node[draw,circle,minimum size=16,inner sep=0] (27) at (18,9) {$v_{8}$};
				\node[draw,circle,minimum size=16,inner sep=0] (28) at (0,12) {$v_{7}$};
				\node[draw,circle,minimum size=16,inner sep=0] (29) at (3,12) {$v_{13}$};
				\node[draw,circle,minimum size=16,inner sep=0] (30) at (6,12) {$v_{14}$};
				\node[draw,circle,minimum size=16,inner sep=0] (31) at (9,12) {$v_{15}$};
				\node[draw,circle,minimum size=16,inner sep=0] (32) at (12,12) {$v_{16}$};
				\node[draw,circle,minimum size=16,inner sep=0] (33) at (15,12) {$v_{17}$};
				\node[draw,circle,minimum size=16,inner sep=0] (34) at (18,12) {$v_{7}$};
				\node[draw,circle,minimum size=16,inner sep=0] (35) at (0,15) {$v_{6}$};
				\node[draw,circle,minimum size=16,inner sep=0] (37) at (6,15) {$v_{11}$};
				\node[draw,circle,minimum size=16,inner sep=0] (39) at (12,15) {$v_{12}$};
				\node[draw,circle,minimum size=16,inner sep=0] (41) at (18,15) {$v_{6}$};
				\node[draw,circle,minimum size=16,inner sep=0] (42) at (0,18) {$v_{0}$};
				\node[draw,circle,minimum size=16,inner sep=0] (43) at (3,18) {$v_{1}$};
				\node[draw,circle,minimum size=16,inner sep=0] (44) at (6,18) {$v_{2}$};
				\node[draw,circle,minimum size=16,inner sep=0] (45) at (9,18) {$v_{3}$};
				\node[draw,circle,minimum size=16,inner sep=0] (46) at (12,18) {$v_{4}$};
				\node[draw,circle,minimum size=16,inner sep=0] (47) at (15,18) {$v_{5}$};
				\node[draw,circle,minimum size=16,inner sep=0] (48) at (18,18) {$v_{0}$};
				\node[draw,circle,minimum size=16,inner sep=0] (49) at (2,16) {$v_{27}$};
				\node[draw,circle,minimum size=16,inner sep=0] (50) at (4,14) {$v_{28}$};
				\node[draw,circle,minimum size=16,inner sep=0] (51) at (9,9) {$v_{29}$};
				\node[draw,circle,minimum size=16,inner sep=0] (52) at (14,4) {$v_{30}$};
				\node[draw,circle,minimum size=16,inner sep=0] (53) at (16,2) {$v_{31}$};
				
				\draw (0) -- (1);
				\draw (1) -- (2);
				\draw (2) -- (3);
				\draw (3) -- (4);
				\draw (4) -- (5);
				\draw (5) -- (6);
				\draw (14) -- (15);
				\draw (15) -- (16);
				\draw (16) -- (17);
				\draw (17) -- (18);
				\draw (18) -- (19);
				\draw (19) -- (20);
				\draw (0) -- (7);
				\draw (7) -- (14);
				\draw (14) -- (21);
				\draw (2) -- (9);
				\draw (9) -- (16);
				\draw (16) -- (23);
				\draw (4) -- (11);
				\draw (11) -- (18);
				\draw (18) -- (25);
				\draw (6) -- (13);
				\draw (13) -- (20);
				\draw (20) -- (27);
				\draw (21) -- (28);
				\draw (23) -- (30);
				\draw (25) -- (32);
				\draw (27) -- (34);
				\draw (28) -- (29);
				\draw (29) -- (30);
				\draw (30) -- (31);
				\draw (31) -- (32);
				\draw (32) -- (33);
				\draw (33) -- (34);
				\draw (13) -- (20);
				\draw (28) -- (35);
				\draw (35) -- (42);
				\draw (30) -- (37);
				\draw (37) -- (44);
				\draw (32) -- (39);
				\draw (39) -- (46);
				\draw (34) -- (41);
				\draw (41) -- (48);
				\draw (42) -- (43);
				\draw (43) -- (44);
				\draw (44) -- (45);
				\draw (45) -- (46);
				\draw (46) -- (47);
				\draw (47) -- (48);
				\draw (49) -- (43);
				\draw (49) -- (44);
				\draw (49) -- (35);
				\draw (49) -- (28);
				\draw (50) -- (44);
				\draw (50) -- (37);
				\draw (50) -- (28);
				\draw (50) -- (29);
				\draw (51) -- (31);
				\draw (51) -- (32);
				\draw (51) -- (25);
				\draw (51) -- (23);
				\draw (51) -- (16);
				\draw (51) -- (17);
				\draw (52) -- (19);
				\draw (52) -- (20);
				\draw (52) -- (11);
				\draw (52) -- (4);
				\draw (53) -- (20);
				\draw (53) -- (13);
				\draw (53) -- (4);
				\draw (53) -- (5);
				\draw (45) -- (37);
				\draw (45) -- (39);
				\draw (31) -- (37);
				\draw (31) -- (39);
				\draw (45) -- (31);
				\draw (47) -- (39);
				\draw (47) -- (41);
				\draw (33) -- (39);
				\draw (33) -- (41);
				\draw (47) -- (33);
				\draw (33) -- (25);
				\draw (33) -- (27);
				\draw (19) -- (25);
				\draw (19) -- (27);
				\draw (33) -- (19);
				\draw (29) -- (21);
				\draw (29) -- (23);
				\draw (15) -- (21);
				\draw (15) -- (23);
				\draw (29) -- (15);
				\draw (17) -- (9);
				\draw (17) -- (11);
				\draw (3) -- (9);
				\draw (3) -- (11);
				\draw (17) -- (3);
				\draw (15) -- (7);
				\draw (15) -- (9);
				\draw (1) -- (7);
				\draw (1) -- (9);
				\draw (15) -- (1);
				\draw[blue, thick] (42) -- (49);
				\draw[blue, thick] (49) -- (50);
				\draw[blue, thick] (50) -- (30);
				\draw[blue, thick] (30) -- (51);
				\draw[blue, thick] (51) -- (18);
				\draw[blue, thick] (18) -- (52);
				\draw[blue, thick] (52) -- (53);
				\draw[blue, thick] (53) -- (6);
			\end{tikzpicture}
			\caption{The triangulation $K$ of the torus used in Remark \ref{rmk:nononzerolincmb}. Black edges have weight $3$ while blue edges have a smaller weight $2$.}
			\label{fig:torustriangulation}
		\end{figure}
		
		We are now going to impose further restrictions on $t,r$ and $s$: we will assume that $t=2$, $r=3$, and $s=6$. Let $K$ be the triangulation of the torus given in Figure \ref{fig:torustriangulation}. We assign a weight to each $1$-simplex(=edge) $e$ of $K$ in the following way:
		$$\text{the weight of }e = \begin{cases}2&\text{if }e =\{v_0,v_{27}\},\{v_{27},v_{28}\},\{v_{28},v_{14}\},\{v_{14},v_{29}\},\\
			&\qquad\{v_{29},v_{23}\},\{v_{23},v_{30}\},\{v_{30},v_{31}\},\text{ or }\{v_{31},v_{0}\}\\ 3&\text{otherwise}\end{cases}$$
		
		Now let $(X,d_X)$ be the metric space where $X$ is the vertices of $K$ and $d_X$ is the shortest path metric (i.e. graph distance) induced from the weighted edges. Then, one can verify the following facts:
		
		\begin{enumerate}
			\item $\vr_\lambda(X)=\begin{cases}K^{(0)},\text{ the }0\text{-skeleton of }K&\text{if}\,\,\,0<\lambda\leq 2\\ \text{the cycle graph consisting of the edges } \{v_0,v_{27}\},\{v_{27},v_{28}\},\\ \{v_{28},v_{14}\},\{v_{14},v_{29}\},\{v_{29},v_{23}\},\{v_{23},v_{30}\},\{v_{30},v_{31}\},\text{ and }\{v_{31},v_{0}\}&\text{if}\,\,\,2<\lambda\leq 3\\ K\text{, the triangulation of the torus shown in Figure \ref{fig:torustriangulation}}&\text{if}\,\,\,3<\lambda\leq 6\end{cases}$

			Therefore,

			$\Hom_1(\vr_\lambda(X);\mathbb{F})=\begin{cases}0&\text{if}\,\,\,0<\lambda\leq 2\\ \mathbb{F}&\text{if}\,\,\,2<\lambda\leq 3\\ \mathbb{F}^2&\text{if}\,\,\,3<\lambda\leq 6\end{cases}$
			
			\item $\Hom_1(\vr_\lambda(X);\mathbb{F})=0$ if $\lambda>6$.
			
			\item If $\lambda\in (2,3]$ and $\lambda'\in (3,6]$, we have
			$$(i_{\lambda,\lambda'})_*(\omega_0)=\omega_1+\omega_2$$
			where
			\begin{align*}
				&\omega_0\text{ is the degree-1 homology class induced by }\\
				&\quad\{v_0,v_{27}\}+\{v_{27},v_{28}\}+\{v_{28},v_{14}\}+\{v_{14}v_{29}\}+\{v_{29},v_{23}\}+\{v_{23},v_{30}\}+\{v_{30},v_{31}\}+\{v_{31},v_{0}\},\\
				&\omega_1\text{ is the degree-1 homology class induced by }\\
				&\quad\{v_0,v_{1}\}+\{v_{1},v_{2}\}+\{v_{2},v_{3}\}+\{v_{3},v_{4}\}+\{v_{4},v_{5}\}+\{v_{5},v_{0}\},\\
				&\omega_2\text{ is the degree-1 homology class induced by }\\
				&\quad\{v_0,v_{6}\}+\{v_{6},v_{7}\}+\{v_{7},v_{8}\}+\{v_{8},v_{9}\}+\{v_{9},v_{10}\}+\{v_{10},v_{0}\}.
			\end{align*}
			Note that $\omega_0$ is a generator of $\Hom_1(\vr_\lambda(X);\mathbb{F})=\mathbb{F}$ and $\omega_1$, $\omega_2$ are linearly independent generators of $\Hom_1(\vr_{\lambda'}(X);\mathbb{F})=\mathbb{F}^2$.
		\end{enumerate}
		
		Hence, by the above three facts, the Vietoris-Rips persistent homology $\PH_1(\vr_*(X);\mathbb{F})$ is equal to $V_*^{2,3,6}$.

		For completeness, below we display the distance matrix for this example. This distance matrix was used in Ripser live \url{https://live.ripser.org/} in order to obtain Figure \ref{fig:rips-torus-pd}. Note that the order of rows and columns corresponds to $(v_0,v_1,v_2,\ldots,v_{31})$: 
		$$d_X=\left[\begin{smallmatrix}
			0 & 3 & 5 & 8 & 5 & 3 & 3 & 5 & 8 & 5 & 3 & 7 & 6 & 7 & 6 & 9 & 9 & 6 & 9 & 9 & 6 & 9 & 9 & 6 & 7 & 6 & 7 & 2 & 4 & 8 & 4 & 2 \cr
			3 & 0 & 3 & 6 & 8 & 6 & 6 & 6 & 6 & 6 & 3 & 6 & 9 & 6 & 7 & 9 & 12 & 9 & 6 & 12 & 3 & 6 & 6 & 9 & 9 & 3 & 9 & 3 & 5 & 9 & 7 & 5 \cr
			5 & 3 & 0 & 3 & 6 & 8 & 6 & 6 & 9 & 9 & 6 & 3 & 6 & 6 & 5 & 6 & 9 & 9 & 8 & 10 & 6 & 6 & 6 & 9 & 12 & 3 & 6 & 3 & 3 & 7 & 9 & 7 \cr
			8 & 6 & 3 & 0 & 3 & 6 & 9 & 9 & 9 & 9 & 9 & 3 & 3 & 9 & 6 & 3 & 6 & 6 & 9 & 9 & 6 & 6 & 6 & 6 & 9 & 3 & 3 & 6 & 6 & 6 & 6 & 6 \cr
			5 & 8 & 6 & 3 & 0 & 3 & 6 & 9 & 9 & 6 & 6 & 6 & 3 & 12 & 9 & 6 & 6 & 6 & 10 & 8 & 9 & 9 & 6 & 5 & 6 & 6 & 3 & 7 & 9 & 7 & 3 & 3 \cr
			3 & 6 & 8 & 6 & 3 & 0 & 3 & 6 & 6 & 6 & 6 & 9 & 3 & 9 & 9 & 6 & 6 & 3 & 12 & 6 & 9 & 12 & 9 & 7 & 6 & 9 & 6 & 5 & 7 & 9 & 5 & 3 \cr
			3 & 6 & 6 & 9 & 6 & 3 & 0 & 3 & 6 & 8 & 6 & 8 & 6 & 6 & 7 & 9 & 6 & 3 & 9 & 6 & 9 & 12 & 12 & 9 & 6 & 9 & 9 & 3 & 5 & 9 & 7 & 5 \cr
			5 & 6 & 6 & 9 & 9 & 6 & 3 & 0 & 3 & 6 & 8 & 6 & 6 & 3 & 5 & 8 & 6 & 3 & 6 & 6 & 6 & 9 & 10 & 9 & 6 & 9 & 12 & 3 & 3 & 7 & 9 & 7 \cr
			8 & 6 & 9 & 9 & 9 & 6 & 6 & 3 & 0 & 3 & 6 & 9 & 6 & 3 & 6 & 9 & 6 & 3 & 6 & 6 & 3 & 6 & 9 & 6 & 3 & 6 & 9 & 6 & 6 & 8 & 6 & 6 \cr
			5 & 6 & 9 & 9 & 6 & 6 & 8 & 6 & 3 & 0 & 3 & 12 & 9 & 6 & 9 & 10 & 9 & 6 & 6 & 6 & 3 & 6 & 8 & 5 & 3 & 6 & 6 & 7 & 9 & 7 & 3 & 3 \cr
			3 & 3 & 6 & 9 & 6 & 6 & 6 & 8 & 6 & 3 & 0 & 9 & 9 & 6 & 9 & 12 & 12 & 9 & 6 & 9 & 3 & 6 & 9 & 7 & 6 & 6 & 8 & 5 & 7 & 9 & 5 & 3 \cr
			7 & 6 & 3 & 3 & 6 & 9 & 8 & 6 & 9 & 12 & 9 & 0 & 6 & 6 & 3 & 3 & 6 & 9 & 6 & 8 & 9 & 8 & 8 & 7 & 10 & 6 & 6 & 5 & 3 & 5 & 9 & 9 \cr
			6 & 9 & 6 & 3 & 3 & 3 & 6 & 6 & 6 & 9 & 9 & 6 & 0 & 9 & 6 & 3 & 3 & 3 & 9 & 6 & 9 & 9 & 9 & 8 & 6 & 6 & 6 & 8 & 8 & 6 & 6 & 6 \cr
			7 & 6 & 6 & 9 & 12 & 9 & 6 & 3 & 3 & 6 & 6 & 6 & 9 & 0 & 3 & 6 & 8 & 6 & 3 & 8 & 3 & 6 & 8 & 7 & 6 & 6 & 10 & 5 & 3 & 5 & 9 & 9 \cr
			6 & 7 & 5 & 6 & 9 & 9 & 7 & 5 & 6 & 9 & 9 & 3 & 6 & 3 & 0 & 3 & 5 & 8 & 3 & 5 & 6 & 5 & 5 & 4 & 7 & 8 & 7 & 4 & 2 & 2 & 6 & 8 \cr
			9 & 9 & 6 & 3 & 6 & 6 & 9 & 8 & 9 & 10 & 12 & 3 & 3 & 6 & 3 & 0 & 3 & 6 & 6 & 6 & 9 & 6 & 6 & 5 & 8 & 6 & 6 & 7 & 5 & 3 & 7 & 9 \cr
			9 & 12 & 9 & 6 & 6 & 6 & 6 & 6 & 6 & 9 & 12 & 6 & 3 & 8 & 5 & 3 & 0 & 3 & 6 & 3 & 9 & 6 & 6 & 5 & 6 & 9 & 8 & 9 & 7 & 3 & 7 & 9 \cr
			6 & 9 & 9 & 6 & 6 & 3 & 3 & 3 & 3 & 6 & 9 & 9 & 3 & 6 & 8 & 6 & 3 & 0 & 9 & 3 & 6 & 9 & 9 & 6 & 3 & 9 & 9 & 6 & 6 & 6 & 6 & 6 \cr
			9 & 6 & 8 & 9 & 10 & 12 & 9 & 6 & 6 & 6 & 6 & 6 & 9 & 3 & 3 & 6 & 6 & 9 & 0 & 6 & 3 & 3 & 6 & 5 & 8 & 6 & 8 & 7 & 5 & 3 & 7 & 9 \cr
			9 & 12 & 10 & 9 & 8 & 6 & 6 & 6 & 6 & 6 & 9 & 8 & 6 & 8 & 5 & 6 & 3 & 3 & 6 & 0 & 9 & 6 & 6 & 3 & 3 & 9 & 6 & 9 & 7 & 3 & 5 & 7 \cr
			6 & 3 & 6 & 6 & 9 & 9 & 9 & 6 & 3 & 3 & 3 & 9 & 9 & 3 & 6 & 9 & 9 & 6 & 3 & 9 & 0 & 3 & 6 & 8 & 6 & 3 & 9 & 6 & 6 & 6 & 6 & 6 \cr
			9 & 6 & 6 & 6 & 9 & 12 & 12 & 9 & 6 & 6 & 6 & 8 & 9 & 6 & 5 & 6 & 6 & 9 & 3 & 6 & 3 & 0 & 3 & 5 & 8 & 3 & 6 & 9 & 7 & 3 & 7 & 9 \cr
			9 & 6 & 6 & 6 & 6 & 9 & 12 & 10 & 9 & 8 & 9 & 8 & 9 & 8 & 5 & 6 & 6 & 9 & 6 & 6 & 6 & 3 & 0 & 3 & 6 & 3 & 3 & 9 & 7 & 3 & 5 & 7 \cr
			6 & 9 & 9 & 6 & 5 & 7 & 9 & 9 & 6 & 5 & 7 & 7 & 8 & 7 & 4 & 5 & 5 & 6 & 5 & 3 & 8 & 5 & 3 & 0 & 3 & 6 & 3 & 8 & 6 & 2 & 2 & 4 \cr
			7 & 9 & 12 & 9 & 6 & 6 & 6 & 6 & 3 & 3 & 6 & 10 & 6 & 6 & 7 & 8 & 6 & 3 & 8 & 3 & 6 & 8 & 6 & 3 & 0 & 9 & 6 & 9 & 9 & 5 & 3 & 5 \cr
			6 & 3 & 3 & 3 & 6 & 9 & 9 & 9 & 6 & 6 & 6 & 6 & 6 & 6 & 8 & 6 & 9 & 9 & 6 & 9 & 3 & 3 & 3 & 6 & 9 & 0 & 6 & 6 & 6 & 6 & 8 & 8 \cr
			7 & 9 & 6 & 3 & 3 & 6 & 9 & 12 & 9 & 6 & 8 & 6 & 6 & 10 & 7 & 6 & 8 & 9 & 8 & 6 & 9 & 6 & 3 & 3 & 6 & 6 & 0 & 9 & 9 & 5 & 3 & 5 \cr
			2 & 3 & 3 & 6 & 7 & 5 & 3 & 3 & 6 & 7 & 5 & 5 & 8 & 5 & 4 & 7 & 9 & 6 & 7 & 9 & 6 & 9 & 9 & 8 & 9 & 6 & 9 & 0 & 2 & 6 & 6 & 4 \cr
			4 & 5 & 3 & 6 & 9 & 7 & 5 & 3 & 6 & 9 & 7 & 3 & 8 & 3 & 2 & 5 & 7 & 6 & 5 & 7 & 6 & 7 & 7 & 6 & 9 & 6 & 9 & 2 & 0 & 4 & 8 & 6 \cr
			8 & 9 & 7 & 6 & 7 & 9 & 9 & 7 & 8 & 7 & 9 & 5 & 6 & 5 & 2 & 3 & 3 & 6 & 3 & 3 & 6 & 3 & 3 & 2 & 5 & 6 & 5 & 6 & 4 & 0 & 4 & 6 \cr
			4 & 7 & 9 & 6 & 3 & 5 & 7 & 9 & 6 & 3 & 5 & 9 & 6 & 9 & 6 & 7 & 7 & 6 & 7 & 5 & 6 & 7 & 5 & 2 & 3 & 8 & 3 & 6 & 8 & 4 & 0 & 2 \cr
			2 & 5 & 7 & 6 & 3 & 3 & 5 & 7 & 6 & 3 & 3 & 9 & 6 & 9 & 8 & 9 & 9 & 6 & 9 & 7 & 6 & 9 & 7 & 4 & 5 & 8 & 5 & 4 & 6 & 6 & 2 & 0
		\end{smallmatrix}\right]$$
	\end{Remark}
	
	\begin{definition}[Localized spread of a homology class]
		For each $(\omega,s)\in\mathrm{Spec}_k(X,\mathbb{F})$, we define the \emph{localized spread} of $(\omega,s)$ as follows:
		
		$$\spread(X;\omega,s):=\sup\{\mathrm{pspread}(X;\omega',s'):s'\leq s\text{ and }\omega=(i_{s',s})_\ast(\omega') \}.$$
	\end{definition}
	
	\begin{Remark}\label{rmk:locsprduppbdd}
		It is easy to check that both $\mathrm{pspread}(X;\omega,s)$ and $\spread(X;\omega,s)$ are always upper bounded by $\spread(X)$.
	\end{Remark}
	
	The following Proposition \ref{prop:locspread} is the  ``localized" version of Proposition \ref{prop:spread} we promised in the beginning of this section.
	
	\begin{proposition}\label{prop:locspread}
		Let $(X,d_X)$ be a compact metric space and $k\geq 1$. Then, for any $(\omega,s)\in\mathrm{Spec}_k(X,\mathbb{F})$, we have
		$$\mathrm{length}(I_{(\omega,s)})\leq\spread(X;\omega,s).$$
	\end{proposition}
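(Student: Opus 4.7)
The plan is to show, for every $\delta>\spread(X;\omega,s)=:L$, that the length of $I_{(\omega,s)}$ is at most $\delta$, and then let $\delta\to L^+$. The main input is Lemma \ref{lemma:locspread} (\textquotedblleft pre-localized spread annihilates classes after time $\delta$\textquotedblright), and the definition of $\spread(X;\omega,s)$ as the supremum of pre-localized spreads over all preimages $\omega'$ of $\omega$ under the structure maps. So the main task is plumbing: pick such an $\omega'$ at time $u'$ barely above $u_{(\omega,s)}$, annihilate it at time $u'+\delta$, and push this zero forward to show $v_{(\omega,s)}\le u'+\delta$.

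Write $u:=u_{(\omega,s)}$ and $v:=v_{(\omega,s)}$. Fix an arbitrary $\delta>L$. I would first take any $u'\in(u,s]$. By definition of $u$, there exists a nonzero $\omega'\in\Hom_k(\vr_{u'}(X);\mathbb{F})$ with $(i_{u',s})_\ast(\omega')=\omega$. By definition of $\spread(X;\omega,s)$, the pair $(\omega',u')$ is among those whose pre-localized spreads are taken in the supremum, so $\mathrm{pspread}(X;\omega',u')\le L<\delta$. Lemma \ref{lemma:locspread} then yields $(i_{u',\,u'+\delta})_\ast(\omega')=0$.

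Now I would rule out the case $u'+\delta<s$: if this held, applying $(i_{u'+\delta,\,s})_\ast$ to the zero class $(i_{u',u'+\delta})_\ast(\omega')$ would give $(i_{u',s})_\ast(\omega')=\omega=0$, contradicting $\omega\ne 0$. Hence $u'+\delta\ge s$ automatically. Factoring $(i_{u',\,u'+\delta})_\ast=(i_{s,\,u'+\delta})_\ast\circ(i_{u',s})_\ast$ then gives
\[
(i_{s,\,u'+\delta})_\ast(\omega)=(i_{u',\,u'+\delta})_\ast(\omega')=0,
\]
so by the definition of $v_{(\omega,s)}$, we get $v\le u'+\delta$.

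Finally, letting $u'\to u^+$ and $\delta\to L^+$, I obtain $v-u\le L=\spread(X;\omega,s)$, which is exactly $\mathrm{length}(I_{(\omega,s)})\le\spread(X;\omega,s)$ (note this also handles the case $v=\infty$: the argument shows $v$ is finite as soon as $L<\infty$, which follows from Remark \ref{rmk:locsprduppbdd} whenever $\spread(X)<\infty$; if $\spread(X;\omega,s)=\infty$ the inequality is vacuous). The only subtle point is the bookkeeping ensuring that we really do pick $\omega'$ witnessing the pre-localized spread within the set $S(\omega',u')$ of supports, but this is built into the definition and requires no further work.
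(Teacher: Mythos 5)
Your proof is correct and follows essentially the same route as the paper's: pick $u'$ just above $u_{(\omega,s)}$ with a preimage $\omega'$ of $\omega$, apply Lemma \ref{lemma:locspread} to kill $\omega'$ by time $u'+\delta$, and conclude $v_{(\omega,s)}\le u'+\delta$ before taking infima. The only difference is that you make explicit the (easy) verification that $u'+\delta\ge s$ and the factoring of structure maps, which the paper leaves implicit.
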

	\begin{proof}
		Fix arbitrary $\delta>\spread(X;\omega,s)$ and $s'\in(u_{(\omega,s)},s]$. Then, $\exists\,\omega'\in\Hom_k(\vr_{s'}(X);\mathbb{F})$ such that $\omega=(i_{s',s})_\ast(\omega')$. Hence, by Lemma \ref{lemma:locspread}, $(i_{s',(s'+\delta)})_\ast(\omega')=0$. This indicates $v_{(\omega,s)}<s'+\delta$. Since the choice of $\delta$ and $s'$ are arbitrary, one can conclude
		$$\mathrm{length}(I_{(\omega,s)})=v_{(\omega,s)}-u_{(\omega,s)}\leq\spread(X;\omega,s).$$
	\end{proof}
	
	For an arbitrary $I\in\dgmR_k(X;\mathbb{F})$, a family of nonzero homology classes $\{(\omega_s,s)\}_{s\in I}\subseteq\mathrm{Spec}_k(X,\mathbb{F})$  such that $(i_{s,s'})_\ast(\omega_s)=\omega_{s'}$ for any $s\leq s'$ in $I$ where $i_{s,s'}:\vr_s(X)\longhookrightarrow\vr_{s'}(X)$ is the canonical inclusion, will be said to \emph{correspond} to $I$, if there is an isomorphism $$\Phi_\ast:\PH_k(\vr_\ast(X);\mathbb{F})\longrightarrow\bigoplus_{I\in\dgmR_k(X;\mathbb{F})}I_\mathbb{F}$$
	such that
	$$\begin{tikzcd}\mathrm{Span}_{\mathbb{F}}(\{\omega_s\})\arrow[d, "\Phi_s"]\arrow[r, "(i_{s,s'})_\ast"] & \mathrm{Span}_{\mathbb{F}}(\{\omega_{s'}\})\arrow[d, "\Phi_{s'}"]\\ \mathbb{F}\arrow[r,"\mathrm{id}"] & \mathbb{F}\end{tikzcd}$$
	
	Observe that Theorem \ref{theorem:pbarcode} guarantees that at least one such family of nonzero homology classes $\{(\omega_s,s)\}_{s\in I}$  always exists.

	\begin{Remark}\label{rmk:IandIomega}
		Now, given an arbitrary $I\in\dgmR_k(X;\mathbb{F})$,  there is a family of nonzero homology classes $\{(\omega_s,s)\}_{s\in I}\subseteq\mathrm{Spec}_k(X,\mathbb{F})$ corresponding to $I$ as described above. Then, obviously $I\subseteq I_{(\omega_s,s)}$ for each $s\in I$. Hence,
		$$\mathrm{length}(I)\leq\inf_{s\in I}\mathrm{length}(I_{(\omega_s,s)})\leq\inf_{s\in I}\spread(X;\omega_s,s)\leq\spread(X)$$
		so that one recovers the  result in Proposition \ref{prop:spread}. Below we show some examples that highlight cases in which the localized spread is more efficient at estimating the length of bars than its global counterpart.
	\end{Remark}
	
	\begin{example}\label{ex:loc-spread}
		Here are some applications of the notion of localized spread.
		
		Let $X$ be a compact metric space. If for a given $I\in\dgmR_k(X;\mathbb{F})$ a corresponding family   $\{(\omega_s,s)\}_{s\in I}\subseteq\mathrm{Spec}_k(X,\mathbb{F})$ is supported by a subset $B\subseteq X$, then
		\begin{equation}\label{ineq:locsprd}
			\mathrm{length}(I)\leq\inf_{s\in I}\mathrm{length}(I_{(\omega_s,s)})\leq\inf_{s\in I}\spread(X;\omega_s,s)\leq\spread(B)
		\end{equation}
		
		\noindent where the first inequality holds as in Remark \ref{rmk:IandIomega}, the second inequality holds by Proposition \ref{prop:locspread}, and the last inequality follows from Remark \ref{rmk:locsprduppbdd}.
		
		\medskip
		Here are 3 scenarios in which the estimate in inequality (\ref{ineq:locsprd}) is useful.
		\begin{enumerate}
			\item Suppose a closed Riemannian manifold $M$ and a nonzero homology class $\omega\in\Hom_1(M;\mathbb{F})$ are given. Also, let $B\subseteq M$ be the shortest loop representing $\omega$. Recall that there is an interval $I\in\dgmR_1(M;\mathbb{F})$ associated to $\omega$ (cf.  Proposition \ref{prop:genfilradpersistence}). Then,
			$$\mathrm{length}(I)\leq\spread(B)=\frac{\mathrm{length}(B)}{3}$$
			by the inequality (\ref{ineq:locsprd}) and Remark \ref{rem:spread-sn}. Actually, $I=\left(0,\frac{\mathrm{length}(B)}{3}\right]$; see \cite{gasparovic2018complete} and \cite[Theorem 8.10]{Virk18}.
			
			\item Let $X$ be the metric gluing of  a loop of length $l_2$ and an interval of length $l_1$ (glued to the circle at one of its endpoints). Then, by Proposition \ref{prop:spread}, $I\leq\spread(X)$  for any $I\in\dgmR_k(X;\mathbb{F})$. However, observe that one can make $\spread(X)$ arbitrarily large by increasing $l_1$. But, if $J\in\dgmR_1(X;\mathbb{F})$ and a family of nonzero homology classes $\{(\omega_s,s)\}_{s\in J}\subseteq\mathrm{Spec}_1(X,\mathbb{F})$ corresponding to $J$ is supported by the loop, then
			$$\mathrm{length}(J)\leq\spread(B)=\frac{l^2}{3}$$
			by  inequality (\ref{ineq:locsprd}) and Remark \ref{rem:spread-sn}. Again, as in the first item, $J=\left(0,\frac{l^2}{3}\right]$. Note that the existence of the interval $\left(0,\frac{l^2}{3}\right]$ in $\dgmR_1(X;\mathbb{F})$ can also be proved via the ``crushing" technique introduced by Hausmann (see \cite[Proposition 2.2]{h95}) since $X$ can be crushed onto the loop of length $l_2$.
			
			\item An example similar to the one described in the previous item arises from Figure \ref{fig:non-unique-int}.  Consider the tube connecting the two blobs to be large: in that case the standard spread of the space will be large yet the lifetime of the individual $\mathrm{H}_2$ classes will be much smaller.
		\end{enumerate}
	\end{example}

	\subsubsection{The proof of Lemma \ref{lemma:katzcontractiongen}} \label{sec:proof-katz-lemma-gen}
	
	Let us introduce a technical tool for this subsection. It is easy to check that the usual linear interpolation in $L^\infty(X)$ gives a geodesic bicombing on $L^\infty(X)$ satisfying  all  three properties mentioned in Lemma \ref{lemma:geobicomb}. However, in \cite{k83}, Katz introduced an alternative way to construct a geodesic bicombing on $L^\infty(X)$:
	
	\begin{definition}[Katz's geodesic bicombing]\label{def:Katzgeobicomb}
		Let $X$ be a compact metric space. We define the \emph{Katz geodesic bicombing} $\gamma_K$ on $L^\infty(X)$ in the following way:
		\begin{align*}
			\gamma_K:L^\infty(X)\times L^\infty(X)\times [0,1]&\longrightarrow L^\infty(X)\\
			(f,g,t)&\longmapsto \gamma_K(f,g,t)
		\end{align*}
		where
		\begin{align*}
			\gamma_K(f,g,t):X&\longrightarrow\mathbb{R}\\
			x&\longmapsto\begin{cases}\max\{f(x)-t\Vert f-g \Vert_\infty,g(x)\}&\text{if }f(x)\geq g(x)\\\min\{f(x)+t\Vert f-g \Vert_\infty,g(x)\}&\text{if }f(x)\leq g(x)\end{cases}
		\end{align*}
		In other words, $\gamma_K(f,g,\cdot)$ moves from $f$ to $g$ with the same speed at every point.
	\end{definition}
	
	The following proposition establishes that $\gamma_K$ is indeed a (continuous) geodesic bicombing, amongst other properties. The proof is relegated to Appendix \ref{app:proof-propo-katz}.

	\begin{restatable}[]{proposition}{restatepropKatzgeoprop}\label{prop:Katzgeoprop}
		Let $X$ be a compact metric space. Then, the Katz geodesic bicombing $\gamma_K$ on $L^\infty(X)$ satisfies the following properties: for any $f,g,h\in L^\infty(X)$ and $0\leq s\leq t\leq 1$,
		\begin{enumerate}
			\item[(1)] $\gamma_K(f,g,0)=f$ and $\gamma_K(f,g,1)=g$.
			
			\item[(2)] $\Vert\gamma_K(f,g,s)-\gamma_K(f,g,t)\Vert_\infty=(t-s)\cdot \Vert f-g \Vert_\infty$.
			
			\item[(3)] $\Vert\gamma_K(f,g,t)-\gamma_K(h,g,t)\Vert_\infty\leq 2\Vert f-h\Vert_\infty$.
			
			\item[(4)] $\Vert\gamma_K(f,g,t)-\gamma_K(f,h,t)\Vert_\infty\leq \Vert g-h\Vert_\infty$.
			
			\item[(5)]  $\gamma_K(\phi,\psi,\lambda)=\gamma_K(f,g,(1-\lambda)s+\lambda t)$ where $\phi=\gamma_K(f,g,s)$ and $\psi=\gamma_K(f,g,t)$ for any $\lambda\in [0,1]$.  (This property is called \emph{consistency}).
			
			\item[(6)] $\Vert\gamma_K(f,g,r)-h\Vert_\infty\leq\max\{\Vert\gamma_K(f,g,s)-h\Vert_\infty,\Vert\gamma_K(f,g,t)-h\Vert_\infty\}$ for any $r\in[s,t]$.
		\end{enumerate}
	\end{restatable}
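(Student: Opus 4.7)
\medskip

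\noindent\textbf{Proof proposal.} The plan is to reduce every assertion to a pointwise statement about a single real-valued ``soft-clipping'' function. Concretely, for fixed $f,g\in L^\infty(X)$, $x\in X$, $t\in[0,1]$, set $u:=g(x)-f(x)$ and $C:=t\,\|f-g\|_\infty$. Unpacking the cases in Definition \ref{def:Katzgeobicomb} yields the uniform description
\begin{equation*}
\gamma_K(f,g,t)(x)\;=\;f(x)+\sigma\bigl(u,C\bigr),\qquad\text{where}\qquad \sigma(u,C):=\operatorname{sgn}(u)\cdot\min\{|u|,\,C\}.
\end{equation*}
Thus every property will be verified at a fixed point and then uniformized in $x$. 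The function $\sigma$ has three key elementary features: (a) for fixed $u$, the map $C\mapsto \sigma(u,C)$ is $1$-Lipschitz and monotone (non-decreasing if $u\geq 0$, non-increasing if $u\leq 0$) on $[0,\infty)$; (b) for fixed $C\geq 0$, the map $u\mapsto \sigma(u,C)$ is $1$-Lipschitz; and (c) the refined inequality
\begin{equation*}
|\sigma(u_1,C_1)-\sigma(u_2,C_2)|\;\leq\;\begin{cases}\max\{|u_1-u_2|,\,|C_1-C_2|\}&\text{if }u_1u_2\geq 0,\\[2pt] |u_1-u_2|&\text{if }u_1u_2\leq 0,\end{cases}
\end{equation*}
which is obtained by a short case analysis on whether each $|u_i|$ exceeds $C_i$ (using the identity $|\min\{a,b\}-\min\{c,d\}|\leq \max\{|a-c|,|b-d|\}$ in the same-sign case, and $\sigma(u,C)\sigma(v,C')\leq 0$ in the opposite-sign case, so that $|\sigma(u,C)-\sigma(v,C')|=\min\{|u|,C\}+\min\{|v|,C'\}\leq |u|+|v|=|u-v|$).

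Properties (1), (2), and (6) follow directly from (a). Property (1) is the observation $\sigma(u,0)=0$ and $\sigma(u,\|f-g\|_\infty)=u$ since $|u|\leq\|f-g\|_\infty$. Property (2) follows because $|\sigma(u,C_s)-\sigma(u,C_t)|\leq |C_s-C_t|=(t-s)\|f-g\|_\infty$ pointwise, while picking $x$ achieving $|u(x)|=\|f-g\|_\infty$ turns the inequality into equality in the sup. Property (6) uses that $t\mapsto \gamma_K(f,g,t)(x)$ is monotone for each $x$ (non-decreasing when $u\geq 0$, non-increasing when $u\leq 0$), so on the interval $[s,t]$ the value $\gamma_K(f,g,r)(x)$ lies between $\gamma_K(f,g,s)(x)$ and $\gamma_K(f,g,t)(x)$; hence $|\gamma_K(f,g,r)(x)-h(x)|\leq \max\{|\gamma_K(f,g,s)(x)-h(x)|,|\gamma_K(f,g,t)(x)-h(x)|\}$, and taking the sup in $x$ preserves the max.

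Properties (3) and (4) are applications of (c), and this is the step I expect to be the main obstacle since a naive triangle-inequality estimate $|\sigma(u,C_1)-\sigma(v,C_2)|\leq |u-v|+|C_1-C_2|$ yields only $\leq 2\|g-h\|_\infty$ in (4) instead of the desired $\|g-h\|_\infty$. For (4), apply (c) with $u_1=g(x)-f(x)$, $u_2=h(x)-f(x)$, $C_1=t\,d(f,g)$, $C_2=t\,d(f,h)$; then $|u_1-u_2|=|g(x)-h(x)|\leq\|g-h\|_\infty$ and $|C_1-C_2|=t\,|d(f,g)-d(f,h)|\leq t\,\|g-h\|_\infty$, and the opposite-sign case is handled by $|\sigma(u_1,C_1)-\sigma(u_2,C_2)|\leq |u_1-u_2|$. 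For (3), the term $|f(x)-h(x)|$ appears additively (since the base points of the two geodesics differ), and using $|u_1-u_2|=|(g-f)(x)-(g-h)(x)|\leq\|f-h\|_\infty$ together with (c) gives the overall bound $2\|f-h\|_\infty$.

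Finally, for property (5), fix $x$, assume $s\leq t$, and use that $\phi(x),\psi(x)$ lie on the same side of $f(x)$ (both $\geq f(x)$ when $g(x)\geq f(x)$, both $\leq f(x)$ otherwise), together with the computation $\|\phi-\psi\|_\infty=(t-s)\,\|f-g\|_\infty$ from property (2). A direct calculation using the formula for $\sigma$ then reduces both sides to the identity
\begin{equation*}
\min\bigl\{\min\{u,A\}+\lambda(B-A),\ \min\{u,B\}\bigr\}\;=\;\min\{u,A+\lambda(B-A)\},\qquad 0\leq A\leq B,\ \lambda\in[0,1],\ u\geq 0,
\end{equation*}
which is verified by splitting into the three cases $u\leq A$, $A\leq u\leq B$, and $u\geq B$; the case $g(x)\leq f(x)$ is symmetric. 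This establishes $\gamma_K(\phi,\psi,\lambda)(x)=\gamma_K(f,g,(1-\lambda)s+\lambda t)(x)$ for each $x$, hence the equality of functions.
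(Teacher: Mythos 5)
Your proof is correct and takes a genuinely different route from the paper. Where the paper verifies each of the six properties by a fresh case analysis on the sign of $f(x)-g(x)$ (and similarly for $h$), you reduce everything to a single soft-clipping function $\sigma(u,C)=\operatorname{sgn}(u)\min\{|u|,C\}$ and establish its Lipschitz and monotonicity properties once. The key gain is conceptual unity: properties (1), (2), (6) follow from monotonicity of $C\mapsto\sigma(u,C)$; (3) and (4) follow from your joint Lipschitz estimate (c), which you correctly identify as the crucial sharpening that gets the constant $1$ instead of $2$ in (4); and (5) reduces to a pointwise algebraic identity about nested $\min$'s. This is cleaner and more reusable than the paper's sequence of ad hoc sign-splittings. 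One small rigor gap: in property (2) you pass from the pointwise inequality to equality by ``picking $x$ achieving $|u(x)|=\|f-g\|_\infty$,'' but elements of $L^\infty(X)$ are merely bounded (not continuous), so the supremum need not be attained. This is easily repaired either by a sequence $x_n$ with $|u(x_n)|\to\|f-g\|_\infty$ (any such $x_n$ eventually has $|u(x_n)|\geq t\|f-g\|_\infty$ when $t<1$, and a limiting argument handles $t=1$), or, as the paper does, by proving only the $\leq$ inequality for all $s\leq t$ and deducing equality from the chain $\|f-g\|_\infty\leq s\|f-g\|_\infty+(t-s)\|f-g\|_\infty+(1-t)\|f-g\|_\infty=\|f-g\|_\infty$ obtained by splitting $[0,1]$ at $s$ and $t$ and applying the triangle inequality together with property (1).
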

	
	Properties (2), (3), and (4) of Proposition \ref{prop:Katzgeoprop} imply the continuity of the Katz geodesic bicombing. In contrast, this bicombing is neither conical nor reversible; See Example \ref{sec:app:katz-bicombing} in the appendix.
	
	\begin{figure}
		\centering
		\includegraphics[width=\linewidth]{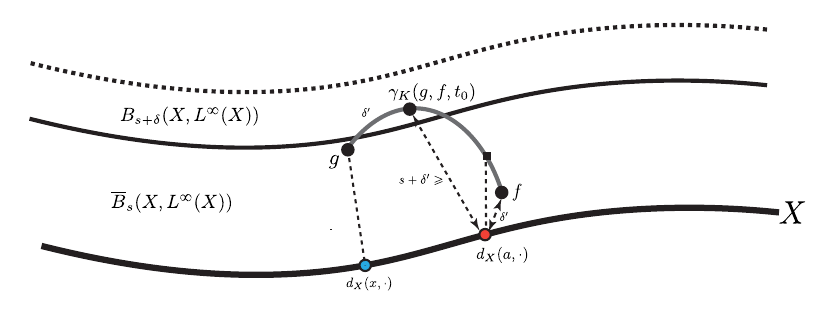}
		\caption{Strategy of the proof of Lemma \ref{lemma:katzcontractiongen}. By construction, the distance between $f$ and $d_X(a,\cdot)$ (represented by the red dot) is $\delta'$ and the distance between $\gamma_K(g,f,t_0)$ and $d_X(a,\cdot)$ is less than or equal to $s+\delta'$. Hence, by Item (6) of Proposition \ref{prop:Katzgeoprop}, we have that the  point represented by a square will be at distance at most $s+\delta'$ from $d_X(a,\cdot)$.}
		\label{fig:spread-proof}
	\end{figure}
	
	\begin{proof}[Proof of Lemma \ref{lemma:katzcontractiongen}]
		By the definition of spread, we know that there is a nonempty finite subset $A\subseteq X$ and $\delta'\in(0,\delta)$ such that $\diam(A) < 2\delta'$ and $\sup_{x\in X}\inf_{a\in A} d_X(x,a) < 2\delta'$.

		Next, we define
		\begin{align*}
			f:X&\longrightarrow\mathbb{R}\\
			x&\longmapsto d_X(x,A)+\delta'.
		\end{align*}
		
		The main strategy of the proof is depicted in Figure \ref{fig:spread-proof}.
		
		\begin{claim}\label{claim:disfa}
			$\Vert d_X(a,\cdot)-f \Vert_\infty=\delta'$ for any $a\in A$.
		\end{claim}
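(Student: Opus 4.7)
The plan is to verify the claim by a direct metric computation: first establish the pointwise bound $|d_X(a,x) - f(x)| \leq \delta'$ for every $x \in X$, and then exhibit a point where the bound is attained.

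For the upper bound, I would fix $a \in A$ and rewrite the quantity of interest as $d_X(a,x) - f(x) = d_X(a,x) - d_X(x,A) - \delta'$, then control the difference $d_X(a,x) - d_X(x,A)$ from both sides. The lower bound $d_X(a,x) - d_X(x,A) \geq 0$ is immediate from the definition of $d_X(x,A)$ as an infimum over $A$ (taking $a$ as a feasible point). For the upper bound, the key input is that $\diam(A) < 2\delta'$: for every $a' \in A$, the triangle inequality yields
\[
d_X(a,x) \leq d_X(a,a') + d_X(a',x) \leq 2\delta' + d_X(a',x),
\]
and taking the infimum over $a' \in A$ gives $d_X(a,x) - d_X(x,A) \leq 2\delta'$. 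Combining, $0 \leq d_X(a,x) - d_X(x,A) \leq 2\delta'$, so that $|d_X(a,x) - d_X(x,A) - \delta'| \leq \delta'$, which proves $\|d_X(a,\cdot) - f\|_\infty \leq \delta'$.

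To see that equality holds, I would evaluate at $x = a$. Since $a \in A$ we have $d_X(a,A) = 0$, so $f(a) = \delta'$, while $d_X(a,a) = 0$; hence $|d_X(a,a) - f(a)| = \delta'$, realizing the supremum. There is no genuine obstacle here; the only mildly delicate point is that the strict inequality $\diam(A) < 2\delta'$ relaxes to a non-strict inequality after taking $\inf_{a' \in A}$, but a non-strict bound is all that is needed to conclude $\|d_X(a,\cdot) - f\|_\infty = \delta'$.
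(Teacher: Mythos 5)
Your proof is correct and follows essentially the same route as the paper's: both decompose $d_X(a,x)-f(x) = d_X(a,x)-d_X(x,A)-\delta'$, bound this difference in $[-\delta',\diam(A)-\delta')$ using $d_X(x,A)\le d_X(a,x)$ on one side and $\diam(A)<2\delta'$ on the other, and then evaluate at $x=a$ to see the supremum is attained. (Your only stray remark is the worry about strictness degrading under the infimum; in fact $d_X(a,x)-d_X(x,A)\le\diam(A)<2\delta'$ stays strict, and in any case the non-strict bound suffices exactly as you say.)
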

		\begin{proof}[Proof of Claim \ref{claim:disfa}]
			To prove this, fix arbitrary $x\in X$. Note that
			$$d_X(a,x)-f(x)=d_X(a,x)-d_X(x,A)-\delta'.$$
			Since $d_X(x,A)\leq d_X(a,x)$, we have $-\delta'\leq d_X(a,x)-d_X(x,A)-\delta'$. Also, because the diameter of $A$ is smaller than $2\delta'$, we have $d_X(a,x)-d_X(x,A)-\delta'<\delta'$. Therefore, we have $\vert d_X(a,x)-f(x) \vert\leq\delta'$. Furthermore, if we put $x=a$, we have that $\Vert d_X(a,\cdot)-f \Vert_\infty=\delta'$.
		\end{proof}
		
		Now, let $$U_{s,\delta}:=\{\gamma_K(g,f,t):g\in\overline{B}_s(X,L^\infty(X)), t\in[0,1]\}.$$ Then  $U_{s,\delta}$ obviously contains $\overline{B}_s(X,L^\infty(X))$ and can be contracted to the point $f$. 
		
		The lemma will follow once we establish the following claim.
		
		\begin{claim}\label{claim:sec}
			$U_{s,\delta}\subseteq B_{s+\delta}(X,L^\infty(X))$.
		\end{claim}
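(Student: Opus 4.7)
The plan is: for an arbitrary $\phi = \gamma_K(g, f, t) \in U_{s, \delta}$ (with $g \in \overline{B}_s(X, L^\infty(X))$ and $t \in [0, 1]$), I will exhibit a point $\hat{x} \in X$ (depending on $g$ and $t$) with $\|\phi - d_X(\hat{x}, \cdot)\|_\infty < s + \delta$. Compactness of $X$ supplies $x_0 \in X$ with $\|g - d_X(x_0, \cdot)\|_\infty \leq s$, and the spread condition supplies $a \in A$ with $d_X(x_0, a) < 2\delta'$. Writing $D := \|g - f\|_\infty$, I will use $\hat{x} = x_0$ when we are still close to the starting point of the Katz geodesic and $\hat{x} = a$ once we are sufficiently far along it.

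First, if $tD < \delta$, then Item (2) of Proposition \ref{prop:Katzgeoprop} gives $\|\gamma_K(g, f, t) - g\|_\infty = tD$, so by the triangle inequality
\[
\|\phi - d_X(x_0, \cdot)\|_\infty \leq tD + s < s + \delta,
\]
and we are done with $\hat{x} = x_0$ (this case also covers $D = 0$).

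Otherwise $tD \geq \delta$, and I introduce the switchover time $t_* := \max\{0,\ (d_X(x_0, a) - \delta')/D\}$. Since $d_X(x_0, a) < 2\delta'$ and $\delta' < \delta$, we have $t_* D \leq \delta' < \delta \leq tD$, and hence $t_* \in [0, t] \subseteq [0, 1]$. The crux will be to establish the inequality
\[
\|\gamma_K(g, f, t_*) - d_X(a, \cdot)\|_\infty \leq s + \delta',
\]
after which Item (6) of Proposition \ref{prop:Katzgeoprop} applied on the subinterval $[t_*, 1]$ gives
\[
\|\gamma_K(g, f, t) - d_X(a, \cdot)\|_\infty \leq \max\{s + \delta',\ \|f - d_X(a, \cdot)\|_\infty\} = s + \delta' < s + \delta
\]
(using Claim \ref{claim:disfa} and $\delta' < \delta$ at the two ends), and the proof closes with $\hat{x} = a$.

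The main obstacle is this crucial inequality on $\|\gamma_K(g, f, t_*) - d_X(a, \cdot)\|_\infty$. Applying Item (6) directly on all of $[0, 1]$ only yields the bound $\max\{s + d_X(x_0, a),\ \delta'\}$, which can be as large as $s + 2\delta'$ and therefore fails to beat $s + \delta$ whenever $d_X(x_0, a) > \delta'$ and $\delta$ is barely above $\spread(X)/2$. To sharpen the bound I will unfold the explicit Katz formula pointwise at each $x \in X$: on the set where $\gamma_K(g, f, t_*)(x)$ has already ``collapsed'' to $f(x)$, Claim \ref{claim:disfa} gives $|f(x) - d_X(a, x)| \leq \delta'$ directly; on the complementary set, where $\gamma_K(g, f, t_*)(x) = g(x) \pm t_* D$ lies strictly between $g(x)$ and $f(x)$, the ``capped at $f$'' side is pinned within $\delta'$ of $d_X(a, x)$, while on the other side the translation by exactly $t_* D = d_X(x_0, a) - \delta'$ absorbs the excess in the triangle-inequality estimate $|g(x) - d_X(a, x)| \leq s + d_X(x_0, a)$, yielding the net pointwise bound $s + \delta'$. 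This cancellation is precisely the geometric reason for the above choice of $t_*$.
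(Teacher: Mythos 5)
Your proof is correct and follows essentially the same strategy as the paper's: fix $x_0$ with $\|g-d_X(x_0,\cdot)\|_\infty\leq s$ and $a\in A$ with $d_X(x_0,a)<2\delta'$, use $x_0$ and the triangle inequality on the early part of the Katz geodesic, then bound the norm at a switchover time and at $t=1$ (via Claim \ref{claim:disfa}) and invoke property~(6) of Proposition \ref{prop:Katzgeoprop} for the remaining segment. The only deviation is your switchover time $t_*$ with $t_*D=\max\{0,d_X(x_0,a)-\delta'\}$ rather than the paper's $t_0$ with $t_0\,\|g-f\|_\infty=\delta'$, together with the split $tD<\delta$ versus $tD\geq\delta$ in place of the paper's split on $\|g-f\|_\infty\leq\delta'$; both choices make the pointwise cancellation work, so this is a cosmetic variation.
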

		
		\begin{proof}[Proof of Claim \ref{claim:sec}]
			To see this, fix arbitrary $g\in\overline{B}_s(X,L^\infty(X))$ and $t\in [0,1]$. Note that one can choose $x\in X$ such that $\Vert g-d_X(x,\cdot)\Vert_\infty\leq s$.
			
			\begin{itemize}
				\item If $\Vert g-f \Vert_\infty\leq\delta'$, then $$\Vert \gamma_K(g,f,t)-d_X(x,\cdot) \Vert_\infty\leq\Vert\gamma_K(g,f,t)-g\Vert_\infty+\Vert g-d_X(x,\cdot)\Vert_\infty\leq s+\delta'<s+\delta$$
				by the triangle inequality and properties (1) and (2) of Proposition \ref{prop:Katzgeoprop}. So, $\gamma_K(g,f,t)\in B_{s+\delta}(X,L^\infty(X))$.
				
				\item Now, assume $\Vert g-f \Vert_\infty>\delta'$. Let us denote $t_0:=\frac{\delta'}{\Vert g-f \Vert_\infty}$. Now, for $t\in[0,t_0]$, $\gamma_K(g,f,t)\in B_{s+\delta}(X,L^\infty(X))$ since
				$$\Vert \gamma_K(g,f,t)-d_X(x,\cdot) \Vert_\infty\leq\Vert \gamma_K(g,f,t)-g \Vert_\infty+\Vert g-d_X(x,\cdot) \Vert_\infty\leq t\Vert g-f \Vert_\infty+s\leq s+\delta'<s+\delta.$$
				Next, we want to show $\gamma_K(g,f,t)\in B_{s+\delta}(X,L^\infty(X))$ for $t\in[t_0,1]$. To do that, choose $a\in A$ such that $d_X(x,a)<2\delta'$. We will prove $\Vert \gamma_K(g,f,t_0)-d_X(a,\cdot) \Vert_\infty\leq s+\delta'$.

				Fix arbitrary $x'\in X$. If $\vert g(x')-f(x')\vert\leq\delta'$, then $\gamma_K(g,f,t_0)(x')=f(x')$. Hence,
				$$\vert\gamma_K(g,f,t_0)(x')-d_X(a,x') \vert=\vert f(x')-d_X(a,x') \vert\leq\delta'$$
				by Claim \ref{claim:disfa}. If $\vert g(x')-f(x')\vert>\delta'$, $g(x')$ cannot be between $d_X(a,x')$ and $f(x')$ since $\vert d_X(a,x')-f(x') \vert\leq\delta'$ by Claim \ref{claim:disfa}. This implies that either
				$$\vert g(x')-d_X(a,x')\vert=\vert d_X(a,x')-f(x') \vert+\vert g(x')-f(x')\vert$$
				or
				$$\vert g(x')-f(x')\vert=\vert d_X(a,x')-f(x') \vert+\vert g(x')-d_X(a,x')\vert.$$
				Either way, it is easy to see that we always have
				$$\vert \gamma_K(g,f,t_0)(x')-d_X(a,x') \vert=\vert\vert g(x')-d_X(a,x') \vert-\delta'\vert\leq s+\delta'.$$
				where the last inequality is true because $\vert g(x')-d_X(a,x') \vert\leq\vert g(x')-d_X(x,x') \vert+d_X(a,x)<s+2\delta'$.
				So,  one can conclude that
				$$\Vert \gamma_K(g,f,t_0)-d_X(a,\cdot) \Vert_\infty\leq s+\delta'.$$
				Therefore, combining this inequality with Claim \ref{claim:disfa} and property (6) of Proposition \ref{prop:Katzgeoprop}, one finally  obtains that
				$$\Vert\gamma_K(d_X(x,\cdot),f,t)-d_X(a,\cdot) \Vert_\infty\leq s+\delta'<s+\delta$$
				so that $\gamma_K(g,f,t)\in B_{s+\delta}(X,L^\infty(X))$ for any $t\in[t_0,1]$.
			\end{itemize}
		\end{proof}
		This concludes the proof.
	\end{proof}
	
	
	\subsection{The filling radius and Vietoris-Rips persistent homology}\label{sec:bound}
	
	Now, we recall the notion of \emph{filling radius}, an invariant for closed connected manifolds introduced by Gromov \cite[pg.8]{gfilling} in the course of proving the systolic inequality (see also \cite{katz-book} for a comprehensive treatment). It turns out to be that this notion can be a bridge between topological data analysis and differential geometry/topology.
	
	\begin{definition}[Filling radius]
		Let $M$ be a closed connected $n$-dimensional manifold with compatible metric $d_M$. One defines the filling radius of $M$ as follows:
		$$\mathrm{FillRad}(M;G):=\inf\{r>0|\,\mathrm{H}_n(\iota_r;G)([M])=0\},$$
		where $\iota_r:M\hookrightarrow B_r(M,L^\infty(M))$ is the (corestriction of the) Kuratowski isometric embedding, $[M]$ is the fundamental class of $M$, with coefficients $G$. We will use the shorthand notation  $\mathrm{FillRad}(M)$ when either $M$ is orientable and $G=\Z$ or when $M$ is not orientable and $G=\Z_2$.
	\end{definition}
	
	\begin{Remark}[Metric manifolds]
		Note that the definition of the filling radius does not require the metric $d_M$ on $M$ to be Riemannian -- it suffices that $d_M$ generates the manifold topology. We call any $(M,d_M)$ satisfying this condition a \textbf{metric manifold}. In particular, one can consider the filling radius of:
		\begin{enumerate}
			\item $\ell^\infty$-metric product of $(M,d_M)$ and $(N,d_N)$ when $M$ and $N$ are Riemannian manifolds and $d_M$ and $d_N$ are their corresponding geodesic distances.
			\item $(N,d_M\vert_{N\times N})$ when $N$ is a submanifold of the Riemannian manifold $(M,d_M)$.
		\end{enumerate}
	\end{Remark}
	
	\begin{Remark}[Relative filling radius and minimality for injective metric spaces.]\label{rem:relative-filling-radius}
		Note that the relative filling radius can be defined for every metric pair $(M,E)$ by considering $r$-neighborhoods of $M$ in $E$ --- it is denoted by $\filrad(M,E)$. Gromov \cite{gfilling} showed that we obtain the minimal possible relative filling radius through the Kuratowski embedding (that is when $E=L^\infty(M)$). This also follows from our work but in greater generality in the context of embeddings into injective metric spaces. If $M$ can be isometrically embedded into an injective metric space $F$, then this embedding can be extended to a $1$-Lipschitz map $f: E \to F$, which induces a map of filtrations $f_r:B_r(M,E) \to B_r(M,F)$, for each $r>0$ (see Definition \ref{def:Injme}). Hence, if the fundamental class of $M$ vanishes in $B_r(M,E)$, then it also vanishes in $B_r(M,F)$. Therefore,
		\begin{equation}\label{eq:rel-fill-rad}
			\filrad(M,F) \leq \filrad(M,E).
		\end{equation}
		If particular, this implies that $\filrad(M,E)=\filrad(M,F)$ whenever   $E$ and $F$ are both injective metric spaces admitting isometric embeddings of $M$.
	\end{Remark}
	
	\begin{Remark}[Filling radius and first change in homotopy type]\label{rmk:firstextremum}
		In \cite[Theorem 2]{k83}  Katz proved that $\mathrm{FillRad}(\mathbb{S}^n)=\frac{1}{2}\arccos{\left(\frac{-1}{n+1}\right)}$. Moreover, by Theorem \ref{theorem:isom} and Theorem \ref{cor:Snfirsttype}, $B_r(\mathbb{S}^n,L^\infty(\mathbb{S}^n))$ is homotopy equivalent to $\mathbb{S}^n$ if $r\in(0,\mathrm{FillRad}(\mathbb{S}^n)]$. Note that this immediately implies that $\filrad(\Sp^n)\geq\frac{1}{2}\arccos{\left(\frac{-1}{n+1}\right)}$.

		One might then ask whether for any closed connected manifold $M$ it holds that $\mathrm{FillRad}(M)$ is the first value of $r$ where the homotopy type of $B_r(M,L^\infty(M))$ changes. In general, however, this is not true as the following two examples show:
		\begin{enumerate}
			\item It is known \cite[Proposition 0.3]{katz-cpn} that $\mathrm{FillRad}(\mathbb{CP}^3)>\mathrm{FillRad}(\mathbb{CP}^1)=\frac{1}{2}\arccos{\left(-\frac{1}{3}\right)}$. Also, by Theorem \cite[Theorem 8.1]{katz-s1}, $B_r(\mathbb{CP}^3,L^\infty(\mathbb{CP}^3))$ is not homotopy equivalent to $\mathbb{CP}^3$ for $r\in\left(\frac{1}{2}\arccos{\left(-\frac{1}{3}\right)},\frac{1}{2}\arccos{\left(-\frac{1}{3}\right)}+\varepsilon_0\right)$ where $\varepsilon_0>0$ is a positive constant. In other words, the homotopy type of $B_r(\mathbb{CP}^3,L^\infty(\mathbb{CP}^3))$  already changed before $r=\mathrm{FillRad}(\mathbb{CP}^3)$.
			
			\item \begin{figure}
				\centering
				\includegraphics[width=0.35\textheight]{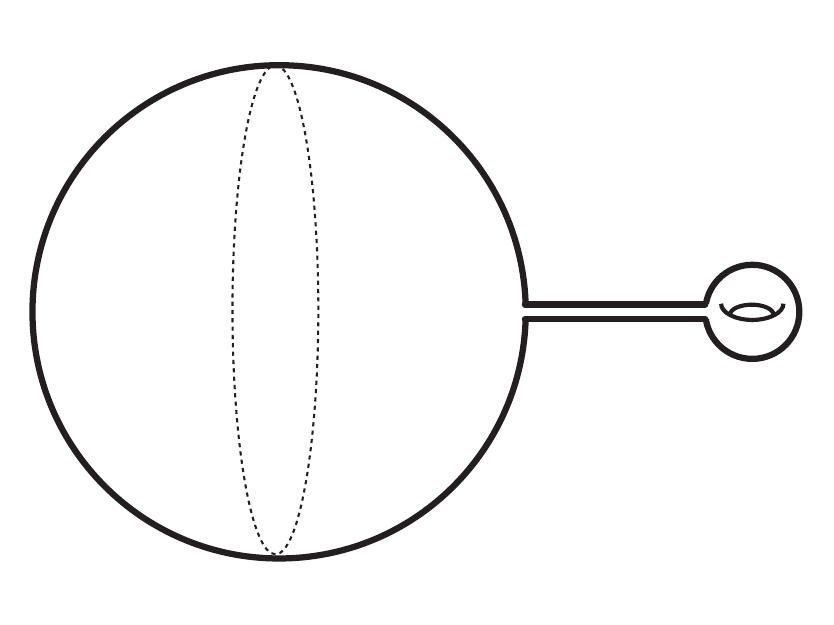}
				\caption{A big sphere $X$ with a small handle. In this case, as $r>0$ increases,  $B_r(X,L^\infty(X))$ changes homotopy type from that of $X$ to that of $\mathbb{S}^2$ as soon as $r>r_0$ for some $r_0<\filrad(X).$}
				\label{fig:spherehandle}
			\end{figure} The following example provides  geometric intuition for how the  homotopy type of Kuratowski neighborhoods may change before $r$ reaches the filling radius. Consider a big sphere with a small handle attached through a long neck (see Figure \ref{fig:spherehandle}). Since the top dimensional hole in this space is big, we expect the filling radius to be big. On the other hand, the $1$-th homology class coming from the small handle dies in a small Kuratowski neighborhood, hence the homotopy type changes at that point.
		\end{enumerate}
	\end{Remark}
	
	We now relate the filling radius of a closed connected $n$-dimensional manifold to its $n$-dimensional Vietoris-Rips persistence barcode.
	
	\propfrfdgm
	
	\begin{framed}
		The unique interval identified by Proposition \ref{prop:filradpersistence} will be henceforth denoted by $$I_{n,\mathbb{F}}^M:=(0,\mathrm{d}_{n,\mathbb{F}}^M].$$
	\end{framed}
	
	\begin{proof}[Proof of Proposition \ref{prop:filradpersistence}]
		First, let us consider the case when $M$ is orientable. Observe that the following diagram commutes:
		$$\begin{tikzcd}\Hom_n(M;\Z)\otimes\mathbb{F}\arrow[d, "j", hook]\arrow[r, "(\iota_r)_*\otimes\mathrm{id}_{\mathbb{F}}"] & \Hom_n(B_r(M,L^\infty(M));\Z)\otimes\mathbb{F}\arrow[d, "j_r", hook]\arrow[r] & \Hom_n(B_s(M,L^\infty(M));\Z)\otimes\mathbb{F}\arrow[d, "j_s", hook]\\ \Hom_n(M;\mathbb{F})\arrow[r, "(\iota_r)_*"] & \Hom_n(B_r(M,L^\infty(M));\mathbb{F})\arrow[r] & \Hom_n(B_s(M,L^\infty(M));\mathbb{F})\end{tikzcd}$$
		for any $0<r\leq s$, where every horizontal arrow is induced by the obvious inclusions, and the vertical arrows ($j,j_r,$ and $j_s$) must be injective by the universal coefficient theorem for homology (see \cite[Theorem 55.1]{munkres2018elements}). Hence, one obtains  that
		
		$$\filrad(M;\mathbb{F})=\inf\{r>0|\,\mathrm{H}_n(\iota_r;\mathbb{F})(j([M]))=0\},$$
		
		and the collection of nonzero homology classes
		$$\{\omega_r:=(\iota_r)_*\circ j([M])\in\Hom_n(B_r(M,L^\infty(M));\mathbb{F})\}_{r\in(0,\filrad(M;\mathbb{F})]}$$
		
		Therefore, with the aid of Theorem \ref{theorem:isom} and Theorem \ref{thm:intervalform}, one  concludes that
		
		$$(0,2\,\filrad(M;\mathbb{F})]\in\dgm_n^\mathrm{VR}(M;\mathbb{F}).$$ 
		
		Also, by Hausmann's theorem \cite[Theorem 3.5]{h95}, $\vr_r(M)$ is homotopy equivalent to $M$ for $r>0$ small enough. Therefore,  $(0,2\,\filrad(M;\mathbb{F})]$ must be the unique interval in $\dgm_n^\mathrm{VR}(M;\mathbb{F})$ with left endpoint $0$. Finally, since $j_r\circ(\iota_r)_*\otimes\mathrm{id}_{\mathbb{F}}=(\iota_r)_*\circ j$ by the commutativity of the diagram, $(\iota_r)_*\circ j([M])\neq 0$ for some $r>0$ implies that $(\iota_r)_*([M])\neq 0$. Therefore, $\filrad(M;\mathbb{F})\leq\filrad(M)$ as we requested.

		The proof of the non-orientable case is similar, so we omit it.
	\end{proof}
	
	\begin{Remark}
		Observe that:
		\begin{enumerate}
			\item $\filrad(\Sp^n;\mathbb{F})=\filrad(\Sp^n)$ for any field $\mathbb{F}$. This can be obtained via Proposition \ref{prop:filradpersistence} and Remark \ref{rmk:onfirsttype}. Alternatively, a more direct proof can be obtained via Jung's Theorem (cf. Theorem \ref{thm:Jung}) following an idea similar to the one used in the proof of \cite[1.2.B. Lemma, 4.5.A Lemma]{gfilling}, \cite[Theorem 2]{k83}. For  completeness, we provide a proof along these lines here.
			
			Since we already know that $\filrad(\Sp^n;\mathbb{F})\leq\filrad(\Sp^n)$ by Proposition \ref{prop:filradpersistence}, it is enough to prove that $\filrad(\Sp^n;\mathbb{F})\geq\filrad(\Sp^n)$. Suppose $\filrad(\Sp^n;\mathbb{F})<\filrad(\Sp^n)$. Then, there is $0<r<\filrad(\Sp^n)$ such that $\mathrm{H}_n(\iota_r;\mathbb{F})([M])=0$ and there are $n$-dimensional singular chain $c_n$ such that $[c_n]=[M]$, and a $(n+1)$-dimensional singular chain $c_{n+1}:=\sum_{i=1}^la_i\sigma_i$ where $a_i\in\mathbb{F}$ and $\sigma_i:\Delta_{n+1}\rightarrow B_r(\Sp^n,L^\infty(\Sp^n))$ for all $i=1,\dots,l$ such that $\partial c_{n+1}=c_n$. Under this assumption, we will show that $[c_n]=0$ which contradicts to the fact that $[M]\neq 0$. One can assume that $c_n$ is a sum of canonical geodesic simplexes. Now, consider the set $V:=\{\sigma_i(v):i=1,\dots,l\text{ and }v\text{ is a vertex of }\Delta_{n+1}\}$. By subdividing $\Delta_{n+1}$ if necessary, one can assume that $\Vert \sigma_i(v)-\sigma_i(w)\Vert_\infty<2\filrad(\Sp^n)-2r$ if $v$ and $w$ are adjacent in the $1$-skeleton of $\Delta_{n+1}$. Next, since $V\subset B_r(\Sp^n,L^\infty(\Sp^n))$, for each $\sigma_i(v)\in V$ there exists a point $x_{\sigma_i(v)}\in\Sp^n$ (not necessarily unique) such that $\Vert \sigma_i(v)-d_{\Sp^n}(x_{\sigma_i(v)},\cdot) \Vert_{\infty}<r$. Hence, by defining $T(\sigma_i(v)):=x_{\sigma_i(v)}$ for each $\sigma_i(v)\in V$, we have a function $T:V\rightarrow\Sp^n$ such that $\Vert \sigma_i(v)-d_{\Sp^n}(T(\sigma_i(v)),\cdot) \Vert_{\infty}<r$ for each $\sigma_i(v)\in V$. Then, $d_{\Sp^n}(T(\sigma_i(v)),T(\sigma_i(w))<2\filrad(\Sp^n)$ if $v$ and $w$ are adjacent because of the triangle inequality. Finally, by Jung's Theorem (cf. Theorem \ref{thm:Jung}) and the  argument employed in \cite[1.2.B. Lemma, 4.5.A Lemma]{gfilling}, \cite[Theorem 2]{k83}, one can define singular simplexes $T_i:\Delta_{n+1}\rightarrow\Sp^n$ such that $T_i$ is the canonical geodesic simplex induced by $\{T_i(v):=T(\sigma_i(v)):v\text{ is a vertex of }\Delta_{n+1}\}$ for all $i=1,\dots,l$. Then, $\bar{c}_{n+1}:=\sum_{i=1}^la_iT_i$ is a $(n+1)$-dimensional singular chain in $\Sp^n$ such that $\partial\bar{c}_{n+1}=c_n$. This means that $[c_n]=[M]=0$ which is a contradiction.
			
			\item It is known that $\filrad(\mathbb{CP}^3;\mathbb{Q})<\filrad(\mathbb{CP}^3)$ by \cite[Theorem 0.1, Proposition 0.3]{katz-cpn}.
		\end{enumerate}
	\end{Remark}
	
	\begin{Remark}
		Actually, one can generalize Proposition \ref{prop:filradpersistence} to metric manifolds. See Proposition \ref{prop:genfilradpersistence} for the full generalization.
	\end{Remark}
	
	\begin{Remark}\label{rem:productfilrad}
		Let $M$ and $N$ be closed connected metric manifolds. Let $M\times N$ denote the $\ell^\infty$-product of $M$ and $N$ (as metric spaces). By Theorem \ref{thm:vrapp}, Remark \ref{rem:productbarcode}, and Proposition \ref{prop:filradpersistence},
		$$\filrad(M \times N;\mathbb{F})=\min (\filrad(M;\mathbb{F}),\filrad(N;\mathbb{F})).$$
		A similar result is true for the $\ell^\infty$-product of more than two metric manifolds.
	\end{Remark}

	\subsubsection{Bounding the filling radius and consequences for Vietoris-Rips persistent homology}\label{sec:bdfilrad}
	
	Proposition \ref{prop:filradpersistence} above permits estimating certain  properties of the barcode $\dgm_n^\mathrm{VR}(M)$ of an $n$-dimensional manifold $M$. 
	
	\paragraph{Injectivity radius and persistence barcodes.} 
	
	If $I_{n,\mathbb{F}}^M=(0,\mathrm{d}_{n,\mathbb{F}}^M]$ is the unique interval in $\dgm_n^\vr(M)$ identified by Proposition \ref{prop:filradpersistence}, then
	\begin{equation}\label{eq:lb-inj}
		\mathrm{d}_{n,\mathbb{F}}^M\geq \frac{\mathrm{Inj}(M)}{n+2}.
	\end{equation}
	This follow from the well known fact that $\filrad(M)\geq \frac{\mathrm{Inj}(M)}{2(n+2)}$, where $\mathrm{Inj}(M)$ is the injectivity radius of $M$ \cite[4.5.A. Lemma]{gfilling}. This implies that for all spheres $M=\mathbb{S}^n$, $\mathrm{d}_{n,\mathbb{F}}^M \geq \frac{\pi}{n+2}.$ Note that Proposition \ref{prop:filradpersistence} indicates that  this estimate is not tight in general since $$\mathrm{d}_n^{\Sp^n} = 2\,\filrad(\mathbb{S}^n)= \arccos\left(\frac{-1}{n+1}\right)\geq \frac{\pi}{2}.$$
	
	\paragraph{Systole and persistence barcodes.} \label{page:sys} The \textbf{systole} $\mathrm{sys}_1(M)$ of a Riemannian manifold $M$ is defined to be the infimal length over non-contractible loops of $M$. In \cite[1.2.B Lemma]{gfilling}, Gromov proved that
	$$\mathrm{sys}_1(M)\leq 6\,\filrad(M)$$
	for any closed \emph{essential} Riemannian manifold $M$.\footnote{See \cite{gfilling} for the definition of essential manifolds. For this paper it suffices to keep in mind that aspherical manifolds are essential.} Moreover, one can also define the \textbf{homological systole} $\mathrm{sysh}_1(M;\mathbb{F})$ to be the infimal length over non null-homologous loops of $M$. In general, $\mathrm{sys}_1(M)\leq\mathrm{sysh}_1(M;\mathbb{F})$ since any contractible loop is null-homologous (see \cite[2.A]{h01}). See Figure \ref{fig:diff-sys} for a space on which the notions differ. In \cite[Theorem 8.10]{Virk18}, Virk proved that $$\left(0,\frac{\mathrm{sysh}_1(M;\mathbb{F})}{3}\right]\in\dgmR_1(M;\mathbb{F})$$ for any closed Riemannian manifold $M$. Observe that the $n$-dimensional torus $\mathbb{T}^n$ is an aspherical, hence essential, manifold. Also, observe that $\mathrm{sys}_1(\mathbb{T}^n)=\mathrm{sysh}_1(\mathbb{T}^n)$ since the fundamental group $\pi_1(\mathbb{T}^n)$ is abelian. Therefore, this permits relating the top dimensional persistence barcode with the first dimensional barcode of any $n$-dimensional Riemannian torus. We  summarize this via the following
	
	\begin{corollary}For \emph{any} Riemannian metric on the $n$-dimensional torus $\mathbb{T}^n$:
		\begin{itemize}
			\item  the interval $I_1^{\mathbb{T}^n}:=\left(0,\frac{\mathrm{sys}_1(\mathbb{T}^n)}{3}\right]$ is an element of $\dgmR_1(\mathbb{T}^n;\mathbb{F})$,
			\item the interval $I_n^{\mathbb{T}^n}:=\left(0,2\,\filrad(\mathbb{T}^n)\right]$ is an element of $\dgmR_n(\mathbb{T}^n;\mathbb{F})$, and
			\item  $I_1^{\mathbb{T}^n}\subseteq I_n^{\mathbb{T}^n}.$
		\end{itemize}
	\end{corollary}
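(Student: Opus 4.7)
The plan is to verify the three bullets one at a time, drawing on the named ingredients already stated in the discussion immediately preceding the corollary. Since both candidate intervals are of the form $(0,r]$, the containment claim in the third bullet reduces to comparing right endpoints, so the task really boils down to two persistence statements plus one classical inequality.

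For the first bullet I would invoke Virk's result \cite[Theorem 8.10]{Virk18} cited above, which gives $\bigl(0,\tfrac{\mathrm{sysh}_1(M)}{3}\bigr]\in\dgmR_1(M;\mathbb{F})$ for any closed Riemannian manifold $M$. Taking $M=\mathbb{T}^n$, it suffices to observe that $\mathrm{sys}_1(\mathbb{T}^n)=\mathrm{sysh}_1(\mathbb{T}^n)$. This equality is a standard consequence of the fact that $\pi_1(\mathbb{T}^n)\cong\mathbb{Z}^n$ is abelian, so the Hurewicz map $\pi_1(\mathbb{T}^n)\to\mathrm{H}_1(\mathbb{T}^n;\mathbb{Z})$ is an isomorphism; consequently a loop is null-homotopic if and only if it is null-homologous, which matches the two infima defining the two systoles.

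For the second bullet I would apply Proposition \ref{prop:filradpersistence} directly: $\mathbb{T}^n$ is a closed connected orientable $n$-dimensional Riemannian manifold, so $(0,2\,\filrad(\mathbb{T}^n)]\in\dgmR_n(\mathbb{T}^n;\mathbb{F})$ for any field $\mathbb{F}$, and this is the unique interval in $\dgmR_n(\mathbb{T}^n;\mathbb{F})$ with left endpoint $0$.

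The third bullet is the only place where a genuine inequality must be combined. Since both intervals are left-open at $0$ and right-closed, $I_1^{\mathbb{T}^n}\subseteq I_n^{\mathbb{T}^n}$ is equivalent to $\tfrac{\mathrm{sys}_1(\mathbb{T}^n)}{3}\le 2\,\filrad(\mathbb{T}^n)$, i.e.\ $\mathrm{sys}_1(\mathbb{T}^n)\le 6\,\filrad(\mathbb{T}^n)$. This is exactly Gromov's inequality \cite[1.2.B Lemma]{gfilling} applied to $\mathbb{T}^n$, once one knows that $\mathbb{T}^n$ is essential; and $\mathbb{T}^n$ is essential because it is aspherical (its universal cover $\mathbb{R}^n$ is contractible), as noted in the footnote on page~\pageref{page:sys}. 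The only step that requires a moment's care, and hence the one I would flag as the main obstacle, is this bookkeeping that $\mathbb{T}^n$ falls under the hypotheses of Gromov's systolic inequality; everything else is a direct citation.
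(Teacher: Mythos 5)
Your proposal is correct and follows exactly the route the paper intends: Virk's theorem together with $\mathrm{sys}_1(\mathbb{T}^n)=\mathrm{sysh}_1(\mathbb{T}^n)$ (because $\pi_1(\mathbb{T}^n)$ is abelian) gives the first bullet, Proposition~\ref{prop:filradpersistence} gives the second, and Gromov's systolic inequality for essential manifolds applied to the aspherical $\mathbb{T}^n$ gives the containment. The paper leaves the corollary without a displayed proof precisely because it is a direct assembly of the three ingredients you cite, and your bookkeeping of why $\mathbb{T}^n$ falls under each hypothesis is exactly the reasoning laid out in the paragraph preceding the statement.
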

	
	\begin{figure}
		\centering
		\includegraphics[width=.8\textwidth]{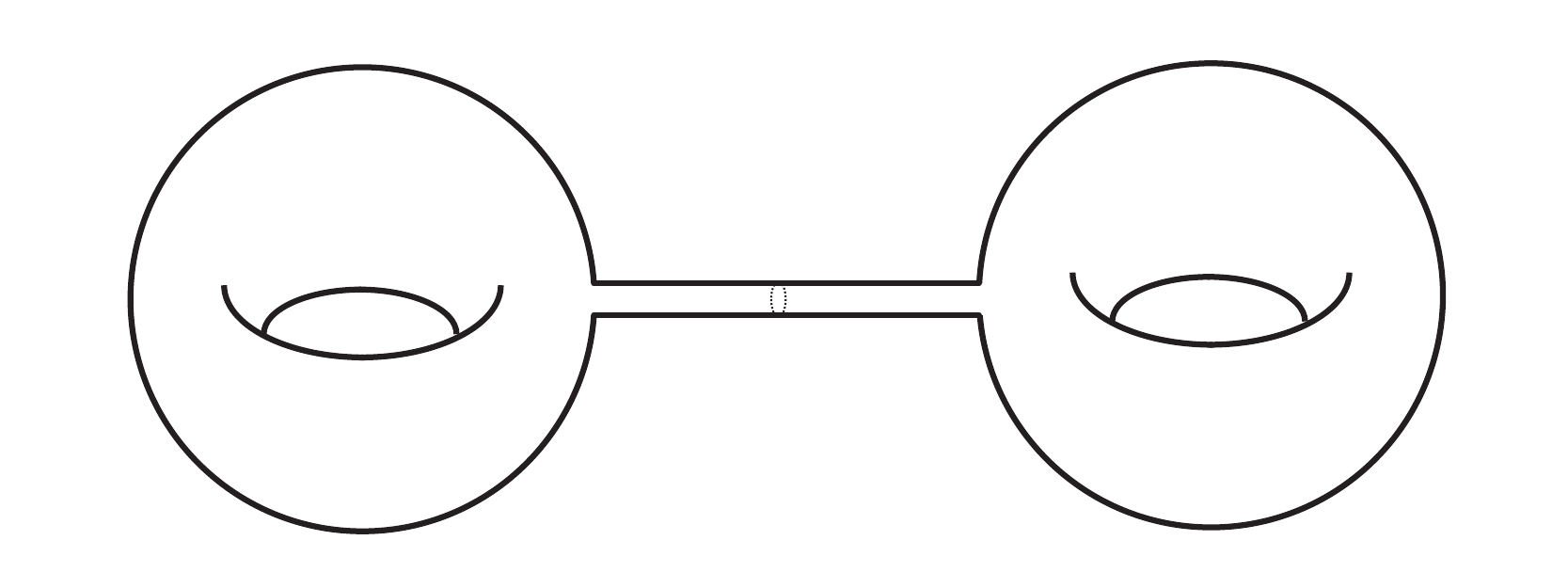}
		\caption{A space $X$ for which $\mathrm{sys}(X)\neq \mathrm{sysh}(X)$. }
		\label{fig:diff-sys}
	\end{figure}
	
	\paragraph{Volume and persistence barcodes.}
	An inequality proved by Gromov in \cite[1.2.A Main Theorem]{gfilling}  states that for each $n$ natural there exists a constant $c_n>0$ such that if $M$ is any $n$-dimensional complete Riemannian manifold then,  
	\begin{equation}\label{eq:ineqs-volfr}
		\filrad(M)\leq c_n\,\big(\mathrm{vol}(M)\big)^{1/n}.
	\end{equation}
	It then follows that
	\begin{equation}\label{eq:vol-ub}
		\mathrm{d}_{n,\mathbb{F}}^M\leq 2\,c_n \big(\mathrm{vol}(M)\big)^{1/n}.
	\end{equation}

	In particular, this bound improves upon the one given by Corollary \ref{coro:diam-bound}, $\mathrm{d}_{n,\mathbb{F}}^M\leq \frac{2}{3}\mathrm{diam}(M)$,  when $M$ is ``thin" like in the case of a thickened embedded graph \cite{osman-reeb}.

	\paragraph{Spread and persistence barcodes.}
	The following proposition is proved in \cite[Lemma 1]{k83} (for integer coefficients). Here we provide a different proof which easily follows from the persistent homology perspective that we have adopted in this paper.
	
	\begin{proposition}\label{prop:spreadfilrad}
		Let $M$ be a closed connected metric manifold. Then, $$\filrad(M;\mathbb{F}) \leq \frac{1}{2}\spread(M).$$
	\end{proposition}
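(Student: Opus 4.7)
The plan is to give a short proof using the machinery already developed in the excerpt; two natural routes are available.

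The most direct route goes through Lemma \ref{lemma:katzcontraction}. Fix any $\delta > \tfrac{1}{2}\spread(M)$. By that lemma there is a contractible space $U$ with $M \subseteq U \subseteq B_\delta(M, L^\infty(M))$. The inclusion $\iota_\delta : M \hookrightarrow B_\delta(M, L^\infty(M))$ factors through $U$, and since $U$ is contractible, $\mathrm{H}_n(U; G) = 0$ (with $G = \mathbb{Z}$ if $M$ is orientable, $G = \mathbb{Z}_2$ otherwise). Hence $\mathrm{H}_n(\iota_\delta; G)([M]) = 0$, so $\filrad(M) \leq \delta$. Taking the infimum over such $\delta$ gives $\filrad(M) \leq \tfrac{1}{2}\spread(M)$.

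Alternatively, one can argue purely at the level of barcodes. By Proposition \ref{prop:filradpersistence}, the interval $I_n^M = (0, 2\,\filrad(M)]$ belongs to $\dgmR_n(M; \mathbb{F})$, where $\mathbb{F}$ is any field if $M$ is orientable and $\mathbb{F} = \Z_2$ otherwise. Applying Proposition \ref{prop:spread} (which requires $n \geq 1$; the case $n = 0$ is trivial since a connected $0$-manifold is a point) yields $\mathrm{length}(I_n^M) = 2\,\filrad(M) \leq \spread(M)$, which is exactly the desired inequality.

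Neither route presents a genuine obstacle since all the heavy lifting is done by earlier results; the only minor point to verify is that the manifold $M$ can indeed be isometrically embedded into $L^\infty(M)$ via the Kuratowski map (Definition \ref{def:kuratowski}), which requires only that $(M, d_M)$ be a metric manifold so that $d_M$ is a bona fide metric. I would present the first (more self-contained) argument in the main text, noting in a remark that the persistence-theoretic viewpoint provided by Proposition \ref{prop:filradpersistence} together with Proposition \ref{prop:spread} gives an immediate alternative derivation.
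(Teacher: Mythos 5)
Both routes you describe appear verbatim in the paper: the paper's official proof of Proposition~\ref{prop:spreadfilrad} is exactly your second argument (combine Proposition~\ref{prop:filradpersistence} with Proposition~\ref{prop:spread}), and the paper immediately follows it with a remark saying that Lemma~\ref{lemma:katzcontraction} can also be used, which is your first argument. Your proposal is correct and covers the same ground; you have merely swapped which of the two the paper treats as primary and which as the aside.
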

	\begin{proof}
		Follows from Proposition \ref{prop:spread} and Proposition \ref{prop:filradpersistence}.
	\end{proof}
	
	\begin{Remark}
		One can also use Lemma \ref{lemma:katzcontraction} to prove Proposition \ref{prop:spreadfilrad}.
	\end{Remark}
	
	\begin{Remark}
		The inequality in the statement above becomes an equality for spheres \cite{k83}.
	\end{Remark}
	
	By Corollary \ref{coro:diam-bound}, Proposition \ref{prop:filradpersistence}, and the fact that $\filrad(\mathbb{S}^1)=\frac{\pi}{3}$, we know that
	$$\mathrm{length}(I)\leq  \frac{2\pi}{3}=\mathrm{length}(I_1^{\mathbb{S}^1})$$
	for any $k\geq 1$, and any $I\in\dgmR_k(\mathbb{S}^1;\mathbb{F})$. This motivates the following conjecture:
	
	\begin{conjecture}\label{con:barcodlengtbdd}
		Let $M$ be a closed connected $n$-dimensional metric manifold. Then, 
		$$\mathrm{length}(I)\leq \mathrm{length}(I_{n,\mathbb{F}}^M)$$
		for any $I\in\dgmR_k(M;\mathbb{F})$ and any $k\geq 1$.
	\end{conjecture}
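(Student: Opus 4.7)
The plan is to first reformulate the conjecture via Theorem~\ref{theorem:isom}: it suffices to show that for every $s>0$, every $k\geq 1$, and every $r>\filrad(M)$, the inclusion-induced map
$$\Hom_k\bigl(B_s(M,L^\infty(M));\mathbb{F}\bigr) \longrightarrow \Hom_k\bigl(B_{s+r}(M,L^\infty(M));\mathbb{F}\bigr)$$
vanishes. Equivalently, every interval in the persistence barcode of the Kuratowski filtration $B_*(M,L^\infty(M))$ has length at most $\filrad(M)$. Note that Proposition~\ref{prop:filradpersistence} already handles, for $k=n$, the unique interval starting at $0$ — the task is to upgrade this to \emph{all} bars in \emph{all} dimensions.

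My main proposed tool is the filling chain of the fundamental class: fix an $(n+1)$-chain $D$ in $B_{\filrad(M)+\epsilon}(M,L^\infty(M))$ whose boundary represents $[M]$. The goal is to use $D$ to construct, for any $k$-cycle $c$ representing a class in $\Hom_k(B_s(M,L^\infty(M));\mathbb{F})$, a $(k+1)$-chain $e$ supported in $B_{s+\filrad(M)+\epsilon}(M,L^\infty(M))$ with $\partial e = c$. Geometrically, I would attempt to ``sweep'' $c$ across $D$ via the geodesic bicombing of $L^\infty(M)$ (for instance the Katz bicombing of Definition~\ref{def:Katzgeobicomb}), producing a chain-level nullhomotopy whose support one can control by the bicombing's conicality/consistency properties. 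An equivalent simplicial approach, via Proposition~\ref{prop:vrhyperconvex}, would instead start from a simplicial filling $D'$ of the fundamental cycle in $\vr_{2\filrad(M)+\epsilon}(M)$ and produce a filling of $c$ in $\vr_{2s+2\filrad(M)+\epsilon}(M)$ by a join-with-$D'$ operation, verifying the necessary diameter bounds via contiguity arguments in the style of the proof of Proposition~\ref{prop:spread}.

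The hardest step will be this filling construction. The thickenings $B_s(M,L^\infty(M))$ are not manifolds, so there is no immediate Poincar\'e-duality cap product to exploit, and any sweep must carefully bookkeep both simplex diameters and chain-level boundaries so that no ``excess'' dimensional thickening occurs. A second subtle issue is that many classes in $\Hom_k(B_s(M,L^\infty(M)))$ do not come from $\Hom_k(M)$ at all — for example, the higher-dimensional spherical classes arising in $\vr_{2s}(\mathbb{S}^1)\simeq \mathbb{S}^{2m+1}$ for large $s$ — so the proof must also handle these ``newborn'' classes without leaning on the manifold structure of $M$. For such classes, a promising avenue is to bound their longevity via the localized spread (Proposition~\ref{prop:locspread}) applied to a finite support subset $B\subseteq M$ that witnesses the class, reducing the problem to Katz-style Morse-theoretic analysis of the diameter function on $B$. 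If a direct proof proves elusive, a fallback is to first establish the statement for finite metric approximations $X_n\to M$ (where Poincar\'e-Lefschetz duality on simplicial thickenings is more tractable) and then pass to the limit using the stability of the homotopy interleaving distance (Theorem~\ref{thm:stabilitymh}).
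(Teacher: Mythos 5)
The fundamental problem with your proposal is that Conjecture~\ref{con:barcodlengtbdd} is \emph{false}, and the paper does not prove it: immediately after stating the conjecture, the paper exhibits a counterexample. Take $X = \mathbb{S}^1\times\mathbb{S}^2$ with the $\ell^\infty$-product metric, a closed connected $3$-dimensional metric manifold. By Remark~\ref{rem:productfilrad}, $\filrad(X)=\min\bigl(\filrad(\mathbb{S}^1),\filrad(\mathbb{S}^2)\bigr)=\tfrac{1}{2}\arccos\bigl(-\tfrac{1}{3}\bigr)$, so $\mathrm{length}(I_3^X)=\arccos\bigl(-\tfrac{1}{3}\bigr)\approx 1.91$. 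On the other hand, the persistent K\"unneth formula (Theorem~\ref{thm:vrapp}(1) together with Remark~\ref{rem:productbarcode}) gives an interval $\bigl(0,\tfrac{2\pi}{3}\bigr]\in\dgmR_1(X;\mathbb{F})$ of length $\tfrac{2\pi}{3}\approx 2.09 > \arccos\bigl(-\tfrac{1}{3}\bigr)$, violating the claimed inequality.

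Your own write-up actually touches the failure point without recognizing it: you flag the ``newborn'' classes in $\Hom_k$ of the thickenings that do not come from $\Hom_k(M)$ as a subtlety, and you hope to control them via localized spread or Katz-style Morse theory. But no such bound in terms of $\filrad(M)$ alone can exist, because the above $\mathrm{H}_1$ bar persists until $\tfrac{2\pi}{3}$, which is governed by the $\mathbb{S}^1$ factor and is completely insensitive to the top-dimensional filling radius of the product (which is the \emph{minimum} of the factors' filling radii, not the maximum). Any sweep-by-the-filling-chain argument for the fundamental class $[M]$ cannot anihilate a $1$-cycle that lives ``transversally'' to the small factor; the cycle only dies when its own factor's spread is reached. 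The correct general bound the paper does prove is $\mathrm{length}(I)\leq\spread(X)$ (Proposition~\ref{prop:spread}), and for geodesic spaces $\mathrm{length}(I)\leq\tfrac{2}{3}\diam(X)$ (Corollary~\ref{coro:diam-bound}), but replacing $\spread(X)$ by $2\filrad(M)=\mathrm{length}(I_n^M)$ is strictly stronger and, as the product counterexample shows, false.
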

	
	However, this conjecture is not true in general, as the following example shows.
	
	\begin{Remark}
		Consider the $\ell^\infty$-product $X = \mathbb{S}^1\times\mathbb{S}^2$. Then, by Remark \ref{rem:productfilrad}, we have
		$$\filrad(X)=\min (\filrad(\mathbb{S}^1),\filrad(\mathbb{S}^2))=\frac{1}{2}\arccos{\left(-\frac{1}{3}\right)}.$$
		This implies that $\mathrm{length}(I_3^X)=2\filrad(X)=\arccos{\left(-\frac{1}{3}\right)}$. Now, we will prove that there is longer interval in $\dgmR_1(X;\mathbb{F})$. First, observe that there is an infinite length interval in $\dgmR_0(\mathbb{S}^2;\mathbb{F})$. Also, $I_1^{\mathbb{S}^1}=(0,2\,\filrad(\mathbb{S}^1)]=(0,\frac{2\pi}{3}]$. Therefore, by the persistent K\"unneth formula (Theorem \ref{thm:vrapp}.(1)), and Remark \ref{rem:productbarcode}, the interval $I=(0,\frac{2\pi}{3}]$ exists in $\dgmR_1(X;\mathbb{F})$.

		Therefore, since $\frac{2\pi}{3}>\arccos{\left(-\frac{1}{3}\right)}$,  Conjecture \ref{con:barcodlengtbdd} is false.
	\end{Remark}

	\subsubsection{Application to obtaining lower bounds for the Gromov-Hausdorff distance}
	
	With the aid of the stability of barcodes (cf. Theorem \ref{thm:stab-barcodes}) and the notion of filling radius, one can obtain the following result:
	
	\begin{proposition}\label{prop:lb-gh-fr}
		Let $M$ be a closed connected $m$-dimensional orientable (resp. non-orientable) Riemannian manifold, and let $X$ be a compact metric space such that:
		\begin{enumerate}
			\item $\Hom_m(X;\mathbb{F})=0$ for some arbitrary field $\mathbb{F}$ (resp. $\Hom_m(X;\mathbb{F})=0$ for $\mathbb{F}=\Z_2$), and
			
			\item $\vr_r(X)\simeq X$ for every $r\in (0,\filrad(M;\mathbb{F})]$.
		\end{enumerate}
		Then, $$d_{\mathrm{B}}(\dgmR_m(M;\mathbb{F}),\dgmR_m(X;\mathbb{F}))\geq \filrad(M;\mathbb{F})$$ and, as a consequence,
		$$\dgh(M,X)\geq\frac{1}{2}\filrad(M;\mathbb{F}).$$
	\end{proposition}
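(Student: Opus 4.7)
The plan is to combine the stability of Vietoris-Rips persistence barcodes (Theorem \ref{thm:stab-barcodes}) with the characterization of the interval coming from the fundamental class (Proposition \ref{prop:filradpersistence}). By the isometry theorem (Theorem \ref{thm:isometry}), it suffices to produce a lower bound on the bottleneck distance between $\dgmR_m(M;\mathbb{F})$ and $\dgmR_m(X;\mathbb{F})$ and then invoke stability.

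First I would extract the critical interval: by Proposition \ref{prop:filradpersistence}, the interval $I := (0, 2\,\filrad(M)]$ belongs to $\dgmR_m(M;\mathbb{F})$. Next I would argue that, under the hypotheses on $X$, every interval in $\dgmR_m(X;\mathbb{F})$ must start at or after $\filrad(M)$. Indeed, since $\vr_r(X)\simeq X$ for every $r\in(0,\filrad(M)]$ and $\Hom_m(X;\mathbb{F})=0$, we get $\Hom_m(\vr_r(X);\mathbb{F})=0$ for all such $r$. By Theorem \ref{thm:intervalform}, any $J\in\dgmR_m(X;\mathbb{F})$ has the form $(u,v]$ or $(u,\infty)$; the vanishing on $(0,\filrad(M)]$ forces the left endpoint to satisfy $u\geq \filrad(M)$.

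Now I would bound the bottleneck distance from below using the interval $I$. Fix an arbitrary partial matching $P$ between the two barcodes. If $I$ is left unmatched, then the matching cost is at least $\tfrac{|2\,\filrad(M) - 0|}{2} = \filrad(M)$. If instead $I$ is matched with some $J = (u,v]$ (or $(u,\infty)$) in $\dgmR_m(X;\mathbb{F})$, then
\[
\|I - J\|_\infty \;\geq\; |u - 0| \;\geq\; \filrad(M).
\]
In either case $\mathrm{cost}(P)\geq \filrad(M)$, so $d_\mathrm{B}\bigl(\dgmR_m(M;\mathbb{F}),\dgmR_m(X;\mathbb{F})\bigr)\geq \filrad(M)$.

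Finally, chaining Theorem \ref{thm:isometry} and Theorem \ref{thm:stab-barcodes} yields
\[
\filrad(M) \;\leq\; d_\mathrm{B}\bigl(\dgmR_m(M;\mathbb{F}),\dgmR_m(X;\mathbb{F})\bigr) \;=\; d_\mathrm{I}\bigl(\PH_m(\vr_*(M);\mathbb{F}),\PH_m(\vr_*(X);\mathbb{F})\bigr) \;\leq\; 2\,\dgh(M,X),
\]
which gives the desired inequality. There is no real obstacle here beyond checking the bookkeeping in the unmatched case of the bottleneck cost; the non-orientable variant is identical, as Proposition \ref{prop:filradpersistence} produces the same interval $(0,2\,\filrad(M)]$ with $\mathbb{F}=\Z_2$ coefficients, to which the hypothesis on $X$ applies.
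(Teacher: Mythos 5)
Your proof is correct and follows essentially the same route as the paper's: extract the interval $(0,2\,\filrad(M)]$ from Proposition \ref{prop:filradpersistence}, argue that the hypotheses force every interval of $\dgmR_m(X;\mathbb{F})$ to have left endpoint at least $\filrad(M)$, bound the bottleneck cost from below case-by-case, and pass to $\dgh$ via Theorems \ref{thm:isometry} and \ref{thm:stab-barcodes}. Your argument is if anything a bit more careful than the paper's, since you explicitly invoke Theorem \ref{thm:intervalform} for the form of the intervals and note the $(u,\infty)$ case.
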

	\begin{proof}
		Observe that by Theorem \ref{thm:isometry} and Theorem \ref{thm:stab-barcodes},
		$$\dgh(M,X)\geq\frac{1}{2}d_{\mathrm{B}}(\dgmR_m(M;\mathbb{F}),\dgmR_m(X;\mathbb{F})).$$
		Hence, it is enough to establish that
		
		$$d_{\mathrm{B}}(\dgmR_m(M;\mathbb{F}),\dgmR_m(X;\mathbb{F}))\geq\filrad(M;\mathbb{F}).$$
		
		Recall that the special interval $I_{m,\mathbb{F}}^M:=(0,2\,\filrad(M;\mathbb{F})]$ belongs to $\dgmR_m(M;\mathbb{F})$ by Proposition \ref{prop:filradpersistence}. Moreover, if $I:=(u,v]\in\dgmR_m(X;\mathbb{F})$, then $u\geq\filrad(M;\mathbb{F})$ by the two assumptions on $X$.

		Now, fix an arbitrary partial matching $P$ between $\dgmR_m(M;\mathbb{F})$ and $\dgmR_m(X;\mathbb{F})$. If $I_{m,\mathbb{F}}^M$ is unmatched to any interval in $\dgmR_m(X;\mathbb{F})$, then $\mathrm{cost}(P)\geq\frac{\vert0-2\,\filrad(M;\mathbb{F})\vert}{2}=\filrad(M;\mathbb{F})$. If $(I_{m,\mathbb{F}}^M,I:=(u,v])\in P$, then $\mathrm{cost}(P)\geq\vert 0-u \vert=u\geq\filrad(M;\mathbb{F})$. Since $P$ is arbitrary, one can conclude $d_{\mathrm{B}}(\dgmR_m(M;\mathbb{F}),\dgmR_m(X;\mathbb{F}))\geq\filrad(M;\mathbb{F})$ as we required.
	\end{proof}
	
	By combining Proposition \ref{prop:lb-gh-fr} with Proposition \ref{prop:spread} we now obtain the \emph{exact value} of the lower bound for $\dgh(\Sp^m,\Sp^n)$ given by invoking the stability of Vietoris-Rips barcodes:
	
	\begin{corollary}\label{coro:dgh-spheres}
		For any positive integers $1\leq m<n$, we have:
		$$\sup_k d_{\mathrm{B}}\left(\dgmR_k(\Sp^m;\mathbb{F}),\dgmR_k(\Sp^n;\mathbb{F})\right) = \filrad(\Sp^m)= \frac{1}{2}\arccos\left(\frac{-1}{m+1}\right)$$
		and, as a consequence,
		$$\dgh(\Sp^m,\Sp^n)\geq \frac{1}{4}\arccos{\left(\frac{-1}{m+1}\right)}\geq \frac{\pi}{8}.$$
	\end{corollary}
	\begin{proof}
		Notice that $\Sp^m$ is orientable, $\Hom_m(\Sp^n;\mathbb{F})=0$ for any field $\mathbb{F}$, $\vr_r(\Sp^n)\simeq\Sp^n$ for any $r\in\left(0,\arccos\left(\frac{-1}{n+1}\right)\right]$ by Theorem \ref{cor:Snfirsttype}, and $\arccos\left(\frac{-1}{n+1}\right)\geq\frac{\pi}{2}\geq\frac{1}{2}\arccos\left(\frac{-1}{m+1}\right)=\filrad(\Sp^m)$. Hence, by Proposition \ref{prop:lb-gh-fr},
		$$\sup_k d_{\mathrm{B}}\left(\dgmR_k(\Sp^m;\mathbb{F}),\dgmR_k(\Sp^n;\mathbb{F})\right)\geq\filrad(\Sp^m)=\frac{1}{2}\arccos{\left(\frac{-1}{m+1}\right)}.$$
		The reverse inequality follows from  Proposition \ref{prop:spread} and Remarks \ref{rem:spread-sn} and \ref{rmk:firstextremum} relating the spread to the filling radius of spheres. Indeed, by basic properties of the bottleneck distance,\footnote{The cost of the empty matching upper bounds the bottleneck distance.} for every integer $k\geq 0$, $$d_{\mathrm{B}}(\dgmR_k(\Sp^m;\mathbb{F}),\dgmR_k(\Sp^n;\mathbb{F}))\leq \frac{1}{2}\max\left(\max_{I\in \dgmR_k(\Sp^m;\mathbb{F})}\mathrm{length}(I),\max_{J\in \dgmR_k(\Sp^n;\mathbb{F})}\mathrm{length}(J)\right).$$
		Now, by Proposition \ref{prop:spread}, the RHS is bounded above by $\frac{1}{2}\max\big(\spread(\Sp^m),\spread(\Sp^n)\big)$ which by Remark \ref{rem:spread-sn} is equal to $\frac{1}{2}\arccos\left(\frac{-1}{m+1}\right)$ and in turn equal to $\filrad(\Sp^m)$ by Remark \ref{rmk:firstextremum}.
	\end{proof}

	\begin{Remark}
		The lower bounds provided by Corollary \ref{coro:dgh-spheres} are  non-optimal:  See \cite{dgh-spheres} for improved lower bounds via considerations based on a certain version of the Borsuk-Ulam theorem. In fact there the factor $\frac{1}{2}$ is removed leading for example to the bound $\dgh(\Sp^m,\Sp^n)\geq \filrad(\Sp^{\min(m,n)})$ for all $0\leq m<n\leq\infty$. This bound is therein shown to be tight when $(m,n)\in\{(1,2),(1,3),(2,3)\}$ via the construction of suitable correspondences.
		Via Example \ref{ex:int-zero}, Theorem \ref{thm:stab-barcodes}, and Proposition \ref{prop:filradpersistence}, one can directly see that for any geodesic, compact, and simply connected space $Y$,
		$$\dgh(\Sp^1,Y)\geq \frac{\pi}{6}.$$ 
		This, taken together with the comments above leads to the following conjecture.
	\end{Remark}
	
	\begin{conjecture}
		For any geodesic, compact, and simply connected space $Y$ we have
		$$\dgh(\Sp^1,Y)\geq \frac{\pi}{3}.$$
	\end{conjecture}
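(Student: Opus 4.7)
The natural plan is to combine stability of persistent homology with a Borsuk--Ulam-style sharpening analogous to the one carried out in \cite{dgh-spheres} for sphere targets. I first record the relevant persistent invariants: by Proposition \ref{prop:filradpersistence} together with the complete description in \cite{adams}, one has $\dgmR_1(\Sp^1;\mathbb{F})=\{(0,2\pi/3]\}$, with the single interval corresponding to the fundamental class via $\filrad(\Sp^1)=\pi/3$.

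Next I would establish $\dgmR_1(Y;\mathbb{F})=\emptyset$ whenever $Y$ is compact, geodesic and simply connected. To this end I argue that the vertex inclusion $Y\hookrightarrow\vr_r(Y)$ induces a surjection on $\pi_1$ for every $r>0$: for geodesic $Y$, every edge in a simplicial loop $\ell$ of $\vr_r(Y)$ may be replaced by the corresponding $Y$-geodesic so that $\ell$ descends to an honest continuous loop $\gamma$ in $Y$; since $\pi_1(Y)=0$, $\gamma$ bounds a continuous disc $D^2\to Y$, and a sufficiently fine triangulation of $D^2$ (possible by uniform continuity so that every triangle maps to a set of diameter less than $r$) pulls back to a simplicial null-homotopy of $\ell$ inside $\vr_r(Y)$. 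This is the Hausmann-style mechanism used implicitly in Section \ref{sec:hyperbolicity}, but applied only at the level of $\pi_1$. Hurewicz then gives $\Hom_1(\vr_r(Y);\mathbb{F})=0$ for every $r>0$, so the barcode $\dgmR_1(Y;\mathbb{F})$ is empty.

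With these two facts in hand, Example \ref{ex:int-zero} and the stability theorem (Theorem \ref{thm:stab-barcodes}) immediately yield
\begin{equation*}
2\,\dgh(\Sp^1,Y)\;\geq\;d_I\bigl(\PH_1(\vr_*(\Sp^1);\mathbb{F}),\,0_*\bigr)\;=\;\tfrac{1}{2}\cdot\tfrac{2\pi}{3}\;=\;\tfrac{\pi}{3},
\end{equation*}
reproducing the $\pi/6$ lower bound already recorded in the paper just above the conjecture. To reach the sharp constant $\pi/3=\filrad(\Sp^1)$ one must remove the factor of $2$ loss inherent in the stability inequality, exactly as \cite{dgh-spheres} does when $Y=\Sp^n$. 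Given a correspondence $R\subseteq\Sp^1\times Y$ of distortion $\epsilon<2\pi/3$, I would isometrically embed both spaces into a common injective metric space $E$ so that $d_H(\Sp^1,Y;E)\leq\epsilon/2$, invoke Theorem \ref{theorem:isom} to identify $B_r(Y,E)\simeq\vr_{2r}(Y)$ throughout the filtration, and seek to ``transport'' the fundamental class of $\Sp^1$ into $B_{\epsilon/2}(Y,E)$ in a way that contradicts $\filrad(\Sp^1)=\pi/3$ without the customary two-sided interleaving, which again only gives $\pi/6$.

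The main obstacle is precisely this last passage. The sphere-target argument of \cite{dgh-spheres} is powered by the antipodal $\Z_2$-symmetry of $\Sp^n$ and an explicit Borsuk--Ulam degree computation, neither of which is available for a generic simply connected geodesic target $Y$. A successful proof of the conjecture therefore hinges on finding a symmetry-free substitute for Borsuk--Ulam, most naturally a sharp \emph{one-sided} filling-radius obstruction of the form ``if $\Sp^1$ sits inside the $\rho$-neighborhood of $Y$ in an injective ambient $E$ with $\rho<\pi/3$, then a non-trivial class persists in some persistent $\Hom_1$ invariant of $Y$'', contradicting the vanishing of $\dgmR_1(Y;\mathbb{F})$ established above. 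Making such an obstruction quantitatively sharp so that it triggers at $\epsilon/2=\pi/3$ rather than at $\epsilon=\pi/3$ appears to require genuinely new input beyond the techniques developed in the body of the paper, and is the reason the statement remains conjectural.
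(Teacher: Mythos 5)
This statement is labeled a \emph{conjecture} in the paper, and the paper does not prove it; it only records (in the remark immediately preceding the conjecture) the weaker bound $\dgh(\Sp^1,Y)\geq \pi/6$ obtained from Example~\ref{ex:int-zero}, Theorem~\ref{thm:stab-barcodes}, and Proposition~\ref{prop:filradpersistence}. Your writeup correctly reproduces this partial result by the same route: you compute $\dgmR_1(\Sp^1;\mathbb{F})=\{(0,2\pi/3]\}$, argue that $\dgmR_1(Y;\mathbb{F})=\emptyset$ for compact, geodesic, simply connected $Y$ (your simplicial triangulation-of-a-disc argument is, modulo the isomorphism theorem, the same mechanism as Lemma~\ref{lemma:surjectivepi1}, which shows $\iota_r$ induces a $\pi_1$-surjection for geodesic $X$ in an injective ambient space), and then invoke stability to get $\pi/6$. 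You then accurately identify the obstruction to sharpening this to $\pi/3$: the factor-of-two loss is inherent in the two-sided interleaving/stability inequality, and the tool that removes it for sphere targets in~\cite{dgh-spheres} is an antipodal Borsuk--Ulam argument that has no evident analogue for an arbitrary simply connected geodesic $Y$. In short, you have not claimed a proof where none exists; your diagnosis of what is known and what is missing matches the paper's own assessment of why the statement remains open.
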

	
	\subsubsection{A generalization of the  filling radius}
	
	\begin{figure}
		\centering
		\includegraphics[width=\linewidth]{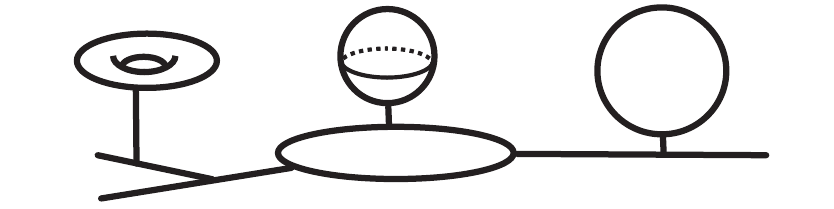}
		\caption{An ANR space contemplated by Proposition \ref{prop:genfilradpersistence}.}
		\label{fig:anr}
	\end{figure}
	
	The goal of this section is to provide some partial results regarding the structure of $\dgmR_\ast(\cdot)$ for non-smooth spaces; see Figure \ref{fig:anr}. In order to do so we consider a generalization of the notion of filling radius for arbitrary compact ANR metric spaces and  arbitrary homology dimension. See \cite{hu1965theory} for an  introduction to the general theory of ANRs.
	
	\begin{definition}[Absolute neighborhood retract]
		A metric space $(X,d_X)$ is said to be \emph{ANR (Absolute Neighborhood Retract)} if, whenever $X$ is a subspace of another metric space $Y$, there exists an open set $X\subset U\subseteq Y$ such that $X$ is a retract of $U$.
	\end{definition}

	It is known that every topological manifold with compatible metric (so, a metric manifold) is an ANR. Not only that, every locally Euclidean metric space is an ANR (see \cite[Theorem III.8.1]{hu1965theory}). Also, every compact, (topologically) finite dimensional, and locally contractible metric space is ANR (see \cite[Section 1]{dugundji}). The following example is one application of this fact.
	
	\begin{example}
		Let $\mathcal{G}$ be a compact metric graph and $M_1,\dots,M_n$ be closed connected metric manifolds. Choose points $v_1,\dots,v_n\in G$ and $p_i\in M_i$ for each $i=1,\dots,n$ and consider the geodesic metric space
		$X:=\mathcal{G}\vee M_1\vee\cdots\vee M_n$
		arising from metric gluings via $v_1\sim p_1,\dots,v_n\sim p_n$. Since $X$ is compact, (topologically) finite dimensional, and locally contractible, it is an ANR. See Figure \ref{fig:anr}.
	\end{example}
	
	Finally, we are ready to define a generalized filling radius.
	
	\begin{definition}[Generalized filling radius]
		Let $(X,E)$ be a metric pair where $X$ is a compact ANR metric space. For any integer $k\geq 1$, any abelian group $G$, and any $\omega\in \mathrm{H}_k(X;G)$, we define the generalized filling radius as follows:
		$$\filrad_k((X,E),G,\omega):=\inf\{r>0|\,\mathrm{H}_k(\iota_r^E;G)(\omega)=0\},$$
		where $\iota_r^E:X\hookrightarrow B_r(X,E)$ is the (corestriction of the) isometric embedding. In other words, we have the map
		$$\filrad_k((X,E),G,\cdot):\mathrm{H}_k(X;G)\longrightarrow \mathbb{R}_{\geq 0}.$$
	\end{definition}
	
	\begin{Remark}
		Following the discussion in Remark \ref{rem:relative-filling-radius} after Equation (\ref{eq:rel-fill-rad}), one can also prove that the smallest possible value of the generalized filling radius is attained when $E$ is an injective metric space. Hence, we denote $\filrad_k(X,G,\omega)$ instead of $\filrad_k((X,E),G,\omega)$ whenever $E$ is injective, for  simplicity.
	\end{Remark}
	
	Let $M$ be an $n$-dimensional metric manifold. Then, note that we have $\filrad_n(M,G,[M]) = \filrad(M)$ in the following two cases: (1) when $M$ is orientable and $G=\mathbb{Z}$, and (2) when $M$ is non-orientable and $G=\mathbb{Z}_2$. 
	
	A priori, one can define the generalized filling radius for any metric space $X$. However, we believe that the context of ANR metric spaces is the right level of generalization for our purposes because of the following  proposition analogous  to Proposition \ref{prop:filradpersistence}.
	
	\begin{proposition}\label{prop:genfilradpersistence}
		Let $X$ be a compact ANR metric space. Then, for any $k\geq 1$ and nonzero $\omega\in\mathrm{H}_k(X;\mathbb{F})$, we have $\filrad_k(X,\mathbb{F},\omega)>0$, and
		$$(0,2\,\filrad_k(X,\mathbb{F},\omega)]\in\dgm_k^\mathrm{VR}(X;\mathbb{F})$$
		where $\mathbb{F}$ is an arbitrary field.
	\end{proposition}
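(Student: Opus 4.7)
The plan is to parallel the proof of Proposition \ref{prop:filradpersistence}, substituting the compact ANR hypothesis on $X$ for the manifold hypothesis: the ANR retraction plays the role previously reserved for Hausmann's theorem. I will work throughout with the Kuratowski filtration $B_*(X,L^\infty(X))$ and then transport the conclusion to the Vietoris-Rips side via Theorem \ref{theorem:isom}, picking up the factor of $2$ from the rescaling of the filtration parameter. For positivity of $\filrad_k$, observe first that since $L^\infty(X)$ is injective it is an absolute retract, and the ANR property of the compact subspace $X$ therefore supplies an open neighborhood $U$ of $X$ in $L^\infty(X)$ together with a retraction $\rho\colon U\to X$; by compactness of $X$ there exists $\epsilon_0>0$ with $B_{\epsilon_0}(X,L^\infty(X))\subseteq U$. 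Then for every $r\leq\epsilon_0$ the Kuratowski inclusion $\iota_r\colon X\hookrightarrow B_r(X,L^\infty(X))$ is a section of $\rho$, hence induces an injection on $\mathrm{H}_k(\,\cdot\,;G)$, which forces $\mathrm{H}_k(\iota_r;G)(\omega)\neq 0$ and so $\filrad_k(X,G,\omega)\geq\epsilon_0>0$.

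Turning to case $(2)$, I fix $d:=\filrad_k(X,\mathbb{F},\omega)$ and set $\omega_r:=\mathrm{H}_k(\iota_r;\mathbb{F})(\omega)\in V_r:=\mathrm{H}_k(B_r(X,L^\infty(X));\mathbb{F})$. By the definition of the infimum $\omega_r\neq 0$ for every $r<d$, while naturality forces $\omega_r=0$ for $r>d$. To close the right endpoint I will show $\omega_d\neq 0$: otherwise a cycle representative $z$ of $\omega$ sitting inside $X$ would be a boundary $\partial b$ in $B_d(X,L^\infty(X))$, and Lemma \ref{lemma:singchaincpt} would confine the support of $b$ to a compact set $K\subseteq B_d$; by compactness we would have $K\subseteq B_{d-\delta}$ for some $\delta>0$, yielding $\omega_{d-\delta}=0$ and contradicting $d=\filrad_k$. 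To extract the interval summand I exploit q-tameness of $V_*$ (inherited from compactness of $X$ via Theorem \ref{theorem:isom}) and decompose $V_*\cong\bigoplus_\mu (I_\mu)_{\mathbb{F}}$ by Theorem \ref{theorem:pbarcode}, with every $I_\mu$ left-open and right-closed by Theorem \ref{thm:intervalform}. Writing $\omega_r=\sum_\mu\omega_r^\mu$ componentwise and picking any index $\mu$ with $\omega_d^\mu\neq 0$, I will argue $I_\mu=(0,d]$. Since the structure map of an interval module is the identity between any two indices contained in $I_\mu$: if $v_\mu>d$ then $\omega_{v_\mu}^\mu=\omega_d^\mu\neq 0$ contradicts $\omega_{v_\mu}=0$, forcing $v_\mu=d$; and for every $\epsilon\in(0,d)$ consistency gives $\omega_\epsilon^\mu=\omega_d^\mu\neq 0$, which requires $\epsilon\in I_\mu$, i.e.\ $u_\mu<\epsilon$, so $u_\mu=0$. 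Therefore $(0,d]$ appears in the barcode of $V_*$, and Theorem \ref{theorem:isom} transports it to $(0,2d]\in\dgmR_k(X;\mathbb{F})$.

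For case $(1)$ I will rely on the universal-coefficient theorem, which (exactly as in the proof of Proposition \ref{prop:filradpersistence}) supplies natural injections $j_Y\colon\mathrm{H}_k(Y;\mathbb{Z})\otimes\mathbb{F}\hookrightarrow\mathrm{H}_k(Y;\mathbb{F})$ compatible with the Kuratowski inclusions. The resulting commutative diagram identifies vanishing of $\mathrm{H}_k(\iota_r;\mathbb{Z})(\omega)$ with vanishing of its image $\omega_{\mathbb{F}}:=j_X(\omega\otimes 1)$ under $\mathrm{H}_k(\iota_r;\mathbb{F})$, giving $\filrad_k(X,\mathbb{Z},\omega)=\filrad_k(X,\mathbb{F},\omega_{\mathbb{F}})$ whenever $\omega_{\mathbb{F}}\neq 0$; the statement then reduces to case $(2)$ applied to $\omega_\mathbb{F}$. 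The hard part of the whole argument is the extraction of $(0,d]$ as a genuine summand of $V_*$ in the second paragraph: merely producing a compatible non-vanishing family $(\omega_r)_{r\in(0,d]}$ only exhibits $(0,d]$ as the support of a sub-persistence module, whereas the barcode statement demands an interval summand of that exact shape; the delicate combination of q-tameness (so that the barcode exists), Theorem \ref{thm:intervalform} (so that all intervals are left-open and right-closed), and componentwise tracking of $\omega$ in the interval decomposition is precisely what closes that gap.
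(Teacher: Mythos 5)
Your proof follows the paper's strategy precisely: use the ANR retraction in place of Hausmann's theorem to get that $\iota_r\colon X\hookrightarrow B_r(X,L^{\infty}(X))$ admits a retraction for small $r$, hence injects on $\mathrm{H}_k(\,\cdot\,;G)$ and $\filrad_k>0$; then extract the interval via the isomorphism theorem. Where the paper waves at ``the remaining part of the proof is essentially the same as the proof of Proposition~\ref{prop:filradpersistence},'' you actually fill in the details, and you fill them in correctly: re-deriving the closure of the right endpoint via Lemma~\ref{lemma:singchaincpt} (this is exactly the mechanism of Lemma~\ref{lemma:left-right}, part (ii)), and then tracking $\omega_r$ componentwise through the interval decomposition to pin down $(0,d]$ as an actual summand rather than merely the support of a compatible family. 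This last point is indeed the delicate one, and your treatment is right.

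One caution in case (1), which you inherit from the paper's own proof of Proposition~\ref{prop:filradpersistence}: the commutative UCT diagram, plus injectivity of $j_r$, shows only that $\mathrm{H}_k(\iota_r;\mathbb{Z})(\omega)=0\Rightarrow\mathrm{H}_k(\iota_r;\mathbb{F})(\omega_{\mathbb F})=0$, and hence $\filrad_k(X,\mathbb{F},\omega_{\mathbb F})\leq\filrad_k(X,\mathbb{Z},\omega)$. Your reduction to case (2) asserts equality, which requires the converse: $\mathrm{H}_k(\iota_r;\mathbb{Z})(\omega)\neq 0\Rightarrow\mathrm{H}_k(\iota_r;\mathbb{Z})(\omega)\otimes 1\neq 0$ in $\mathrm{H}_k(B_r;\mathbb{Z})\otimes\mathbb{F}$. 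That fails whenever the image integral class is torsion (for $\mathrm{char}\,\mathbb{F}=0$) or lies in $p\cdot\mathrm{H}_k(B_r;\mathbb{Z})$ (for $\mathrm{char}\,\mathbb{F}=p$), so the equality of the two filling radii is not automatic from what you wrote, and even your prudent hypothesis $\omega_{\mathbb F}\neq 0$ does not close this gap. You should flag this as an assumption or supply the missing argument; when $\omega_{\mathbb F}=0$ the reduction does not yield a barcode interval at all, while the proposition as stated still promises one.
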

	\begin{proof}
		First, note that one cannot apply Hausmann's theorem since $X$ is not necessarily a Riemannian manifold. However, since $X$ is ANR and a closed subset of $L^\infty(X)$, there exists an open $U\subseteq L^\infty(X)$ such that $X\subset U$ and $U$ retracts onto $X$. Let $\rho:U\rightarrow X$ be the retraction. Now, since $U$ is open there exists $r>0$ such that $B_r(X,L^\infty(X))\subseteq U$. Observe that the restriction $\rho_r:=\rho|_{B_r(X,L^\infty(X))}:B_r(X,L^\infty(X)) \rightarrow X$ is still a retraction. It means that $\rho_r\circ\iota_r=\mathrm{id}_X$. Therefore,
		$$\mathrm{H}_k(\iota_r;\mathbb{F}):\mathrm{H}_k(X;\mathbb{F})\longrightarrow\mathrm{H}_k(B_r(X,L^\infty(X));\mathbb{F})$$
		is injective. This implies that $\filrad_k(X,\mathbb{F},\omega)>0$ and that there exists some interval in $\dgmR_k(X;\mathbb{F})$ corresponding to the nonzero homology class $\omega\in\mathrm{H}_k(X;\mathbb{F})$.

		The remaining part of proof is essentially the same as the proof of Proposition \ref{prop:filradpersistence}, so we omit it.
	\end{proof}
	
	\begin{example}
		For any nonzero $\omega\in\mathrm{H}_1(M;\mathbb{F})$ with an arbitrary field $\mathbb{F}$, because of the result in \cite[Theorem 8.10]{Virk18}, one has that $2\,\filrad_1(M,\mathbb{F},\omega)=\frac{\mathrm{length}(\gamma)}{3}$ where $\gamma$ is a shortest closed curve representing the homology class $\omega$.
	\end{example}

	\medskip
	\paragraph{A refinement for the case $k=1$.}
	
	We now prove that when $k=1$, the intervals given by Proposition \ref{prop:genfilradpersistence} are the \emph{only} bars in $\dgmR_1(X;\mathbb{F}).$
	
	\begin{lemma}\label{lemma:surjectivepi1}
		Let $X$ be a compact geodesic metric space, which is a subspace of an injective metric space $(E,d_E)$. Then, for any $r>0$, the canonical inclusion $\iota_r:X\longhookrightarrow B_r(X,E)$ induces a surjection at the level of fundamental groups. In particular, this implies $\iota_r$ induces a surjection at the level of first degree of homology, also.
	\end{lemma}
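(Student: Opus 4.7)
The plan is to directly construct, for any loop $\alpha\colon S^1\to B_r(X,E)$ based at a point $p\in X$, an explicit based homotopy inside $B_r(X,E)$ from $\alpha$ to a loop lying entirely in $X$. The main tool will be the conical geodesic bicombing $\gamma$ on $E$ provided by Lemma \ref{lemma:geobicomb}, combined with the fact that $X$ itself is geodesic.

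First I would use that $\alpha(S^1)$ is compact and $B_r(X,E)$ is open to pick $r'<r$ with $\alpha(S^1)\subseteq B_{r'}(X,E)$, and choose $\epsilon>0$ with $\epsilon<r-r'$. Then I would subdivide $S^1$ by $0=t_0<t_1<\cdots<t_n=1$ so that $d_E(\alpha(t_i),\alpha(t_{i+1}))<\epsilon$ for all $i$, pick $y_i\in X$ with $d_E(\alpha(t_i),y_i)<r'$ while setting $y_0=y_n=p$, and take an $X$-geodesic $\sigma_i\colon[t_i,t_{i+1}]\to X$ from $y_i$ to $y_{i+1}$ (available because $X$ is geodesic). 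Concatenating the $\sigma_i$'s yields a candidate loop $\tilde\beta$ in $X$ based at $p$. The candidate homotopy is
\[
H(s,t) \;:=\; \gamma\big(\alpha(s),\sigma_i(s),t\big),\qquad s\in[t_i,t_{i+1}],\ t\in[0,1].
\]
Continuity across subdivision points holds because both one-sided limits at $s=t_i$ equal $\gamma(\alpha(t_i),y_i,t)$. By construction $H(\cdot,0)=\alpha$, $H(\cdot,1)=\tilde\beta$, and $H(0,t)=H(1,t)=\gamma(p,p,t)=p$, so the homotopy is based.

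The core verification is that $H(s,t)\in B_r(X,E)$ for all $(s,t)$. Applying the conical inequality of Lemma \ref{lemma:geobicomb} with $a=\alpha(s)$, $b=\sigma_i(s)$, and constant second pair $y_j$, then using $d_E(\alpha(s),y_j)<r'+\epsilon$, $d_X(y_i,y_{i+1})<2r'+\epsilon$, and $d_E(\sigma_i(s),y_i)=\tau(s)\,d_X(y_i,y_{i+1})$ (with $\tau(s)=(s-t_i)/(t_{i+1}-t_i)$), I get the two bounds
\[
d_E(H(s,t),y_i) < (1-t)(r'+\epsilon)+t\tau(s)(2r'+\epsilon),\quad
d_E(H(s,t),y_{i+1}) < (1-t)(r'+\epsilon)+t(1-\tau(s))(2r'+\epsilon).
\]
Taking the minimum and using $\min(\tau,1-\tau)\le 1/2$ collapses both to
\[
d_E(H(s,t),X) \;<\; r' + \epsilon\big(1 - t/2\big) \;\le\; r' + \epsilon \;<\; r.
\]
Thus $H$ lands in $B_r(X,E)$, proving $[\alpha]=(\iota_r)_*[\tilde\beta]$, so $(\iota_r)_*\colon\pi_1(X,p)\to\pi_1(B_r(X,E),p)$ is surjective. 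Surjectivity on $\mathrm{H}_1$ follows from naturality of the Hurewicz map.

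The main obstacle is the distance estimate in the last step: any single reference point in $X$ only yields a bound of order $3r'$, which is too weak. The essential trick is to take the minimum of the distances from $H(s,t)$ to the two endpoints $y_i$ and $y_{i+1}$, where the factor $\min(\tau,1-\tau)\le 1/2$ precisely absorbs the $2r'+\epsilon$ contribution and brings the bound below $r$. This is also where geodesicity of $X$ is indispensable: it lets us transport $y_i$ to $y_{i+1}$ along a path $\sigma_i$ inside $X$ rather than only inside $E$, which is what makes the terminal loop $\tilde\beta$ live in $X$.
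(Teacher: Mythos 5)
Your proof is correct and reaches the same conclusion by a genuinely different route. You build an explicit homotopy $H(s,t)=\gamma(\alpha(s),\sigma_i(s),t)$ using the conical geodesic bicombing and verify by a direct estimate that it stays inside $B_r(X,E)$, whereas the paper's proof is qualitative: it covers the loop by balls $B_r(x_i,E)$ via the Lebesgue number lemma, uses contractibility of balls (Lemma \ref{lem:intersectballs}) to replace each arc $\gamma|_{[t_{i-1},t_i]}$ by a concatenation $\alpha_i\cdot\beta_i$ through the center $x_i$, and then shows each junction $\beta_i\cdot\alpha_{i+1}$ is homotopic rel endpoints to the concatenation of two $X$-geodesics through the $X$-midpoint $p$ of $x_i,x_{i+1}$, using contractibility of the intersection $B_r(x_i,E)\cap B_r(x_{i+1},E)$. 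The underlying geometric mechanism is the same in both proofs --- the paper's midpoint $p$ plays the role of your $\min(\tau,1-\tau)\le\tfrac{1}{2}$ trick, and both rely on the bicombing and geodesicity of $X$ --- but you package it as a single explicit Lipschitz-type estimate rather than as a sequence of homotopy-theoretic reductions. Your version is more constructive and makes the dependence on $r$ quantitative; the paper's is shorter once the nerve-type lemmas are in place.

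One small wrinkle worth fixing in your writeup: you choose the subdivision so that $d_E(\alpha(t_i),\alpha(t_{i+1}))<\epsilon$, but the estimate $d_E(\alpha(s),y_j)<r'+\epsilon$ actually requires $\mathrm{diam}\,\alpha([t_i,t_{i+1}])<\epsilon$ for $j\in\{i,i+1\}$ and all $s\in[t_i,t_{i+1}]$. This is immediate from uniform continuity (and you even invoke the Lebesgue-number idea implicitly), so just state the stronger condition on the subdivision. Also make explicit that $\gamma(y,y,t)=y$ for all $t$, which follows from the constant-speed property of the bicombing ($d_E(\gamma(y,y,s),\gamma(y,y,t))=(t-s)\cdot 0=0$) and which you use to identify $\gamma(y_j,y_j,t)$ with $y_j$ and to see that $H$ is based.
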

	\begin{proof}
		Let $\gamma:[0,1] \to B_r(X,E)$ be an arbitrary continuous path with endpoints $x,x'$ in $X$. It is enough to show that $\gamma$ is homotopy equivalent to a path in $X$ relative to its endpoints. By Lebesgue number lemma, one can choose $0=t_0 < t_1 < \dots <t_n=1$ such that there exists $x_i \in X$ satisfying $\gamma([t_{i-1},t_i]) \subseteq B_r(x_i,E)$ for each $i\in\{1,\dots,n\}$. Let $y_i:=\gamma(t_i)$ for $i\in\{0,\dots,n\}$. Since each $B_r(x_i,E)$ is contractible by Lemma \ref{lem:intersectballs}, one can choose continuous paths $\alpha_i,\beta_i$ contained in $B_r(x_i,E)$ such that $\alpha_i$ is from $y_{i-1}$ to $x_i$ and $\beta_i$ is from $x_i$ to $y_i$ respectively. As $B_r(x_i,E)$ is contractible, $\gamma|_{[t_{i-1},t_i]}$ is homotopy equivalent to $\alpha_i \cdot\beta_i$ relative to endpoints. Hence we have 
		$$\gamma \simeq (\alpha_1 \ast \beta_1) \ast \dots \ast (\alpha_n \ast \beta_n)$$
		relative to endpoints. Note that $\alpha_1$ and $\beta_n$ can be chosen as geodesics in $X$ as they connect $x,x_1$ and $x_n,x'$ in $B_r(x_1,E),B_r(x_n,E)$ respectively. Hence it is enough to show that
		$$(\beta_1 \ast \alpha_2) \ast \dots \ast (\beta_{n-1} \ast \alpha_n)$$
		is homotopy equivalent to a path in $X$ relative to endpoints. Let us show that $\beta_i \cdot \alpha_{i+1}$ is homotopy equivalent to a path in $X$ for each $i$. Let $p$ be a midpoint of $x_i,x_{i+1}$ in $X$. Note that $p$ and $y_i$ are contained in $B_r(x_i,E) \cap B_r(x_{i+1},E)$, which is contractible (again by Lemma \ref{lem:intersectballs}). Let $\theta$ be a path in that intersection from $y_i$ to $p$. Let $\gamma_{x_i,p}$ be a shortest geodesic in $X$ from $x_i$ to $p$ and $\gamma_{p,x_{i+1}}$ be a shortest geodesic in $X$ from $p$ to $x_{i+1}$. Note that $\gamma_{x_i,p} \cdot \bar{\theta}$ is contained in $B_r(x_{i})$ and has endpoints $x_i,y_i$ hence it is homotopy equivalent to $\beta_i$ relative to endpoints. Similarly $\theta \cdot \gamma_{p,x_{i+1}}$ is homotopy equivalent to $\alpha_{i+1}$ relative to endpoints. Hence
		\begin{align*}
			\beta_i \cdot \alpha_{i+1} &\simeq \gamma_{x_i,p} \cdot \bar{\theta} \cdot \theta \cdot \gamma_{p,x_{i+1}} \\
			&\simeq \gamma_{x_i,p} \cdot \gamma_{p,x_{i+1}}
		\end{align*}
		relative to endpoints. This completes the proof of the first claim.

		For the second claim, exploit \cite[Theorem 2A.1]{h01}.
	\end{proof}

	In \cite[Theorem 8.10]{Virk18}, Z. Virk provided a proof of the Corollary below which takes place at the simplicial level. The proof we give below exploits the hyperconvexity properties of $L^\infty(X)$ and also our isomophism theorem, Theorem \ref{theorem:isom}. Given our main results, we can give a more concise proof. See \cite[Section 3]{dey17} for related results.

	\begin{corollary}
		Let $X$ be a compact geodesic metric space. Then, for any $I\in\dgmR_1(X;\mathbb{F})$, there exists $\omega\in\Hom_1(X;\Z)$ such that $I=(0,2\,\filrad_1(X,\Z,\omega)]$.
	\end{corollary}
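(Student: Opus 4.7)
The plan is to combine the isomorphism theorem (Theorem \ref{theorem:isom}) with the surjectivity provided by Lemma \ref{lemma:surjectivepi1}. By Theorem \ref{theorem:isom}, the barcode $\dgmR_1(X;\mathbb{F})$ is (up to doubling the scale) the barcode of $V_\ast := \mathrm{H}_1(B_\ast(X,L^\infty(X));\mathbb{F})$. Hence it suffices to show that every interval of $\dgm(V_\ast)$ has the form $(0,\filrad_1(X,\Z,\omega)]$ for some $\omega \in \mathrm{H}_1(X;\Z)$.

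First I would verify that every interval in $\dgm(V_\ast)$ has left endpoint $0$. Lemma \ref{lemma:surjectivepi1} gives, for each $r > 0$, a surjection $(\iota_r)_\ast : \mathrm{H}_1(X;\mathbb{F}) \twoheadrightarrow V_r$. Viewing $C_\ast := \mathrm{H}_1(X;\mathbb{F})$ as a constant persistence module (all structure maps equal to the identity), the collection $\{(\iota_r)_\ast\}_{r>0}$ assembles into a levelwise surjective morphism of persistence modules $C_\ast \twoheadrightarrow V_\ast$, by naturality of the inclusions $\iota_r$. If some interval of $\dgm(V_\ast)$ had left endpoint $u > 0$, then a representative $v \in V_{u+\varepsilon}$ of the class being born at $u^+$ could be lifted to $\omega \in C_{u+\varepsilon}$; the element $\omega$ sits also in $C_r$ for every $r \leq u+\varepsilon$, and by naturality $(\iota_r)_\ast(\omega) \in V_r$ is a preimage of $v$ under the structure map of $V_\ast$, contradicting that the interval is empty on $(0,u]$. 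Combined with Theorem \ref{thm:intervalform}, every interval in $\dgm(V_\ast)$ has the form $(0,d]$.

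Next, given $I = (0,d] \in \dgm(V_\ast)$, I would extract a representing integer class. Pick a small $r_0 \in (0,d]$, and using a barcode decomposition choose $v_0 \in V_{r_0}$ whose persistence summand is exactly $I$. Surjectivity yields $\omega_\mathbb{F} \in \mathrm{H}_1(X;\mathbb{F})$ with $(\iota_{r_0})_\ast(\omega_\mathbb{F}) = v_0$; then $(\iota_r)_\ast(\omega_\mathbb{F}) \neq 0$ precisely for $r < d$, so $\filrad_1(X,\mathbb{F},\omega_\mathbb{F}) = d$. The Universal Coefficient Theorem gives $\mathrm{H}_1(X;\mathbb{F}) \cong \mathrm{H}_1(X;\Z) \otimes \mathbb{F}$ (no $\mathrm{Tor}$ term since $\mathrm{H}_0$ is torsion-free), so $\omega_\mathbb{F}$ admits an integer lift $\omega_0 \in \mathrm{H}_1(X;\Z)$.

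The hard part will be reconciling the $\Z$- and $\mathbb{F}$-coefficient filling radii so that the integer lift has the correct filling radius. The inequality $\filrad_1(X,\Z,\omega_0) \geq \filrad_1(X,\mathbb{F},\omega_\mathbb{F}) = d$ follows immediately from change of coefficients. The reverse inequality may fail because $(\iota_d)_\ast(\omega_0) \in \mathrm{H}_1(B_d;\Z)$ can be a nonzero torsion class that nevertheless vanishes over $\mathbb{F}$. I would remedy this by adjusting the lift: a short diagram chase using the Universal Coefficient Theorem shows that $(\iota_d)_\ast(\omega_0)$ must be divisible in $\mathrm{H}_1(B_d;\Z)$ by the characteristic of $\mathbb{F}$ (or by the torsion order in characteristic zero); using the $\Z$-coefficient version of Lemma \ref{lemma:surjectivepi1} to pull back an appropriate divisor to a class $\omega_1 \in \mathrm{H}_1(X;\Z)$ and replacing $\omega_0$ by $\omega := \omega_0 - n \omega_1$ (for the appropriate integer $n$) preserves the $\mathbb{F}$-image while killing $(\iota_d)_\ast(\omega)$. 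This gives $\filrad_1(X,\Z,\omega) \leq d$, hence equality, yielding the desired $\omega \in \mathrm{H}_1(X;\Z)$.
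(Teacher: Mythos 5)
Your overall strategy --- combining Theorem \ref{theorem:isom} with the $\pi_1$-surjectivity of Lemma \ref{lemma:surjectivepi1} --- is the paper's, and the first half of your argument (a levelwise surjection from the constant module $\mathrm{H}_1(X;\mathbb{F})$ onto $\mathrm{H}_1(B_\ast(X,L^\infty(X));\mathbb{F})$, forcing every interval to have left endpoint $0$) is correct. You are also right to flag that upgrading an $\mathbb{F}$-homology class to an \emph{integral} class carrying the same filling radius is a genuine step which the paper's one-line proof leaves implicit, and the source of potential failure (torsion or $p$-divisibility in $\mathrm{H}_1(B_r;\Z)$ that is invisible over $\mathbb{F}$) is correctly diagnosed.

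However, the proposed repair is broken. By Theorem \ref{thm:intervalform} the interval is closed on the right, so $(\iota_d)_\ast(\omega_\mathbb{F})\neq 0$; your clause ``precisely for $r<d$'' should read ``$r\le d$''. Consequently $(\iota_d)_\ast(\omega_0)\in\mathrm{H}_1(B_d;\Z)$ has \emph{nonzero} image in $\mathrm{H}_1(B_d;\mathbb{F})$, so it is not divisible by $p=\mathrm{char}\,\mathbb{F}$ (nor torsion in characteristic zero), and the diagram chase fails at the scale $r=d$. Worse, any $\omega$ with $(\iota_d)_\ast(\omega)=0$ would overshoot: since $B_d=\bigcup_{r<d}B_r$ and singular homology commutes with such directed unions, vanishing at scale $d$ already forces vanishing at some $r<d$, giving $\filrad_1(X,\Z,\omega)<d$. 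The divisibility phenomenon you are worried about lives only at scales $r>d$; correcting the lift there yields a class $\omega^{(r)}$ depending on $r$, and your draft does not explain how to replace the family $\{\omega^{(r)}\}_{r>d}$ by a single integral class that works for all $r>d$ at once. That uniformity is exactly the missing content of the reverse inequality.
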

	\begin{proof}
		Apply Lemma \ref{lemma:surjectivepi1} and Theorem \ref{theorem:isom}.
	\end{proof}

	\paragraph{A conjecture.}

	After seeing the proof of Proposition \ref{prop:genfilradpersistence}, some readers might wonder whether one can prove a version of Hausmann's theorem \cite[Theorem 3.5]{h95} for compact ANR metric spaces. This leads to formulating the conjecture below.
	
	\begin{conjecture}
		Let $(X,d_X)$ be a compact ANR metric space. Then, there exists $r(X)>0$ such that $\vr_r(X)$ is homotopy equivalent to $X$ for any $r\in(0,r(X)]$.
	\end{conjecture}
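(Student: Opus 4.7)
The plan is to reduce the conjecture to a statement about Kuratowski-type neighborhoods of $X$. By Proposition \ref{prop:vrhyperconvex} (applied with the injective Banach space $L^\infty(X)$), we have $\vr_{2r}(X)\simeq B_r(X,L^\infty(X))$ for every $r>0$. Hence it suffices to exhibit some $r_0>0$ such that the inclusion $X\hookrightarrow B_r(X,L^\infty(X))$ is a homotopy equivalence for every $r\in(0,r_0]$; setting $r(X):=2r_0$ then yields the conjecture.

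To establish this reduced statement I would first exploit the ANR hypothesis exactly as in the proof of Proposition \ref{prop:genfilradpersistence}: there exists a retraction $\rho:U\to X$ defined on some open neighborhood $U$ of $X$ in $L^\infty(X)$, and by compactness of $X$ there is some $r_0>0$ with $B_{r_0}(X,L^\infty(X))\subseteq U$, so that $\rho$ restricts to retractions $\rho_r:B_r(X,L^\infty(X))\to X$ for every $r\in(0,r_0]$. Since $\rho_r\circ\iota=\mathrm{id}_X$ automatically, the only remaining task is to produce a homotopy $H$ inside $B_r(X,L^\infty(X))$ from $\iota\circ\rho_r$ to $\mathrm{id}_{B_r(X,L^\infty(X))}$. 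A natural first candidate is the straight-line homotopy $H(y,t)=(1-t)\rho_r(y)+t\,y$, which takes place in the ambient Banach space and pointwise fixes $X$. It satisfies $d(H(y,t),X)\leq (1-t)\|y-\rho_r(y)\|_\infty$, so it does stay inside $B_r(X,L^\infty(X))$ provided that $\|y-\rho_r(y)\|_\infty<r$ for every $y\in B_r(X,L^\infty(X))$.

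The principal obstacle is precisely this last containment condition, and I expect it to be the hard part. A routine compactness argument, using that $y\mapsto\|y-\rho(y)\|_\infty$ is continuous on $U$ and vanishes on $X$, only yields that for every $\epsilon>0$ there exists $\delta>0$ with $\|y-\rho(y)\|_\infty<\epsilon$ whenever $d(y,X)<\delta$; a priori the dependence $\delta=\delta(\epsilon)$ can satisfy $\delta(\epsilon)\ll\epsilon$, so one cannot simply set $\epsilon=r$ and close the argument. My proposal to overcome this is to invoke the classical theorem of Borsuk (see e.g.\ Hu's \emph{Theory of Retracts}) that a compact ANR embedded as a closed subset of an AR admits arbitrarily small open neighborhoods that are \emph{strong deformation retracts}; since $L^\infty(X)$ is a Banach space, hence an AR, such neighborhoods $V$ of $X$ exist. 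The remaining, and genuinely delicate, step is to refine the deformation retract of $V$ onto $X$ via a tube-lemma and Lebesgue-number argument along the compact set $X\times[0,1]$, so that the trajectories of the homotopy stay inside $B_r(X,L^\infty(X))$ for $r$ small. This refinement is subtle because $L^\infty(X)$ is not locally compact, and securing it in full generality is precisely the reason the statement remains only a conjecture in the paper.
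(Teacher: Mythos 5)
There is no paper proof of this statement: the paper poses it as an open conjecture after the proof of Proposition~\ref{prop:genfilradpersistence}, and your self-assessment --- that you cannot close the argument --- is accurate, so there is nothing to compare your sketch against. What is worth saying is that your setup is the right one (reduce via Proposition~\ref{prop:vrhyperconvex} to showing $X\hookrightarrow B_r(X,L^\infty(X))$ is a homotopy equivalence for small $r$, and use the ANR retraction $\rho:U\to X$), and you have correctly located the obstruction: the retraction $\rho_r$ only gives $\rho_r\circ\iota_r=\mathrm{id}_X$, hence that $X$ is a retract of $B_r(X,L^\infty(X))$, and producing the other homotopy $\iota_r\circ\rho_r\simeq\mathrm{id}_{B_r}$ requires controlling the displacement $\|y-\rho(y)\|_\infty$ on all of $B_r$, which continuity alone does not give.

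However, the remedy you propose does not close the gap either, and it is worth seeing why. Borsuk's theorem supplies, for every neighborhood $V$ of $X$ in the AR $L^\infty(X)$, an open $W$ with $X\subseteq W\subseteq V$ such that $X\hookrightarrow W$ is a strong deformation retract. But $B_r(X,L^\infty(X))$ is not such a $W$; at best one can sandwich it as $W'\subseteq B_r\subseteq W$ with both $X\hookrightarrow W'$ and $X\hookrightarrow W$ strong deformation retracts. This makes the composite $W'\hookrightarrow W$ a homotopy equivalence, but the two-out-of-three property does not let you conclude that either factor $W'\hookrightarrow B_r$ or $B_r\hookrightarrow W$ is one. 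Likewise, the tube-lemma refinement only yields: for each $\epsilon>0$ there is $\delta\le\epsilon$ with $H(B_\delta\times[0,1])\subseteq B_\epsilon$, where $H$ is Borsuk's deformation of some fixed ambient neighborhood. This shows that the inclusion $B_\delta\hookrightarrow B_\epsilon$ factors up to homotopy through $X$ (the interleaving-type statement used in, e.g., Lemma~\ref{lemma:katzcontractiongen}), not that $B_\delta\hookrightarrow B_\delta$ deformation retracts onto $X$; $\delta$ and $\epsilon$ cannot be made equal without precisely the uniform displacement bound that is missing. So the obstacle you flag is real and the sketch is not a proof; the conjecture stands.
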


	\subsection{Rigidity of spheres}\label{sec:rigidity}
	A problem of interest in the area of persistent homology is that of deciding how much information from a metric space is captured by its associated persistent homology invariants. One basic (admittedly imprecise) question that we posed in page \pageref{fig:d2-s1-tight-span} is: 
	
	\qexact
	
	\begin{figure}
		\centering
		\includegraphics[scale=1]{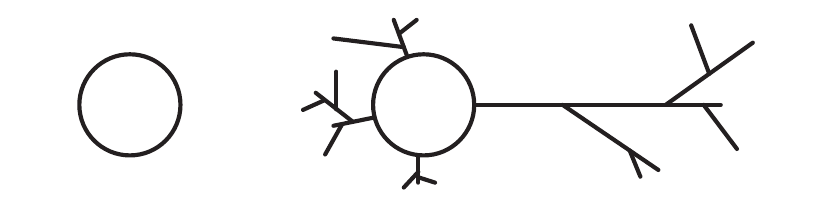}\caption{Two geodesic spaces with the same Vietoris-Rips persistence barcodes. Notice that these spaces are at a large Gromov-Hausdorff distance. \label{fig:s1-tree}}
	\end{figure}
	
	As proved in \cite{mo18} via the notion of \emph{core} of a metric graph or as a consequence  of \cite[Proposition 2.2]{h95}, the unit circle $\mathbb{S}^1$ and the join $X$ of $\mathbb{S}^1$ with disjoint trees of arbitrary length (regarded as a geodesic metric space) have the same Vietoris-Rips persistence barcodes (for all dimensions), see Figure \ref{fig:s1-tree}. However, by increasing the length of the trees attached  these two spaces are at arbitrarily large Gromov-Hausdorff distance, as shown  in Figure \ref{fig:s1-tree}. This means that, in full generality, Question \ref{q:exact} does not admit a reasonable answer if ``similarity" is measure in a strict metric sense via the Gromov-Hausdorff distance.
	
	A related type of questions one might pose are of the type:
	
	\begin{customquestion}{4}\label{q:approx}
		Let $\mathcal{C}$ be a given class of compact metric spaces. Does there exist $\epsilon_\mathcal{C}>0$ such that whenever $d_\mathrm{B}(\dgmR_\ast(X),\dgmR_\ast(Y))<\epsilon_\mathcal{C}$ for some $X,Y\in \mathcal{C}$, then $X$ and $Y$ are homotopy equivalent? 
	\end{customquestion}
	
	Answers to questions such as Questions \ref{q:exact} and \ref{q:approx}  above (together with Questions \ref{customq:estimator}, \ref{customq:estimator-tight}, and \ref{customq:estimator-reverse} in page \pageref{customq:estimator}) are not currently known in full generality. One might then consider ``localized" versions of the above questions: fix some special compact metric space $X_0$, and then assume $Y$ satisfies the respective conditions stipulated in the above question statements.
	
	In this regard, from work by Wilhelm \cite[Main Theorem 2]{wilhem} and Proposition \ref{prop:filradpersistence} we immediately obtain the following corollary for the case of Riemannian manifolds.
	
	\cororigid
	
	\begin{Remark}
		Note that the case of $n=1$ is simpler. Let $M$ be an arbitrary closed connected $1$-dimensional Riemannian manifold. Then, $M$ is isometric to $r\cdot\mathbb{S}^1$ for some $r>0$ and $I_{1,\mathbb{F}}^M=\left(0,\frac{2\pi r}{3}\right]$. Hence, $I_{1,\mathbb{F}}^M=I_1^{\mathbb{S}^1}$ obviously implies $M$ is isometric to $\mathbb{S}^1$.
	\end{Remark}
	
	\begin{Remark}
		Wilhelm's method of proof does not yield an explicit value for the parameter $\epsilon_n$ given in item 2 above. 
		Wilhelm's rigidity result was extended to Alexandrov spaces by Yokota \cite{yokota}, so Corollary \ref{Cor:Wilhelmrigidity} can be generalized to that context.
	\end{Remark}
	
	\begin{example}[A one parameter family of surfaces with the same Filling Radius as $\Sp^2$]\label{ex:one-parameter}
		If we ignore the sectional curvature condition in Corollary \ref{Cor:Wilhelmrigidity}, then for each $\varepsilon>0$ small enough one can construct a one-parameter family $ \{\Sp_h^2:h\in[0,\filrad(\Sp^2)-\varepsilon]\}$ of surfaces with the same filling radius as $\Sp^2$  such that $\Sp^2_0=\Sp^2$ but $\Sp_h^2$ is not isometric to $\Sp^2$ for any $h>0$.  This phenomenon is analogous to the one depicted in Figure \ref{fig:s1-tree}.
		
		\medskip
		Here is the construction (cf. Figure \ref{fig:oneparameter}):
		
		\medskip
		Let $u_1,u_2,u_3,u_4$ be the vertices of a regular tetrahedron inscribed in $\Sp^2$. Hence, $d_{\Sp^2}(u_i,u_j)=2\,\filrad(\Sp^2)$ for any $i \neq j$. Now, let $T$ be a very small spherical triangle contained inside the spherical triangle determined by the points $u_1,u_2,u_3$ as in Figure \ref{fig:oneparameter} (left). In other words, we choose  $\varepsilon:=\diam(T) \ll 2\,\filrad(\Sp^2)$.

		Now, for any $h\geq 0$, we define $\Sp^2_h$ in the following way:
		
		$$\Sp^2_h:=\Sp^2\backslash\mathrm{Int}(T)\times\{0\}\,\,\bigcup\,\,\partial T\times [0,h]\,\,\bigcup\,\, T\times \{h\}\subsetneq \Sp^2\times [0,h]$$
		with the metric $$d_{\Sp^2_h}((x,s),(y,t)):=d_{\Sp^2}(x,y)+\vert s-t \vert.$$ Then, $\Sp^2_h$ is a $2$-dimensional metric manifold. See Figure \ref{fig:oneparameter} (right) for the description of $\Sp^2_h$. Also, note that the following map is $1$-Lipschitz.
		\begin{align*}
			P_h:\Sp^2_h&\longrightarrow\Sp^2\\
			(x,s)&\longmapsto x
		\end{align*}
		
		\begin{claim}
			First, we claim that $\filrad(\Sp^2_h)\geq\filrad(\Sp^2)$ for any $h\geq 0$. 
		\end{claim}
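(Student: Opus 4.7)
The plan is to use the $1$-Lipschitz projection $P_h:\Sp^2_h\to\Sp^2$ together with the functoriality of the Kuratowski construction in order to transfer information about the vanishing of the fundamental class. Two observations are crucial. First, $P_h$ is $1$-Lipschitz (already recorded in the setup). Second, $P_h$ is a homotopy equivalence: topologically $\Sp^2_h$ is $\Sp^2$ with the disk $T$ replaced by the cylinder $\partial T\times[0,h]$ capped off by $T\times\{h\}$, and $P_h$ just collapses this cylinder onto its base. In particular $(P_h)_\ast$ sends the (integral) fundamental class $[\Sp^2_h]$ to $[\Sp^2]$.

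Next, I would promote $P_h$ to a map of Kuratowski pairs. Since $L^\infty(\Sp^2)$ is injective, Lemma \ref{lem:injective} (applied to the pairs $(\Sp^2_h,L^\infty(\Sp^2_h))$ and $(\Sp^2,L^\infty(\Sp^2))$) yields a $1$-Lipschitz extension $\widetilde{P}_h:L^\infty(\Sp^2_h)\to L^\infty(\Sp^2)$ with $\widetilde{P}_h|_{\Sp^2_h}=P_h$. Because $\widetilde{P}_h$ is $1$-Lipschitz and maps $\Sp^2_h$ into $\Sp^2$, it restricts, for every $r>0$, to a continuous map
\[
\widetilde{P}_h:B_r(\Sp^2_h,L^\infty(\Sp^2_h))\longrightarrow B_r(\Sp^2,L^\infty(\Sp^2))
\]
which commutes with the Kuratowski embeddings $\iota_r$:
\[
\begin{tikzcd}
\Sp^2_h \arrow[r,"\iota_r"]\arrow[d,"P_h"'] & B_r(\Sp^2_h,L^\infty(\Sp^2_h)) \arrow[d,"\widetilde{P}_h"]\\
\Sp^2 \arrow[r,"\iota_r"'] & B_r(\Sp^2,L^\infty(\Sp^2))
\end{tikzcd}
\]

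The proof then concludes by a diagram chase. If $r>\filrad(\Sp^2_h)$, then $\mathrm{H}_2(\iota_r;\mathbb{Z})([\Sp^2_h])=0$, so by commutativity and the first observation
\[
\mathrm{H}_2(\iota_r;\mathbb{Z})\big([\Sp^2]\big)\;=\;\mathrm{H}_2(\iota_r;\mathbb{Z})\circ (P_h)_\ast\big([\Sp^2_h]\big)\;=\;(\widetilde{P}_h)_\ast\circ \mathrm{H}_2(\iota_r;\mathbb{Z})\big([\Sp^2_h]\big)\;=\;0,
\]
forcing $r\geq\filrad(\Sp^2)$. Taking the infimum over such $r$ gives $\filrad(\Sp^2_h)\geq\filrad(\Sp^2)$, as claimed. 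The only potentially delicate point is verifying that $(P_h)_\ast[\Sp^2_h]=[\Sp^2]$, but this is immediate from the explicit collapsing description of $P_h$ (equivalently, $P_h$ has degree one); everything else is a standard application of the injectivity of $L^\infty(\Sp^2)$ combined with the functoriality already developed in Section \ref{sec:pairs}.
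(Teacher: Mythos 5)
Your proof is correct and follows essentially the same route as the paper: promote the $1$-Lipschitz projection $P_h$ to a map $\widetilde{P}_h$ of Kuratowski neighborhoods via injectivity of $L^\infty(\Sp^2)$, note $(P_h)_\ast[\Sp^2_h]=[\Sp^2]$, and chase the commuting diagram. You merely spell out one more intermediate step (explicitly invoking Lemma~\ref{lem:injective} and the degree-one/collapsing description of $P_h$) that the paper leaves implicit.
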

		\begin{proof}
			Note that, since $P_h$ is $1$-Lipschitz, the following diagram commutes for any $r>0$.
			
			$$\begin{tikzcd}
				\Sp^2_h \arrow[r, hook] \arrow[d, "P_h"] & B_r(\Sp^2_h,L^\infty(\Sp^2_h))\arrow[d, "\tilde{P}_h"] \\
				\Sp^2 \arrow[r, hook] & B_r(\Sp^2,L^\infty(\Sp^2))
			\end{tikzcd}$$
			
			Moreover, since $(P_h)_\ast([\Sp^2_h])=[\Sp^2]$, the diagram above implies $\filrad(\Sp^2_h)\geq\filrad(\Sp^2)$ as we required.
		\end{proof}
		
		\begin{claim}
			Next, we claim that $\filrad(\Sp^2_h)\leq\filrad(\Sp^2)$ whenever $h+\varepsilon\leq 2\,\filrad(\Sp^2)$. 
		\end{claim}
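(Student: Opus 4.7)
The plan is to reduce the desired inequality to a spread bound via Proposition \ref{prop:spreadfilrad}, which ensures $\filrad(\Sp^2_h)\leq \tfrac{1}{2}\,\spread(\Sp^2_h)$ because $\Sp^2_h$ is a closed connected metric manifold. It therefore suffices to exhibit a finite subset $A\subset \Sp^2_h$ whose diameter and covering radius are both at most $\arccos(-1/3)=2\,\filrad(\Sp^2)$, under the hypothesis $h+\varepsilon\leq 2\,\filrad(\Sp^2)$.

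The natural candidate is the ``lifted tetrahedron'' $A:=\{(u_1,0),(u_2,0),(u_3,0),(u_4,0)\}$. Using the additive metric on $\Sp^2_h$, its diameter equals $\arccos(-1/3)$, the pairwise distance of the regular geodesic tetrahedron inscribed in $\Sp^2$. The covering estimate splits in two. For points of the form $(x,0)$ with $x\in \Sp^2\setminus\mathrm{Int}(T)$, we invoke the classical computation that the covering radius of $\{u_1,u_2,u_3,u_4\}$ in $\Sp^2$ equals $\arccos(1/3)=\pi-\arccos(-1/3)$, attained at the face centers; this is strictly smaller than $\arccos(-1/3)$. For the ``excursion'' points $(y,s)$ with $y\in T\cup\partial T$ and $s\in(0,h]$, we arrange $T$ so that it sits adjacent to a chosen vertex, say $u_1$, of the face $u_1u_2u_3$ containing it (compatible with $\varepsilon\ll 2\,\filrad(\Sp^2)$). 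Then $d_{\Sp^2}(y,u_1)\leq \varepsilon$ and the additive metric gives
$$d_{\Sp^2_h}\big((y,s),(u_1,0)\big)=d_{\Sp^2}(y,u_1)+s\leq \varepsilon+h\leq 2\,\filrad(\Sp^2).$$
Combining both cases shows $\spread(\Sp^2_h)\leq 2\,\filrad(\Sp^2)$, and Proposition \ref{prop:spreadfilrad} immediately yields $\filrad(\Sp^2_h)\leq \filrad(\Sp^2)$.

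The main obstacle is the second covering case: if $T$ were placed far from all of the $u_i$ (for instance near the centroid of the face $u_1u_2u_3$), the bound would degrade to roughly $\pi-\arccos(-1/3)+h+\varepsilon$ and the given hypothesis would no longer suffice. The crucial point is therefore the geometric observation that the entire cylindrical excursion $\partial T\times[0,h]\cup T\times\{h\}$, which protrudes a height $h$ above the base, can be absorbed into a single metric ball of radius $2\,\filrad(\Sp^2)$ centered at $(u_1,0)$ precisely when $h+\varepsilon\leq 2\,\filrad(\Sp^2)$ and $T$ hugs the vertex $u_1$.
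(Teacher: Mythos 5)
Your proposal is correct and follows the same route as the paper: bound $\spread(\Sp^2_h)$ by $2\,\filrad(\Sp^2)$ via the lifted tetrahedron $A=\{(u_i,0)\}_{i=1}^4$, then invoke Proposition \ref{prop:spreadfilrad}. You have made explicit two points the paper leaves implicit: (i) that base points $(x,0)$ with $x\in\Sp^2\setminus\mathrm{Int}(T)$ lie within the covering radius $\arccos(1/3)<\arccos(-1/3)$ of some $u_i$, a step the paper passes over silently; and (ii) that $T$ must abut a vertex $u_1$ for the estimate $d_{\Sp^2}(x,u_1)\leq\varepsilon$ on the excursion points to hold, something the paper encodes only through Figure \ref{fig:oneparameter} and relies on tacitly in the displayed inequality. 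Your remark that the computation would break down for $T$ near the face centroid is correct and usefully clarifies the role of both the hypothesis $h+\varepsilon\leq 2\,\filrad(\Sp^2)$ and the placement of $T$.
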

		\begin{proof}
			For this we will prove that the spread of $\Sp^2_h$ is bounded by twice the filling radius of $\Sp^2$.
			
			Note that the set $\{(u_1,0),(u_2,0),(u_3,0),(u_4,0)\}\subset\Sp^2_h$ satisfies the following conditions,
			
			\begin{enumerate}
				\item $\diam(\{(u_1,0),(u_2,0),(u_3,0),(u_4,0)\})=2\,\filrad(\Sp^2)$, and
				
				\item $\min_{i=1,2,3,4}d_{\Sp^2_h}((x,s),(u_i,0))\leq 2\,\filrad(\Sp^2)$ for any $(x,s)\in\Sp^2_h$.
			\end{enumerate}
			
			Observe that the second condition holds because if $(x,s)\in\partial T\times [0,h]\cup T\times \{h\}$ (the triangular cylinder with its cap), $d_{\Sp^2_h}((x,s),(u_1,0))=d_{\Sp^2}(x,u_1)+s\leq h+\varepsilon\leq 2\,\filrad(\Sp^2)$.

			Hence, by Proposition \ref{prop:spreadfilrad}, $\filrad(\Sp^2_h)\leq\frac{1}{2}\spread(\Sp^2_h)\leq\filrad(\Sp^2)$.\vspace{\baselineskip}
		\end{proof}
		
		We then  conclude that $\filrad(\Sp^2)=\filrad(\Sp^2_h)$ whenever $h\in [0,2\,\filrad(\Sp^2)-\varepsilon]$.
		
	\end{example}
	
	\begin{Remark}
		Note that the above construction can be generalized to $\Sp^n$ for $n\geq 3$. Also, the small subset $T$ is  need not be a spherical triangle in general, though the argument becomes more involved in that case. For example, one can choose $T$ to be a small geodesic disk on $\Sp^2$.
	\end{Remark}
	
	\paragraph{Rigidity theorems with respect to the bottleneck distance.}
	
	Proposition \ref{prop:LiuWilhelm1} and \ref{prop:LiuWilhelm2}  below provide rigidity results with respect to the bottleneck distance (see Definition \ref{def:bottledis}).

	For the rest of this subsection we will assume that an arbitrary constant $c\geq 1$ is fixed. 
	
	\begin{proposition}\label{prop:LiuWilhelm1}
		Suppose $M$ is a closed connected $n$-dimensional Riemannian manifold with sectional curvature $\mathrm{K}_M\in[1,c]$ and injectivity radius $\mathrm{Inj}(M)\geq\frac{\pi}{2\sqrt{c}}$, then:
		\begin{enumerate}
			\item There exists $\varepsilon_n>0$ such that, if $$d_\mathrm{B}\big(\dgmR_n(M;\mathbb{F}),\dgmR_n(\mathbb{S}^n;\mathbb{F})\big)<\min\{\varepsilon_n,\frac{1}{\pi\sqrt{c}}\cdot\filrad(\mathbb{S}^n)\},$$
			then $M$ is diffeomorphic to $\Sp^n$.
			
			\item If $d_\mathrm{B}\big(\dgmR_n(M;\mathbb{F}),\dgmR_n(\mathbb{S}^n;\mathbb{F})\big)<\min\{2\filrad(\mathbb{S}^n)-\frac{\pi}{3},\frac{1}{\pi\sqrt{c}}\cdot\filrad(\mathbb{S}^n)\}$, then $M$ is a twisted $n$-sphere (and in particular, homotopy equivalent to the $n$-sphere).
		\end{enumerate}
	\end{proposition}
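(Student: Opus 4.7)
The plan is to convert the hypothesized bottleneck closeness into closeness between $\filrad(M)$ and $\filrad(\Sp^n)$, and then to invoke the relevant items of Corollary~\ref{Cor:Wilhelmrigidity}. By Proposition~\ref{prop:filradpersistence}, each of $\dgmR_n(M;\mathbb{F})$ and $\dgmR_n(\Sp^n;\mathbb{F})$ contains a \emph{unique} interval starting at $0$, namely $I_n^M=(0,2\,\filrad(M)]$ and $I_n^{\Sp^n}=(0,\arccos(-1/(n+1))]$. Since $\mathrm{K}_M\geq 1$, item~(1) of Corollary~\ref{Cor:Wilhelmrigidity} already gives $\filrad(M)\leq\filrad(\Sp^n)$, so only a matching-controlled lower bound on $\filrad(M)$ is still needed.

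The central step is to rule out, under the extra assumptions $\mathrm{K}_M\leq c$ and $\mathrm{Inj}(M)\geq\pi/(2\sqrt{c})$, the existence of any interval $(a',b']\in\dgmR_n(M;\mathbb{F})\setminus\{I_n^M\}$ with $a'<\rho$, where $\rho\geq\tfrac{1}{\pi\sqrt{c}}\filrad(\Sp^n)$ is a threshold to be specified. I would deduce this from a quantitative Hausmann-type statement: the curvature upper bound and injectivity-radius hypothesis together with Rauch comparison and uniqueness of short geodesics (as in \cite[Theorem~3.5]{h95}) guarantee $\vr_r(M)\simeq M$ for $r\in(0,\rho)$; hence the persistence module $\PH_n(\vr_r(M);\mathbb{F})$ is constant on that interval, so no interval in $\dgmR_n(M;\mathbb{F})$ other than $I_n^M$ intersects $(0,\rho)$. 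Now fix any partial matching $P$ of cost $\varepsilon$ slightly exceeding $d_\mathrm{B}(\dgmR_n(M;\mathbb{F}),\dgmR_n(\Sp^n;\mathbb{F}))$. Leaving $I_n^{\Sp^n}$ unmatched costs at least $\filrad(\Sp^n)$, which exceeds $\varepsilon$ by hypothesis; so $I_n^{\Sp^n}$ must be paired with some $(a',b']\in\dgmR_n(M;\mathbb{F})$ with $a'<\varepsilon<\rho$. The structural property just established forces $(a',b'] = I_n^M$, yielding $|2\,\filrad(\Sp^n)-2\,\filrad(M)|<\varepsilon$.

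With this inequality in hand, both items follow from Corollary~\ref{Cor:Wilhelmrigidity}. For item~(1), $\varepsilon<\varepsilon_n$ (with $\varepsilon_n$ as in item~(3) of Corollary~\ref{Cor:Wilhelmrigidity}) gives $2\,\filrad(\Sp^n)-2\,\filrad(M)<\varepsilon_n$, and that same item identifies $M$ as diffeomorphic to $\Sp^n$. For item~(2), $\varepsilon<2\,\filrad(\Sp^n)-\tfrac{\pi}{3}$ yields $\mathrm{length}(I_n^M)=2\,\filrad(M)>2\,\filrad(\Sp^n)-\varepsilon>\tfrac{\pi}{3}$, and item~(4) of Corollary~\ref{Cor:Wilhelmrigidity} identifies $M$ as a twisted $n$-sphere.

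The main obstacle is pinning down the threshold $\rho$ in the second paragraph with the correct dependence on $c$. One has to verify that the two-sided curvature pinching together with the injectivity-radius assumption are strong enough that Hausmann's three hypotheses (or the refinement underlying Theorem~\ref{cor:Snfirsttype}) are satisfied uniformly up to radius at least $\tfrac{1}{\pi\sqrt{c}}\filrad(\Sp^n)$; this is precisely where the constant $\pi\sqrt{c}$ appearing in the hypothesized bound on $d_\mathrm{B}$ must be matched against the geometry of $M$, and it is the only genuinely non-formal ingredient in the argument.
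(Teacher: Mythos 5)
Your plan is structurally close to the paper's, but there is a genuine gap at "the central step," and it is not a minor technical detail to be patched: it is a claim that is both stronger than and independent of Liu's theorem, and it is the precise point where the paper takes a different turn.

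You propose to inspect which interval of $\dgmR_n(M;\mathbb{F})$ gets matched to $I_n^{\Sp^n}$, and to conclude it must be $I_n^M$ by first proving a quantitative Hausmann statement for $M$: that $\vr_r(M)\simeq M$ (with identity structure maps) for all $r\in(0,\rho)$ with $\rho\geq\frac{1}{\pi\sqrt{c}}\filrad(\Sp^n)$, so that no other interval of $\dgmR_n(M;\mathbb{F})$ can start below $\rho$. You yourself flag this as the "only genuinely non-formal ingredient"; unfortunately it is not established by the hypotheses. Liu's \cite[Proposition 1.6]{liu} gives the bound $\filrad(M)\geq\frac{1}{\pi\sqrt{c}}\filrad(\Sp^n)$ \emph{on the filling radius only}, i.e.\ on the persistence of the fundamental class under Kuratowski thickening. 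It says nothing about the homotopy type of $\vr_r(M)$ in a range of $r$, nor about the potential existence of other $n$-dimensional classes in $\dgmR_n(M;\mathbb{F})$ with small birth time (a handle attached by a thin neck, as in Figure~\ref{fig:spherehandle}, creates exactly such extraneous classes in lower degrees, and there is no a priori reason this cannot occur in degree $n$ without violating the curvature pinching -- certainly nothing in Liu's argument rules it out). Verifying Hausmann's three conditions for $M$ up to a specified radius would involve a genuinely separate Riemannian-geometric comparison argument, and there is no reason the resulting threshold would coincide with the specific constant $\frac{1}{\pi\sqrt{c}}$, which is an artifact of Liu's filling-radius comparison, not of geodesic-uniqueness or Rauch estimates.

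The paper avoids this issue by running the matching argument from the other side. In Lemma~\ref{lemma:intdisfilradineq} it asks what $I_n^M$ (not $I_n^{\Sp^n}$) is matched to: if unmatched, $\filrad(M)<\varepsilon$; if matched to something other than $I_n^{\Sp^n}$, Theorem~\ref{cor:Snfirsttype} (the sharp Hausmann-type theorem for $\Sp^n$, where that analysis has actually been carried out) forces the partner's left endpoint to be $\geq 2\filrad(\Sp^n)$, giving a cost contradiction; hence $I_n^M$ is matched to $I_n^{\Sp^n}$ or $\filrad(M)<\varepsilon$. Only then is Liu's bound invoked -- to rule out the branch $\filrad(M)<\varepsilon$ -- which is exactly the statement Liu proves and requires no information about other intervals of $\dgmR_n(M;\mathbb{F})$. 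In short: you tried to prove something about the unknown barcode of $M$, whereas the argument only needs something about the known barcode of $\Sp^n$ plus a one-line application of Liu's filling-radius inequality. The remainder of your write-up (obtaining $2|\filrad(M)-\filrad(\Sp^n)|<\varepsilon$ and then applying items (3) and (4) of Corollary~\ref{Cor:Wilhelmrigidity}) is correct once this gap is closed in the paper's way.
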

	
	If $M$ is even dimensional, then we can drop the assumption on the injectivity radius.
	
	\begin{proposition}\label{prop:LiuWilhelm2}
		Suppose $M$ is a closed connected $n$-dimensional Riemannian manifold with sectional curvature $\mathrm{K}_M\in[1,c]$ for even $n$, then:
		\begin{enumerate}
			\item There exists $\varepsilon_n>0$ such that, if $$d_\mathrm{B}\big(\dgmR_n(M;\mathbb{F}),\dgmR_n(\mathbb{S}^n;\mathbb{F})\big)<\min(\varepsilon_n,\frac{1}{4\sqrt{c}}\cdot\filrad(\mathbb{S}^n)),$$
			then $M$ is diffeomorphic to $\Sp^n$.
			
			\item If $d_\mathrm{B}\big(\dgmR_n(M;\mathbb{F}),\dgmR_n(\mathbb{S}^n;\mathbb{F})\big)<\min(2\filrad(\mathbb{S}^n)-\frac{\pi}{3},\frac{1}{4\sqrt{c}}\cdot\filrad(\mathbb{S}^n))$, then $M$ is a twisted $n$-sphere (and in particular, homotopy equivalent to the $n$-sphere).
		\end{enumerate}
	\end{proposition}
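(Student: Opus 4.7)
The plan is to derive Proposition \ref{prop:LiuWilhelm2} directly from Proposition \ref{prop:LiuWilhelm1} by showing that the sectional curvature hypothesis, together with even-dimensionality, automatically forces the injectivity radius bound $\mathrm{Inj}(M)\geq \pi/(2\sqrt{c})$ required by Proposition \ref{prop:LiuWilhelm1}. The bottleneck-distance hypothesis of Proposition \ref{prop:LiuWilhelm2} is then strictly stronger than that of Proposition \ref{prop:LiuWilhelm1} because $\tfrac{1}{4\sqrt{c}}<\tfrac{1}{\pi\sqrt{c}}$.

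First I would invoke the classical even-dimensional Klingenberg injectivity radius estimate: any compact even-dimensional orientable Riemannian manifold $N$ with sectional curvature $0<K_N\leq c$ satisfies $\mathrm{Inj}(N)\geq \pi/\sqrt{c}$. Since $M$ is even-dimensional with $K_M\in[1,c]$, the orientable case gives $\mathrm{Inj}(M)\geq \pi/\sqrt{c}\geq \pi/(2\sqrt{c})$ at once. In the possibly non-orientable case, I would pass to the orientation double cover $\tilde M\to M$, which is even-dimensional and orientable and inherits the same curvature bounds. Klingenberg applied to $\tilde M$ yields $\mathrm{Inj}(\tilde M)\geq \pi/\sqrt{c}$, and a standard argument comparing the injectivity radius on a Riemannian cover to the length of the shortest non-trivial deck transformation produces $\mathrm{Inj}(M)\geq \pi/(2\sqrt{c})$.

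Second I would use that $\pi<4$, whence $\tfrac{1}{4\sqrt{c}}\,\filrad(\mathbb{S}^n)<\tfrac{1}{\pi\sqrt{c}}\,\filrad(\mathbb{S}^n)$, and therefore the bottleneck hypothesis of part (1) of Proposition \ref{prop:LiuWilhelm2} implies the bottleneck hypothesis of part (1) of Proposition \ref{prop:LiuWilhelm1}. Combined with the injectivity radius bound just established, all the assumptions of Proposition \ref{prop:LiuWilhelm1}(1) are met, and its conclusion---that $M$ is diffeomorphic to $\mathbb{S}^n$---is precisely the desired conclusion of Proposition \ref{prop:LiuWilhelm2}(1). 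A verbatim reduction proves part (2) after replacing $\varepsilon_n$ by $2\filrad(\mathbb{S}^n)-\pi/3$.

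The main technical obstacle is the non-orientable case of the injectivity radius estimate. The orientable Klingenberg theorem is classical, but transferring the bound from $\tilde M$ down to $M$ requires showing that every non-trivial deck transformation of $\tilde M\to M$ moves points by at least $\pi/\sqrt{c}$, equivalently that the shortest orientation-reversing closed geodesic in $M$ has length at least $\pi/\sqrt{c}$. Here Synge's theorem is useful: a compact non-orientable even-dimensional manifold with positive sectional curvature has $\pi_1(M)=\mathbb{Z}/2$, so the orientation-reversing loops form a single non-trivial homotopy class, and the usual Morse-theoretic arguments on the based loop space produce the required lower bound. Beyond this careful bookkeeping, the entire proof is a mechanical hypothesis-checking reduction to Proposition \ref{prop:LiuWilhelm1}.
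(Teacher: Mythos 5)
Your proof is correct, but it follows a genuinely different path from the paper's. The paper's proof of Proposition~\ref{prop:LiuWilhelm2} is a one-line citation: it reruns the argument for Proposition~\ref{prop:LiuWilhelm1} but replaces the invocation of Liu's Proposition~1.6 (which, under the injectivity radius hypothesis, gives $\filrad(M)\geq\frac{1}{\pi\sqrt{c}}\filrad(\mathbb{S}^n)$) with Liu's Remark~1.8.(3) (which, for even $n$, gives $\filrad(M)\geq\frac{1}{4\sqrt{c}}\filrad(\mathbb{S}^n)$ with no injectivity radius hypothesis). You instead absorb the even-dimensionality into the injectivity radius hypothesis of Proposition~\ref{prop:LiuWilhelm1} via Klingenberg's estimate, and then reduce. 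The reduction is legitimate: by Klingenberg, a compact orientable even-dimensional manifold with $0<\mathrm{K}\leq c$ has $\mathrm{Inj}\geq\pi/\sqrt{c}$; in the non-orientable case $\pi_1(M)\cong\mathbb{Z}/2$ by Synge, the orientation double cover $\tilde M$ is simply connected with $\mathrm{Inj}(\tilde M)\geq\pi/\sqrt{c}$, and the displacement $\inf_x d(x,\sigma x)$ of the nontrivial deck transformation is bounded below by $\pi/\sqrt{c}$, so $\mathrm{Inj}(M)\geq\pi/(2\sqrt{c})$. Since $\pi<4$ gives $\frac{1}{4\sqrt{c}}<\frac{1}{\pi\sqrt{c}}$, the bottleneck hypothesis of Proposition~\ref{prop:LiuWilhelm2} implies that of Proposition~\ref{prop:LiuWilhelm1}, and you are done. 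Your route actually yields a marginally stronger statement (the constant $\frac{1}{\pi\sqrt{c}}$ can replace $\frac{1}{4\sqrt{c}}$), whereas the paper's route inherits whatever constant Liu's remark provides; on the other hand the paper's route is a clean citation while yours must carry the classical differential-geometry machinery explicitly.

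One point to tighten: your justification of the non-orientable displacement bound via ``Morse-theoretic arguments on the based loop space'' is not quite the right mechanism. The clean argument is: the displacement function $x\mapsto d(x,\sigma x)$ on compact $\tilde M$ attains its minimum $\delta_0$ at some $x_0$, and the first-variation formula forces the minimizing geodesic from $x_0$ to $\sigma(x_0)$ together with its $\sigma$-image to close up smoothly into a closed geodesic of length $2\delta_0$ in $\tilde M$. Klingenberg's lemma, applied at any point of this closed geodesic, shows every closed geodesic in $\tilde M$ has length at least $2\,\mathrm{Inj}(\tilde M)\geq 2\pi/\sqrt{c}$, hence $\delta_0\geq\pi/\sqrt{c}$. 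This is the standard displacement lemma, not loop-space Morse theory; with that correction the proposal stands.
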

	
	\begin{lemma}\label{lemma:intdisfilradineq}
		Let $M$ be a closed connected $n$-dimensional Riemannian manifold. If $$d_\mathrm{B}\big(\dgmR_n(M;\mathbb{F}),\dgmR_n(\mathbb{S}^n;\mathbb{F})\big)<\varepsilon$$
		for some $\varepsilon\in(0,2\filrad(\mathbb{S}^n)]$, then either
		\begin{enumerate}
			\item $\filrad(M;\mathbb{F})<\varepsilon$, or
			\item $2\vert \filrad(M;\mathbb{F})-\filrad(\mathbb{S}^n)\vert<\varepsilon$.
		\end{enumerate}
	\end{lemma}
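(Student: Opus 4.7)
The plan is to argue by contradiction. Assume both conclusions fail simultaneously, i.e., $\filrad(M)\geq \varepsilon$ and $2|\filrad(M)-\filrad(\mathbb{S}^n)|\geq \varepsilon$, and derive a contradiction from the existence of a partial matching $P$ between $\dgmR_n(M;\mathbb{F})$ and $\dgmR_n(\mathbb{S}^n;\mathbb{F})$ of cost strictly less than $\varepsilon$ (which is guaranteed by the assumption $d_\mathrm{B}<\varepsilon$ and Definition \ref{def:bottledis}).

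The two ingredients I will invoke are (i) Proposition \ref{prop:filradpersistence}, which provides the distinguished intervals $I_n^M=(0,2\filrad(M)]\in\dgmR_n(M;\mathbb{F})$ and $I_n^{\mathbb{S}^n}=(0,2\filrad(\mathbb{S}^n)]\in\dgmR_n(\mathbb{S}^n;\mathbb{F})$, each being the \emph{unique} bar in its respective barcode starting at $0$; and (ii) Theorem \ref{cor:Snfirsttype} together with the remark following Corollary \ref{cor:S2secondtype}, which together imply that every interval $(u,v]\in\dgmR_n(\mathbb{S}^n;\mathbb{F})$ distinct from $I_n^{\mathbb{S}^n}$ satisfies $u\geq 2\filrad(\mathbb{S}^n)$ (because $\vr_r(\mathbb{S}^n)\simeq\mathbb{S}^n$ for every $r\in(0,2\filrad(\mathbb{S}^n)]$, so the $n$-th persistent homology has rank one throughout this range and the bar there is exactly $I_n^{\mathbb{S}^n}$).

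Now I analyze how $I_n^M$ is treated by $P$, splitting into three exhaustive cases. If $I_n^M$ is unmatched, then $\mathrm{cost}(P)\geq |0-2\filrad(M)|/2=\filrad(M)\geq \varepsilon$, contradicting $\mathrm{cost}(P)<\varepsilon$. If $I_n^M$ is matched to $I_n^{\mathbb{S}^n}$, then $\mathrm{cost}(P)\geq \|I_n^M-I_n^{\mathbb{S}^n}\|_\infty=2|\filrad(M)-\filrad(\mathbb{S}^n)|\geq \varepsilon$, again a contradiction. Finally, if $I_n^M$ is matched to some $J=(u,v]\in \dgmR_n(\mathbb{S}^n;\mathbb{F})$ with $J\ne I_n^{\mathbb{S}^n}$, then ingredient (ii) gives $u\geq 2\filrad(\mathbb{S}^n)\geq \varepsilon$ (using the standing hypothesis $\varepsilon\leq 2\filrad(\mathbb{S}^n)$), so $\mathrm{cost}(P)\geq \|I_n^M-J\|_\infty\geq |0-u|=u\geq \varepsilon$, a contradiction in each case.

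The conceptual step — the only place any real input beyond the definition of the bottleneck distance enters — is the third case, where I rely on the rigidity property that \emph{no} other bar in $\dgmR_n(\mathbb{S}^n;\mathbb{F})$ can be born before time $2\filrad(\mathbb{S}^n)$. Without this fact (which is precisely the content of Theorem \ref{cor:Snfirsttype} and the remark identifying $I_n^{\mathbb{S}^n}$ with the bar coming from the fundamental class on the entire range $(0,2\filrad(\mathbb{S}^n)]$), a bar $J$ could be born very close to $0$ and be matched cheaply with $I_n^M$, evading the argument. Once this structural input is in hand, the proof reduces to the elementary case analysis above.
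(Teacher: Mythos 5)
Your proof is correct and is essentially the same argument as the paper's: both rely on Proposition \ref{prop:filradpersistence} for the distinguished intervals $I_n^M$ and $I_n^{\Sp^n}$, on Theorem \ref{cor:Snfirsttype} to rule out a cheap matching with a late-born interval in $\dgmR_n(\Sp^n;\mathbb{F})$, and then do the same three-way case analysis on how $I_n^M$ is handled by an optimal partial matching. Your contradiction framing versus the paper's direct case split is a trivial reformulation.
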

	\begin{proof}
		By Proposition \ref{prop:filradpersistence}, we know that $(0,2\filrad(M;\mathbb{F})]\in\dgmR_n(M;\mathbb{F})$. Suppose
		$$d_\mathrm{B}\big(\dgmR_n(M;\mathbb{F}),\dgmR_n(\mathbb{S}^n;\mathbb{F})\big)<\varepsilon$$
		for some $\varepsilon\in(0,2\filrad(\mathbb{S}^n)]$. Then, there is a partial matching (See Definition \ref{def:bottledis}) $R_\varepsilon$ between $\dgmR_n(M;\mathbb{F})$ and $\dgmR_n(\mathbb{S}^n;\mathbb{F})$ such that $\mathrm{cost}(R_\varepsilon)<\varepsilon$. Consider the following two cases:
		\begin{enumerate}
			\item Suppose the interval $(0,2\filrad(M;\mathbb{F})]$ is not matched to any interval in $\dgmR_n(\mathbb{S}^n;\mathbb{F})$. Then,
			$$\filrad(M;\mathbb{F})\leq \mathrm{cost}(R_\varepsilon)<\varepsilon.$$
			
			\item Suppose $(0,2\filrad(M;\mathbb{F})]$ is matched to some interval $(u,v]\in\dgmR_n(\mathbb{S}^n;\mathbb{F})$ in the partial matching $R_\varepsilon$. Then, we claim that $(u,v]=(0,2\filrad(\mathbb{S}^n)]$. Suppose not. Since we know that $\mathrm{VR}_{r}(\mathbb{S}^n)\simeq \mathbb{S}^n$ for any $r\in(0,2\filrad(\mathbb{S}^n)]$ by Theorem \ref{cor:Snfirsttype}, any interval in $\dgmR_n(\mathbb{S}^n;\mathbb{F})$ other than $(0,2\filrad(\Sp^n)]$ must be born after $2\filrad(\mathbb{S}^n)$. In particular, $u\geq 2\filrad(\mathbb{S}^n)$. This implies
			$$2\filrad(\mathbb{S}^n)\leq \vert u-0 \vert\leq \mathrm{cost}(R_\varepsilon)<\varepsilon\leq 2\filrad(\mathbb{S}^n),$$
			which is contradiction. Hence, $(0,2\filrad(M;\mathbb{F})]$ is matched to $(0,2\filrad(\mathbb{S}^n)]$ in the optimal matching. Therefore,
			$$2\vert \filrad(M;\mathbb{F})-\filrad(\mathbb{S}^n)\vert\leq \mathrm{cost}(R_\varepsilon)<\varepsilon.$$
		\end{enumerate}
	\end{proof}
	
	The proof strategy for Propositions \ref{prop:LiuWilhelm1} and \ref{prop:LiuWilhelm2} is to invoke Wilhelm's result \cite[Main Theorem 2]{wilhem} and  Lemma \ref{lemma:intdisfilradineq} above. However, if $\filrad(M)$ were small,  one would not be able to apply Wilhelm's theorem. To avoid that, we will invoke a result due to Liu \cite{liu}.
	
	\begin{proof}[Proof of Proposition \ref{prop:LiuWilhelm1}]
		Since $c\geq 1$, $\frac{1}{\pi\sqrt{c}}\cdot\filrad(\mathbb{S}^n)\leq 2\filrad(\mathbb{S}^n)$.
		\begin{enumerate}
			\item By Corollary \ref{Cor:Wilhelmrigidity}.(3), there is $\varepsilon_n>0$ such that $2\vert \filrad(M;\mathbb{F})-\filrad(\mathbb{S}^n)\vert<\varepsilon_n$ implies $M$ are diffeomorphic to $\mathbb{S}^n$.

			Suppose $d_\mathrm{B}\big(\dgmR_n(M;\mathbb{F}),\dgmR_n(\mathbb{S}^n;\mathbb{F})\big)<\min\{\varepsilon_n,\frac{1}{\pi\sqrt{c}}\cdot\filrad(\mathbb{S}^n)\}$. Then, we have $\filrad(M;\mathbb{F})<\min\{\varepsilon_n,\frac{1}{\pi\sqrt{c}}\cdot\filrad(\mathbb{S}^n)\}$, or $2\vert \filrad(M;\mathbb{F})-\filrad(\mathbb{S}^n)\vert<\min\{\varepsilon_n,\frac{1}{\pi\sqrt{c}}\cdot\filrad(\mathbb{S}^n)\}$ by Lemma \ref{lemma:intdisfilradineq}. However, the first case cannot happen since $\filrad(M;\mathbb{F})\geq\frac{1}{\pi\sqrt{c}}\cdot\filrad(\mathbb{S}^n)$ by \cite[Proofs of Theorem 1.1 and Proposition 1.6]{liu}. Therefore, $2\vert \filrad(M;\mathbb{F})-\filrad(\mathbb{S}^n)\vert<\min\{\varepsilon_n,\frac{1}{\pi\sqrt{c}}\cdot\filrad(\mathbb{S}^n)\}\leq\varepsilon_n$ so that $M$ and $\mathbb{S}^n$ are diffeomorphic.
			
			\item By basically the same argument, $d_\mathrm{B}\big(\dgmR_n(M;\mathbb{F}),\dgmR_n(\mathbb{S}^n;\mathbb{F})\big)<\min\{2\filrad(\mathbb{S}^n)-\frac{\pi}{3},\frac{1}{\pi\sqrt{c}}\cdot\filrad(\mathbb{S}^n)\}$ implies $2\vert \filrad(M;\mathbb{F})-\filrad(\mathbb{S}^n)\vert<2\filrad(\mathbb{S}^n)-\frac{\pi}{3}$. Therefore, we have $\mathrm{length}(I_{n,\mathbb{F}}^M)> \frac{\pi}{3}$ so that $M$ is a twisted $n$-sphere.
		\end{enumerate}
	\end{proof}

	\begin{proof}[Proof of Proposition \ref{prop:LiuWilhelm2}]
		The proof is basically the same as the proof of Proposition \ref{prop:LiuWilhelm1}. The only difference is we have to use \cite[Remark 1.8.(3)]{liu} instead of \cite[Proposition 1.6]{liu}.
	\end{proof}

	\subsection{Stability of the filling radius}
	
	In \cite{liu}, Liu studies the mapping properties of the filling radius. His results can be interpreted as providing certain guarantees for how the  filling radius changes under \emph{multiplicative} distortion of metrics.  Here we study the effect of additive distortion.
	
	\begin{customquestion}{5}\label{question:stab-fr}
		Under suitable restrictions,  there exists a constant $L>0$ such that for all closed connected metric manifolds $M$ and $N$ it holds that 
		\begin{equation}\label{eq:stab-fr}\big|\filrad(M)-\filrad(N)\big| \leq L\cdot \dgh(M,N).\end{equation} 
	\end{customquestion}
	
	This question is whether the filling radius could be stable as a map from the collection of all metric manifolds to the real line. The answer is negative, as the following example proves.
	
	\begin{example}[Counterexample for manifolds with different dimension] \label{ex:diff-dim}
		Fix $\epsilon>0$ and let $M = \mathbb{S}^1$ and $N_\epsilon = \mathbb{S}^1\times (\epsilon \cdot \mathbb{S}^1)$, a thin torus. Then, it is clear that $\dgh(M,N_\epsilon)\leq \epsilon$ whereas $\filrad(\mathbb{S}^1)=\frac{\pi}{3}$ and, by Remark \ref{rem:productfilrad}, $\filrad(N_\epsilon) = \frac{\pi}{3}\epsilon$.  This means that equation (\ref{eq:stab-fr}) cannot hold in general.
	\end{example}
	
	A subsequent possibility is considering only manifolds with the same dimension. The answer in this case is also negative:
	
	\begin{example}[Counterexample for manifolds with the same dimension]\label{ex:same-dim} Let $n\geq 2$ be any integer and $\epsilon,\delta>0$; we assume that $\delta\ll \epsilon$ so that a certain tubular neighborhood construction described below works. Consider $M = \mathbb{S}^n\subset \mathbb{R}^{n+1}$. Endow $\mathbb{S}^n$ with the usual round  Riemannian metric. Let $G_\epsilon$ be a (finite) metric graph embedded in $\mathbb{S}^n$ such that $\dgh(\mathbb{S}^n,G_\epsilon)<\epsilon$; such graphs always exist for compact geodesic spaces \cite[Proposition 7.5.5]{burago2001course}. Now, let $N_{\epsilon,\delta}$ be (a suitably smoothed out version of) the boundary of the $\delta$-tubular neighborhood of $G_\epsilon$ in $\mathbb{R}^{n+1}$. Now, $\dgh(M,N_{\epsilon,\delta})\leq C\cdot (\epsilon +\delta),$ for some constant $C>0$ whose exact value is not relevant. However, $\filrad(M) = \frac{1}{2}\arccos\left(\frac{-1}{n+1}\right)\geq \frac{\pi}{4}$ whereas $\filrad(N_{\epsilon,\delta}) \leq C_n\cdot \delta$ by  inequality (\ref{eq:ineqs-volfr}). This means that equation (\ref{eq:stab-fr}) cannot hold in general, even when the manifolds $M$ and $N$ have the same dimension. 
	\end{example}
	
	We are however able to establish the following:
	\begin{proposition}[Stability of the Filling Radius]\label{prop:stab-same-fund-class}
		Let $M$ be a closed connected $n$-dimensional manifold. Let $d_1$ and $d_2$ be two metrics on $M$ compatible with the manifold topology. Then, 
		$$\big|\filrad(M,d_1)-\filrad(M,d_2)\big|\leq \|d_1-d_2\|_{\infty}.$$
	\end{proposition}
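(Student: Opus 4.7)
Write $\epsilon := \|d_1 - d_2\|_\infty = \sup_{x,y \in M} |d_1(x,y) - d_2(x,y)|$, and let $\kappa_i : M \to L^\infty(M)$ be the Kuratowski embedding associated with $d_i$, i.e.\ $\kappa_i(x) = d_i(x,\cdot)$, for $i=1,2$. The underlying set of functions $L^\infty(M)$ and its sup-norm depend only on the set $M$, not on the metric chosen; the two embeddings differ only in the image. The key pointwise estimate is
\[
\|\kappa_1(x) - \kappa_2(x)\|_\infty \;=\; \sup_{y \in M} |d_1(x,y) - d_2(x,y)| \;\leq\; \epsilon \qquad \text{for every } x \in M.
\]
Denote by $B_r^{(i)} := B_r(\kappa_i(M), L^\infty(M))$ the open $r$-neighborhood of the $i$-th Kuratowski image. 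From the pointwise estimate and the triangle inequality one gets the inclusion $B_r^{(1)} \subseteq B_{r+\epsilon}^{(2)}$ for every $r > 0$.

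By symmetry, it suffices to show $\filrad(M,d_2) \leq \filrad(M,d_1) + \epsilon$. Fix $r > \filrad(M,d_1)$, so that the Kuratowski inclusion $\iota_r^{(1)} : M \hookrightarrow B_r^{(1)}$ annihilates the fundamental class $[M]$ in homology with the appropriate coefficients $G$. Let $j : B_r^{(1)} \hookrightarrow B_{r+\epsilon}^{(2)}$ be the inclusion given by the preceding step. The composition $j \circ \iota_r^{(1)} : M \to B_{r+\epsilon}^{(2)}$ is the map $x \mapsto \kappa_1(x)$ viewed inside $B_{r+\epsilon}^{(2)}$; meanwhile $\iota_{r+\epsilon}^{(2)} : M \to B_{r+\epsilon}^{(2)}$ is the map $x \mapsto \kappa_2(x)$.

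The central step is to build a homotopy between these two maps that stays inside $B_{r+\epsilon}^{(2)}$. The straight-line homotopy
\[
H : M \times [0,1] \longrightarrow L^\infty(M), \qquad H(x,t) \;:=\; (1-t)\,\kappa_2(x) + t\,\kappa_1(x)
\]
works: for each $(x,t)$ one has $\|H(x,t) - \kappa_2(x)\|_\infty = t \,\|\kappa_1(x) - \kappa_2(x)\|_\infty \leq \epsilon < r + \epsilon$, so the whole homotopy lies in $B_\epsilon^{(2)} \subseteq B_{r+\epsilon}^{(2)}$ and is continuous by continuity of addition and scalar multiplication in $L^\infty(M)$. Consequently $j \circ \iota_r^{(1)}$ and $\iota_{r+\epsilon}^{(2)}$ induce the same map on $\mathrm{H}_n(\,\cdot\,;G)$, and therefore
\[
\mathrm{H}_n(\iota_{r+\epsilon}^{(2)};G)([M]) \;=\; j_* \circ \mathrm{H}_n(\iota_r^{(1)};G)([M]) \;=\; j_*(0) \;=\; 0,
\]
which yields $\filrad(M,d_2) \leq r + \epsilon$. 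Letting $r \searrow \filrad(M,d_1)$ concludes. There is no real obstacle here beyond the two observations that the Kuratowski images are $\epsilon$-close pointwise and that the straight-line homotopy never strays more than $\epsilon$ from $\kappa_2(M)$; the rest is functoriality of singular homology.
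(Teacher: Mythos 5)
Your proof is correct and takes essentially the same approach as the paper: both arguments rely on the pointwise Kuratowski estimate $\|\kappa_1(x)-\kappa_2(x)\|_\infty\le\|d_1-d_2\|_\infty$, the resulting neighborhood inclusion $B_r^{(1)}\subseteq B_{r+\epsilon}^{(2)}$, and the straight-line homotopy in $L^\infty(M)$ between the two Kuratowski inclusions, followed by functoriality of homology. The paper actually states and proves the result for the generalized filling radius (arbitrary $k$, $G$, $\omega$) in Proposition \ref{prop:stab-same-fund-class-general} and derives the filling-radius case as a corollary, but the argument is identical to yours.
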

	
	Actually, one can prove a more general result.
	
	\begin{proposition}[Stability of  Generalized Filling Radii]\label{prop:stab-same-fund-class-general}
		Let $M$ be a closed connected manifold. Let $d_1$ and $d_2$ be two metrics on $M$ compatible with the manifold topology. For any integer $k\geq 0$, any abelian group $G$, and any nonzero $\omega\in\mathrm{H}_k(M;G)$,
		$$\big|\filrad_k((M,d_1),G,\omega)-\filrad_k((M,d_2),G,\omega)\big|\leq \|d_1-d_2\|_{\infty}.$$
	\end{proposition}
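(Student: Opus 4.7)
The plan is to reduce the claim to showing one inequality
$$\filrad_k((M,d_2),G,\omega) \leq \filrad_k((M,d_1),G,\omega) + \eta, \quad \eta := \|d_1-d_2\|_\infty,$$
since swapping the roles of $d_1$ and $d_2$ then gives the other direction. The key observation is that for each $i=1,2$, the Kuratowski embedding $\kappa_i\colon (M,d_i)\to L^\infty(M)$, $x\mapsto d_i(x,\cdot)$, lands in the \emph{same} Banach space $L^\infty(M)$ (depending only on $M$ as a set), and the two embeddings are pointwise close: for every $x\in M$,
$$\|\kappa_1(x)-\kappa_2(x)\|_\infty = \sup_{y\in M}|d_1(x,y)-d_2(x,y)| \leq \eta.$$
Writing $B_r^{(i)} := B_r(\kappa_i(M),L^\infty(M))$, this immediately gives the containment $B_r^{(1)} \subseteq B_{r+\eta}^{(2)}$ for every $r>0$.

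Now fix $r>\filrad_k((M,d_1),G,\omega)$, so that $\mathrm{H}_k(\kappa_1 : M\to B_r^{(1)};G)(\omega)=0$. Composing with the inclusion $j\colon B_r^{(1)}\hookrightarrow B_{r+\eta}^{(2)}$ kills $\omega$ in $\mathrm{H}_k(B_{r+\eta}^{(2)};G)$. The main step is then to exhibit a homotopy, inside $B_{r+\eta}^{(2)}$, between the two maps $j\circ \kappa_1$ and $\kappa_2$ from $M$ to $B_{r+\eta}^{(2)}$. I would use the straight-line homotopy
$$H\colon M\times[0,1]\longrightarrow L^\infty(M),\qquad H(x,t) = (1-t)\,\kappa_1(x)+t\,\kappa_2(x),$$
which is continuous in $(x,t)$ because both $\kappa_1,\kappa_2$ are $1$-Lipschitz with respect to the manifold topology and the shift is uniformly bounded by $\eta$. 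The crucial check is that the image lies in $B_{r+\eta}^{(2)}$: for any $(x,t)$,
$$\|H(x,t)-\kappa_2(x)\|_\infty = (1-t)\,\|\kappa_1(x)-\kappa_2(x)\|_\infty \leq \eta < r+\eta,$$
so $H(x,t)\in B_{r+\eta}(\kappa_2(x),L^\infty(M))\subseteq B_{r+\eta}^{(2)}$.

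Given such a homotopy, the induced maps on $\mathrm{H}_k(\,\cdot\,;G)$ coincide, hence $\mathrm{H}_k(\kappa_2:M\to B_{r+\eta}^{(2)};G)(\omega) = \mathrm{H}_k(j\circ \kappa_1)(\omega)=0$. This yields $\filrad_k((M,d_2),G,\omega)\leq r+\eta$; letting $r\searrow \filrad_k((M,d_1),G,\omega)$ gives the desired inequality. The only subtle point, and what I would regard as the main sanity check rather than a genuine obstacle, is that the neighborhoods $B_r^{(i)}$ really are taken inside the \emph{same} ambient space $L^\infty(M)$ (so that the inclusion $j$ and the linear interpolation $H$ make literal sense); everything else — the pointwise bound, the containment of balls, and the Lipschitz continuity of the embeddings and the homotopy — is immediate from the definition of $\eta$ and of the Kuratowski functor.
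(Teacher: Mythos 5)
Your proof is correct and follows essentially the same route as the paper's: both work with the two Kuratowski embeddings into the same ambient $L^\infty(M)$, both observe the pointwise bound $\|\kappa_1(x)-\kappa_2(x)\|_\infty\leq\|d_1-d_2\|_\infty$ yielding the containment of neighborhoods, and both use the straight-line homotopy $(1-t)\kappa_1(x)+t\kappa_2(x)$ to intertwine the two inclusions inside $B_{r+\eta}$ and transfer the vanishing of $\omega$. No substantive differences.
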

	
	\begin{Remark}
		Proposition \ref{prop:stab-same-fund-class} is just a special case of Proposition \ref{prop:stab-same-fund-class-general} when $k=n$, $\omega=[M]$, and $G=\mathbb{Z}$ or $\mathbb{Z}_2$.
	\end{Remark}
	
	\begin{proof}[Proof of Proposition \ref{prop:stab-same-fund-class-general}]
		Let $i_1:(M,d_1)\longrightarrow L^\infty(M)$ (resp. $i_2:(M,d_2)\longrightarrow L^\infty(M)$) be the Kuratowski embedding of $M$ into $L^\infty(M)$ with respect to $d_1$ (resp. $d_2$). For arbitrary $r>0$, let $i_1^r:(M,d_1)\longrightarrow B_r(i_1(M),L^\infty(M))$ and $i_2^r:(M,d_2)\longrightarrow B_r(i_2(M),L^\infty(M))$ denote the corresponding isometric embeddings induced from $i_1$ and $i_2$. For arbitrary $r>0$, observe that
		$$B_r(i_1(M),L^\infty(M))\subseteq B_{r+\Vert d_1-d_2 \Vert_\infty}(i_2(M),L^\infty(M))$$
		because of the following argument: for arbitrary $f\in B_r(i_1(M),L^\infty(M))$, there exist $x\in M$ such that $\Vert f- d_1(x,\cdot) \Vert_\infty<r$. Hence,
		\begin{align*}
			\Vert f- d_2(x,\cdot) \Vert_\infty\leq\Vert f- d_1(x,\cdot) \Vert_\infty+\Vert d_1(x,\cdot)- d_2(x,\cdot) \Vert_\infty<r+\Vert d_1-d_2\Vert_\infty.
		\end{align*}
		In the similar way, one can prove that $B_r(i_2(M),L^\infty(M))\subseteq B_{r+\Vert d_1-d_2 \Vert_\infty}(i_1(M),L^\infty(M))$.

		Now, fix arbitrary  $r>\filrad_k((M,d_1),G,\omega)$ and let 
		$$j^r:B_r(i_1(M),L^\infty(M)) \longhookrightarrow B_{r+\Vert d_1-d_2 \Vert_\infty}(i_2(M),L^\infty(M))$$
		be the canonical inclusion map. The maps defined above fit into the following (in general non-commutative diagram):

		\[\begin{tikzcd}
			M \arrow{r}{i_1^r} \arrow[swap]{dr}{i_2^{r+\|d_1-d_2\|_\infty}} & B_r(i_1(M),L^\infty(M)) \arrow{d}{j^r} \\
			& B_{r+\|d_1-d_2\|_\infty}(i_2(M),L^\infty(M))
		\end{tikzcd}
		\]

		Next, we  prove that $j^r\circ i_1^r$ is homotopic to $i_2^{r+\Vert d_1-d_2 \Vert_\infty}$ via the linear interpolation
		\begin{align*}
			H:M\times[0,1]&\longrightarrow B_{r+\Vert d_1-d_2 \Vert_\infty}(i_2(M),L^\infty(M))\\
			(x,t)&\longmapsto (1-t)\,d_1(x,\cdot)+t\,d_2(x,\cdot).
		\end{align*}
		The only subtle point is whether this linear interpolation is always  contained in the thickening $B_{r+\Vert d_1-d_2 \Vert_\infty}(i_2(M),L^\infty(M))$ or not. To ascertain this, for arbitrary $x\in M$ and $t\in[0,1]$, compute the distance between  $H(x,t)$ and $d_2(x,\cdot)$ as:
		\begin{align*}
			\Vert (1-t)\,d_1(x,\cdot)+t\,d_2(x,\cdot)-d_2(x,\cdot) \Vert_\infty&=\vert 1-t \vert\cdot\Vert d_1(x,\cdot)-d_2(x\cdot) \Vert_\infty\\
			&\leq \Vert d_1-d_2 \Vert_\infty<r+\Vert d_1-d_2 \Vert_\infty.
		\end{align*}
		Hence, $H$ is a well-defined homotopy between $j^r\circ i_1^r$ and $i_2^{r+\Vert d_1-d_2 \Vert_\infty}$. Therefore, we have $$(j^r)_*\circ (i_1^r)_*=\big(i_2^{r+\Vert d_1-d_2 \Vert_\infty}\big)_*.$$
		From the assumption on $r$, we know that $(i_1^r)_*(\omega)=0$. By the above, this implies that
		$$\big(i_2^{r+\Vert d_1-d_2 \Vert_\infty}\big)_*(\omega)=0.$$
		Hence,  we have
		$$\filrad_k((M,d_2),G,\omega)\leq\filrad_k((M,d_1),G,\omega)+\Vert d_1-d_2 \Vert_\infty$$
		since $r>\filrad_k((M,d_1),G,\omega)$ is arbitrary. In the similar way, one can also show
		$$\filrad_k((M,d_1),G,\omega)\leq\filrad_k((M,d_2),G,\omega)+\Vert d_1-d_2 \Vert_\infty.$$
		This concludes the proof.
	\end{proof}

	\subsubsection{The strong filling radius}
	Examples \ref{ex:diff-dim} and \ref{ex:same-dim} suggest that the setting of Proposition \ref{prop:stab-same-fund-class} might be  a suitable one for studying stability of the filling radius. 
	
	In this section we consider a certain strong variant of the filling radius satisfying equation (\ref{eq:stab-fr}) which arises from the notion of persistent homology.  
	
	\begin{definition}[Strong filling radius]
		Given a closed connected $n$-dimensional metric manifold $M$ and a field $\mathbb{F}$, we define the \emph{strong filling radius} $\sfilrad(M;\mathbb{F})$ as  half the length of the largest interval in the $n$-th Vietoris-Rips persistence barcode of $M$:
		$$\sfilrad(M;\mathbb{F}):=\frac{1}{2}\max\big\{\mathrm{length}(I),\,I\in\dgmR_n(M;\mathbb{F})\big\}.$$
	\end{definition}
	
	The reader familiar with concepts from applied algebraic topology will have noticed that the definition of strong filling radius of an $n$-dimensional metric manifold  coincides with (one half of) the \emph{maximal persistence} of its associated Vietoris-Rips persistence module. In fact, for each nonnegative integer $k$ one can define the $k$-dimensional version of strong filling radius of any compact metric space $X$.
	
	\begin{definition}[Generalized strong filling radius]
		Given a compact metric space $X$, a field $\mathbb{F}$, and a nonnegative integer $k\geq 0$, we define the \emph{generalized strong filling radius} $\sfilrad_k(X;\mathbb{F})$ as  half the length of the largest interval in the $k$-th Vietoris-Rips persistence barcode of $X$:
		$$\sfilrad_k(X;\mathbb{F}):=\frac{1}{2}\max\big\{\mathrm{length}(I),\,I\in\dgmR_k(X;\mathbb{F})\big\}.$$
	\end{definition}

	\begin{Remark} \begin{itemize}
			\item When $X$ is isometric to a metric manifold $M$ with dimension $n$, we of course have $\sfilrad_n(X) = \sfilrad(M)$. 
			\item In general, $\sfilrad_k$ and $\filrad_k$ are obviously related in the sense that $\sfilrad_k(X;\mathbb{F})\geq \sup\{\filrad_k(X,\mathbb{F},\omega);\,\omega\in \mathrm{H}_k(X;\mathbb{F})\}$ for any nonnegative integer $k$.
		\end{itemize}
	\end{Remark}
	
	The following remark follows directly from Proposition \ref{prop:spread} and Proposition \ref{prop:filradpersistence}.
	\begin{Remark}\label{rem:ineqfilsfil}
		$\filrad(M;\mathbb{F}) \leq \sfilrad(M;\mathbb{F}) \leq \frac{1}{2}\spread(M)$ for any field $\mathbb{F}$ when $M$ is orientable, and $\mathbb{F}=\mathbb{Z}_2$ when $M$ is non-orientable.
	\end{Remark}
	
	\begin{definition}[$\mathbb{F}$-regularly filled manifold]
		Let $(M,d_M)$ be a closed connected metric manifold and $\mathbb{F}$ be a field. We say that $M$ is $\mathbb{F}$-\emph{regularly filled} if $\filrad(M;\mathbb{F})=\sfilrad(M;\mathbb{F})$.
	\end{definition}
	
	\begin{Remark}
		For each $n \geq 1$, the $n$-dimensional unit sphere with the intrinsic metric is $\mathbb{F}$-regularly filled for any field $\mathbb{F}$. 
		Indeed, by \cite[Proof of Theorem 2]{k83}, $\filrad(\mathbb{S}^n)=\frac{1}{2}\spread(\mathbb{S}^n)$. Hence, the result follows from Remark \ref{rem:ineqfilsfil}.
	\end{Remark}
	
	As a consequence of the remark above and Remark \ref{rem:spread-sn} we have
	
	\begin{corollary} For all integers $n\geq 1$,
		$\filrad(\mathbb{S}^n) = \sfilrad(\mathbb{S}^n;\mathbb{F}) = \frac{1}{2}\arccos\left(\frac{-1}{n+1}\right).$
	\end{corollary}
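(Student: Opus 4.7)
The plan is to chain together three results already in place, since the corollary is essentially the collapse of a sandwich inequality.

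First, I invoke the double inequality
\[
\filrad(\mathbb{S}^n) \;\leq\; \sfilrad(\mathbb{S}^n;\mathbb{F}) \;\leq\; \tfrac{1}{2}\spread(\mathbb{S}^n)
\]
from Remark \ref{rem:ineqfilsfil}, which is valid for every field $\mathbb{F}$ because $\mathbb{S}^n$ is orientable. The left inequality comes from Proposition \ref{prop:filradpersistence} (the interval $(0, 2\filrad(\mathbb{S}^n)]$ lies in $\dgmR_n(\mathbb{S}^n;\mathbb{F})$, so its length cannot exceed the maximum), and the right inequality comes from Proposition \ref{prop:spread} applied to $\mathbb{S}^n$.

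Next, I quote the equality $\filrad(\mathbb{S}^n) = \tfrac{1}{2}\spread(\mathbb{S}^n)$ established by Katz in the proof of Theorem 2 of \cite{k83} (this is the preceding remark before the corollary). This equality forces both inequalities above to become equalities, and in particular yields
\[
\sfilrad(\mathbb{S}^n;\mathbb{F}) \;=\; \filrad(\mathbb{S}^n) \;=\; \tfrac{1}{2}\spread(\mathbb{S}^n).
\]

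Finally, I substitute the explicit value $\spread(\mathbb{S}^n) = \arccos\!\left(\tfrac{-1}{n+1}\right)$ recorded in Remark \ref{rem:spread-sn} (also due to Katz \cite{k83}) to conclude
\[
\filrad(\mathbb{S}^n) \;=\; \sfilrad(\mathbb{S}^n;\mathbb{F}) \;=\; \tfrac{1}{2}\arccos\!\left(\tfrac{-1}{n+1}\right).
\]
There is no real obstacle here: every ingredient is either cited from \cite{k83} or already proved in the paper, so this is a purely expository assembly step rather than a substantive argument.
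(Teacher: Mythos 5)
Your proof is correct and follows essentially the same route as the paper: the remark immediately preceding the corollary establishes $\filrad(\mathbb{S}^n)=\sfilrad(\mathbb{S}^n;\mathbb{F})$ by combining Katz's equality $\filrad(\mathbb{S}^n)=\tfrac{1}{2}\spread(\mathbb{S}^n)$ with the sandwich inequality of Remark~\ref{rem:ineqfilsfil}, and then Remark~\ref{rem:spread-sn} supplies the explicit value. You have simply unpacked the same chain of citations a bit more explicitly.
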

	
	There exist, however, non-regularly filled metric manifolds. We present two examples: the first one arises from our study of the K\"unneth formula in Section \ref{sec:vrapp} whereas the second one is a direct construction. Both examples make use of results from \cite{adams} about homotopy types of Vietoris-Rips complexes of $\mathbb{S}^1$.
	
	\begin{example}[A non-regularly filled metric manifold]
		Let $r>1$ and $X$ be the $\ell^\infty$-product $ \mathbb{S}^1 \times \mathbb{S}^1 \times (r \cdot \mathbb{S}^1)$. By Remark \ref{rem:productfilrad}, $\filrad(X)=\filrad(\mathbb{S}^1)=\frac{2 \pi}{3}.$ By Example \ref{example:torusbarcode}, $\dgmR_3(X;\mathbb{F})$ contains the interval $(\frac{2\pi r}{3}, \frac{4\pi r}{5}],$ which has length $\frac{2\pi r}{15}$. Hence, if $r>5$,  $X$ is not $\mathbb{F}$-regularly filled.
	\end{example}
	
	\begin{example}[A non-regularly filled Riemannian manifold]
		Take any  embedding of $\mathbb{S}^1$ into $\mathbb{R}^4$ and let $\epsilon>0$ be small. Consider the boundary $C_\epsilon$ of the $\epsilon$- tubular neighborhood around $\mathbb{S}^1$. This will be a $3$-dimensional submanifold of $\mathbb{R}^4$. As a submanifold it inherits the ambient inner product and $C_\epsilon$ can be regarded as a Riemannian manifold in itself. Then, as a metric space, with the geodesic distance, $C_\epsilon$ will be $\epsilon$-close to $\mathbb{S}^1$ (with geodesic distance) in Gromov-Hausdorff sense. Because we know that for   $r\in  (\frac{2\pi}{3}, \frac{4\pi}{5}]$, $\vr_r(\mathbb{S}^1)\simeq \mathbb{S}^3$, and because of the Gromov-Hausdorff stability of barcodes (cf. Theorem \ref{thm:stab-barcodes}), it must be that $\dgmR_3(C_\epsilon;\mathbb{F})$ contains an interval $I$ which itself contains $(\frac{2\pi}{3}+\epsilon, \frac{4\pi}{5}-\epsilon)$. This latter interval is non-empty whenever $\epsilon>0$ is small enough so that $\sfilrad(C_\epsilon;\mathbb{F})\approx \frac{2\pi}{15} -2\epsilon$. However, $\filrad(C_\epsilon)\approx \epsilon$.
	\end{example}
	
	By invoking the relationship between the Vietoris-Rips persistent homology and the strong filling radius, one can verify that the strong filling radii of two $n$-dimensional metric manifolds $M$ and $N$ are close if these two manifolds are similar in the Gromov-Hausdorff distance sense. 
	
	\begin{proposition}\label{prop:stability}
		Let $X$ and $Y$ be compact metric spaces . Then, for any integer $k\geq 0$,
		$$|\sfilrad_k(X;\mathbb{F})-\sfilrad_k(Y;\mathbb{F})| \leq 2 \, \dgh(X,Y).$$
	\end{proposition}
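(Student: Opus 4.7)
The plan is to reduce the statement to bottleneck stability and a careful analysis of what a bounded-cost partial matching does to the \emph{longest} bar in a barcode. Write $L_X := \sup\{\mathrm{length}(I) : I \in \dgmR_k(X;\mathbb{F})\}$ and similarly $L_Y$, so that $\sfilrad_k(X;\mathbb{F}) = \tfrac{1}{2}L_X$ and it suffices to show $|L_X - L_Y| \leq 4\,\dgh(X,Y)$.

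First, since $X$ and $Y$ are compact (hence totally bounded), by the remark following Definition~\ref{def:int-dist} the persistence modules $\PH_k(\vr_*(X);\mathbb{F})$ and $\PH_k(\vr_*(Y);\mathbb{F})$ are q-tame, so that Theorem~\ref{thm:isometry} applies and gives
$$d_\mathrm{B}(\dgmR_k(X;\mathbb{F}),\dgmR_k(Y;\mathbb{F})) = d_\mathrm{I}(\PH_k(\vr_*(X);\mathbb{F}),\PH_k(\vr_*(Y);\mathbb{F})) \leq 2\,\dgh(X,Y),$$
where the last inequality is Theorem~\ref{thm:stab-barcodes}. So it suffices to prove $|L_X - L_Y| \leq 2\,d_\mathrm{B}(\dgmR_k(X;\mathbb{F}),\dgmR_k(Y;\mathbb{F}))$.

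Fix any $\epsilon > d_\mathrm{B}(\dgmR_k(X;\mathbb{F}),\dgmR_k(Y;\mathbb{F}))$; by definition of bottleneck distance there exists a partial matching $P$ between the barcodes with $\mathrm{cost}(P) < \epsilon$. Pick a bar $I \in \dgmR_k(X;\mathbb{F})$ with $\mathrm{length}(I)$ arbitrarily close to $L_X$ (possible since $L_X$ is a supremum). Now split into two cases. If $I$ is unmatched by $P$, then the cost formula (Definition~\ref{def:bottledis}) forces $\mathrm{length}(I)/2 < \epsilon$, whence $L_X \leq 2\epsilon \leq L_Y + 2\epsilon$. Otherwise $I$ is matched to some $I' = \langle a',b'\rangle \in \dgmR_k(Y;\mathbb{F})$ with $\|I - I'\|_\infty < \epsilon$; writing $I = \langle a,b\rangle$ and using $|a-a'|, |b-b'| < \epsilon$ yields $\mathrm{length}(I) \leq \mathrm{length}(I') + 2\epsilon \leq L_Y + 2\epsilon$. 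Either way $\mathrm{length}(I) \leq L_Y + 2\epsilon$, and taking the sup over such $I$ gives $L_X \leq L_Y + 2\epsilon$. Swapping the roles of $X$ and $Y$ (using the same matching viewed from the other side) and letting $\epsilon \downarrow d_\mathrm{B}$ we obtain
$$|L_X - L_Y| \leq 2\,d_\mathrm{B}(\dgmR_k(X;\mathbb{F}),\dgmR_k(Y;\mathbb{F})) \leq 4\,\dgh(X,Y),$$
which is exactly the desired bound after dividing by two.

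The argument is essentially routine once the right two black boxes (q-tameness of Vietoris-Rips for totally bounded spaces, plus the isometry theorem) are in place; the only mildly delicate point is the bookkeeping with unmatched bars, since the maximum bar length could in principle be achieved by a bar that is not matched at all. Handling that case via the ``$|a-b|/2$'' term in the cost is the crux of the proof, and it is precisely the reason the constant comes out to $2$ (rather than $1$) in the final inequality.
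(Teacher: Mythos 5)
Your argument is correct. The paper's proof is shorter: it invokes Remark~\ref{rem:stab-gh} ($d_\mathrm{I}\le 2\,\dgh$), then applies the triangle inequality for the interleaving distance to compare each module against the zero module $0_*$, and finally uses Example~\ref{ex:int-zero}, which identifies $d_\mathrm{I}(V_*,0_*)$ with half the length of the longest bar. You instead pass through the isometry theorem to work with the bottleneck distance and carry out the same triangle-inequality reasoning ``by hand'' at the level of partial matchings. The two routes are conceptually the same (your Case~1 / Case~2 analysis is precisely the unwinding of the facts that $d_\mathrm{B}(M,\emptyset)=\tfrac{1}{2}L_M$ and that $d_\mathrm{B}$ satisfies the triangle inequality); yours is more verbose but also more self-contained, and has the small advantage of not leaning on Example~\ref{ex:int-zero}, which the paper states only for finite-dimensional persistence modules rather than for the q-tame modules actually arising here. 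One minor gain from the paper's route is that it avoids invoking q-tameness at all in this step, since the argument never leaves the interleaving side.
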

	\begin{proof}
		By Remark \ref{rem:stab-gh} one has
		\begin{align*}
			2\dgh(X,Y)&\geq d_\mathrm{I}\big(\dgmR_k(X;\mathbb{F}),\dgmR_k(Y;\mathbb{F})\big)\\
			&\geq\left| d_\mathrm{I}\big(\dgmR_k(X;\mathbb{F}),0_*\big)- d_\mathrm{I}\big(\dgmR_k(Y;\mathbb{F}),0_*\big)\right|,
		\end{align*}
		where the last inequality follows from the triangle inequality for the interleaving distance. The conclusion now follows from Example \ref{ex:int-zero}. 
	\end{proof}
	
	\begin{Remark}
		Albeit for the notation $\sfilrad_k$, the above stability result should be well known to readers familiar with applied algebraic topology concepts -- we state and prove it here however to provide some background for those readers who are not.
	\end{Remark}


	\appendix
	\section{Appendix}
	\subsection{Proof of Proposition \ref{prop:Katzgeoprop}}\label{app:proof-propo-katz}
	
	\restatepropKatzgeoprop*
	
	\begin{proof}
		\begin{enumerate}
			\item[(1)] The first claim  trivially follows from the definition of $\gamma_K$ and that of the $\ell^\infty$-norm.
			
			\item[(2)] For the second claim, observe that it is enough to show
			$$\Vert\gamma_K(f,g,s)-\gamma_K(f,g,t)\Vert_\infty\leq (t-s)\cdot \Vert f-g \Vert_\infty$$
			for any $f,g\in L^\infty(X)$ and $0\leq s\leq t\leq 1$.

			Fix arbitrary $x\in X$. Without loss of generality, one can assume that $f(x)\geq g(x)$. Then,
			$$\vert \gamma_K(f,g,s)(x)-\gamma_K(f,g,t)(x) \vert=\max\{f(x)-s\Vert f-g \Vert_\infty,g(x)\}-\max\{f(x)-t\Vert f-g \Vert_\infty,g(x)\}.$$
			Observe that, if $s\in \left[0,\frac{f(x)-g(x)}{\Vert f-g \Vert_\infty}\right]$,
			$$\max\{f(x)-s\Vert f-g \Vert_\infty,g(x)\}=f(x)-s\Vert f-g \Vert_\infty.$$
			Hence,
			\begin{align*}
				\vert \gamma_K(f,g,s)(x)-\gamma_K(f,g,t)(x) \vert&=(f(x)-s\Vert f-g \Vert_\infty)-\max\{f(x)-t\Vert f-g \Vert_\infty,g(x)\}\\
				&\leq (f(x)-s\Vert f-g \Vert_\infty)-(f(x)-t\Vert f-g \Vert_\infty)\\
				&=(t-s)\Vert f-g \Vert_\infty.
			\end{align*}
			
			Also, if $s\in \left[\frac{f(x)-g(x)}{\Vert f-g \Vert_\infty},1\right]$,
			
			$$\max\{f(x)-s\Vert f-g \Vert_\infty,g(x)\}=\max\{f(x)-t\Vert f-g \Vert_\infty,g(x)\}=g(x)$$
			
			so that $\vert \gamma_K(f,g,s)(x)-\gamma_K(f,g,t)(x) \vert=0\leq (t-s)\Vert f-g \Vert_\infty.$
			
			Since $x$ is arbitrary, we obtain
			$\Vert\gamma_K(f,g,s)-\gamma_K(f,g,t)\Vert_\infty\leq (t-s)\cdot \Vert f-g \Vert_\infty$.
			
			\item[(3)] Fix arbitrary $x\in X$. We will prove that
			$$\vert\gamma_K(f,g,t)(x)-\gamma_K(h,g,t)(x)\vert\leq 2\Vert f-h\Vert_\infty.$$
			Unfortunately, we have to do tedious case-by-case analysis.
			
			\begin{enumerate}
				\item If $f(x)\geq g(x)$ and $h(x)\geq g(x)$: Observe that, for $t\in\left[0,\frac{f(x)-g(x)}{\Vert f-g\Vert_\infty}\right]$,
				$$\gamma_K(f,g,t)(x)=f(x)-t\Vert f-g\Vert_\infty.$$
				Hence,
				\begin{align*}
					\gamma_K(f,g,t)(x)-\gamma_K(h,g,t)(x)&=(f(x)-t\Vert f-g\Vert_\infty)-\max\{h(x)-t\Vert h-g\Vert_\infty,g(x)\}\\
					&\leq (f(x)-t\Vert f-g\Vert_\infty)-(h(x)-t\Vert h-g\Vert_\infty)\\
					&=f(x)-h(x)-t(\Vert f-g\Vert_\infty-\Vert h-g\Vert_\infty)\\
					&\leq \vert f(x)-h(x) \vert+t\vert\Vert f-g\Vert_\infty-\Vert h-g\Vert_\infty\vert\\
					&\leq 2\Vert f-h\Vert_\infty.
				\end{align*}
				
				Now, for $t\in\left[\frac{f(x)-g(x)}{\Vert f-g\Vert_\infty},1\right]$, $\gamma_K(f,g,t)(x)=g(x)$. Hence,
				\begin{align*}
					\gamma_K(f,g,t)(x)-\gamma_K(h,g,t)(x)&=g(x)-\max\{h(x)-t\Vert h-g\Vert_\infty,g(x)\}\\
					&\leq g(x)-g(x)=0\leq  2\Vert f-h\Vert_\infty.
				\end{align*}
				In the similar way, one can also obtain
				$$\gamma_K(h,g,t)(x)-\gamma_K(f,g,t)(x)\leq 2\Vert f-h\Vert_\infty$$
				for any $t\in[0,1]$. Hence,
				$$\vert\gamma_K(f,g,t)(x)-\gamma_K(h,g,t)(x)\vert\leq 2\Vert f-h\Vert_\infty,$$
				as we wanted.
				
				\item If $f(x)\leq g(x)$ and $h(x)\leq g(x)$: This case is similar to the previous one so we omit it.
				
				\item If $f(x)\geq g(x)$ and $h(x)\leq g(x)$: Observe that
				$$\gamma_K(f,g,t)(x),\gamma_K(h,g,t)(x)\in[h(x),f(x)].$$
				Therefore,
				\begin{align*}
					\vert\gamma_K(f,g,t)(x)-\gamma_K(h,g,t)(x)\vert&\leq f(x)-h(x)\\
					&\leq\Vert f-h\Vert_\infty\leq 2\Vert f-h\Vert_\infty.
				\end{align*}
				
				\item If $f(x)\leq g(x)$ and $h(x)\geq g(x)$: Similar to the previous case.
			\end{enumerate}
			Since $x$ is arbitrary, we finally have
			$$\Vert\gamma_K(f,g,t)-\gamma_K(h,g,t)\Vert_\infty\leq 2\Vert f-h\Vert_\infty.$$
			
			\item[(4)] Fix arbitrary $x\in X$. We will prove that
			$$\vert\gamma_K(f,g,t)(x)-\gamma_K(f,h,t)(x)\vert\leq \Vert g-h\Vert_\infty.$$
			Let's do case-by-case analysis.
			
			\begin{enumerate}
				\item If $f(x)\geq g(x)$ and $f(x)\geq h(x)$: Observe that, for $t\in\left[0,\frac{f(x)-g(x)}{\Vert f-g\Vert_\infty}\right]$,
				$$\gamma_K(f,g,t)(x)=f(x)-t\Vert f-g\Vert_\infty.$$
				Hence,
				\begin{align*}
					\gamma_K(f,g,t)(x)-\gamma_K(f,h,t)(x)&=(f(x)-t\Vert f-g\Vert_\infty)-\max\{f(x)-t\Vert f-h\Vert_\infty,h(x)\}\\
					&\leq (f(x)-t\Vert f-g\Vert_\infty)-(f(x)-t\Vert f-h\Vert_\infty)\\
					&=t(\Vert f-h\Vert_\infty-\Vert f-g\Vert_\infty)\\
					&\leq \Vert g-h\Vert_\infty.
				\end{align*}
				
				Now, for $t\in\left[\frac{f(x)-g(x)}{\Vert f-g\Vert_\infty},1\right]$, $\gamma_K(f,g,t)(x)=g(x)$. Hence,
				\begin{align*}
					\gamma_K(f,g,t)(x)-\gamma_K(f,h,t)(x)&=g(x)-\max\{f(x)-t\Vert f-h\Vert_\infty,h(x)\}\\
					&\leq g(x)-h(x)\leq\Vert g-h\Vert_\infty.
				\end{align*}
				
				In the similar way, one can also obtain
				$$\gamma_K(f,h,t)(x)-\gamma_K(f,g,t)(x)\leq \Vert g-h\Vert_\infty$$
				for any $t\in[0,1]$. Hence,
				$$\vert\gamma_K(f,g,t)(x)-\gamma_K(f,h,t)(x)\vert\leq \Vert g-h\Vert_\infty.$$
				as we wanted.
				
				\item If $f(x)\leq g(x)$ and $f(x)\leq h(x)$: Similar to the previous case.
				
				\item If $f(x)\geq g(x)$ and $f(x)\leq h(x)$: Observe that
				$$\gamma_K(f,g,t)(x),\gamma_K(f,h,t)(x)\in[g(x),h(x)].$$
				Therefore,
				\begin{align*}
					\vert\gamma_K(f,g,t)(x)-\gamma_K(f,h,t)(x)\vert&\leq h(x)-g(x)\\
					&\leq\Vert g-h\Vert_\infty.
				\end{align*}
				
				\item If $f(x)\leq g(x)$ and $f(x)\geq h(x)$: Similar to the previous case.
			\end{enumerate}
			Since $x$ is arbitrary, we finally have
			$$\Vert\gamma_K(f,g,t)-\gamma_K(f,h,t)\Vert_\infty\leq \Vert g-h \Vert_\infty.$$
			
			\item[(5)] Fix arbitrary $x\in X$. Suppose $f(x)\geq g(x)$. Then,
			$$\phi(x)=\max\{f(x)-s\Vert f-g\Vert_\infty,g(x)\}$$
			and
			$$\psi(x)=\max\{f(x)-t\Vert f-g\Vert_\infty,g(x)\}.$$
			By  property (1) of this proposition, we know $\Vert \phi-\psi\Vert_\infty=(t-s)\Vert f-g \Vert_\infty$. Moreover, since $\phi(x)\geq\psi(x)$, we have
			$$\gamma_K(\phi,\psi,\lambda)(x)=\max\{\phi(x)-\lambda\Vert\phi-\psi\Vert_\infty,\psi(x)\}.$$
			Observe that,
			\begin{align*}
				\phi(x)-\lambda\Vert\phi-\psi\Vert_\infty&=\max\{f(x)-s\Vert f-g\Vert_\infty,g(x)\}-\lambda(t-s)\Vert f-g \Vert_\infty\\
				&=\max\{f(x)-((1-\lambda)s+\lambda t)\Vert f-g\Vert_\infty,g(x)-\lambda(t-s)\Vert f-g \Vert_\infty\}.
			\end{align*}
			Since $f(x)-((1-\lambda)s+\lambda t)\Vert f-g\Vert_\infty\geq f(x)-t\Vert f-g\Vert_\infty$ and $g(x)\geq g(x)-\lambda(t-s)\Vert f-g \Vert_\infty$, we finally have
			\begin{align*}
				\gamma_K(\phi,\psi,\lambda)(x)&=\max\{f(x)-((1-\lambda)s+\lambda t)\Vert f-g\Vert_\infty,g(x)\}\\
				&=\gamma_K(f,g,(1-\lambda)s+\lambda t)(x).
			\end{align*}
			One can do the similar proof for the case when $f(x)\leq g(x)$. Hence, we have
			$$\gamma_K(\phi,\psi,\lambda)=\gamma_K(f,g,(1-\lambda)s+\lambda t).$$
			
			\item[(6)] Consider the special case $s=0$ and $t=1$. Fix arbitrary $x\in X$. Observe that $\gamma_K(f,g,r)(x)$ is between $f(x)$ and $g(x)$. Therefore,
			$$\vert \gamma_K(f,g,r)(x)-h(x) \vert\leq\max\{\vert f(x)-h(x)\vert,\vert g(x)-h(x)\vert\}\leq\max\{\Vert f-h \Vert_\infty,\Vert g-h\Vert_\infty\}.$$
			Since $x$ is arbitrary, we have
			$$\Vert \gamma_K(f,g,r)-h \Vert_\infty\leq\max\{\Vert f-h \Vert_\infty,\Vert g-h\Vert_\infty\}=\max\{\Vert \gamma_K(f,g,0)-h \Vert_\infty,\Vert \gamma_K(f,g,1)-h\Vert_\infty\}.$$
			For general $s$ and $t$, combine this result with   property (5).
		\end{enumerate}
	\end{proof}
	
	\subsection{Some properties of $\gamma_K$}\label{sec:app:katz-bicombing}
	\begin{example}
		In this example, we will see that some of the nice properties considered in Lemma \ref{lemma:geobicomb} do not hold for the Katz geodesic bicombing (see Definition \ref{def:Katzgeobicomb}). Consider $X$ to be a two point space. Then $L^\infty(X)$ can be regarded as $\mathbb{R}^2$ with the $\ell^\infty$-norm.
		\begin{enumerate}
			\item \emph{Katz's geodesic bicombing is not conical in general:} We will find $f,f',g,g'\in L^\infty(X)$ and $t\in [0,1]$ such that
			$$\Vert \gamma_K(f,g,t)-\gamma_K(f',g',t)\Vert_\infty > (1-t)\Vert f-f'\Vert_\infty+t\Vert g-g'\Vert_\infty.$$
			Let $f=f'=(0,0)$, $g=(c,d)$ for some $0<c<d$, and $g=(c',d')$ for some $0<c'<d'$. Then,
			$$\gamma_K(0,g,t)=\begin{cases}(t\,d,t\,d)&\text{if }t\in[0,\frac{c}{d}]\\(c,t\,d)&\text{if }t\in[\frac{c}{d},1]\end{cases}$$
			and we have the similar expression for $\gamma_K(0,g',t)$. Hence, for any $t\in \left[\max\{\frac{c}{d},\frac{c'}{d'}\},1\right)$,
			$$\gamma_K(0,g,t)=(c,t\,d)$$
			and
			$$\gamma_K(0,g',t)=(c',t\,d').$$
			Therefore, if we choose $\vert c-c' \vert> \vert d-d' \vert$ (for example, $(c,d)=(4,5)$ and $(c',d')=(1,5)$), then we have
			$$\Vert \gamma_K(0,g,t)-\gamma_K(0,g',t) \Vert_\infty=\vert c-c' \vert$$
			and
			$$t\Vert g-g' \Vert_\infty=t\vert c-c'\vert.$$
			Hence, we have
			$$\Vert \gamma_K(0,g,t)-\gamma_K(0,g',t) \Vert_\infty>t\Vert g-g' \Vert_\infty.$$
			So, Katz geodesic bicombing is not conical. In particular, this implies it is not convex.
			
			\item \emph{Katz geodesic bicombing is not reversible in general:} We will find $f,g\in L^\infty(X)$ and $t\in [0,1]$ such that
			$$\gamma_K(f,g,t)\neq\gamma_K(g,f,1-t).$$
			Let $f=(0,0)$ and $g=(c,d)$ for some $0<c<d$ as before. Then,
			$$\gamma_K(0,g,t)=\begin{cases}(td,td)&\text{if }t\in[0,\frac{c}{d}]\\(c,td)&\text{if }t\in[\frac{c}{d},1]\end{cases}$$
			and
			$$\gamma_K(g,0,t)=\begin{cases}(c-td,(1-t)d)&\text{if }t\in[0,\frac{c}{d}]\\(0,(1-t)d)&\text{if }t\in[\frac{c}{d},1]\end{cases}$$
			Now, if we choose $t\in\left(0,\min\{\frac{c}{d},1-\frac{c}{d}\}\right]$, we have
			$\gamma_K(0,g,t)=(td,td)$ and $\gamma_K(g,0,1-t)=(0,td)$. Hence,
			$$\gamma_K(0,g,t)\neq\gamma_K(g,0,1-t).$$
			
		\end{enumerate}
	\end{example}
	
	\subsection{Proof of the generalized functorial nerve lemma}\label{sec:app:functnerve}
	The goal of this appendix is proving the following \emph{Generalized Functorial Nerve Lemma}.
	
	\functnerve*
	
	Our proof of  Theorem \ref{thm:fuctnerve} invokes many elements of \cite[Section 4.G]{h01} which provides a proof of the classical nerve lemma.
	
	\begin{definition}
		Let $X$ be a topological space and let $\mathcal{U}=\{U_\alpha\}_{\alpha\in\Lambda}$ be an open covering of $X$ ($\Lambda$ is an arbitrary index set). For any $\sigma=\{\alpha_0,\dots,\alpha_n\}\in\mathrm{N}\,\mathcal{U}$, the nonempty intersection $U_{\alpha_0}\cap\dots \cap\,U_{\alpha_n}$ is denoted by $U_\sigma$. Note that, when $\sigma=\{\alpha_0,\dots,\alpha_n\}\in\mathrm{N}\,\mathcal{U}$ and $\sigma'$ is an $n'$-face of $\sigma$, there are the canonical inclusions
		$$i_{\sigma\sigma'}:U_\sigma\longhookrightarrow U_{\sigma'}$$
		and
		$$j_{\sigma\sigma'}:\Delta_{n'}\longhookrightarrow \Delta_n.$$

		Then, the \emph{complex of spaces} corresponding to $\mathcal{U}$ consists of the set of all $U_\sigma$ and the set of all canonical inclusions $i_{\sigma\sigma'}$ over all possible $\sigma'\subseteq\sigma\in\mathrm{N}\,\mathcal{U}$.

		The \emph{realization} of this complex of spaces, denoted by $\Delta X_\mathcal{U}$, is defined in the following way:
		$$\Delta X_\mathcal{U}:=\bigsqcup_{\sigma=\{\alpha_0,\dots,\alpha_n\}\in\mathrm{N}\,\mathcal{U}} U_\sigma\times\Delta_n\slash\sim$$
		where $(x,p)\sim (x',p')$ whenever $i_{\sigma\sigma'}(x)=x'$ and $j_{\sigma\sigma'}(p')=p$.
	\end{definition}
	
	We need the following slight improvements of Propositions 4G.1 and 4G.2 of \cite{h01}. These improved claims are actually implicit in their respective proofs -- see \cite[pp. 458-459]{h01}.
	
	\begin{proposition}[Proposition 4G.1 of \cite{h01}] \label{prop:4g1}
		Let $X$ be a topological space and $\mathcal{U}=\{U_\alpha\}_{\alpha\in\Lambda}$ be a good open cover of $X$ (every nonempty finite intersection is contractible). Then,
		\begin{align*}
			f:\Delta X_\mathcal{U}&\longrightarrow \mathrm{N}\,\mathcal{U}\\
			(x,p)&\longmapsto p\,\,\text{if}\,\,(x,p)\in U_\sigma\times\Delta_n
		\end{align*}
		is a homotopy equivalence between $\Delta X_\mathcal{U}$ and $\mathrm{N}\,\mathcal{U}$.
	\end{proposition}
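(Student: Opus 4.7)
The plan is to adapt Hatcher's skeletal induction argument used in the classical nerve lemma, verifying that each step carries over to the paracompact setting with arbitrary index sets. The map $f$ is well-defined and continuous from the definition of $\Delta X_\mathcal{U}$ as a quotient, since the value of $f$ on $(x,p) \in U_\sigma \times \Delta_n$ is just $p$ and this respects the identifications $(x,p) \sim (x',p')$ imposed by the face inclusions $i_{\sigma\sigma'}$ and $j_{\sigma\sigma'}$.

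First I would filter $\mathrm{N}\,\mathcal{U}$ by its simplicial skeleta $\{\mathrm{N}\,\mathcal{U}^{(n)}\}_{n\geq 0}$ and correspondingly set $\Delta X_\mathcal{U}^{(n)} := f^{-1}(\mathrm{N}\,\mathcal{U}^{(n)})$. The key structural observation is that $\Delta X_\mathcal{U}^{(n)}$ is obtained from $\Delta X_\mathcal{U}^{(n-1)}$ as the pushout of the inclusion $\bigsqcup_\sigma U_\sigma \times \partial \Delta_n \hookrightarrow \bigsqcup_\sigma U_\sigma \times \Delta_n$ along the natural attaching map into $\Delta X_\mathcal{U}^{(n-1)}$, where $\sigma$ ranges over the $n$-simplices of $\mathrm{N}\,\mathcal{U}$. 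Similarly $\mathrm{N}\,\mathcal{U}^{(n)}$ is obtained from $\mathrm{N}\,\mathcal{U}^{(n-1)}$ via the standard CW pushout $\bigsqcup_\sigma \partial \Delta_n \hookrightarrow \bigsqcup_\sigma \Delta_n$, and the map $f$ intertwines these two pushout squares in the obvious way.

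Next I would prove by induction on $n$ that $f_n := f|_{\Delta X_\mathcal{U}^{(n)}}$ is a homotopy equivalence. The base case $n=0$ is immediate: $\Delta X_\mathcal{U}^{(0)} = \bigsqcup_\alpha U_\alpha$ with each $U_\alpha = U_{\{\alpha\}}$ contractible by the good cover hypothesis, so $f_0$ collapses each contractible component to its corresponding vertex. For the inductive step, contractibility of $U_\sigma$ makes the projections $U_\sigma \times \Delta_n \to \Delta_n$ and $U_\sigma \times \partial \Delta_n \to \partial \Delta_n$ into homotopy equivalences. Since $\partial \Delta_n \hookrightarrow \Delta_n$ and $U_\sigma \times \partial \Delta_n \hookrightarrow U_\sigma \times \Delta_n$ are closed cofibrations, the gluing lemma for pushouts along cofibrations, combined with the inductive hypothesis on $f_{n-1}$ and the projection being a homotopy equivalence on the newly attached cells, yields that $f_n$ is itself a homotopy equivalence.

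Finally I would pass to the colimit $n \to \infty$. Since both filtrations consist of closed cofibrations, the induced map on colimits is again a homotopy equivalence, giving $f : \Delta X_\mathcal{U} \to \mathrm{N}\,\mathcal{U}$ as desired. The main technical obstacle will be justifying the gluing lemma and the colimit step in the paracompact-but-not-CW setting: the issue is that $\Delta X_\mathcal{U}$ is only an honest CW complex when each $U_\sigma$ is, but the pushout/colimit structure involves standard closed cofibrations in the $\Delta_n$-factor, so the cofibration arguments apply fiberwise over each simplex. For arbitrary (possibly uncountable) $\Lambda$, no additional argument is needed because every point of $\mathrm{N}\,\mathcal{U}$ lies in some finite-dimensional skeleton, and the skeletal induction is local in $n$; paracompactness of $X$ ensures that $\Delta X_\mathcal{U}$ carries the expected colimit topology so these pointwise arguments globalize.
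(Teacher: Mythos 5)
Your proof is correct, but it follows a genuinely different path from the one the paper takes. The paper's proof invokes the mapping-cylinder construction underlying Hatcher's \cite[Proposition 4G.1]{h01}: it forms the intermediate realization $\Delta MX_\mathcal{U}$ from the complex of mapping cylinders $M\phi_\sigma$ ($\phi_\sigma:U_\sigma\to\{*\}$) and observes that both $\Delta X_\mathcal{U}$ and $\mathrm{N}\,\mathcal{U}$ sit inside $\Delta MX_\mathcal{U}$ as deformation retracts, with $f$ arising as the composite of one inclusion with the other retraction. You instead bypass $\Delta MX_\mathcal{U}$ entirely and run a skeletal induction on $\mathrm{N}\,\mathcal{U}$, identifying each stage $\Delta X_\mathcal{U}^{(n)}$ as a pushout of $\bigsqcup_\sigma U_\sigma\times\partial\Delta_n\hookrightarrow\bigsqcup_\sigma U_\sigma\times\Delta_n$ over $\Delta X_\mathcal{U}^{(n-1)}$ and applying the gluing lemma for pushouts along cofibrations, then passing to the colimit. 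Both routes are sound; the mapping-cylinder argument delivers more (it exhibits a common space retracting onto both $\Delta X_\mathcal{U}$ and $\mathrm{N}\,\mathcal{U}$), while the gluing-lemma argument is more transparent in how it obtains $f$ as the induced map of pushout colimits and avoids ever having to construct a deformation retraction by hand. For the functoriality needed in Theorem \ref{thm:fuctnerve} either description of $f$ is adequate, since both realize $f$ as the map $(x,p)\mapsto p$.

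One minor correction: your closing appeal to paracompactness of $X$ to justify the colimit step is a red herring. This proposition, as stated, makes no paracompactness hypothesis (paracompactness enters only in Proposition \ref{prop:4g2} to build a partition of unity). The topology of $\Delta X_\mathcal{U}$ is by definition the quotient topology from $\bigsqcup_\sigma U_\sigma\times\Delta_{\dim\sigma}$, so the colimit topology over the skeletal filtration is automatic, and the closed cofibrations $\Delta X_\mathcal{U}^{(n-1)}\hookrightarrow\Delta X_\mathcal{U}^{(n)}$ (which are pushouts of closed cofibrations) give precisely the hypotheses needed to pass to the colimit without any further input. Apart from that stray remark, the argument is complete.
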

	\begin{proof}
		First of all, since $U_\sigma$ is contractible whenever $\sigma\in\mathrm{N}\,\mathcal{U}$, note that there is a homotopy equivalence $\phi_\sigma:U_\sigma\rightarrow \{*\}$ for any $\sigma\in\mathrm{N}\,\mathcal{U}$.

		The homotopy equivalence between $\Delta X_\mathcal{U}$ and $\mathrm{N}\,\mathcal{U}$ is just a special case of \cite[Proposition 4G.1]{h01}. The choice of $f$ is implicit in the fact that both of $\Delta X_\mathcal{U}$ and $\mathrm{N}\,\mathcal{U}$ are deformation retracts of $\Delta MX_\mathcal{U}$ where $\Delta MX_\mathcal{U}$ is the realization of the complex of spaces consisting of the mapping cylinders $M\phi_\sigma$ for any $\sigma\in\mathrm{N}\,\mathcal{U}$ and the canonical inclusions between them.
	\end{proof}

	\begin{proposition}[Proposition 4G.2 of \cite{h01}] \label{prop:4g2}
		Let $X$ be a paracompact space, $\mathcal{U}=\{U_\alpha\}_{\alpha\in\Lambda}$ be an open cover of $X$, and $\{\psi_\alpha\}_{\alpha\in\Lambda}$ be a partition of unity subordinate to the cover $\mathcal{U}$ (it must exist since $X$ is paracompact).

		Then,
		\begin{align*}
			g:X&\longrightarrow\Delta X_\mathcal{U}\\
			x&\longmapsto (x,(\psi_\alpha(x))_{\alpha\in\Lambda})
		\end{align*}
		is a homotopy equivalence between $X$ and $\Delta X_\mathcal{U}$.
	\end{proposition}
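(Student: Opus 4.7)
The plan is to exhibit an explicit retraction $r:\Delta X_\mathcal{U}\to X$ satisfying $r\circ g=\mathrm{id}_X$, and then produce a straight-line homotopy from $g\circ r$ to $\mathrm{id}_{\Delta X_\mathcal{U}}$ using the partition of unity $\{\psi_\alpha\}$.

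First I would define $r$ by sending the class of $(x,p)\in U_\sigma\times\Delta_n$ to $x\in X$. Since the identifications $(x,p)\sim(i_{\sigma\sigma'}(x),j_{\sigma\sigma'}(p'))$ are induced by the set-theoretic inclusions $i_{\sigma\sigma'}:U_\sigma\hookrightarrow U_{\sigma'}$ (which preserve the underlying point of $X$), $r$ is well defined; it is continuous because it is induced by the projections $U_\sigma\times\Delta_n\to U_\sigma\hookrightarrow X$ and $\Delta X_\mathcal{U}$ carries the quotient topology. The equality $r\circ g=\mathrm{id}_X$ is then immediate from the definition of $g$.

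For the homotopy, given $(x,p)\in U_\sigma\times\Delta_n$ with $\sigma=\{\alpha_0,\dots,\alpha_n\}$, I set $\tau(x):=\{\alpha\in\Lambda:\psi_\alpha(x)>0\}$, a finite set by the local finiteness of the partition of unity. Because $x\in U_\sigma\cap U_{\tau(x)}$, the union $\rho(x):=\sigma\cup\tau(x)$ is a simplex of $\mathrm{N}\,\mathcal{U}$; via the face inclusion $\Delta_n\hookrightarrow\Delta_{|\rho(x)|-1}$ both $p$ (extended by zeros on the new vertices) and $q(x):=(\psi_\alpha(x))_{\alpha\in\rho(x)}$ live in the same ambient simplex, so I would define
$$H\bigl([(x,p)],\,t\bigr):=\bigl[\bigl(x,\,(1-t)\,p+t\,q(x)\bigr)\bigr]\in U_{\rho(x)}\times\Delta_{|\rho(x)|-1}\big/\!\sim.$$
At $t=0$ the pair $(x,p)$ equals (under the face inclusion) its extension, so $H(-,0)=\mathrm{id}_{\Delta X_\mathcal{U}}$; at $t=1$ the point $(x,q(x))$ is identified, via the face inclusion $\tau(x)\subseteq\rho(x)$, with $g(x)=(x,(\psi_\alpha(x))_{\alpha\in\tau(x)})$, so $H(-,1)=g\circ r$.

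The main obstacle will be verifying that $H$ descends to a well-defined continuous map on $\Delta X_\mathcal{U}\times[0,1]$. Well-definedness reduces to the fact that the affine face inclusions $j_{\sigma\sigma'}$ commute with straight-line interpolation, so replacing $(x,p)$ by an equivalent representative gives an equivalent output. For continuity, it is convenient to embed $\Delta X_\mathcal{U}\hookrightarrow X\times\Delta_\Lambda$, where $\Delta_\Lambda$ is the simplex on $\Lambda$ with the weak topology, so that $H$ becomes $(x,p,t)\mapsto(x,(1-t)p+t\,q(x))$ with $q(x):=(\psi_\alpha(x))_{\alpha\in\Lambda}$. The local finiteness of $\{\psi_\alpha\}$ gives, around any $x_0\in X$, a neighborhood on which only finitely many $\psi_\alpha$ are non-zero, so on each such neighborhood $H$ is a straight-line homotopy inside a fixed finite-dimensional simplex and is manifestly continuous. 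Combined with $H_0=\mathrm{id}$ and $H_1=g\circ r$, this proves that $r$ and $g$ are mutually inverse homotopy equivalences.
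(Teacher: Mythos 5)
Your overall route is exactly the one the paper intends: the paper's ``proof'' of this proposition is a single line deferring to Hatcher's Proposition 4G.2, and your argument is a faithful reconstruction of Hatcher's --- the projection $r([(x,p)])=x$ as a retraction with $r\circ g=\mathrm{id}_X$, and the straight-line homotopy in barycentric coordinates from $\mathrm{id}_{\Delta X_\mathcal{U}}$ to $g\circ r$. The well-definedness of $H$ (compatibility with the face identifications, which are affine) and the identification of the endpoints $H_0$ and $H_1$ are all correct as you state them.

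The one step that does not hold up as written is the continuity of $H$. The natural map $\Delta X_\mathcal{U}\to X\times\Delta_\Lambda$ is injective and continuous, but it is \emph{not} a topological embedding: the quotient topology on $\Delta X_\mathcal{U}$ is in general strictly finer than the topology induced from the coordinates. (Already for $X=\R$, $U_1=(-\infty,1)$, $U_2=(-1,\infty)$, the points $\left[\left(1-\tfrac1n,\left(\tfrac1n,1-\tfrac1n\right)\right)\right]$ converge to $[(1,e_2)]$ in $X\times\Delta_1$ but not in $\Delta X_\mathcal{U}$: the saturated open set whose trace on $(U_1\cap U_2)\times\Delta_1$ is $\{(x,(s,1-s)):x>0,\ s<1-x\}$ and whose trace on $U_2\times\{e_2\}$ is $\{x>0\}$ separates the limit from the sequence.) Hence verifying continuity of the coordinate formula $(x,p,t)\mapsto(x,(1-t)p+tq(x))$ into $X\times\Delta_\Lambda$ does not by itself give continuity into $\Delta X_\mathcal{U}$. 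The repair is the standard local one: check continuity of the lift of $H$ on each piece $U_\sigma\times\Delta_n\times[0,1]$. Given $x_0\in U_\sigma$, let $F_0:=\{\alpha:\ x_0\in\mathrm{supp}\,\psi_\alpha\}$; this is finite, there is a neighborhood $V$ of $x_0$ with $\{\alpha:\psi_\alpha(x)>0\}\subseteq F_0$ for all $x\in V$, and --- crucially, by subordination, since supports are closed --- $x_0\in U_\alpha$ for every $\alpha\in F_0$, so $G:=\sigma\cup F_0$ is a simplex of $\mathrm{N}\,\mathcal{U}$ and $U_G$ is a neighborhood of $x_0$. On $(V\cap U_G)\times\Delta_n\times[0,1]$ the formula defines a continuous map into the single piece $U_G\times\Delta_{|G|-1}$, and composing with the (automatically continuous) canonical map $U_G\times\Delta_{|G|-1}\to\Delta X_\mathcal{U}$ yields $H$ there. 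With this substitution your proof is complete, and the same local argument also establishes the continuity of $g$ itself.
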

	\begin{proof}
		The proof is the same as \cite[Proposition 4G.2]{h01}.
	\end{proof}

	\begin{lemma}\label{lemma:realizationnervecommute}
		Let $X$ and $Y$ be two topological spaces, $\rho:X\longrightarrow Y$ be a continuous map, $\mathcal{U}=\{U_\alpha\}_{\alpha\in A}$ and $\mathcal{V}=\{V_\beta\}_{\beta\in B}$ be good open covers (every nonempty finite intersection is contractible) of $X$ and $Y$ respectively, based on arbitrary index sets $A$ and $B$, and $\pi:A\longrightarrow B$ be a map such that
		$$\rho(U_\alpha)\subseteq V_{\pi(\alpha)}$$
		for any $\alpha\in A$.

		Let $\mathrm{N}\,\mathcal{U}$ and $\mathrm{N}\,\mathcal{V}$ be the nerves of $\mathcal{U}$ and $\mathcal{V}$, respectively. Observe that $\pi$ induces the canonical simplicial map $\bar{\pi}:\mathrm{N}\,\mathcal{U}\longrightarrow\mathrm{N}\,\mathcal{V}$ since $U_{\alpha_0}\cap\dots \cap\,U_{\alpha_n}\neq\emptyset$ implies $V_{\pi(\alpha_0)}\cap\dots \cap\,V_{\pi(\alpha_n)}\neq\emptyset$, and $\rho$ induces the canonical map $\bar{\rho}:\Delta X_\mathcal{U}\longrightarrow\Delta Y_\mathcal{V}$ mapping $(x,p)$ to $(\rho(x),\bar{\pi}(p))$.

		Then, there exist homotopy equivalences $f:\Delta X_\mathcal{U}\longrightarrow \mathrm{N}\,\mathcal{U}$ and $f':\Delta Y_\mathcal{V}\longrightarrow \mathrm{N}\,\mathcal{V}$ which commute with $\bar{\rho}$ and $\bar{\pi}$:
		$$\begin{tikzcd}
			\Delta X_\mathcal{U} \arrow[r, "f"] \arrow[d, "\bar{\rho}"] & \mathrm{N}\,\mathcal{U}\arrow[d, "\bar{\pi}"] \\
			\Delta Y_{\mathcal{V}} \arrow[r, "f'"] & \mathrm{N}\,\mathcal{V}
		\end{tikzcd}$$
	\end{lemma}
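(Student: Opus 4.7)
The plan is to show that the same canonical projection used in Proposition \ref{prop:4g1} is already natural enough to make the diagram commute on the nose, so that no further adjustment is needed. Concretely, I would define $f:\Delta X_\mathcal{U}\longrightarrow\mathrm{N}\,\mathcal{U}$ by sending the equivalence class of $(x,p)\in U_\sigma\times\Delta_n$ to $p$, viewed as a point in the geometric realization of the simplex $\sigma\in\mathrm{N}\,\mathcal{U}$, and define $f':\Delta Y_\mathcal{V}\longrightarrow\mathrm{N}\,\mathcal{V}$ by the analogous formula. By Proposition \ref{prop:4g1}, both $f$ and $f'$ are homotopy equivalences. So the only thing that remains is to verify that the square commutes and that $\bar{\rho}$ is well-defined.

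For well-definedness of $\bar{\rho}$, I would check that whenever $(x,p)\sim (x',p')$ in $\Delta X_\mathcal{U}$ (so that $\sigma'\subseteq\sigma$, $i_{\sigma\sigma'}(x)=x'$, and $j_{\sigma\sigma'}(p')=p$), the image pairs $(\rho(x),\bar{\pi}(p))$ and $(\rho(x'),\bar{\pi}(p'))$ are equivalent in $\Delta Y_\mathcal{V}$. This uses two facts: (i) the hypothesis $\rho(U_\alpha)\subseteq V_{\pi(\alpha)}$ for all $\alpha$ implies $\rho(U_\sigma)\subseteq V_{\pi(\sigma)}$ for every $\sigma\in\mathrm{N}\,\mathcal{U}$, so $\rho$ respects the set-theoretic inclusions between the $U$'s and $V$'s; and (ii) $\bar{\pi}$ is simplicial, so it commutes with the face inclusions $j_{\sigma\sigma'}$, with the image face being $\pi(\sigma')\subseteq\pi(\sigma)$ (allowing for possible collapses when $\pi$ is not injective on $\sigma$).

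For the commutativity of the square, a direct computation suffices: given $[(x,p)]\in \Delta X_\mathcal{U}$ with $(x,p)\in U_\sigma\times\Delta_n$, one has
\[
f'\bigl(\bar{\rho}([(x,p)])\bigr)=f'\bigl([(\rho(x),\bar{\pi}(p))]\bigr)=\bar{\pi}(p)=\bar{\pi}\bigl(f([(x,p)])\bigr),
\]
so the two composites agree pointwise, not merely up to homotopy.

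The only genuinely nontrivial bookkeeping is the possibility that $\pi$ collapses vertices of some $\sigma$, in which case $\bar{\pi}$ restricted to the closed simplex $\Delta_n$ factors through a lower-dimensional face $\Delta_{n'}$; I would handle this by interpreting $\bar{\pi}(p)$ as an element of this face (equivalently, as a point in the realization of $\mathrm{N}\,\mathcal{V}$), which is exactly the target needed to make $f'(\rho(x),\bar{\pi}(p))$ land in the right simplex. No obstacle arises because both $f$, $f'$ and the simplicial structure of $\bar{\pi}$ descend cleanly to geometric realizations.
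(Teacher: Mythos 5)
Your proposal is correct and follows essentially the same route as the paper's proof: define $f$ and $f'$ as the canonical projections $[(x,p)]\mapsto p$ from Proposition~\ref{prop:4g1} and verify strict commutativity of the square by the same pointwise computation. The added remarks on well-definedness of $\bar{\rho}$ and on possible vertex collapses of $\pi$ are sensible bookkeeping that the paper leaves implicit, but they do not change the argument.
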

	\begin{proof}
		By Proposition \ref{prop:4g1},
		\begin{align*}
			f:\Delta X_\mathcal{U}&\longrightarrow \mathrm{N}\,\mathcal{U}\\
			(x,p)&\longmapsto p\,\text{if}\,(x,p)\in U_\sigma\times\Delta_n
		\end{align*}
		is a homotopy equivalence between $\Delta X_\mathcal{U}$ and $\mathrm{N}\,\mathcal{U}$. Also,
		\begin{align*}
			f':\Delta Y_\mathcal{V}&\longrightarrow \mathrm{N}\,\mathcal{V}\\
			(y,q)&\longmapsto q\,\text{if}\,(y,q)\in V_\sigma\times\Delta_n
		\end{align*}
		is a homotopy equivalence between $\Delta Y_\mathcal{V}$ and $\mathrm{N}\,\mathcal{V}$.

		To check the commutativity of the diagram, fix arbitrary $(x,p)\in U_\sigma\times\Delta_n\subseteq\Delta X_\mathcal{U}$. Then,
		\begin{align*}
			\bar{\pi}\circ f(x,p)=\bar{\pi}(p)=f'(\rho(x),\bar{\pi}(p))=f'\circ \bar{\rho}(x,p).
		\end{align*}
		Hence, $\bar{\pi}\circ f=f'\circ \bar{\rho}$ as we wanted.
	\end{proof}

	\begin{lemma}\label{lemma:Xrealizationcommute}
		Let $X$ and $Y$ be two paracompact spaces, $\rho:X\longrightarrow Y$ be a continuous map, $\mathcal{U}=\{U_\alpha\}_{\alpha\in A}$ and $\mathcal{V}=\{V_\beta\}_{\beta\in B}$ be open covers of $X$ and $Y$ respectively, based on arbitrary index sets $A$ and $B$, and $\pi:A\longrightarrow B$ be a map such that
		$$\rho(U_\alpha)\subseteq V_{\pi(\alpha)}$$
		for any $\alpha\in A$.

		Let $\mathrm{N}\,\mathcal{U}$ and $\mathrm{N}\,\mathcal{V}$ be the nerves of $\mathcal{U}$ and $\mathcal{V}$, respectively. Observe that $\pi$ induces the canonical simplicial map $\bar{\pi}:\mathrm{N}\,\mathcal{U}\longrightarrow\mathrm{N}\,\mathcal{V}$ since $U_{\alpha_0}\cap\dots \cap\,U_{\alpha_n}\neq\emptyset$ implies $V_{\pi(\alpha_0)}\cap\dots \cap\,V_{\pi(\alpha_n)}\neq\emptyset$, and $\rho$ induces the canonical map $\bar{\rho}:\Delta X_\mathcal{U}\longrightarrow\Delta Y_\mathcal{V}$ mapping $(x,p)$ to $(\rho(x),\bar{\pi}(p))$.

		Then, there exist homotopy equivalences $g:X\longrightarrow\Delta X_\mathcal{U}$ and $g':Y\longrightarrow\Delta Y_\mathcal{V}$ which commute with $\rho$ and $\bar{\rho}$ up to homotopy:
		$$\begin{tikzcd}
			X \arrow[r, "g"] \arrow[d, "\rho"] & \Delta X_\mathcal{U}\arrow[d, "\bar{\rho}"] \\
			Y \arrow[r, "g'"] & \Delta Y_\mathcal{V}
		\end{tikzcd}$$
	\end{lemma}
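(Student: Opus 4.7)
My plan is to build on Proposition~\ref{prop:4g2} by choosing partitions of unity on $X$ and $Y$ \emph{independently}, and then exhibiting a straight-line homotopy between $\bar{\rho}\circ g$ and $g'\circ\rho$ at the level of barycentric coordinates inside $\Delta Y_{\mathcal{V}}$. First I would invoke paracompactness of $X$ and $Y$ to pick partitions of unity $\{\psi_\alpha\}_{\alpha\in A}$ subordinate to $\mathcal{U}$ and $\{\phi_\beta\}_{\beta\in B}$ subordinate to $\mathcal{V}$, and let $g(x)=(x,(\psi_\alpha(x))_{\alpha})$ and $g'(y)=(y,(\phi_\beta(y))_{\beta})$ be the homotopy equivalences produced by Proposition~\ref{prop:4g2}.

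Next I would compute both compositions explicitly. The map $\bar{\rho}\circ g$ sends $x$ to $(\rho(x),q^X(x))$, where $q^X(x)\in\mathrm{N}\,\mathcal{V}$ has $\beta$-coordinate $\sum_{\alpha\in\pi^{-1}(\beta)}\psi_\alpha(x)$, i.e.\ the pushforward of $(\psi_\alpha(x))_{\alpha}$ along $\pi$. The map $g'\circ\rho$ sends $x$ to $(\rho(x),q^Y(\rho(x)))$, where $q^Y(y)$ has $\beta$-coordinate $\phi_\beta(y)$. Both produce points lying above $\rho(x)$ in $\Delta Y_{\mathcal{V}}$, so the natural candidate for a homotopy is the straight-line interpolation
\[
H(x,t) := \bigl(\rho(x),\; (1-t)\,q^X(x) + t\,q^Y(\rho(x))\bigr)
\]
on $X\times[0,1]$, which restricts to $\bar{\rho}\circ g$ at $t=0$ and to $g'\circ\rho$ at $t=1$. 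Continuity of $H$, viewed as a map into $Y\times\mathrm{N}\,\mathcal{V}$, is immediate from the local finiteness of the two partitions of unity together with continuity of $\rho$.

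The main obstacle will be to verify that $H(x,t)$ lands in $\Delta Y_{\mathcal{V}}$ for every $(x,t)$, that is, that the support set
\[
\Sigma(x,t) := \bigl\{\beta\in B \;:\; \text{the $\beta$-coordinate of } (1-t)\,q^X(x)+t\,q^Y(\rho(x)) \text{ is positive}\bigr\}
\]
spans a simplex of $\mathrm{N}\,\mathcal{V}$ and that $\rho(x)\in\bigcap_{\beta\in\Sigma(x,t)}V_\beta$. Here the hypothesis $\rho(U_\alpha)\subseteq V_{\pi(\alpha)}$ enters decisively: for any $\beta\in\Sigma(x,t)$, either some $\alpha\in\pi^{-1}(\beta)$ satisfies $\psi_\alpha(x)>0$, forcing $x\in U_\alpha$ and hence $\rho(x)\in\rho(U_\alpha)\subseteq V_{\pi(\alpha)}=V_\beta$, or else $\phi_\beta(\rho(x))>0$, which gives $\rho(x)\in V_\beta$ directly. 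Therefore $\rho(x)\in\bigcap_{\beta\in\Sigma(x,t)}V_\beta$, which is in particular non-empty, so $\Sigma(x,t)\in\mathrm{N}\,\mathcal{V}$ and $H(x,t)$ genuinely represents a point of $\Delta Y_{\mathcal{V}}$. Once this is in hand, $H$ is the desired homotopy establishing $\bar{\rho}\circ g\simeq g'\circ\rho$, and combining this with Lemma~\ref{lemma:realizationnervecommute} will yield Theorem~\ref{thm:fuctnerve}.
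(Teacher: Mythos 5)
Your proposal is correct and follows essentially the same approach as the paper: both use Proposition \ref{prop:4g2} to obtain $g$ and $g'$ from (independently chosen) partitions of unity, form the straight-line interpolation between $\bar{\rho}\circ g$ and $g'\circ\rho$ in the nerve-coordinate factor, and verify it lands in $\Delta Y_{\mathcal V}$ by using $\rho(U_\alpha)\subseteq V_{\pi(\alpha)}$ together with the support conditions of the partitions of unity to show $\rho(x)\in\bigcap_{\beta\in\Sigma(x,t)}V_\beta$. Your explicit description of the $\beta$-coordinate of $\bar\pi(\psi_\bullet(x))$ as the pushforward $\sum_{\alpha\in\pi^{-1}(\beta)}\psi_\alpha(x)$ makes the well-definedness argument a touch more transparent than the paper's write-up, but the argument is the same.
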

	\begin{proof}
		By Proposition \ref{prop:4g2}
		\begin{align*}
			g:X&\longrightarrow\Delta X_\mathcal{U}\\
			x&\longmapsto (x,(\psi_\alpha(x))_{\alpha\in\Lambda})
		\end{align*}
		is a homotopy equivalence between $X$ and $\Delta X_\mathcal{U}$, where $\{\psi_\alpha\}_{\alpha\in A}$ is a partition of unity subordinate to the cover $\mathcal{U}$. And,
		\begin{align*}
			g':Y&\longrightarrow\Delta Y_\mathcal{V}\\
			y&\longmapsto (y,(\psi_\beta'(y))_{\beta\in B})
		\end{align*}
		is a homotopy equivalence between $Y$ and $\Delta Y_\mathcal{V}$ where $\{\psi_\beta'\}_{\beta\in B}$ is a partition of unity subordinate to the cover $\mathcal{V}$.

		Finally, we will show that $\bar{\rho}\circ g\simeq g'\circ \rho$. Observe that, for arbitrary $x\in X$,
		
		$$\bar{\rho}\circ g(x)=\bar{\rho}(x,(\psi_\alpha(x))_{\alpha\in\Lambda})=\bigg(\rho(x),\bar{\pi}\big((\psi_\alpha(x))_{\alpha\in A}\big)\bigg)$$
		and
		$$g'\circ \rho(x)=g'(\rho(x))=\bigg(\rho(x),\big(\psi_\beta'(\rho(x))\big)_{\beta\in B}\bigg).$$
		Hence, one can just construct a homotopy between $\bar{\rho}\circ g$ and $g'\circ \rho$ in the following way:
		\begin{align*}
			h:X\times [0,1]&\longrightarrow\Delta Y_\mathcal{V}\\
			(x,t)&\longmapsto \bigg(\rho(x),(1-t)\,\bar{\pi}\big((\psi_\alpha(x))_{\alpha\in A}\big)+t\,\big(\psi_\beta'(\rho(x))\big)_{\beta\in B}\bigg)
		\end{align*}
		Here, note that the linear interpolation between $\bar{\pi}\big((\psi_\alpha(x))_{\alpha\in A}\big)$ and $\big(\psi_\beta'(\rho(x))\big)_{\beta\in B}$ is well-defined since, because of the properties of partition of unity and the assumption that $\rho(U_\alpha)\subseteq V_{\pi(\alpha)}$,
		$$\rho(x)\in \bigcap_{\alpha:\psi_\alpha(x)>0}V_{\pi(\alpha)}\cap\bigcap_{\beta:\psi_\beta'(\rho(x))>0}V_\beta$$
		so that
		$$\{\pi(\alpha)\in B:\psi_\alpha(x)>0\}\cup\{\beta\in B:\psi_\beta'(\rho(x))>0\}$$
		forms a simplex in $\mathrm{N}\,\mathcal{V}$.
	\end{proof}
	
	Finally, one can prove the functorial nerve lemma.
	
	\begin{proof}[Proof of Theorem \ref{thm:fuctnerve}]
		Combine Lemma \ref{lemma:realizationnervecommute} and Lemma \ref{lemma:Xrealizationcommute}.
	\end{proof}

	\subsection{Proof of $\vr_r(\Sp^n)\simeq\Sp^n$ for $r\in \left(0,\arccos\left(-\frac{1}{n+1}\right)\right]$}\label{sec:app:SnHausmann}
	
	In this appendix we will prove Theorem \ref{cor:Snfirsttype}.
	
	\thmSnfirsttype*
	
	Since the case of $\Sp^1$ is already proved in \cite{adams}, it is enough to prove the above theorem for $\Sp^n$ with $n\geq 2$. Moreover, unlike the other parts of the paper, in this section we discriminate between the simplicial complex $\vr_r(\Sp^n)$ and its realization $\vert\vr_r(\Sp^n)\vert$.

	To prove Theorem \ref{cor:Snfirsttype}, we will basically emulate the proof strategy which Hausmann  in \cite{h95}. However, a crucual modification will be necessary which requires  the following version of Jung's theorem.
	
	\begin{definition}\label{def:gch}
		Given a nonempty subset $A\subset \Sp^n$, its \emph{geodesic convex hull $\mathrm{conv}_{\Sp^n}(A)$}  is defined to be the set consisting of the union of all minimizing geodesics between pairs of points in $A$. It is clear that when $A$ is contained in an open hemisphere,  $\mathrm{conv}_{\Sp^n}(A) = \{\Pi_{\Sp^n}(c)|\,c\in \mathrm{conv}(A)\}$ where $\Pi_{\Sp^n}(p) := \frac{p}{\|p\|}$ for $p\neq 0$ and $\Pi_{\Sp^n}(p):=0$ otherwise.
	\end{definition}
	
	\begin{theorem}[A version of Jung's theorem for spheres]\label{thm:Jung}
		For any $n\geq 1$, if $A\subset\Sp^n$ satisfies $D:=\diam(A)<\arccos\left(-\frac{1}{n+1}\right)$, then there must be $u\in\mathrm{Conv}_{\Sp^n}(A)$ such that $A\subseteq \overline{B}_{\psi(D)}(u,\Sp^n)$, where
		\begin{align*}
			\psi:\left[0,\arccos\left(-\frac{1}{n+1}\right)\right]&\longrightarrow\R_{\geq 0}\\
			D&\longmapsto\arccos\left(\sqrt{\frac{1+(n+1)\cos D}{n+2}}\right).
		\end{align*}
	\end{theorem}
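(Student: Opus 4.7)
The plan is to reduce the spherical statement to the classical Euclidean Jung's theorem by viewing $A\subset\Sp^n\subset\R^{n+1}$ with its chordal (ambient Euclidean) distance and then radially projecting the resulting enclosing-ball center back to $\Sp^n$. Concretely, I would first observe that since $\|x-y\|_{\R^{n+1}}=2\sin(d_{\Sp^n}(x,y)/2)$, the chordal diameter of $A$ equals $2\sin(D/2)$. Applying the classical Jung's theorem in $\R^{n+1}$, there exists a point $c\in\mathrm{conv}(A)$ such that
\[
\|a-c\|\le R:=2\sin(D/2)\sqrt{\tfrac{n+1}{2(n+2)}}\qquad\text{for every }a\in A.
\]
A direct calculation using $\sin^2(D/2)=(1-\cos D)/2$ gives $R^2=(n+1)(1-\cos D)/(n+2)$, and I would check that the threshold hypothesis $D<\arccos(-1/(n+1))$ is exactly equivalent to $R<1$.

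Next, I would set $u:=c/\|c\|\in\Sp^n$ (after verifying $c\ne 0$). This is forced by $R<1$: expanding $\|a-c\|^2\le R^2$ with $\|a\|=1$ gives $a\cdot c\ge (1+\|c\|^2-R^2)/2$, and taking $c=0$ would yield $0\ge (1-R^2)/2>0$, a contradiction. Moreover the same inequality at $c\ne 0$ gives $a\cdot c>0$ for all $a\in A$, so $A$ sits in an open hemisphere; then by the characterization in Definition \ref{def:gch} the radial projection $u=\Pi_{\Sp^n}(c)$ belongs to $\mathrm{conv}_{\Sp^n}(A)$.

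The key numerical step is then to lower-bound $a\cdot u=(a\cdot c)/\|c\|$. Dividing the previous inequality by $\|c\|$ gives
\[
a\cdot u\;\ge\;\frac{1+\|c\|^2-R^2}{2\|c\|}.
\]
Since $\|c\|\le 1$ (convex combination of unit vectors) and $\|c\|>0$, an elementary one-variable minimization of $t\mapsto(1+t^2-R^2)/(2t)$ on $(0,1]$ shows the minimum is $\sqrt{1-R^2}$, attained at $t=\sqrt{1-R^2}\in(0,1]$. Substituting the explicit value of $R^2$ yields
\[
\sqrt{1-R^2}=\sqrt{\tfrac{1+(n+1)\cos D}{n+2}}=\cos\bigl(\psi(D)\bigr),
\]
so $a\cdot u\ge\cos(\psi(D))$ for every $a\in A$, i.e.\ $d_{\Sp^n}(a,u)\le\psi(D)$, which is the claim.

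The genuinely delicate point is not any single calculation but the choice of reduction: one must invoke the classical Jung theorem in exactly the right ambient dimension (so that the Jung constant $\sqrt{(n+1)/(2(n+2))}$ miraculously conspires with the chord-to-arc identity to produce precisely the function $\psi$), and one must then perform the minimization in $\|c\|$ to avoid any a priori information about how close the Euclidean center lies to the sphere. The rest, including the hemisphere/projection compatibility needed to place $u$ in $\mathrm{conv}_{\Sp^n}(A)$, follows cleanly from the diameter assumption once $R<1$ is established.
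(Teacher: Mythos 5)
Your proof is correct and follows essentially the same route as the paper's: pass to the chordal metric on $\Sp^n\subset\R^{n+1}$, obtain a Euclidean enclosing-ball center in $\mathrm{conv}(A)$, radially project it to the sphere, and lower-bound $a\cdot u$ by a one-variable minimization over $\|c\|\in(0,1]$. The only real difference is packaging: the paper re-derives the Jung radius bound from scratch (via Federer's Lemma~2.10.40 plus a Carath\'eodory/summation argument) and invokes Dubins--Schwarz to ensure $p\neq 0$, whereas you cite classical Euclidean Jung in dimension $n+1$ as a black box and get $c\neq 0$ for free from $R<1$; your explicit minimization of $t\mapsto(1+t^2-R^2)/(2t)$ also makes transparent the step that the paper states tersely as $\|a-u\|\leq\sqrt{2-2\sqrt{1-c^2}}$. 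Both arguments hinge on the same numerical coincidence between the Jung constant $\sqrt{(n+1)/(2(n+2))}$ and the chord--arc identity.
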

	
	The version of Jung's theorem stated above is different from the one considered by Katz \cite[Lemma 2]{k83} in the following two senses: (1) We provide a precise formula for the radius $\psi(D)$ of the closed ball covering $A$, depending on $D=\diam(A)$. In particular, our version is stronger when $D$ is small. (2) On the contrary, if $D$ is large (close to $\arccos\left(-\frac{1}{n+1}\right)$), then the radius $\psi(D)$ can be as large as $\frac{\pi}{2}$. But $\frac{\pi}{2}$ is strictly greater number than $\pi-\arccos\left(-\frac{1}{n+1}\right)$ which is the radius guaranteed by Katz's version. So, for the case when $D$ is large, Katz's version is stronger.

	The proof of our version is somewhat similar to the classical proof in \cite{federer2014geometric}.
	
	\begin{Remark}
		Note that the map $\phi$ satisfies the following properties: 
		\begin{enumerate}
			\item $\psi(D)\leq\frac{\pi}{2}$ for any $D\in\left[0,\arccos\left(-\frac{1}{n+1}\right)\right]$.
			
			\item $\psi$ is an increasing function.
			
			\item $\lim\limits_{D\rightarrow 0+}\psi(D)=0$.
		\end{enumerate}
	\end{Remark}
	
	\begin{proof}[Proof of Theorem \ref{thm:Jung}]
		Without loss of generality, one can assume $A$ is compact. Recall that one can view $\Sp^n$ as a subset of $\R^{n+1}$ in the following way:
		$$\Sp^n=\{(x_1,\dots,x_{n+1})\in\R^{n+1}:x_1^2+\cdots x_{n+1}^2=1\}.$$
		Also, for any $x,y\in\Sp^n$, the Euclidean norm $\Vert x-y\Vert$ and the geodesic distance $d_{\Sp^n}(x,y)$ satisfy the following relationship:
		$$\Vert x-y \Vert=\sqrt{2-2\cos(d_{\Sp^n}(x,y))}.$$
		Now, if we apply \cite[Lemma 2.10.40. ]{federer2014geometric} with $P:=A\times\{1\}$, there are $p\in\R^{n+1}$ and $c\geq 0$ such that,
		\begin{enumerate}
			\item For all $a\in A$, $\Vert a-p \Vert\leq c$.
			\item $p$ belongs to the convex hull of $\{a\in A: \Vert a-p\Vert=c\}$.
		\end{enumerate}
		Therefore, there are non-negative numbers $\lambda_1,\dots,\lambda_{n+2}$ and $a_1,\dots,a_{n+2}\in\{a\in A: \Vert a-x\Vert=c\}$ such that
		\begin{enumerate}
			\item $p=\sum_{i=1}^{n+2}\lambda_i a_i$.
			\item $1=\sum_{i=1}^{n+2}\lambda_i$.
		\end{enumerate}
		Hence, one can easily check $\Vert p \Vert\leq 1$. Also, since
		$$\Vert a_i-a_j\Vert=\sqrt{2-2\cos(d_{\Sp^n}(x,y))}\leq\sqrt{2-2\cos D}<\sqrt{2+\frac{2}{n+1}},$$
		$p\neq 0 $ by \cite[Lemma 1]{dubins1981equidiscontinuity}. Furthermore, for each $j\in\{1,\dots,n+2\}$,
		
		\begin{align*}
			2c^2&=\sum_{i=1}^{n+2}\lambda_i(2c^2-2\langle (a_i-p),(a_j-p)\rangle)=\sum_{i=1}^{n+2}\lambda_i\Vert (a_i-p)-(a_j-p) \Vert^2\\
			&=\sum_{i=1}^{n+2}\lambda_i\Vert a_i-a_j \Vert^2\leq\sum_{i\neq j}\lambda_i(2-2\cos(D))=(1-\lambda_j)(2-2\cos D).
		\end{align*}
		So, by summation with respect to $j$, we have $2(n+2)c^2\leq(n+1)(2-2\cos D)$. Therefore,
		$$c\leq\sqrt{\frac{(n+1)(1-\cos D)}{n+2}}<1.$$
		Finally, let $u:=\frac{p}{\Vert p \Vert}$. Then, $u\in\mathrm{Conv}_{\Sp^n}(A)$ since $p\in\mathrm{Conv}(A)$. Also, one can check that 
		$$\Vert a-u \Vert\leq\sqrt{2-2\sqrt{1-c^2}}\leq\sqrt{2-2\sqrt{\frac{1+(n+1)\cos D}{n+2}}}$$
		for all $a\in A$. This implies $d_{\Sp^n}(a,u)\leq\arccos\left(\sqrt{\frac{1+(n+1)\cos D}{n+2}}\right)=\psi(D)$ for any $a\in A$, so the proof is complete.
	\end{proof}
	
	\subsubsection{The proof of Theorem \ref{cor:Snfirsttype}}
	
	Choose a total ordering on the points of $\Sp^n$. From now on, whenvever we describe a finite subset of $\Sp^n$ by $\{x_0,\dots,x_q\}$, we suppose that $x_0<x_1<\cdots<x_q$. Let $r\in\left(0,\arccos\left(-\frac{1}{n+1}\right)\right]$. We shall associate to each $q$-simplex $\sigma:=\{x_0,\dots,x_q\}\in \vr_r(\Sp^n)$ a singular $q$-simplex $T_\sigma:\Delta_q\longrightarrow\Sp^n$. Recall that the standard Euclidean $q$-simplex $\Delta_q$ is defined in the following way:
	$$\Delta_q:=\left\{\sum_{i=0}^qt_ie_i:t_i\in [0,1]\text{ and }\sum_{i=0}^q t_i=1\right\}.$$
	This map $T_\sigma$ is defined inductively as follows: set $T(e_0)=x_0$. Suppose that $T_\sigma(z)$ is defined for $y=\sum_{i=0}^{p-1}s_ie_i$. Let $z:=\sum_{i=0}^p t_ie_i$. If $t_p=1$, we pose $T_\sigma(z)=x_p$. Otherwise, let
	$$x:=T_\sigma\left(\frac{1}{1-t_p}\sum_{i=0}^{p-1}t_i e_i\right).$$
	We define $T_\sigma(z)$ as the point on the unique shortest geodesic joining $x$ to $x_p$ with $d_{\Sp^n}(x,T_\sigma(z))=t_p\cdot d_{\Sp^n}(x,x_p)$ (the unique shortest geodesic exists since $\mathrm{Conv}_{\Sp^n}(\{x_0,\dots,x_q\})$ must be contained in some open ball of radius smaller than $\frac{\pi}{2}$ by Theorem \ref{thm:Jung}). To sum up, $T_\sigma$ is defined inductively on $\Delta_p$ for $p\leq q$ as the \emph{geodesic join} of $T_\sigma(\Delta_{p-1})$ with $x_p$.

	If $\sigma'$ is a a face of $\sigma$ of dimension $p$, we form the euclidean sub $p$-simplex $\Delta'$ of $\Delta_q$ formed by the points $\sum_{i=0}^q t_i e_i\in\Delta_q$ with $t_i=0$ if $x_i\notin\sigma'$. One can check by induction on $\dim\sigma'$ that
	\begin{equation}\label{eq:Hausmannmap}
		T_{\sigma'}=T_\sigma\vert_{\Delta'}.
	\end{equation}
	
	By (\ref{eq:Hausmannmap}), the correspondence $\sigma\mapsto T_\sigma$ gives rise to a continuous map
	$$T:\vert\vr_r(\Sp^n)\vert\longrightarrow\Sp^n.$$

	Here is a quick overview of how we will prove Theorem \ref{cor:Snfirsttype}. Through Lemma \ref{lemma:contractivesphere} (which enables the application of Hausmann's `crushings' on sufficiently small subsets of spheres), Lemma \ref{lemma:simplehomiso}, and Lemma \ref{lemma:Hausmannhomiso} we will prove that $T$ induces an isomorphism at homology level. Also, by Lemma \ref{lemma:HausmannFG}, we will prove that $T$ also induces an isomorphism at the level of fundamental groups. Finally, the proof of Theorem \ref{cor:Snfirsttype} will follow by invoking the homology Whitehead theorem.

	\begin{lemma}\label{lemma:contractivesphere}
		Let $x\in\Sp^n$, $y,z\in B_{\frac{\pi}{2}}(x,\Sp^n)$, and $\gamma_y:[0,1]\rightarrow\Sp^n$ (resp. $\gamma_z:[0,1]\rightarrow\Sp^n$) be the unique shortest geodesics from $x$ to $y$ (resp. from $x$ to $z$). Then,
		$$d_{\Sp^n}(\gamma_y(s),\gamma_z(s))\leq d_{\Sp^n}(\gamma_y(t),\gamma_z(t))$$
		for any $0\leq s\leq t\leq 1$.
	\end{lemma}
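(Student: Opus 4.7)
The plan is to reduce the inequality to a two-dimensional statement about $\cos d$ via the spherical law of cosines, and then exhibit $\cos d_{\Sp^n}(\gamma_y(s),\gamma_z(s))$ as a manifestly non-negative combination of two cosine terms that are each non-increasing on $[0,1]$.

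First I would set $A := d_{\Sp^n}(x,y)$, $B := d_{\Sp^n}(x,z)$, and let $\alpha \in [0,\pi]$ denote the angle between the initial velocities $\dot\gamma_y(0)$ and $\dot\gamma_z(0)$ in $T_x\Sp^n$. Because $\gamma_y,\gamma_z$ are the constant-speed shortest geodesics joining $x$ to $y,z$, their values at time $s$ sit at spherical distance $sA$ and $sB$ from $x$, and the spherical law of cosines (applied in the $2$-sphere containing $x,\gamma_y(s),\gamma_z(s)$ as a totally geodesic subspace of $\Sp^n$) gives
\begin{equation*}
\cos d_{\Sp^n}(\gamma_y(s),\gamma_z(s)) \;=\; \cos(sA)\cos(sB) \,+\, \sin(sA)\sin(sB)\cos\alpha.
\end{equation*}
So it is enough to show that the right-hand side, call it $f(s)$, is non-increasing on $[0,1]$; then $d_{\Sp^n}(\gamma_y(s),\gamma_z(s)) = \arccos f(s)$ is non-decreasing, as $\arccos$ is a decreasing function.

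Next I would apply the product-to-sum identities $\cos(sA)\cos(sB) = \tfrac{1}{2}[\cos(s(A-B))+\cos(s(A+B))]$ and $\sin(sA)\sin(sB) = \tfrac{1}{2}[\cos(s(A-B))-\cos(s(A+B))]$ to rewrite
\begin{equation*}
f(s) \;=\; \frac{1+\cos\alpha}{2}\,\cos\!\bigl(s(A-B)\bigr) \,+\, \frac{1-\cos\alpha}{2}\,\cos\!\bigl(s(A+B)\bigr).
\end{equation*}
The two coefficients $\tfrac{1\pm\cos\alpha}{2}$ are non-negative. Moreover, since $y,z\in B_{\pi/2}(x,\Sp^n)$, we have $A,B\in[0,\pi/2)$, so $|A-B|<\pi/2$ and $A+B<\pi$; hence for $s\in[0,1]$ both $s|A-B|$ and $s(A+B)$ lie in $[0,\pi]$, on which $\cos$ is non-increasing. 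Therefore $s\mapsto \cos(s(A-B))$ and $s\mapsto \cos(s(A+B))$ are each non-increasing on $[0,1]$, and $f$ is a non-negative combination of them, hence non-increasing. This yields the claim.

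No single step is a real obstacle here; the only subtle point is justifying the spherical law of cosines for two geodesics starting at a common point in $\Sp^n$ (rather than for a triangle with three independent vertices), but this is immediate because the three points $x,\gamma_y(s),\gamma_z(s)$ lie on the totally geodesic $2$-sphere $\Sp^n\cap\mathrm{span}\{x,\dot\gamma_y(0),\dot\gamma_z(0)\}$ (or on a great circle if the spans coincide, in which case $\alpha\in\{0,\pi\}$ and the formula for $f(s)$ collapses to $\cos(s(A\mp B))$, for which monotonicity is direct from the ranges of $A-B$ and $A+B$ noted above). The hypothesis $y,z\in B_{\pi/2}(x,\Sp^n)$ is used in two places: to guarantee uniqueness of the two shortest geodesics (so that $\gamma_y,\gamma_z$ are well-defined), and to ensure $A+B<\pi$ so that $s(A+B)$ never leaves the interval on which $\cos$ is monotone.
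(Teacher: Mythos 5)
Your argument is correct, and it takes a genuinely different route from the paper. Both proofs start the same way: they invoke the spherical law of cosines (on the totally geodesic $2$-sphere through $x,\gamma_y(s),\gamma_z(s)$) to write $\cos d_{\Sp^n}(\gamma_y(s),\gamma_z(s)) = \cos(sA)\cos(sB)+\sin(sA)\sin(sB)\cos\alpha =: f(s)$, and then reduce the lemma to showing $f$ is non-increasing on $[0,1]$. From there the paper takes a calculus route: it differentiates $f$, uses $\cos\alpha\le 1$ (together with the positivity of $\cos(sA),\cos(sB)$ which hinges on $A,B<\pi/2$) to bound $f'(s)$, and simplifies the resulting trigonometric expression to $-(A-B)\sin(s(A-B))\le 0$. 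You instead use the product-to-sum identities to rewrite $f(s)=\tfrac{1+\cos\alpha}{2}\cos(s(A-B))+\tfrac{1-\cos\alpha}{2}\cos(s(A+B))$, exhibit it as a non-negative combination of two terms, and argue that each term is non-increasing because $s(A+B)$ and $s|A-B|$ remain in $[0,\pi]$, where $\cos$ is monotone. Both proofs need $A,B<\pi/2$ in an essential way, but for slightly different reasons: the paper needs it for positivity of certain cosines in the derivative estimate, while you need it to guarantee $A+B<\pi$. Your decomposition is arguably cleaner since it bypasses differentiation entirely and makes the monotonicity of $f$ transparent once the algebra is done; the paper's version is more of a direct verification. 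Either is a valid replacement for the other.
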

	\begin{proof}
		Let $d_{\Sp^n}(x,y)=a$ and $d_{\Sp^n}(x,z)=b$. Without loss of generality, one can assume $a\geq b$. By the spherical law of cosines, one can compute
		
		$$\cos\big(d_{\Sp^n}(\gamma_y(t),\gamma_z(t))\big)=\cos(ta)\cos(tb)+\sin(ta)\sin(tb)\cos\theta$$
		for any $t\in [0,1]$, where $\theta$ is the angle between $\gamma_y$ and $\gamma_z$ at $x$.

		Now, consider the following map
		\begin{align*}
			f:[0,1]&\longrightarrow\R_{\geq 0}\\
			t&\longmapsto\cos(ta)\cos(tb)+\sin(ta)\sin(tb)\cos\theta.
		\end{align*}
		To complete the proof, it is enough to show this $f$ is non-increasing. Observe that,
		\begin{align*}
			f'(t)&=-a\sin(ta)\cos(tb)-b\cos(ta)\sin(tb)+a\cos(ta)\sin(tb)\cos\theta+b\sin(ta)\cos(tb)\cos\theta\\
			&\leq-a\sin(ta)\cos(tb)-b\cos(ta)\sin(tb)+a\cos(ta)\sin(tb)+b\sin(ta)\cos(tb)\\
			&-(a-b)\sin(ta)\cos(tb)+(a-b)\cos(ta)\sin(tb)\\
			&=-(a-b)\sin(t(a-b))\leq 0.
		\end{align*}
		Hence, $f$ is non-increasing, so this concludes the proof.
	\end{proof}
	
	The following Lemma is an analogue of \cite[(3.3) Proposition]{h95}.
	
	\begin{lemma}\label{lemma:simplehomiso}
		Let $0<r'\leq r\leq\arccos\left(-\frac{1}{n+1}\right)$. Then the canonical inclusion $\vr_{r'}(\Sp^n)\subset\vr_r(\Sp^n)$ induces an isomorphism on homology.
	\end{lemma}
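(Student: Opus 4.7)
The plan is to adapt Hausmann's proof of \cite[(3.3) Proposition]{h95} to our extended range of $r$, substituting his invocation of classical Jung's theorem (valid only up to $r\leq r(\Sp^n)=\pi/2$) with our spherical refinement Theorem \ref{thm:Jung}, which applies throughout $\left(0,\arccos\left(-\frac{1}{n+1}\right)\right]$. The strategy is to construct an explicit ``crushing'' chain map $\kappa\colon C_\ast(\vr_r(\Sp^n))\to C_\ast(\vr_{r'}(\Sp^n))$ together with a chain homotopy between $\iota\circ\kappa$ and the identity on $C_\ast(\vr_r(\Sp^n))$, where $\iota$ denotes the inclusion.

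The first step is to select, for each simplex $\sigma=\{x_0,\ldots,x_q\}\in\vr_r(\Sp^n)$, a circumcenter $u(\sigma)\in\mathrm{Conv}_{\Sp^n}(\sigma)$ with $d_{\Sp^n}(x_i,u(\sigma))\leq\psi(\diam\sigma)$, as guaranteed by Theorem \ref{thm:Jung}. A short direct verification shows $\psi(D)\leq D$ on $\left(0,\arccos\left(-\frac{1}{n+1}\right)\right]$, so the augmented set $\sigma\cup\{u(\sigma)\}$ remains a simplex of $\vr_r(\Sp^n)$. Writing $\gamma_i$ for the unique shortest geodesic from $u(\sigma)$ to $x_i$ (which exists because all $x_i$ lie in $\overline{B}_{\pi/2}(u(\sigma),\Sp^n)$), Lemma \ref{lemma:contractivesphere} gives $d_{\Sp^n}(\gamma_i(s),\gamma_j(s))\leq d_{\Sp^n}(x_i,x_j)$ for all $s\in[0,1]$. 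Hence every crushed simplex $\{\gamma_0(s),\ldots,\gamma_q(s)\}$ stays inside $\vr_r(\Sp^n)$, and by continuity lies in $\vr_{r'}(\Sp^n)$ once $s$ is sufficiently small.

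The main obstacle, as in Hausmann's original setting, is coherence: the assignment $\sigma\mapsto u(\sigma)$ is not simplicial, so crushings on different simplices will not automatically agree on shared faces. I would resolve this by inductively constructing, by dimension of $\sigma$, both $\kappa$ and the chain homotopy $h$ simultaneously: on a $q$-simplex $\sigma$, the definition extends the already-constructed crushings on $\partial\sigma$ by coning them against $u(\sigma)$, with any boundary discrepancy absorbed into the next piece of $h$. Lemma \ref{lemma:contractivesphere} keeps all intermediate simplices within $\vr_r(\Sp^n)$, while iterating the crushing at most finitely many times (bounded in terms of $q$ and the ratio $r'/r$) brings each $q$-simplex into $\vr_{r'}(\Sp^n)$. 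Passing to homology, the existence of the chain homotopy immediately yields that the inclusion-induced map $\mathrm{H}_\ast(\vr_{r'}(\Sp^n))\to\mathrm{H}_\ast(\vr_r(\Sp^n))$ is an isomorphism.
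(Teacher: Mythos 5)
You correctly identify the two key analytic ingredients the paper also uses---the refined Jung theorem (Theorem \ref{thm:Jung}) to find a circumcenter in an open hemisphere, and Lemma \ref{lemma:contractivesphere} to guarantee that geodesic contraction toward a center does not increase pairwise distances. You also correctly flag the central difficulty: $\sigma\mapsto u(\sigma)$ is not simplicial, so naive crushing is incoherent on shared faces. However, the mechanism you propose to fix this does not actually close the gap.

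The issue is that ``coning against $u(\sigma)$'' produces chains that you have not shown lie in $\vr_{r'}(\Sp^n)$. If you contract $\sigma$ completely to $u(\sigma)$, the image is a single point and the resulting chain is degenerate; so you must stop at some intermediate parameter $s_\sigma$. For the cone on $\kappa(\partial\sigma)$ (or for the homotopy carrying $\sigma$ to its image) to live in $\vr_{r'}$, you would need $d_{\Sp^n}(u(\sigma),v)<r'$ for every vertex $v$ of every simplex in that cone, which is neither stated nor obvious, and in fact fails if $r'$ is small compared to $\psi(\diam\sigma)$. Nor does ``iterating the crushing at most finitely many times'' rescue this: the number of iterations needed depends on how $\psi$ contracts toward $0$, but more seriously each iteration re-selects circumcenters and re-introduces exactly the coherence problem you began with, now one dimension up. What you are implicitly reconstructing is the \emph{proof} of the acyclic carrier theorem, but without isolating the one fact it actually requires: that the carrier assigned to each $\sigma$ is acyclic. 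The paper's proof takes precisely this route. It sets $\Phi_{\delta,\delta'}(\sigma)=\vr_{\delta'}(I_\sigma)$, where $I_\sigma$ is the image of the geodesic-join map $T_\sigma$; it shows $\vr_{\delta'}(I_\sigma)$ is contractible by crushing $I_\sigma$ toward a circumcenter via Lemma \ref{lemma:contractivesphere} and invoking Hausmann's \cite[(2.3) Corollary]{h95}; notes $I_{\sigma'}\subseteq I_\sigma$ for a face $\sigma'$ so the carrier is monotone; and then cites the acyclic carrier theorem \cite[Theorem 13.3]{munkres2018elements} twice (for $r',r$ and for $r',r'$) to produce both the chain map and the chain homotopies in each direction. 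That last point is also worth noting: you construct only a one-sided homotopy $\iota\circ\kappa\simeq\mathrm{id}$, but an isomorphism also requires $\kappa\circ\iota\simeq\mathrm{id}$ on $C_\ast(\vr_{r'}(\Sp^n))$, which the carrier $\Phi_{r',r'}$ supplies for free.

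To salvage your argument: replace the explicit cone construction with the statement that $\vr_{r'}(I_\sigma)$ is acyclic (proved exactly as you sketch, via the crushing and Lemma \ref{lemma:contractivesphere}), make $\Phi(\sigma)=\vr_{r'}(I_\sigma)$ your carrier, and let the acyclic carrier theorem do the bookkeeping you are trying to do by hand.
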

	\begin{proof}
		Let $\sigma=\{x_0,\dots,x_q\}$ be a simplex of $\vr_r(\Sp^n)$ and let $I_\sigma$ be the image of $T_\sigma$. If $\sigma'$ is a face of $\sigma$ then $I_{\sigma'}\subseteq I_\sigma$, and thus $\vr_\delta(I_{\sigma'})$ is a subcomplex of $\vr_\delta(I_\sigma)$ for all $\delta>0$. On the other hand, $\vr_\delta(I_\sigma)$ is acyclic for all $\delta>0$. Indeed, by Theorem \ref{thm:Jung}, $\exists\,u\in I_\sigma$ such that $I_\sigma\subset B_{\frac{\pi}{2}}(u,\Sp^n)$. So, one can consider the obvious crushing from $I_\sigma$ to $\{x\}$ via the shortest geoedesics. So, $\vr_\delta(I_\sigma)$ must be contractible by Lemma \ref{lemma:contractivesphere} and \cite[(2.3) Corollary]{h95}. These considerations show that for $0<\delta'\leq\delta\leq\arccos\left(-\frac{1}{n+1}\right)$, the correspondence
		$$\sigma\longmapsto \vr_{\delta'}(I_\sigma)$$
		is an acyclic carrier $\Phi_{\delta,\delta'}$ from $\vr_\delta(\Sp^n)$ to $\vr_{\delta'}(\Sp^n)$ (see \cite[\S 13]{munkres2018elements}).

		We now use the theorem of acyclic carrier (\cite[Theorem 13.3]{munkres2018elements}). This implies that there exists an augmentation preserving chain map $\nu:C_\ast(\vr_r(\Sp^n))\longrightarrow C_\ast(\vr_{r'}(\Sp^n))$ which is carried by $\Phi_{r,r'}$. Let $\mu$ denote the canonical inclusion from $\vr_{r'}(\Sp^n)$ into $\vr_r(\Sp^n)$. Then, $\phi_{r',r'}$ is an acyclic carrier for both $\nu\circ\mu_\ast$ and the indentity of $C_\ast(\vr_{r'}(\Sp^n))$. By theorem of acyclic carrier again, these two maps are chain homotopic and thus $\nu\circ\mu_\ast$ induces the identity on $\Hom_\ast(\vr_{r'}(\Sp^n))$. The same argument shows that $\mu_\ast\circ\nu$ induces the identity on $\Hom_\ast(\vr_r(\Sp^n))$ (using the acyclic carrier $\Phi_{r,r}$).
	\end{proof}
	
	We will now compare the simplicial homology of $\vr_r(\Sp^n)$ with the singular homology of $M$. Formula (\ref{eq:Hausmannmap}) shows that the correspondence $\sigma\longmapsto T_\sigma$ gives rise to a chain map
	$$T_\sharp^r:C_\ast(\vr_r(\Sp^n))\longrightarrow SC_\ast(\Sp^n)$$
	where $SC_\ast(\Sp^n)$ denotes the singular chain complex of $\Sp^n$.

	The following Lemma is an analogue of \cite[(3.4) Proposition]{h95}.
	
	\begin{lemma}\label{lemma:Hausmannhomiso}
		If $0<r\leq\arccos\left(-\frac{1}{n+1}\right)$ then the chain map $T_\sharp$ induces an isomorphism on homology.
	\end{lemma}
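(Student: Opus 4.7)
The plan is to reduce the full range $r\in\left(0,\arccos\left(-\tfrac{1}{n+1}\right)\right]$ to a small-$r$ regime where Hausmann's original argument applies verbatim, and then transport the result upwards via Lemma \ref{lemma:simplehomiso}. The key observation is that the correspondence $\sigma\mapsto T_\sigma$ does not itself depend on the scale parameter: the map $T_\sharp^r$ is just the restriction of a ``universal'' assignment to those finite subsets $\sigma\subset\Sp^n$ whose diameter is strictly less than $r$. In particular, for any $0<r'\leq r\leq\arccos\left(-\tfrac{1}{n+1}\right)$, the canonical inclusion $\mu:\vr_{r'}(\Sp^n)\hookrightarrow\vr_r(\Sp^n)$ yields a strictly commutative diagram
\[
\begin{tikzcd}
C_\ast(\vr_{r'}(\Sp^n))\arrow[d, "T_\sharp^{r'}"'] \arrow[r, "\mu_\sharp", hook] & C_\ast(\vr_r(\Sp^n))\arrow[d, "T_\sharp^{r}"] \\
SC_\ast(\Sp^n)\arrow[r, equal] & SC_\ast(\Sp^n)
\end{tikzcd}
\]
so that $T_\sharp^r\circ\mu_\sharp=T_\sharp^{r'}$. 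It therefore suffices to exhibit a \emph{single} value of $r'$ in the allowed range for which $T_\sharp^{r'}$ is a homology isomorphism; Lemma \ref{lemma:simplehomiso} will then do the rest.

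For the base case, I would pick any $r'\in(0,\pi/2)$. Since $r(\Sp^n)=\pi/2$ in Hausmann's sense, the original Proposition 3.4 of \cite{h95} applies and produces an acyclic carrier with inverse chain map showing that $T_\sharp^{r'}$ induces an isomorphism on (simplicial/singular) homology. Concretely, Hausmann's argument goes through simplicial approximation: given a singular simplex $\tau:\Delta_q\to\Sp^n$, barycentrically subdivide $\Delta_q$ finely enough that the image of each subsimplex has diameter less than $r'$, then send vertices to $\Sp^n$ via $\tau$ to produce a simplex of $\vr_{r'}(\Sp^n)$; one then verifies via acyclic carriers that the resulting chain map is a homotopy inverse to $T_\sharp^{r'}$. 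The details in the sphere case are identical to Hausmann's, since inside a ball of radius less than $\pi/2$ the geodesic bicombing behavior required by Hausmann holds.

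Combining the two ingredients, given any $r\in\left(0,\arccos\left(-\tfrac{1}{n+1}\right)\right]$, choose $r'\in(0,\min\{r,\pi/2\})$. Hausmann's theorem gives that $T_\sharp^{r'}$ induces an isomorphism on homology, Lemma \ref{lemma:simplehomiso} gives that $\mu_\sharp$ induces an isomorphism on homology, and the commutative square above forces $T_\sharp^r$ to induce an isomorphism on homology as well. This completes the proof.

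The only genuinely new input beyond Hausmann's original argument is that $T_\sharp^r$ is \emph{defined} on all of $C_\ast(\vr_r(\Sp^n))$ for $r$ as large as $\arccos\left(-\tfrac{1}{n+1}\right)$. This is precisely what the refined Jung theorem (Theorem \ref{thm:Jung}) secures: for every simplex $\sigma\in\vr_r(\Sp^n)$ the geodesic convex hull $\mathrm{Conv}_{\Sp^n}(\sigma)$ sits inside an open hemisphere, so all the iterated shortest geodesics used in the inductive construction of $T_\sigma$ are unique and vary continuously. Thus the main ``obstacle'' is purely one of well-definedness of the carrier, which Theorem \ref{thm:Jung} dispatches; once $T_\sharp^r$ exists as a chain map for the full range of $r$, the homological content of the lemma is free thanks to Lemma \ref{lemma:simplehomiso}.
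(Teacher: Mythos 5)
Your proof is correct in spirit, and it takes a genuinely different route from the paper's. You observe the strict commutativity $T_\sharp^r\circ \mu_\sharp = T_\sharp^{r'}$, invoke Hausmann's original Proposition~3.4 at a small scale $r'<\pi/2 = r(\Sp^n)$ to get that $T_\sharp^{r'}$ is a homology isomorphism, and then transport upwards via Lemma~\ref{lemma:simplehomiso}. This is a cleaner two-out-of-three argument. The paper instead re-derives a version of Hausmann's Proposition~3.4 from scratch for the pair $(r',r)$: it introduces the small-simplex singular complexes $SC_\ast(\Sp^n;\delta)$, passes to the ordered chain complex $C_\ast'(\vr_\ast(\Sp^n))$, constructs an explicit quasi-inverse $R: SC_\ast(\Sp^n;\psi(r'))\to C_\ast'(\vr_r(\Sp^n))$ by reading off vertex values, and shows both $R\circ T_\sharp^{r'}$ and $T_\sharp^r\circ R$ are quasi-isomorphisms. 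What your approach buys is brevity and conceptual clarity (one commutative square plus two known isomorphisms); what the paper's approach buys is self-containment and insulation from the dependency on Hausmann's exact formulation.

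One detail you should tighten: the paper uses the \emph{open} Vietoris--Rips complex (simplices of diameter strictly less than $r$), while Hausmann's \cite{h95} uses the closed convention (diameter $\leq\epsilon$). So invoking ``Hausmann's Proposition 3.4 applies verbatim'' at scale $r'$ requires a bridging step, for instance writing $\vr_{r'}^{<}(\Sp^n)=\bigcup_{s<r'}\vr_s^{\leq}(\Sp^n)$ and passing to a directed colimit of homology isomorphisms. This is routine but not free, and it is plausibly part of why the authors chose to re-run the singular-simplex machinery internally rather than cite Hausmann's statement directly. With that bridging step made explicit, your argument is complete.
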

	\begin{proof}
		We shall need a few accessory chain complexes. For $\delta>0$, denote by $SC_\ast(\Sp^n;\delta)$ the sub-chain complexes of $SC_\ast(\Sp^n)$ based on singular simplexes $\tau$ such that there exists $u\in\Sp^n$ with the image of $\tau$ is contained in the open ball $B_\delta(u,\Sp^n)$. Recall that the inclusion $SC_\ast(\Sp^n;\delta)\longhookrightarrow SC_\ast(\Sp^n)$ induces an isomorphism on homology (\cite[Theorem 31.5]{h95}).

		We shall also use the ordered chain complex $C_\ast'(\vr_r(\Sp^n))$: the group $C_q'(\vr_r(\Sp^n))$ is free abelian group on $(q+1)$-tuples $(x_0,\dots,x_q)$ such that $\{x_0\}\cup\cdots\cup\{x_q\}$ is a simplex of $\vr_r(\Sp^n)$. One can view that $C_\ast(\vr_r(\Sp^n))$ as a sub-chain complex of $C_\ast'(\vr_r(\Sp^n))$ by associating a $q$-simplex $\{x_0,\dots,x_q\}$ of $\vr_r(\Sp^n)$ (with our convention that $x_0<x_1<\dots<x_q$ for the well-ordering on $\Sp^n$) the $(q+1)$-tuple $(x_0\dots,x_q)$. It is also classical that this inclusion is homology isomorphism (\cite[Theorem 3.6]{h95}). Observe that the construction $\sigma\mapsto T_\sigma$ does not require that the vertices of $\sigma$ are all distinct. One can then define $T_\sigma$ for a basis element of $C_\ast'(\vr_r(\Sp^n))$ and thus extend to a chain map $T_\sharp^r:C_\ast'(\vr_r(\Sp^n))\longrightarrow SC_\ast(\Sp^n;\psi(r))$. Now, choose $r'<r$ such that $\psi(r')\leq\frac{r}{2}$. One then has the following commutative diagram:
		$$\begin{tikzcd}
			C_\ast'(\vr_{r'}(\Sp^n)) \arrow[r, "T_\sharp^{r'}"] \arrow[d] & SC_\ast(\Sp^n;\psi(r'))\arrow[d] \\
			C_\ast'(\vr_r(\Sp^n)) \arrow[r, "T_\sharp^r"] & SC_\ast(\Sp^n;\psi(r))
		\end{tikzcd}$$
		Let $\tau:\Delta_q\longrightarrow\Sp^n$ be a singular simplex whose image is contained in some open ball of radius $\psi(r')$. The $(q+1)$-tuple $(\tau(e_0),\dots,\tau(e_q))$ is element of $C_q'(\vr_r(\Sp^n))$. This correspondence gives rise to a chain map
		$$R:SC_\ast(\Sp^n;\psi(r'))\longrightarrow C_\ast'(\vr_r(\Sp^n))$$
		The composition $R\circ T_\sharp^{r'}$ is equal to the canonical inclusion $C_\ast'(\vr_{r'}(\Sp^n))\subset C_\ast'(\vr_r(\Sp^n))$ which induces a homotopy isomorphism by Lemma \ref{lemma:simplehomiso}. Let us now understand the composition $T_\sharp^r\circ R:SC_\ast(\Sp^n;\psi(r'))\rightarrow SC_\ast(\Sp^n;\psi(r))$. Let $\tau:\Delta_q\longrightarrow\Sp^n$ be a singular simplex such that $\tau(\Delta_q)\subset B_{\psi(r')}(y,\Sp^n)$ for some $y\in\Sp^n$. Therefore, $\tau':=T_\sharp^r\circ R(\tau)$ also satisfies $\tau'(\Delta_q)\subset B_{\psi(r')}(y,\Sp^n)$ since $\psi(r')<\frac{\pi}{2}$. Hence, $\tau$ and $\tau'$ are canonically homotopic (following, for each $s\in\Delta_q$, the shortest geodesic joining $\tau(s)$ to $\tau'(s)$). As in the proof of the homotopy axiom for singular homology (\cite[\S 30]{h95}), these provide a chain homotopy between $T_\sharp^r\circ R$ and the inclusion $SC_\ast(\Sp^n;\psi(r'))\subset SC_\ast(\Sp^n;\psi(r))$. As said before, this inclusion is known to induce a homology isomorphism. Therefore, $T_\sharp^r\circ R$ induces an isomorphism on homology.

		We have shown that both $R\circ T_\sharp^{r'}$ and $T_\sharp^r\circ R$ induce homology isomorphisms. Therefore, $T_\sharp^r$ induces a morphism both injective and surjective, hence homology isomorphism.
	\end{proof}
	
	\begin{lemma}\label{lemma:HausmannFG}
		If $0<r\leq\arccos\left(-\frac{1}{n+1}\right)$, the map 
		$$T:\vert \vr_r(\Sp^n)\vert\longrightarrow\Sp^n$$
		induces an isomorphism on the fundamental groups.
	\end{lemma}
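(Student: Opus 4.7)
The plan is to reduce to the case $n \geq 2$ (the $n=1$ case of Theorem \ref{cor:Snfirsttype}, and hence of this lemma, being handled in \cite{adams}), in which $\pi_1(\Sp^n) = 0$. The claim then reduces to establishing simple connectivity of $|\vr_r(\Sp^n)|$, since $T_\ast : \pi_1(|\vr_r(\Sp^n)|) \to \pi_1(\Sp^n) = 0$ is automatically surjective. The geometric mechanism that makes the argument work for this enlarged range of $r$ is Theorem \ref{thm:Jung} combined with Lemma \ref{lemma:contractivesphere}: for every simplex $\sigma = \{x_0, \dots, x_q\} \in \vr_r(\Sp^n)$ there is a point $u_\sigma \in \Sp^n$ with $\{x_0, \dots, x_q\} \subseteq \overline{B}_{\psi(r)}(u_\sigma, \Sp^n)$ for $\psi(r) \leq \pi/2$, and such open balls are geodesically convex with a canonical deformation retraction onto $u_\sigma$ along shortest geodesics.

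To show $\pi_1(|\vr_r(\Sp^n)|) = 0$, I would take an arbitrary loop $\gamma : S^1 \to |\vr_r(\Sp^n)|$. Since $\pi_1(\Sp^n) = 0$, the composition $T \circ \gamma$ admits a null-homotopy $H : D^2 \to \Sp^n$. By uniform continuity of $H$, one can triangulate $D^2$ finely enough so that for each closed 2-simplex $\tau$ the image $H(\tau)$ is contained in an open $\Sp^n$-ball of radius strictly less than $r/2$; in particular the three vertex images of $\tau$ lie pairwise at $\Sp^n$-distance less than $r$. Defining $\widetilde{H}$ on each vertex $v$ of the triangulation by $\widetilde{H}(v) := H(v)$, viewed as a vertex of $\vr_r(\Sp^n)$, the mesh condition guarantees that any three vertices of a common 2-simplex span a simplex of $\vr_r(\Sp^n)$; extending $\widetilde{H}$ affinely on each $|\tau|$ yields a well-defined continuous map $\widetilde{H} : D^2 \to |\vr_r(\Sp^n)|$.

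The remaining task, which I expect to be the main obstacle, is to arrange that $\widetilde{H}|_{\partial D^2}$ is homotopic to $\gamma$ inside $|\vr_r(\Sp^n)|$, from which $\gamma$ is null-homotopic as desired. This is handled by a standard simplicial approximation and contiguity argument: by first subdividing both $\gamma$ and the induced triangulation of $\partial D^2$, one ensures that consecutive vertices visited by $\gamma$ and by $\widetilde{H}|_{\partial D^2}$ always land in a common simplex of $\vr_r(\Sp^n)$, so that the two loops are homotopic through piecewise-affine interpolations inside contractible simplices of $|\vr_r(\Sp^n)|$. The delicate point is that the triangulation of $D^2$ must be chosen coherently with a prior subdivision of $\gamma$, which is why I would set up the subdivision of $\partial D^2$ first and then extend it to a sufficiently fine triangulation of $D^2$ before invoking uniform continuity of $H$. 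Once this coherence is in place, $\widetilde{H}$ provides the desired null-homotopy and the lemma follows.
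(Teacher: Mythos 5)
Your proposal is correct and follows essentially the same approach as the paper's proof of injectivity, with the sensible streamlining that, once one has reduced to $n\geq 2$ (where $\pi_1(\Sp^n)=0$), the surjectivity of $T_\ast$ is automatic and the lemma collapses to showing $\pi_1(\vert\vr_r(\Sp^n)\vert)=0$. The core move---take a nullhomotopy $H:D^2\to\Sp^n$ of $T\circ\gamma$, triangulate $D^2$ finely so each simplex has $H$-image in a ball of radius $<r/2$, lift vertex-by-vertex to $\vr_r(\Sp^n)$, and extend affinely---is precisely what the paper does on $[0,1]^2$ via edge-path groups. Your acknowledged delicate point, matching $\widetilde{H}\vert_{\partial D^2}$ with $\gamma$ through subdivision compatibility, is also present in the paper and handled in the same way (inserting intermediate points along geodesics of length $<r$ only produces edge-paths equivalent to the original, since all such points are pairwise within distance $r$). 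One tiny inaccuracy in your framing: Theorem \ref{thm:Jung} and Lemma \ref{lemma:contractivesphere} are what make the map $T$ \emph{well-defined} on the enlarged range of $r$ (and drive the homology isomorphism of Lemma \ref{lemma:simplehomiso}), but they do not actually enter the $\pi_1$ argument itself, which uses only Lebesgue-number style subdivision. Also note that, unlike the paper's proof, yours does not cover $n=1$; as you observe this case is not needed for Theorem \ref{cor:Snfirsttype}, but you should be explicit that your argument establishes the lemma only for $n\geq 2$.
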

	\begin{proof}
		Let $\gamma:[0,1]\longrightarrow\Sp^n$ represent an element of $\pi_1(\Sp^n)$. Choose large enough positive integer $N$such that $\frac{1}{N}$ is smaller than the Lebesgue number for the covering $\{\gamma^{-1}(B_{\frac{r}{2}}(x,\Sp^n))\}_{x\in\Sp^n}$. Then, $d_{\Sp^n}\left(\gamma\left(\frac{k}{N}\right),\gamma\left(\frac{k+1}{N}\right)\right)<r$ for any $k=0,\dots,N-1$. Hence the path $\gamma\vert_{\left[\left(\frac{k}{N}\right),\left(\frac{k+1}{N}\right)\right]}$ is then canonically homotopic to a parametrization of the shortest geodesic joining $\gamma\left(\frac{k}{N}\right)$ to $\gamma\left(\frac{k+1}{N}\right)$. This shows that $\gamma$ is homotopic to a composition $\gamma'$ of geodesics in open balls of radius $\frac{r}{2}$. Such a path $\gamma'$ represents the image of $T$ of an element of $\pi_1(\vert\vr_r(\Sp^n)\vert)$, the latter being identified with the edge-path group of the simplicial complex $\vr_r(\Sp^n)$ (\cite[p.134-139]{spanier1989algebraic}). This proves that $\pi_1 T:\pi_1(\vert\vr_r(\Sp^n)\vert)\longrightarrow\pi_1(\Sp^n)$ is surjective.

		Now, to prove the injectivity, suppose $\pi_1 T([\alpha])=0$ where $\alpha:[0,1]\longrightarrow \vert\vr_r(\Sp^n)\vert$ is a continuous map satisfying $\alpha(0)=\alpha(1)$. Moreover, again by \cite[p.134-139]{spanier1989algebraic}, one can assume $\alpha$ is induced by an edge-path of $\vr_r(\Sp^n)$. In other words, there are a positive integer $N$, and $x_0,\dots,x_{N-1},x_N=x_0\in\Sp^n$ such that $d_{\Sp^n}(x_i,x_{i+1})<r$ and $\alpha\left(\frac{i}{N}\right)=x_i$ for $i=0,\dots,N-1$ (here, we view $x_i$ as $0$-simplex). Next, by the assumption, $[T\circ\alpha]=\pi_1 T([\alpha])=0$. This implies that, there is a homotopy map $H:[0,1]\times [0,1]\longrightarrow\Sp^n$ such that $H(t,1)=T\circ\alpha(t)$ and $H(t,0)=H(0,s)=H(1,s)=x_0$ for any $t,s\in [0,1]$. Next, choose large enough positive integer $N'$ so that if we triangulate $[0,1]\times [0,1]$ with vertices $\left(\frac{k}{N'},\frac{l}{N'}\right)$ for $k,l=0,\dots,N'$, each triangle is contained in one of $\{H^{-1}(B_{\frac{r}{2}}(x,\Sp^n))\}_{x\in\Sp^n}$. It means that, $d_{\Sp^n}\left(H\left(\frac{k}{N'},\frac{l}{N'}\right),H\left(\frac{k'}{N'},\frac{l'}{N'}\right)\right)<r$ whenever $\left(\left(\frac{k}{N'},\frac{l}{N'}\right),\left(\frac{k'}{N'},\frac{l'}{N'}\right)\right)$ is an edge of the triangulation . Because of this observation, one can prove that the edge path $H(0,1),H\left(\frac{1}{N'},1\right),\dots,H\left(\frac{N'-1}{N'},1\right),H(1,1)$ is equivalent to $x_0$. Also, it is easy to check that two edge path $H(0,1),H\left(\frac{1}{N'},1\right),\dots,H\left(\frac{N'-1}{N'},1\right),H(1,1)$ and $x_0,x_1,\dots,x_{N-1},x_N$ are equivalent. This means that $[\alpha]=0$. So, $\pi_1T$ is injective.
	\end{proof}
	
	We are now in position to state the proof of Theorem \ref{cor:Snfirsttype}.
	
	\begin{proof}[Proof of Theorem \ref{cor:Snfirsttype}]
		As mentioned in the beginning of this section, one can assume $n\geq 2$. Hence, $\Sp^n$ is simply connected. Therefore, by Lemma \ref{lemma:HausmannFG}, $\vert \vr_r(\Sp^n) \vert$ is also simply connected. Also, by Lemma \ref{lemma:Hausmannhomiso} and the isomorphism between simplicial and singular homology \cite[\S 34]{munkres2018elements}, $T$ induces an isomorphism on homology. Therefore, $T$ is homotopy equivalence by \cite[Corollary 4.33]{h01}.
	\end{proof}

	
	\subsection{Simplicial proof of Theorem \ref{thm:intervalform}}\label{sec:app:otherpfintvltype}
	
	In this section, we will provide an alternative proof of Theorem \ref{thm:intervalform} which takes place at the level of simplicial complexes.
	
	\begin{lemma}\label{lemma:simplicialleft-right}
		Suppose that a compact metric space $(X,d_X)$, a field $\mathbb{F}$, and a nonnegative integer $k$ are given. Then, for every $I\in\dgmR_k(X;\mathbb{F})$,
		\begin{description}
			\item[(i)] if $u\in[0,\infty)$ is the left endpoint of $I$, then $u\notin I$ (i.e. $I$ is left-open).
			
			\item[(ii)] if $v\in[0,\infty)$ is the right endpoint of $I$, then $v\in I$ (i.e. $I$ is right-closed).
		\end{description}
	\end{lemma}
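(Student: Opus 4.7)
The plan is to mirror the singular-chain argument given for Theorem~\ref{thm:intervalform}, but to replace the compactness-of-the-image step (Lemma~\ref{lemma:singchaincpt}) by a much simpler simplicial observation: a simplicial $k$-chain in $\vr_\ast(X)$ involves only \emph{finitely many} simplices, each of which, being a finite subset of $X$ of diameter strictly less than $r$, already has diameter strictly less than $r-\varepsilon$ for some $\varepsilon>0$. Concretely, for any simplicial chain $c$ on $\vr_r(X)$, writing $c=\sum_{i=1}^l \alpha_i\sigma_i$ with $\sigma_i$ a simplex of $\vr_r(X)$, the number $\varepsilon_c:=r-\max_i\diam(\sigma_i)$ is strictly positive, and $c$ is canonically a chain on $\vr_{r-\varepsilon_c}(X)$. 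This is the simplicial analogue of ``the image of a singular chain is compact''.

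With that in hand, let $i_{r,s}:\vr_r(X)\hookrightarrow \vr_s(X)$ denote the canonical inclusion for $r\le s$ and let $(i_{r,s})_\sharp$, $(i_{r,s})_\ast$ be the induced chain and homology maps. For part \textbf{(i)}, suppose toward contradiction that $u\ge 0$ is a closed left endpoint of some $I\in\dgmR_k(X;\mathbb{F})$. Then there is a simplicial cycle $c_u$ on $\vr_u(X)$ with $[c_u]\neq 0$ and $(i_{u,r})_\ast[c_u]=[c_r]\neq 0$ for all $r\in I$. By the observation above, there exists $\varepsilon>0$ such that $c_u$ is actually a simplicial $k$-chain on $\vr_{u-\varepsilon}(X)$; since the boundary operator is natural, $c_u$ remains a cycle there. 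If $[c_u]$ were zero in $\Hom_k(\vr_{u-\varepsilon}(X);\mathbb{F})$, there would be a $(k+1)$-chain $d$ on $\vr_{u-\varepsilon}(X)$ with $\partial d=c_u$, and then $\partial\bigl((i_{u-\varepsilon,u})_\sharp d\bigr)=c_u$ in $\vr_u(X)$, contradicting $[c_u]\neq 0$. Hence $[c_u]\neq 0$ in $\vr_{u-\varepsilon}(X)$, contradicting the fact that $u$ is the left endpoint of $I$.

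For part \textbf{(ii)}, suppose toward contradiction that $v$ is an open right endpoint, so $v\notin I$. Pick $\varepsilon>0$ small enough that $v-\varepsilon\in I$ with representative cycle $c_{v-\varepsilon}$, and set $c_v:=(i_{v-\varepsilon,v})_\sharp(c_{v-\varepsilon})$. Since $v\notin I$ we have $[c_v]=0$ in $\Hom_k(\vr_v(X);\mathbb{F})$, i.e.\ there is a simplicial $(k+1)$-chain $d_v$ on $\vr_v(X)$ with $\partial d_v=c_v$. Applying the simplicial observation to $d_v$ produces $\varepsilon'\in(0,\varepsilon]$ and a $(k+1)$-chain $d_{v-\varepsilon'}$ on $\vr_{v-\varepsilon'}(X)$ with $(i_{v-\varepsilon',v})_\sharp(d_{v-\varepsilon'})=d_v$. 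By naturality of $\partial$ and injectivity of $(i_{v-\varepsilon',v})_\sharp$, one deduces $\partial d_{v-\varepsilon'}=(i_{v-\varepsilon,v-\varepsilon'})_\sharp(c_{v-\varepsilon})$, so $[c_{v-\varepsilon'}]=(i_{v-\varepsilon,v-\varepsilon'})_\ast[c_{v-\varepsilon}]=0$, contradicting $v-\varepsilon'\in I$.

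The main (and only nontrivial) point is the replacement of the compactness argument by the strict-inequality observation for Vietoris--Rips simplices; everything else is naturality of the boundary operator and injectivity of the inclusion-induced chain maps. Once that replacement is made, the rest of the proof is a word-for-word simplicial translation of the proof of Lemma~\ref{lemma:left-right}, which is why this yields a slightly simpler route to Theorem~\ref{thm:intervalform} that avoids passing through the isomorphism theorem.
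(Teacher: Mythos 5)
Your proof is correct and is essentially the paper's own simplicial-level proof (given in Appendix~\ref{sec:app:otherpfintvltype}): the key replacement of ``compact image of a singular chain'' by ``finitely many simplices of diameter strictly less than $r$'' is exactly the paper's device, and the rest is the same push-forward/naturality/injectivity bookkeeping. One small slip: with $\varepsilon_c:=r-\max_i\diam(\sigma_i)$ you have $r-\varepsilon_c=\max_i\diam(\sigma_i)$, so the maximal-diameter simplex is \emph{not} a simplex of $\vr_{r-\varepsilon_c}(X)$ (which requires diameter strictly less than $r-\varepsilon_c$); you should instead take any $\varepsilon\in(0,\,r-\max_i\diam(\sigma_i))$, as the paper does.
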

	\begin{proof}[Proof of \textbf{\emph{(i)}}]
		The fact that $I\in\dgmR_k(X;\mathbb{F})$ implies that, for each $r\in I$, there exists a simplicial $k$-cycle $c_r$ on $\vr_r(X)$ with coefficients in $\mathbb{F}$ satisfying the following:
		
		\begin{enumerate}
			\item $[c_r]\in \mathrm{H}_k(\vr_r(X);\mathbb{F})$ is nonzero for any $r\in I$.
			\item $(i_{r,s})_\ast([c_r])=[c_s]$ for any $r\leq s$ in $I$.
		\end{enumerate}
		
		Now, suppose that $u$ is a closed left endpoint of $I$ (so, $u\in I$).  In particular, by the above there exists a simplicial $k$-cycle $c_u$ on $\vr_u(X)$ with coefficients in $\mathbb{F}$ with the above two properties. Then, $c_u=\sum_{i=1}^l\alpha_i\sigma_i$ where $\alpha_i\in \mathbb{F}$ and $\sigma_i$ is a subset of $X$ with cardinality $k+1$ and $\diam(\sigma_i)<u$ for any $i=1,\dots,l$. Observe that one can choose small $\varepsilon>0$ such that $\diam(\sigma_i)<u-\varepsilon$ for any $i=1,\dots,l$. Therefore, if we define $c_{u-\varepsilon}:=\sum_{i=1}^l\alpha_i\sigma_i$, it is also a simplicial $k$-cycle on $\vr_{u-\varepsilon}(X)$. Also, it satisfies $(i_{u-\varepsilon,u})_\sharp(c_{u-\varepsilon})=c_u$ obviously since $i_{u-\varepsilon,u}$ is the canonical inclusion.
		
		Moreover,  $c_{u-\varepsilon}$ cannot be null-homologous. Otherwise, there would exist a simplicial $(k+1)$-chain $d_{u-\varepsilon}$ of $\vr_{u-\varepsilon}(X)$ with coefficients in $\mathbb{F}$ such that $\partial_{k+1}^{(u-\varepsilon)} d_{u-\varepsilon}=c_{u-\varepsilon}$. However, this would imply that $$\partial_{k+1}^{(u)}\circ(i_{u-\varepsilon,u})_\sharp(d_{u-\varepsilon})=(i_{u-\varepsilon,u})_\sharp\circ\partial_{k+1}^{(u-\varepsilon)}(d_{u-\varepsilon})=(i_{u-\varepsilon,u})_\sharp(c_{u-\varepsilon})=c_u$$
		because of the naturality of the boundary operators $\partial_{k+1}^{(u-\varepsilon)}$ and $\partial_{k+1}^{(u)}$. This would in turn contradict  the property $[c_u]\neq 0$.

		So, we must have $[c_{u-\varepsilon}]\neq 0$. But, the existence of such $c_{u-\varepsilon}$ contradicts  the fact that $u$ is the left endpoint of $I$. Therefore, one  concludes that $u$ cannot be a  closed left endpoint, so it must be an open endpoint.
	\end{proof}
	
	\begin{proof}[Proof of \textbf{\emph{(ii)}}]
		Now, suppose that $v$ is an open right endpoint of $I$ (so that $v\notin I$ and therefore $c_v$ is not defined by the above two conditions). Choose small enough $\varepsilon>0$ so that $v-\varepsilon\in I$, and let
		$$c_v:=(i_{v-\varepsilon,v})_\sharp(c_{v-\varepsilon}).$$
		Then, $c_v$ must be null-homologous.

		This means that there exists a simplicial $(k+1)$-dimensional chain $d_v$ of $\vr_v(X)$ with coefficients in $\mathbb{F}$ such that $\partial_{k+1}^{(v)} d_v=c_v$. Then, $d_v=\sum_{i=1}^l\alpha_i\tau_i$ where $\alpha_i\in \mathbb{F}$ and $\tau_i$ is a subset of $X$ with cardinality $k+2$ and $\diam(\tau_i)<v$ for any $i=1,\dots,l$. Observe that one can choose $\varepsilon'\in (0,\varepsilon]$ such that $\diam(\tau_i)<v-\varepsilon'$ for any $i=1,\dots,l$. Therefore, if we define $d_{v-\varepsilon'}:=\sum_{i=1}^l\alpha_i\tau_i$, it is also a simplicial $(k+1)$-dimensional chain on $\vr_{v-\varepsilon'}(X)$. Also, it satisfies $(i_{v-\varepsilon',v})_\sharp(d_{v-\varepsilon'})=d_v$. Then, again by the naturality of boundary operators,
		
		$$(i_{v-\varepsilon',v})_\sharp\circ\partial_{k+1}^{(v-\varepsilon')}(d_{v-\varepsilon'})=\partial_{k+1}^{(v)}\circ(i_{v-\varepsilon',v})_\sharp(d_{v-\varepsilon'})=\partial_{k+1}^{(v)} d_v=c_v.$$
		
		Since $(i_{v-\varepsilon',v})_\sharp$ is injective and $(i_{v-\varepsilon',v})_\sharp\circ (i_{v-\varepsilon,v-\varepsilon'})_\sharp (c_{v-\varepsilon})=(i_{v-\varepsilon,v})_\sharp(c_{v-\varepsilon})=c_v$, one can conclude that $\partial_{k+1}^{(v-\varepsilon')}(d_{v-\varepsilon'})=(i_{v-\varepsilon,v-\varepsilon'})_\sharp (c_{v-\varepsilon})$. This indicates that 
		$$0=[(i_{v-\varepsilon,v-\varepsilon'})_\sharp (c_{v-\varepsilon})]=(i_{v-\varepsilon,v-\varepsilon'})_\ast ([c_{v-\varepsilon}])=[c_{v-\varepsilon'}],$$
		but it contradicts the fact that $[c_{v-\varepsilon'}]\neq 0$. Therefore, $v$ must be a closed endpoint.
	\end{proof}
	
	Now, Theorem \ref{thm:intervalform} is achieved as a direct result of Lemma \ref{lemma:simplicialleft-right}.
\end{document}